\theoremstyle{plain}
\newtheorem{theorem}{Theorem}
\newtheorem{corollary}[theorem]{Corollary}
\newtheorem{lemma}[theorem]{Lemma}
\theoremstyle{definition}
\newtheorem{definition}[theorem]{Definition}
\newtheorem{remark}[theorem]{Remark}
\newtheorem{example}[theorem]{Example}
\newcommand{\A}{\ensuremath{\mathbb{A}}}
\newcommand{\R}{\ensuremath{\mathbb{R}}}
\newcommand{\Z}{\ensuremath{\mathbb{Z}}}
\newcommand{\D}{\ensuremath{\mathbb{D}}}
\newcommand{\cF}{\mathcal{F}}
\newcommand{\mat}[4]{\begin{pmatrix}#1&#2\\#3&#4\end{pmatrix}}
\newcommand{\vecc}[2]{\begin{pmatrix}#1\\#2\end{pmatrix}}
\numberwithin{equation}{section}
\numberwithin{theorem}{section}
\begin{document}

\title[Self-Adjoint Proportional Differential Equations]{Second-Order Self-Adjoint Differential Equations Using a Conformable Proportional Derivative}
\author[Anderson]{Douglas R. Anderson} 
\address{Department of Mathematics \\
         Concordia College \\
         Moorhead, MN 56562 USA}
\email{andersod@cord.edu}

\keywords{Conformable derivative, proportional-derivative controller, self-adjoint operator}
\subjclass[2010]{26A24, 34A05, 49J15, 49K15}

\begin{abstract}
In this study, linear second-order conformable differential equations using a proportional derivative are shown to be formally self-adjoint equations with respect to a certain inner product and the associated self-adjoint boundary conditions. Defining a Wronskian, we establish a Lagrange identity and Abel's formula. Several reduction-of-order theorems are given. Solutions of the conformable second-order self-adjoint equation are then shown to be related to corresponding solutions of a first-order Riccati equation and a related quadratic functional and a conformable Picone identity. The first part of the study is concluded with a comprehensive roundabout theorem relating key equivalences among all these results. Subsequently, we establish a Lyapunov inequality, factorizations of the second-order equation, and conclude with a section on boundary value problems and Green's functions.
\end{abstract}

\maketitle\thispagestyle{empty}


\section{Introduction}\label{secintro}

We study the linear second-order conformable self-adjoint equation
\[ D^{\alpha}[pD^{\alpha}x](t)+q(t)x(t)=h(t), \]
where $D^{\alpha}$ is a proportional derivative operator. Conformable proportional derivatives have been introduced by Anderson and Ulness \cite{au} to generalize the idea of a proportional derivative controller \cite{ang}. This new conformable derivative operator $D^\alpha$ of order $\alpha\in[0,1]$, where $D^0$ is the identity operator, and $D^1$ is the classical differential operator, will be used to explore conformable self-adjoint equations.


\begin{definition}[Conformable Differential Operator]\label{conformdef}
Let $\alpha\in[0,1]$. A differential operator $D^{\alpha}$ is conformable if and only if $D^0$ is the identity operator and $D^1$ is the classical differential operator. Specifically, $D^{\alpha}$ is conformable if and only if for a differentiable function $f=f(t)$, 
\[ D^0 f(t) = f(t) \quad\text{and}\quad D^1 f(t) = \frac{d}{dt}f(t) = f'(t). \]
\end{definition}


\begin{remark}
In control theory, a proportional-derivative (PD) controller for controller output $u$ at time $t$ with two tuning parameters has the algorithm
\[ u(t) =  \kappa_pE(t) + \kappa_d\frac{d}{dt}E(t) , \]
where $\kappa_p$ is the proportional gain, $\kappa_d$ is the derivative gain, and $E$ is the error between the state variable and the process variable; see \cite{ang}, for example. This is the impetus for the next definition.
\end{remark}


\begin{definition}[A Class of Conformable Derivatives]
Let $\alpha\in[0,1]$, $\mathcal{I}\subseteq\R$, and let the functions $\kappa_0,\kappa_1:[0,1]\times\mathcal{I}\rightarrow[0,\infty)$ be continuous such that
\begin{eqnarray} 
 \lim_{\alpha\rightarrow 0^+} \kappa_1(\alpha,t) = 1, & \displaystyle\lim_{\alpha\rightarrow 0^+} \kappa_0(\alpha,t) = 0, & \forall\;t\in\mathcal{I}, \nonumber \\ 
 \lim_{\alpha\rightarrow 1^-} \kappa_1(\alpha,t) = 0, & \displaystyle\lim_{\alpha\rightarrow 1^-} \kappa_0(\alpha,t) = 1, & \forall\;t\in\mathcal{I}, \label{kappaconditions} \\
 \kappa_1(\alpha,t)\ne 0, \alpha\in[0,1), & \kappa_0(\alpha,t)\ne 0, \alpha\in(0,1], & \forall\;t\in\mathcal{I}. \nonumber
\end{eqnarray}
Then the following differential operator $D^{\alpha}$, defined via
\begin{equation}\label{derivdef}
 D^{\alpha} f(t) = \kappa_1(\alpha,t) f(t) + \kappa_0(\alpha,t) f'(t), \qquad t\in\mathcal{I}
\end{equation} 
is conformable provided the right-hand side exists at $t$, where $f':=\frac{d}{dt}f$.
\end{definition}

\begin{remark} For the operator given in \eqref{derivdef}, $\kappa_1$ is a type of proportional gain $\kappa_p$, $\kappa_0$ is a type of derivative gain $\kappa_d$, $f$ is the error, and $u=D^{\alpha}f$ is the controller output. 
To illustrate, one could take $\kappa_1\equiv \cos\left(\alpha\pi/2\right)$ and $\kappa_0\equiv \sin\left(\alpha\pi/2\right)$, or
$\kappa_1\equiv (1-\alpha)\omega^{\alpha}$ and $\kappa_0\equiv \alpha\omega^{1-\alpha}$ for any $\omega\in(0,\infty)$; or, $\kappa_1=(1-\alpha)|t|^{\alpha}$  and $\kappa_0=\alpha |t|^{1-\alpha}$ on $\mathcal{I}=\R\backslash\{0\}$, so that
\[ D^{\alpha} f(t) = (1-\alpha) |t|^{\alpha} f(t) + \alpha |t|^{1-\alpha} f'(t). \]
If $\kappa_1$ and $\kappa_0$ are constant then $D^\beta D^\alpha = D^\alpha D^\beta$, but $D^\beta D^\alpha \ne D^\alpha D^\beta$ for $\alpha,\beta\in[0,1]$ in general. 
\end{remark}

\begin{remark}
Note that $D^{\alpha} f(t)$ may exist even if $f'(t)$ does not if we relax the conditions
\[ \kappa_1(\alpha,t)\ne 0, \;\alpha\in[0,1), \quad \kappa_0(\alpha,t)\ne 0, \;\alpha\in(0,1], \quad \forall\;t\in\mathcal{I}. \]
For example, take $\kappa_1=(1-\alpha)t^{\alpha}$ and $\kappa_0=\alpha t^{1-\alpha}$ on $\mathcal{I}=[0,\infty)$, and let $f(t)=t^r$ for $r\in[\alpha,1)$. Then
\[ D^{\alpha} f(t) = (1-\alpha)t^{\alpha+r} + r\alpha t^{r-\alpha} \]
exists for all $t\in[0,\infty)$, even though $f'(t)$ exists only for $t\in(0,\infty)$. We will not explore such singular cases in this paper; they remain open questions for further research. $\hfill\Box$
\end{remark}

In this present work we aim to extend these notions using \eqref{derivdef} to the conformable second-order differential equation
\begin{equation}\label{introeq}
 D^\alpha\left[pD^\alpha x\right](t)+q(t)x(t)=0,
\end{equation}
which will be shown to be associated with a self-adjoint operator because it admits a Lagrange identity. The self-adjoint form \eqref{introeq}, however, is an appropriate generalization and extension of the classical second-order self-adjoint form from ordinary differential equations \cite{cod,kp,r56,Reid,Reid2,Reid3}
\begin{equation}\label{oldform} 
 (px')'(t)+q(t)x(t)=0 
\end{equation}
to conformable equations, since at $\alpha=1$ we see that \eqref{introeq} reduces to \eqref{oldform}.

The paper is constructed as follows, with \eqref{derivdef} as our choice for $D^{\alpha}$, and without loss of generality using the notation $\kappa_j$ as a shorthand for $\kappa_j(\alpha,\cdot)$ satisfying \eqref{kappaconditions} for $j=0,1$. 
Section~\ref{nec} illustrates the interesting geometry associated with this calculus, and covers essential results for subsequent development.
In Section~\ref{secselfadj} we explore \eqref{introeq}, show how it is formally a self-adjoint equation, use a Wronskian and establish a Lagrange identity and Abel's formula. Section~\ref{secreduct} contains several reduction of order theorems. A corresponding Riccati differential equation is introduced in Section~\ref{riccatieq}, the solutions of which are related back to solutions of \eqref{introeq}. Section~\ref{secquad} explores a related quadratic functional and a foundational result, a Picone identity. A full connection among the various aspects of the paper is established in Section~\ref{secreid} in a Reid roundabout theorem.


\section{Calculus for the Proportional Derivative}\label{nec}

The calculus results in this section are modeled after those found in \cite{gracelaura}.

We begin with a vital definition \cite[Definition 1.6]{au}, which establishes a type of exponential function for derivative \eqref{derivdef}.


\begin{definition}[Conformable Exponential Function]
Let $\alpha\in(0,1]$, the points $s,t\in\R$ with $s\le t$, and let the function $p:[s,t]\rightarrow\R$ be continuous. Let $\kappa_0,\kappa_1:[0,1]\times\R\rightarrow[0,\infty)$ be continuous and satisfy \eqref{kappaconditions}, with $p/\kappa_0$ and $\kappa_1/\kappa_0$ Riemann integrable on $[s,t]$. Then the conformable exponential function with respect to $D^{\alpha}$ in \eqref{derivdef} is defined to be
\begin{equation}\label{epts}
 e_p(t,s):= e^{\int_s^t \frac{p(\tau)-\kappa_1(\alpha,\tau)}{\kappa_0(\alpha,\tau)} d\tau}, \quad 
 e_0(t,s) = e^{-\int_s^t \frac{\kappa_1(\alpha,\tau)}{\kappa_0(\alpha,\tau)} d\tau},
\end{equation} 
and satisfies
\begin{equation}\label{expderiv}
 D^{\alpha} e_p(t,s) = p(t) e_p(t,s), \quad D^{\alpha} e_0(t,s) = 0.
\end{equation}
\end{definition}

The following useful properties of the exponential function are a direct result of \eqref{epts}. 


\begin{lemma}[Exponential Function Properties]\label{expprops}
Let $\kappa_0,\kappa_1$ satisfy \eqref{kappaconditions}, $D^{\alpha}$ satisfy \eqref{derivdef}, let $p,q$ be continuous functions, and let $t,s,r\in\R$. For the conformable exponential function given in \eqref{epts}, the following properties hold.
\begin{enumerate}
 \item[$(i)$] $e_p(t,t)\equiv 1$.
 \item[$(ii)$] $e_p(t,s)e_p(s,r)=e_p(t,r)$.
 \item[$(iii)$] $\frac{1}{e_p(t,s)}=e_p(s,t)=e_{(2\kappa_1-p)}(t,s)$.
 \item[$(iv)$] $e_{\kappa_1}(t,s)\equiv 1$.
 \item[$(v)$] $e_{-\kappa_1}(t,s)=e^2_0(t,s)$.
 \item[$(vi)$] $e_p(t,s)e_q(t,s)=e_{(p+q-\kappa_1)}(t,s)$.
 \item[$(vii)$] $\frac{e_p(t,s)}{e_q(t,s)}=e_{(p-q+\kappa_1)}(t,s)$.
 \item[$(viii)$] $D_t^{\alpha}[e_p(s,t)]=(2\kappa_1-p)(t)e_p(s,t)$, where $D_t^{\alpha}$ indicates employing $D^{\alpha}$ with respect to $t$ for fixed $s$.
 \item[$(ix)$] If $p$ is differentiable and positive, then 
      \[ e_{\left(\frac{-\D^{\alpha}p}{p}\right)}(t,t_0) = \frac{p(t_0)}{p(t)}e^2_0(t,t_0) \quad\text{and}\quad 
         e_{\left(\frac{\D^{\alpha}p}{p}\right)}(t,t_0) = \frac{p(t)}{p(t_0)}. \]
\end{enumerate}
\end{lemma}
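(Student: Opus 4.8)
The plan is to reduce every identity to a statement about the exponent appearing in the definition \eqref{epts}, namely the integrand $(p-\kappa_1)/\kappa_0$, and then to exploit the elementary facts that $\int_t^t(\cdots)\,d\tau=0$, that integrals are additive, $\int_r^s+\int_s^t=\int_r^t$, and that the exponential converts sums of integrals into products. Properties $(i)$ and $(ii)$ fall out immediately: $(i)$ because the exponent vanishes when the limits coincide, and $(ii)$ because additivity turns the product $e_p(t,s)e_p(s,r)$ into $e_p(t,r)$.

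For $(iii)$ through $(vii)$ I would simply substitute the relevant argument into \eqref{epts} and simplify the integrand pointwise. For instance, the first equality in $(iii)$ is reciprocation, which flips the sign of the exponent and hence the limits of integration, giving $e_p(s,t)$; for the second equality one checks that the integrand of $e_{(2\kappa_1-p)}(t,s)$ is $\big((2\kappa_1-p)-\kappa_1\big)/\kappa_0=(\kappa_1-p)/\kappa_0$, the negative of the integrand of $e_p(t,s)$. Properties $(iv)$ and $(v)$ are the special cases obtained by setting the subscript to $\kappa_1$ and to $-\kappa_1$ and reading off the resulting exponents $0$ and $-2\kappa_1/\kappa_0$. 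Finally $(vi)$ and $(vii)$ follow by adding or subtracting the two exponents $(p-\kappa_1)/\kappa_0$ and $(q-\kappa_1)/\kappa_0$ and matching the outcome against the integrand generated by the claimed subscript $p+q-\kappa_1$ or $p-q+\kappa_1$.

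Property $(viii)$ is where I would combine an already-established identity with the derivative rule rather than compute from scratch: rewriting $e_p(s,t)=e_{(2\kappa_1-p)}(t,s)$ via $(iii)$ reduces the question to $D_t^\alpha$ of a standard conformable exponential, and \eqref{expderiv} then returns precisely $(2\kappa_1-p)(t)e_p(s,t)$. For $(ix)$ I would use the explicit operator form \eqref{derivdef} to write $D^\alpha p=\kappa_1 p+\kappa_0 p'$, so that $\frac{D^\alpha p}{p}=\kappa_1+\kappa_0\frac{p'}{p}$; the key observation is that the $\kappa_1$ contributed by this argument exactly cancels the $-\kappa_1$ in the exponent of \eqref{epts}, leaving the integrand $p'/p$ whose antiderivative is $\ln p$, yielding the ratio $p(t)/p(t_0)$. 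The companion identity with $-D^\alpha p/p$ instead leaves the integrand $-2\kappa_1/\kappa_0-p'/p$, which factors as $e_0^2(t,t_0)$ times $p(t_0)/p(t)$. None of these steps presents a genuine obstacle, since each is a one-line manipulation of the exponent; the only place demanding care is $(ix)$, where one must correctly expand $D^\alpha p$ through \eqref{derivdef} and track the cancellation of the $\kappa_1$ terms, and where the standing positivity and differentiability hypotheses on $p$ are exactly what make $p'/p$ integrable with primitive $\ln p$.
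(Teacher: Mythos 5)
Your proposal is correct and matches the paper's intent exactly: the paper offers no written proof, stating only that these properties ``are a direct result of \eqref{epts},'' and your verification---reducing each identity to the exponent integrand $(p-\kappa_1)/\kappa_0$ and using additivity of integrals, with \eqref{derivdef} and \eqref{expderiv} invoked for $(viii)$ and $(ix)$---is precisely that direct computation. All steps check out, including the cancellation of the $\kappa_1$ terms in $(ix)$, so nothing further is needed.
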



\begin{remark}
Due to \eqref{expderiv}, the geodesics of this operator, namely those curves with zero acceleration \cite{mccleary,oneill} and thus satisfy the differential equation
\[ D^{\alpha}D^{\alpha}y = 0, \] 
include the exponentials \eqref{epts} of the form $ce_0(t,t_0)$ for all $c\in\R$. Two important geodesics are the secant line for a function $f$ from $a$ to $b$ given by
\begin{equation}\label{secant}
 \sigma(t):=e_0(t,a)f(a)+h_1(t,a)\frac{e_0(t,b)f(b)-e_0(t,a)f(a)}{h_1(b,a)}, \quad h_1(t,a):=\int_a^t 1 d_{\alpha}s,
\end{equation}
and the tangent line for a function $f$ differentiable at $a$ given by
\begin{equation}\label{tangent}
 \ell(t):=e_0(t,a)f(a)+h_1(t,a)e_0(t,a)D^{\alpha}f(a).
\end{equation}
Despite the fact that the geodesics are non-straight lines, the geometry associated with the proportional derivative calculus in this paper can be considered Euclidean; this is so since given any line (geodesic) and a point not on that line, there is exactly one line through the given point that is parallel to the given line. $\hfill\Box$
\end{remark}


\begin{theorem}[Rolle's Theorem]\label{rolle}
Let $\alpha\in(0,1]$. If the function $f$ is continuous on $[a,b]$ and differentiable on $(a,b)$, with
\[ f(a) = e_0(a,b)f(b), \]
then there exists at least one number $c\in(a,b)$ such that $D^{\alpha}f(c)=0$.
\end{theorem}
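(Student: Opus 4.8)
The plan is to reduce the statement to the classical Rolle theorem by stripping off the proportional part of the operator with a suitable integrating factor. Since $\alpha\in(0,1]$ forces $\kappa_0\neq 0$, I can write $D^{\alpha}f(t)=\kappa_0(\alpha,t)\big(f'(t)+\tfrac{\kappa_1}{\kappa_0}(\alpha,t)f(t)\big)$, so that the vanishing of $D^{\alpha}f$ is equivalent to the vanishing of the bracketed expression. This suggests introducing, via the explicit form of $e_0$ in \eqref{epts}, the auxiliary function
\[ g(t):=\frac{f(t)}{e_0(t,a)}=f(t)\,e^{\int_a^t \frac{\kappa_1(\alpha,\tau)}{\kappa_0(\alpha,\tau)}\,d\tau}. \]

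First I would verify the regularity of $g$. Because $\kappa_0,\kappa_1$ are continuous with $\kappa_0\neq 0$ on $(0,1]$, the integrand $\kappa_1/\kappa_0$ is continuous, so the exponential factor is $C^1$ and strictly positive; hence $g$ inherits continuity on $[a,b]$ and differentiability on $(a,b)$ from $f$. Next I would differentiate, obtaining by the classical product rule
\[ g'(t)=e^{\int_a^t \frac{\kappa_1}{\kappa_0}\,d\tau}\Big(f'(t)+\tfrac{\kappa_1}{\kappa_0}(\alpha,t)f(t)\Big)=\frac{e^{\int_a^t \frac{\kappa_1}{\kappa_0}\,d\tau}}{\kappa_0(\alpha,t)}\,D^{\alpha}f(t), \]
so that, since both the exponential and $\kappa_0$ are nonzero, $g'(c)=0$ if and only if $D^{\alpha}f(c)=0$.

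The remaining step is to check that the hypothesis $f(a)=e_0(a,b)f(b)$ is precisely what matches the endpoints of $g$. Evaluating gives $g(a)=f(a)$, while using $e_0(a,b)e_0(b,a)=e_0(a,a)=1$ from Lemma~\ref{expprops}(i)--(ii) gives $g(b)=f(b)/e_0(b,a)=e_0(a,b)f(b)$; thus the hypothesis is exactly $g(a)=g(b)$. Applying the classical Rolle theorem to $g$ then produces $c\in(a,b)$ with $g'(c)=0$, and the equivalence above yields $D^{\alpha}f(c)=0$. I do not anticipate a genuine obstacle: the only points requiring care are confirming that the integrating factor is well defined and nonvanishing (which rests on $\kappa_0\neq 0$ for $\alpha\in(0,1]$) and bookkeeping the exponential identities so that the endpoint condition translates correctly.
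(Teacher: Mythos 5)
Your proposal is correct, and it is worth noting that the paper itself states Rolle's Theorem \emph{without} proof (it is labeled as new and then used to derive the Mean Value Theorem and, in turn, Theorem~\ref{ftc}), so your argument supplies a proof the paper omits rather than duplicating one. The reduction you use is the natural one and is fully consistent with the paper's machinery: writing $D^{\alpha}f(t)=\kappa_0(t)\bigl(f'(t)+\tfrac{\kappa_1}{\kappa_0}(t)f(t)\bigr)$ and conjugating by the positive $C^1$ factor $1/e_0(t,a)=e^{\int_a^t \kappa_1/\kappa_0\,d\tau}$ turns $D^{\alpha}$ into a positive multiple of the classical derivative of $g=f/e_0(\cdot,a)$; this is exactly the same mechanism behind the identities \eqref{dfpluskf} and \eqref{dfminuskf} that the paper exploits repeatedly (Lagrange identity, Abel's formula, the P\'olya factorization). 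All the details check out: $\kappa_0>0$ for $\alpha\in(0,1]$ by \eqref{kappaconditions} makes the integrating factor well defined and the equivalence $g'(c)=0\iff D^{\alpha}f(c)=0$ valid; and the endpoint computation $g(a)=f(a)$, $g(b)=f(b)/e_0(b,a)=e_0(a,b)f(b)$ (via Lemma~\ref{expprops}(i)--(ii)) shows the paper's skew boundary hypothesis $f(a)=e_0(a,b)f(b)$ is precisely the classical condition $g(a)=g(b)$, after which classical Rolle finishes the argument. One cosmetic remark: the same conjugation applied to the secant-matching function $f - \sigma$ (with $\sigma$ as in \eqref{secant}) would give you the paper's Mean Value Theorem~\ref{mvt} immediately, so your method extends to the companion result as well.
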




\begin{theorem}[Mean Value Theorem]\label{mvt}
Let $\alpha\in(0,1]$. If the function $f$ is continuous on $[a,b]$ and differentiable on $(a,b)$, then there exists at least one number $c\in(a,b)$ such that 
\[ D^{\alpha}f(c) = \frac{e_0(c,b)f(b) - e_0(c,a)f(a)}{h_1(b,a)}, \]
where $h_1(b,a)=\int_a^b 1 d_{\alpha}s$ for $d_{\alpha}s:=ds/\kappa_0(s)$.
\end{theorem}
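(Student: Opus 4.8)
The plan is to mimic the classical derivation of the mean value theorem from Rolle's theorem (Theorem~\ref{rolle}) by subtracting off the secant line \eqref{secant}. Concretely, I would introduce the auxiliary function
\[ g(t) := f(t) - \sigma(t), \qquad \sigma(t) = e_0(t,a)f(a) + h_1(t,a)\,\frac{e_0(t,b)f(b) - e_0(t,a)f(a)}{h_1(b,a)}, \]
and note that $g$ is continuous on $[a,b]$ and differentiable on $(a,b)$, inheriting these from $f$ since $e_0(\cdot,a)$, $e_0(\cdot,b)$, and $h_1(\cdot,a)$ are all differentiable under the standing continuity of $\kappa_0,\kappa_1$ together with $\kappa_0\ne 0$ for $\alpha\in(0,1]$.

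First I would record the endpoint values of the secant line. Using $e_0(t,t)\equiv 1$ from Lemma~\ref{expprops}$(i)$ and $h_1(a,a)=0$, a short computation gives $\sigma(a)=f(a)$ and $\sigma(b)=f(b)$, hence $g(a)=g(b)=0$. In particular the Rolle hypothesis $g(a)=e_0(a,b)g(b)$ holds trivially, since $0 = e_0(a,b)\cdot 0$. Theorem~\ref{rolle} then supplies a point $c\in(a,b)$ with $D^\alpha g(c)=0$, equivalently $D^\alpha f(c)=D^\alpha\sigma(c)$.

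The heart of the argument is to evaluate $D^\alpha\sigma$ and show it equals the claimed right-hand side identically in $t$. Writing $\phi(t):=e_0(t,b)f(b)-e_0(t,a)f(a)$, linearity of $D^\alpha$ together with $D^\alpha e_0(\cdot,s)=0$ from \eqref{expderiv} gives $D^\alpha\phi=0$, i.e. $\kappa_0\phi'=-\kappa_1\phi$. Since $\frac{d}{dt}h_1(t,a)=1/\kappa_0$, expanding directly from the definition \eqref{derivdef} via $D^\alpha(uv)=\kappa_1 uv+\kappa_0(u'v+uv')$ yields
\[ D^\alpha\!\left[h_1(\cdot,a)\,\phi\right] = \kappa_1 h_1\phi + (\kappa_0 h_1')\phi + h_1(\kappa_0\phi') = \kappa_1 h_1\phi + \phi - \kappa_1 h_1\phi = \phi, \]
the exact analog of $(t\cdot c)'=c$. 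Combined with $D^\alpha[e_0(\cdot,a)f(a)]=0$, this gives $D^\alpha\sigma(t)=\phi(t)/h_1(b,a)$ for every $t$, and evaluation at $c$ produces the asserted formula.

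I expect the only delicate points to be bookkeeping rather than substance: confirming that $\sigma$ is genuinely differentiable on $(a,b)$ so that $D^\alpha\sigma$ makes sense pointwise, and carefully tracking the non-Leibniz expansion, whose extra $\kappa_1 uv$ term is precisely what cancels the contribution $h_1(\kappa_0\phi')$ above. Everything else reduces to the two structural facts that $e_0$ is a $D^\alpha$-constant and that $h_1$ plays the role of the independent variable in this calculus.
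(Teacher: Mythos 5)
The paper states this Mean Value Theorem without proof (both it and Rolle's Theorem are asserted immediately after the secant line \eqref{secant} is introduced, with no proof environment), so there is no in-paper argument to compare against; your proposal is correct and is plainly the intended one. Subtracting the secant geodesic $\sigma$ from $f$, noting $g(a)=g(b)=0$ so the Rolle hypothesis $g(a)=e_0(a,b)g(b)$ holds trivially, and computing $D^\alpha\sigma(t)=\bigl(e_0(t,b)f(b)-e_0(t,a)f(a)\bigr)/h_1(b,a)$ via the two structural facts $D^\alpha e_0(\cdot,s)=0$ (so $\kappa_0\phi'=-\kappa_1\phi$) and $\frac{d}{dt}h_1(t,a)=1/\kappa_0(t)$ is exactly right, and correctly produces the $c$-dependent right-hand side upon evaluation at the Rolle point.
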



The following fundamental theorem was given in \cite[Lemma 1.9 (ii)]{au}, but we supply a more rigorous proof here as Rolle's Theorem and the Mean Value Theorem above are new.


\begin{theorem}[Fundamental Theorem of Integral Calculus]\label{ftc}
Let $\alpha\in(0,1]$. Suppose $f:[a,b]\rightarrow\R$ is differentiable on $[a,b]$ and $f'$ is integrable on $[a,b]$. Then
\[ \int_a^b D^{\alpha}[f(t)]e_0(b,t)d_{\alpha}t = f(b) - f(a)e_0(b,a), \]
where $d_{\alpha}t:=dt/\kappa_0(t)$.
\end{theorem}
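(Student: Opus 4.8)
The plan is to reduce the statement to the classical Fundamental Theorem of Calculus by recognizing the integrand as an exact ordinary derivative. First I would unfold every definition: since $d_{\alpha}t = dt/\kappa_0(t)$ by hypothesis and $D^{\alpha}f(t) = \kappa_1(t)f(t) + \kappa_0(t)f'(t)$ by \eqref{derivdef}, the integrand collapses to
\[ D^{\alpha}[f(t)]\,e_0(b,t)\,d_{\alpha}t = \left[\frac{\kappa_1(t)}{\kappa_0(t)}f(t) + f'(t)\right] e_0(b,t)\,dt. \]

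The key step is to differentiate $t\mapsto e_0(b,t)$, where the active variable sits in the \emph{lower} argument. From \eqref{epts} we have $e_0(b,t) = e^{-\int_t^b \kappa_1(\tau)/\kappa_0(\tau)\,d\tau}$, so the sign change in $\frac{d}{dt}\bigl(-\int_t^b\bigr) = +\int_b^t$ gives $\frac{d}{dt}e_0(b,t) = \frac{\kappa_1(t)}{\kappa_0(t)}e_0(b,t)$. Combined with the ordinary product rule, this shows the bracketed expression above is exactly the total derivative $\frac{d}{dt}\bigl[f(t)e_0(b,t)\bigr]$, so that the conformable integrand is a perfect classical differential.

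Then I would invoke the classical Fundamental Theorem of Calculus — which itself rests on the Mean Value Theorem, and this dependence is precisely why the present derivation is more rigorous than the earlier one in \cite{au} — to integrate the exact derivative and evaluate at the endpoints:
\[ \int_a^b D^{\alpha}[f(t)]\,e_0(b,t)\,d_{\alpha}t = \int_a^b \frac{d}{dt}\bigl[f(t)e_0(b,t)\bigr]\,dt = f(b)e_0(b,b) - f(a)e_0(b,a). \]
Applying property $(i)$ of Lemma~\ref{expprops}, namely $e_0(b,b)=1$, then yields $f(b) - f(a)e_0(b,a)$, as claimed.

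The main obstacle I expect is bookkeeping rather than conceptual: correctly tracking the sign when differentiating the lower-slot exponential $e_0(b,t)$, and verifying that the stated hypotheses are exactly enough to license the classical step. Specifically, with $f$ differentiable on $[a,b]$ and $f'$ integrable, together with the standing Riemann integrability of $\kappa_1/\kappa_0$ (which makes $e_0(b,\cdot)$ continuous and bounded), the product $\frac{d}{dt}[f(t)e_0(b,t)]$ is integrable, so the classical Fundamental Theorem applies and no further regularity on $f'$ or $\kappa_0$ is required.
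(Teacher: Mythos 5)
Your proof is correct, but it takes a genuinely different route from the paper's. The paper deliberately stays inside the conformable framework: it applies the conformable Mean Value Theorem (Theorem \ref{mvt}) to $f$ on each subinterval of an arbitrary partition, observes that the resulting Riemann--Stieltjes sums with respect to $\mu(t)=h_1(t,a)$ telescope to $f(b)-e_0(b,a)f(a)$, and then identifies the Stieltjes integral with the $d_{\alpha}t$ integral. You instead unfold every definition and reduce the statement to the classical Fundamental Theorem of Calculus: since $\frac{d}{dt}e_0(b,t)=\frac{\kappa_1(t)}{\kappa_0(t)}e_0(b,t)$, the weighted integrand $D^{\alpha}[f(t)]\,e_0(b,t)/\kappa_0(t)$ is exactly $\frac{d}{dt}\bigl[f(t)e_0(b,t)\bigr]$, i.e., $e_0(b,\cdot)$ acts as an integrating factor. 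Your route is shorter and makes transparent that the conformable FTC is the classical FTC in disguise (in its strong form, for merely integrable derivatives); the paper's route is self-contained in the new calculus and showcases the newly proved conformable Rolle/MVT machinery, which is the stated reason the proof is included at all. One bookkeeping correction: the pointwise differentiability of $t\mapsto e_0(b,t)$ with derivative $\frac{\kappa_1}{\kappa_0}e_0(b,\cdot)$ requires $\kappa_1/\kappa_0$ to be \emph{continuous} at $t$, not merely Riemann integrable as you assert; this does hold under the standing hypotheses, because $\kappa_0,\kappa_1$ are continuous and $\kappa_0\neq 0$ for $\alpha\in(0,1]$, so your argument has no actual gap, but the justification should be attributed to that continuity rather than to integrability.
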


\begin{proof}
Let $P$ be any partition of $[a,b]$, $P=\{t_0,t_1,\cdots,t_n\}$, and recall that $h_1(t,a)=\int_a^t 1 d_{\alpha}s$. By the Mean Value Theorem \ref{mvt} above applied to $f$ on $[t_{i-1},t_i]$, there exist $c_i\in(t_{i-1},t_i)$ such that
\[ D^{\alpha}f(c_i) = \frac{e_0(c_i,t_i)f(t_i) - e_0(c_i,t_{i-1})f(t_{i-1})}{h_1(t_i,t_{i-1})}, \]
or equivalently
\[ D^{\alpha}[f(c_i)]e_0(b,c_i)h_1(t_i,t_{i-1}) = e_0(b,t_i)f(t_i) - e_0(b,t_{i-1})f(t_{i-1}). \] 
After forming the Riemann-Stieltjes sum
\[ S(P,f,\mu) = \sum_{i=1}^{n} D^{\alpha}[f(c_i)]e_0(b,c_i)\left(\mu(t_i) - \mu(t_{i-1})\right), \quad \mu(t):=\int_a^t 1 d_{\alpha}s=h_1(t,a), \]
we see that 
\[ S(P,f,\mu) = \sum_{i=1}^{n} e_0(b,t_i)f(t_i) - e_0(b,t_{i-1})f(t_{i-1}) = f(b) - e_0(b,a)f(a). \]
Since $P$ was arbitrary, 
\begin{eqnarray*} 
 \int_a^b D^{\alpha}[f(t)]e_0(b,t) d\mu(t) &=& \int_a^b D^{\alpha}[f(t)]e_0(b,t) \mu'(t) dt \\
 &=& \int_a^b D^{\alpha}[f(t)]e_0(b,t) d_{\alpha}t \\
 &=& f(b) - e_0(b,a)f(a).
\end{eqnarray*}
This completes the proof.
\end{proof}



For reference, we include the following results \cite[Lemma 1.7]{au}.

\begin{lemma}[Basic Derivatives]\label{basicderiv}
Let the conformable differential operator $D^{\alpha}$ be given as in \eqref{derivdef}, where $\alpha\in[0,1]$. Let the function $p:[s,t]\rightarrow\R$ be continuous. Let $\kappa_0,\kappa_1:[0,1]\times\R\rightarrow[0,\infty)$ be continuous and satisfy \eqref{kappaconditions}, with $p/\kappa_0$ and $\kappa_1/\kappa_0$ Riemann integrable on $[s,t]$. Assume the functions $f$ and $g$ are differentiable as needed. Then
\begin{enumerate}
\item $D^{\alpha}[af+bg] = aD^{\alpha}[f]+bD^{\alpha}[g]$ for all $a,b\in\R$;
\item $D^{\alpha}c = c\kappa_1(t)$ for all constants $c\in\R$;
\item $D^{\alpha}[fg] = fD^{\alpha}[g] + gD^{\alpha}[f] - fg\kappa_1$;
\item $D^{\alpha}[f/g] = \frac{gD^{\alpha}[f]-fD^{\alpha}[g]}{g^2} + \frac{f}{g}\kappa_1$; 
\item for $\alpha\in(0,1]$ and for the exponential function $e_0$ given in \eqref{epts}, we have
      \begin{equation}\label{DofI}
        D^{\alpha}\left[\int_{a}^t f(s)e_0(t,s)d_{\alpha}s\right] = f(t), \quad d_{\alpha}s=\kappa_0^{-1}(s)ds.
      \end{equation}
\end{enumerate}
\end{lemma}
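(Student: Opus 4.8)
The plan is to verify (1)--(4) by direct substitution into the defining formula \eqref{derivdef}, $D^\alpha f = \kappa_1 f + \kappa_0 f'$, using only linearity of the ordinary derivative together with the classical product and quotient rules. For (1), linearity of $\frac{d}{dt}$ gives $D^\alpha[af+bg] = \kappa_1(af+bg) + \kappa_0(af'+bg')$, which regroups immediately into $aD^\alpha f + bD^\alpha g$. For (2), a constant has zero ordinary derivative, so $D^\alpha c = \kappa_1 c + \kappa_0\cdot 0 = c\kappa_1$. For (3), I would expand $D^\alpha[fg] = \kappa_1 fg + \kappa_0(f'g+fg')$ and separately expand $fD^\alpha g + gD^\alpha f = 2\kappa_1 fg + \kappa_0(fg'+gf')$; subtracting the surplus term $fg\kappa_1$ from the latter reproduces the former. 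Property (4) follows the same pattern from the classical quotient rule $(f/g)' = (f'g-fg')/g^2$, after which the $\kappa_1 fg$ cross-terms in $(gD^\alpha f - fD^\alpha g)/g^2$ cancel and the correction $\frac{f}{g}\kappa_1$ restores the proportional part.

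The substantive part is (5), for which I would exploit the semigroup identity $e_0(t,s) = e_0(t,a)/e_0(s,a)$ coming from Lemma~\ref{expprops}(ii). Writing $F(t) := \int_a^t f(s)e_0(t,s)\,d_\alpha s$, this identity lets me pull the $t$-dependence outside the integral, obtaining $F(t) = e_0(t,a)\,G(t)$ with $G(t) := \int_a^t \frac{f(s)}{e_0(s,a)}\,d_\alpha s$. Now I would apply the product rule (3) just established, together with the key fact $D^\alpha e_0(t,a) = 0$ from \eqref{expderiv}, to get $D^\alpha F = e_0(t,a)\bigl(D^\alpha G - \kappa_1 G\bigr)$. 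Since $D^\alpha G - \kappa_1 G = \kappa_0 G'$ directly from \eqref{derivdef}, and the classical Fundamental Theorem of Calculus gives $G'(t) = f(t)/\bigl(e_0(t,a)\kappa_0(t)\bigr)$, everything collapses to $D^\alpha F = e_0(t,a)\cdot\kappa_0(t)\cdot\frac{f(t)}{e_0(t,a)\kappa_0(t)} = f(t)$, which is \eqref{DofI}.

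I expect the only delicate point to be (5), and specifically the justification for handling the parameter-dependent kernel $e_0(t,s)$. The cleanest route avoids Leibniz's rule for differentiating under the integral sign by performing the factorization $F = e_0(\cdot,a)G$ first, thereby reducing the $t$-dependence of the integrand to the ordinary upper-limit dependence in $G$, so that only the classical Fundamental Theorem of Calculus is needed. The continuity of $f$ and the integrability hypotheses on $\kappa_1/\kappa_0$ in the statement guarantee $G$ is well-defined and differentiable, validating the factorization and the subsequent product-rule computation. An equivalent but slightly heavier approach would differentiate $F$ via Leibniz's rule, using $e_0(t,t)=1$ and $\partial_t e_0(t,s) = -\frac{\kappa_1(t)}{\kappa_0(t)}e_0(t,s)$ to get $F' = \frac{f}{\kappa_0} - \frac{\kappa_1}{\kappa_0}F$, whence $D^\alpha F = \kappa_1 F + \kappa_0 F' = f$ after cancellation; I would favor the factorization argument since it keeps the regularity requirements minimal.
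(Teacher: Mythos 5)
Your proposal is correct in all five parts, but there is no in-paper proof to compare it against: the paper states Lemma~\ref{basicderiv} purely for reference, importing it verbatim from \cite[Lemma 1.7]{au} without proof. On the merits, your treatment of (1)--(4) is exactly the routine expansion one expects from \eqref{derivdef}, and your handling of (5) is sound and self-contained: the factorization $\int_a^t f(s)e_0(t,s)\,d_{\alpha}s = e_0(t,a)\int_a^t \frac{f(s)}{e_0(s,a)}\,d_{\alpha}s$ is legitimate by Lemma~\ref{expprops}(ii), it converts the parameter dependence of the kernel into pure upper-limit dependence, and then $D^{\alpha}e_0(\cdot,a)=0$ from \eqref{expderiv}, your product rule (3), and the classical fundamental theorem of calculus (valid since $f$ is continuous and $\kappa_0$ is continuous and nonvanishing for $\alpha\in(0,1]$, which is precisely why (5) carries that restriction) finish the computation. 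It is worth noting that the identity you produce along the way, $D^{\alpha}[e_0(\cdot,a)G] = e_0(\cdot,a)\left(D^{\alpha}G - \kappa_1 G\right)$, is exactly the device the paper later records as \eqref{dfminuskf} and reuses throughout (e.g., in Lemma~\ref{lemma511}, Theorem~\ref{polya}, and Theorem~\ref{rdsols}), so your route is fully consonant with the paper's own toolkit even though the paper never proves this lemma itself. Your fallback Leibniz computation is also correct, but your stated preference for the factorization argument is well judged, since it avoids having to justify differentiation under the integral sign.
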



\begin{definition}\label{maxmin}
A function $f$ has a maximum at $t_0\in[a,b]$ if and only if
\[ f(t_0) \ge e_0(t_0,t)f(t) \quad \text{for all}\quad t\in[a,b]. \]
Similarly, a function $f$ has a minimum at $t_0\in[a,b]$ if and only if
\[ f(t_0) \le e_0(t_0,t)f(t) \quad \text{for all}\quad t\in[a,b]. \]
\end{definition}


\begin{definition}
A function $f$ is $\alpha$-increasing on an interval $\mathcal{I}$ if
\[ e_0(t_1,t_2)f(t_2) \ge f(t_1), \quad\text{whenever}\quad t_2 > t_1, \quad t_1,t_2\in\mathcal{I}, \]
and is strictly $\alpha$-increasing if
\[ e_0(t_1,t_2)f(t_2) > f(t_1), \quad\text{whenever}\quad t_2 > t_1, \quad t_1,t_2\in\mathcal{I}. \]
A function $f$ is $\alpha$-decreasing on an interval $\mathcal{I}$ if
\[ f(t_2) \le e_0(t_2,t_1)f(t_1), \quad\text{whenever}\quad t_2 > t_1, \quad t_1,t_2\in\mathcal{I}, \]
and is strictly $\alpha$-decreasing if
\[ f(t_2) < e_0(t_2,t_1)f(t_1), \quad\text{whenever}\quad t_2 > t_1, \quad t_1,t_2\in\mathcal{I}. \]
\end{definition}


\begin{theorem}[Increasing/Decreasing Test]\label{incdectest}
Suppose that $D^{\alpha}f$ exists on some interval $\mathcal{I}$.
\begin{enumerate}
 \item If $D^{\alpha}f(t)>0$ for all $t\in\mathcal{I}$, then $f$ is strictly $\alpha$-increasing on $\mathcal{I}$. 
 \item If $D^{\alpha}f(t)<0$ for all $t\in\mathcal{I}$, then $f$ is strictly $\alpha$-decreasing on $\mathcal{I}$. 
\end{enumerate} 
\end{theorem}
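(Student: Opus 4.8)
The plan is to reduce both statements to a single application of the Mean Value Theorem (Theorem~\ref{mvt}) on an arbitrary subinterval, and then to convert the resulting inequality into the form demanded by the definitions of strictly $\alpha$-increasing and strictly $\alpha$-decreasing using the cocycle and reciprocal identities for $e_0$ recorded in Lemma~\ref{expprops}.

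First I would fix $t_1,t_2\in\mathcal{I}$ with $t_1<t_2$ and apply Theorem~\ref{mvt} to $f$ on $[t_1,t_2]$, producing $c\in(t_1,t_2)$ with
\[ D^{\alpha}f(c) = \frac{e_0(c,t_2)f(t_2) - e_0(c,t_1)f(t_1)}{h_1(t_2,t_1)}. \]
The key observation is that the denominator $h_1(t_2,t_1)=\int_{t_1}^{t_2}\kappa_0^{-1}(s)\,ds$ is strictly positive, since $\kappa_0>0$ on $\mathcal{I}$ for $\alpha\in(0,1]$, and that $e_0(c,t_1)$ is strictly positive, being an exponential. Hence the sign of $D^{\alpha}f(c)$ coincides with the sign of the numerator $e_0(c,t_2)f(t_2)-e_0(c,t_1)f(t_1)$.

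For part (i), the hypothesis $D^{\alpha}f(c)>0$ gives $e_0(c,t_2)f(t_2)>e_0(c,t_1)f(t_1)$. I would then invoke the cocycle property in the form $e_0(c,t_2)=e_0(c,t_1)e_0(t_1,t_2)$ (Lemma~\ref{expprops}(ii)) and cancel the common positive factor $e_0(c,t_1)$ to obtain $e_0(t_1,t_2)f(t_2)>f(t_1)$, which is exactly the definition of strictly $\alpha$-increasing. Part (ii) is symmetric: $D^{\alpha}f(c)<0$ yields $e_0(t_1,t_2)f(t_2)<f(t_1)$ after the same cancellation, and then dividing through by $e_0(t_1,t_2)>0$ together with the reciprocal identity $1/e_0(t_1,t_2)=e_0(t_2,t_1)$ (Lemma~\ref{expprops}(iii)) produces $f(t_2)<e_0(t_2,t_1)f(t_1)$, the definition of strictly $\alpha$-decreasing.

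No genuine obstacle arises, since the Mean Value Theorem supplies the entire analytic content; the only points requiring care are the strict positivity of $h_1(t_2,t_1)$ and of $e_0(c,t_1)$, and the correct bookkeeping of the exponential identities when translating between the two equivalent presentations of the monotonicity conditions.
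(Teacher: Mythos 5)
Your proof is correct. The paper states Theorem~\ref{incdectest} without any proof, but your argument---applying the Mean Value Theorem (Theorem~\ref{mvt}) on an arbitrary $[t_1,t_2]\subseteq\mathcal{I}$, using strict positivity of $h_1(t_2,t_1)$ and of $e_0(c,t_1)$, and then the semigroup identity $e_0(c,t_2)=e_0(c,t_1)e_0(t_1,t_2)$ and reciprocal identity $1/e_0(t_1,t_2)=e_0(t_2,t_1)$ from Lemma~\ref{expprops} to match the definitions of strict $\alpha$-monotonicity---is evidently the intended one, given that the paper develops Rolle's Theorem and the Mean Value Theorem immediately before this test.
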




\begin{theorem}[Concavity Test]
Suppose that $D^{\alpha}D^{\alpha}f$ exists on some interval $\mathcal{I}$. 
\begin{enumerate}
 \item If $D^{\alpha}D^{\alpha}f(t)>0$ for all $t\in\mathcal{I}$, then the graph of $f$ is concave upward on $\mathcal{I}$. 
 \item If $D^{\alpha}D^{\alpha}f(t)<0$ for all $t\in\mathcal{I}$, then the graph of $f$ is concave downward on $\mathcal{I}$. 
\end{enumerate}
Here concave upward means the curve $y=f(t)$ lies above all of its tangent lines \eqref{tangent} on $\mathcal{I}$, and concave downward means the curve $y=f(t)$ lies below all of its tangent lines \eqref{tangent} on $\mathcal{I}$.
\end{theorem}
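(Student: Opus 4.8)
The plan is to reduce the conformable concavity statement to the classical second-derivative test for convexity by means of an exponential substitution that ``straightens out'' the $e_0$ factors appearing in the tangent line \eqref{tangent}. Fixing a reference point $t_0\in\mathcal{I}$, I would write every function relative to the geodesic $e_0$ by setting $g(t):=f(t)e_0(t_0,t)$, so that $f(t)=e_0(t,t_0)g(t)$ by Lemma~\ref{expprops}(iii). The engine of the whole argument is the identity
\[
 D^{\alpha}\bigl[e_0(t,t_0)g(t)\bigr]=\kappa_0(t)\,e_0(t,t_0)\,g'(t),
\]
which follows at once from the product expansion \eqref{derivdef} together with $\kappa_0 e_0'=-\kappa_1 e_0$ (a direct consequence of \eqref{epts}), since the two resulting $\kappa_1 e_0 g$ terms cancel.

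Next I would change the independent variable to $s:=h_1(t,t_0)=\int_{t_0}^t d_{\alpha}\tau$, which is legitimate for $\alpha\in(0,1]$ because then $\kappa_0>0$ makes $t\mapsto s$ strictly monotone. Writing $\tilde g(s):=g(t)$ one has $\tilde g'(s)=\kappa_0 g'$, and applying the displayed identity twice yields
\[
 D^{\alpha}f=e_0(t,t_0)\tilde g'(s),\qquad D^{\alpha}D^{\alpha}f=e_0(t,t_0)\tilde g''(s).
\]
Because $e_0>0$, the hypothesis $D^{\alpha}D^{\alpha}f>0$ is therefore equivalent to $\tilde g''(s)>0$, i.e.\ to ordinary convexity of $\tilde g$ in the variable $s$, and symmetrically for the $<0$ case.

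It then remains to verify that the conformable ``lies above its tangent line'' condition translates into the classical one for $\tilde g$. Evaluating \eqref{tangent} at a base point $a$, substituting $f(a)=e_0(a,t_0)\tilde g(s_a)$ and $D^{\alpha}f(a)=e_0(a,t_0)\tilde g'(s_a)$ with $s_a:=h_1(a,t_0)$, and using $e_0(t,a)e_0(a,t_0)=e_0(t,t_0)$ from Lemma~\ref{expprops}(ii) together with $h_1(t,a)=h_1(t,t_0)-h_1(a,t_0)=s-s_a$, one finds
\[
 \ell(t)=e_0(t,t_0)\bigl[\tilde g(s_a)+(s-s_a)\tilde g'(s_a)\bigr].
\]
Since $f(t)=e_0(t,t_0)\tilde g(s)$ and $e_0(t,t_0)>0$, the inequality $f(t)\ge\ell(t)$ holds for all $t$ exactly when $\tilde g(s)\ge\tilde g(s_a)+(s-s_a)\tilde g'(s_a)$ for all $s$, which is precisely the statement that $\tilde g$ lies above its tangent line at $s_a$ in the Euclidean sense.

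With these equivalences in hand the conclusion is immediate: if $D^{\alpha}D^{\alpha}f>0$ on $\mathcal{I}$ then $\tilde g''>0$, so the classical concavity test forces $\tilde g$ above each of its tangent lines, whence $f$ lies above each conformable tangent line \eqref{tangent} and is concave upward; the concave-downward case is identical with all inequalities reversed. I expect the only real friction to be the bookkeeping with the $e_0$-cocycle identities of Lemma~\ref{expprops} and the confirmation that the conformable ``run'' $h_1(t,a)$ becomes the genuine increment $s-s_a$; once the substitution $f=e_0\,(\tilde g\circ h_1)$ is set up correctly, every conformable object collapses to its Euclidean counterpart.
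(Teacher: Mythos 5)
Your proposal is correct, and in fact the paper offers nothing to compare it against: the Concavity Test (like Rolle's Theorem, the Mean Value Theorem, and the Increasing/Decreasing Test in Section~\ref{nec}) is stated without proof, with the section's calculus results attributed to the REU preprint \cite{gracelaura}. Your argument supplies a complete proof. The engine identity $D^{\alpha}\bigl[e_0(t,t_0)g(t)\bigr]=\kappa_0(t)e_0(t,t_0)g'(t)$ is right (the two $\kappa_1 e_0 g$ terms do cancel, since $\kappa_0 e_0'=-\kappa_1 e_0$ follows from \eqref{epts}), and the substitution $f=e_0(\cdot,t_0)\,(\tilde g\circ h_1)$ genuinely conjugates $D^{\alpha}$ to $d/ds$: one gets $D^{\alpha}f=e_0\,\tilde g'(s)$ and $D^{\alpha}D^{\alpha}f=e_0\,\tilde g''(s)$, while the cocycle identity $e_0(t,a)e_0(a,t_0)=e_0(t,t_0)$ and additivity $h_1(t,a)=s-s_a$ collapse the conformable tangent line \eqref{tangent} to the Euclidean tangent line of $\tilde g$. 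Since $e_0>0$, all inequalities transfer both ways, and the classical second-derivative test finishes the argument. Two small points worth making explicit if this were written up: (a) the change of variable needs $\alpha\in(0,1]$ so that $\kappa_0>0$ (as you note, and as the paper implicitly assumes whenever $e_0$ and $h_1$ appear); and (b) the hypothesis that $D^{\alpha}D^{\alpha}f$ \emph{exists} does imply that $\tilde g''$ exists, which is needed to invoke the classical test --- this follows by reading your engine identity in reverse, since $e_0$ and $\kappa_0$ are nonvanishing, so it is a one-line remark rather than a gap. A pleasant byproduct of your method is that the same conjugation proves the paper's unproved Rolle's Theorem, Mean Value Theorem, and Increasing/Decreasing Test in one stroke, since each is the pullback of its Euclidean counterpart under $f\mapsto\tilde g$.
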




\begin{example}
In this example we illustrate the interesting geometry associated with this derivative. Let $\alpha\in(0,1)$, $\kappa_1=\cos(\alpha\pi/2)$, $\kappa_0=\sin(\alpha\pi/2)$, and let $f(t)=\sin t$. Then
\[ D^{\alpha}[f(t)] = \cos(\alpha\pi/2)\sin t + \sin(\alpha\pi/2)\cos t = \sin\left(t+\frac{\alpha\pi}{2}\right). \]
Setting this equal to zero to determine the critical numbers, we find
\[ t_n = n\pi-\frac{\alpha\pi}{2}, \quad n\in\Z. \]
Note that as $\alpha\rightarrow 0^+$ we have $t_n=n\pi$, the zeros of the original function $\sin t$; as $\alpha\rightarrow 1^-$ we have $t_n=n\pi-\frac{\pi}{2}$, the zeros of the full derivative function $\cos t$, as expected.
After noticing that 
\[ D^{\alpha}[D^{\alpha}f](t)=\sin\left(t+\alpha\pi\right), \]
we see that
\[ D^{\alpha}[D^{\alpha}f](t_n) = \sin\left(n\pi+\frac{\alpha\pi}{2}\right)=(-1)^n\sin\left(\frac{\alpha\pi}{2}\right)  \begin{cases} <0 &: n \text{ odd}, \\  >0 &: n \text{ even}. \end{cases} \]
Thus $f(t)=\sin t$ has an $\alpha$-max at $t_n$ when $n$ is odd, and an $\alpha$-min at $t_n$ when $n$ is even. Since
\[ \sin(t_n) = \sin\left(n\pi-\frac{\alpha\pi}{2}\right) = (-1)^{n+1}\sin\left(\frac{\alpha\pi}{2}\right), \]
we conclude that $f(t)=\sin t$ has an $\alpha$-max of $\sin\left(\frac{\alpha\pi}{2}\right)$ at $(2m+1)\pi-\frac{\alpha\pi}{2}$ for $m\in\Z$, and an $\alpha$-min of $-\sin\left(\frac{\alpha\pi}{2}\right)$ at $2m\pi-\frac{\alpha\pi}{2}$ for $m\in\Z$.
\end{example}



\section{Self-Adjoint Differential Equations}\label{secselfadj}
Let $p$ and $q$ be continuous functions on some interval $\mathcal{I}\subseteq\R$ with $p(t)>0$ for all $t\in\mathcal{I}$. In this section we are concerned with the conformable second-order (formally) self-adjoint homogeneous differential equation $Lx=0$ or for a continuous function $h$ the associated nonhomogeneous equation 
\begin{equation}\label{saeq}
 Lx=h, \quad Lx(t):= D^\alpha\left[pD^\alpha x\right](t) + q(t)x(t)
\end{equation}
for $t\in\mathcal{I}$, $\alpha\in(0,1]$. We interpret $D^{\alpha}$ using \eqref{derivdef} for $\kappa_0$ and $\kappa_1$ satisfying \eqref{kappaconditions}. This operator $L$ is simpler and more useful than the one introduced in \cite[(4.1)]{au}.


\begin{definition} 
Let $\D$ denote the set of all continuous functions $x$ defined on $\mathcal{I}$ such that $D^{\alpha}x$ and $D^\alpha\left[pD^\alpha x\right]$ are continuous on $\mathcal{I}$. Then $x$ is a solution of the homogeneous equation $Lx=0$ for $L$ in \eqref{saeq} on $\mathcal{I}$ provided $x\in\D$ and
\[ Lx(t) = 0 \quad\text{for all}\quad t\in\mathcal{I}. \]
\end{definition}

Next we address the question under which circumstances an equation of the form 
\[ a(t)D^{\alpha}D^{\alpha}x(t) + b(t)D^{\alpha}x(t) + c(t)x(t) = g(t) \]
can be rewritten in self-adjoint form \eqref{saeq}, where we assume that $a\neq 0$ on $\mathcal{I}$ and $a,b,c,g$ are continuous functions on $\mathcal{I}$.


\begin{theorem}\label{pisaf} 
Assume $\kappa_0,\kappa_1$ satisfy \eqref{kappaconditions}, and $t_0\in\mathcal{I}$. If $a,b,c,g:\mathcal{I}\rightarrow\R$ are continuous functions with $a\neq 0$ on $\mathcal{I}$, then the iterated conformable equation
\begin{equation}\label{lineq} 
  a(t)D^{\alpha}D^{\alpha}x + b(t)D^{\alpha}x + c(t)x = g(t), \quad t\in\mathcal{I},
\end{equation}
can be written in self-adjoint form $Lx=h$, where $h=gp/a$ for
\begin{equation}\label{peq} 
  p(t) = e_{(\frac{b}{a}+\kappa_1)}(t,t_0), \quad q(t) = p(t)c(t)/a(t), 
\end{equation}
for $t\in\mathcal{I}$.
\end{theorem}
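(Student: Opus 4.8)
The plan is to expand the self-adjoint operator $L$ by means of the conformable product rule and then match coefficients against the given equation \eqref{lineq} after scaling by the factor $p/a$. First I would apply Lemma \ref{basicderiv}(3) with $f=p$ and $g=D^{\alpha}x$ to the leading term of $Lx$, obtaining
\[ D^{\alpha}\left[pD^{\alpha}x\right] = p\,D^{\alpha}D^{\alpha}x + \left(D^{\alpha}p - \kappa_1 p\right)D^{\alpha}x, \]
so that the full operator reads
\[ Lx = p\,D^{\alpha}D^{\alpha}x + \left(D^{\alpha}p - \kappa_1 p\right)D^{\alpha}x + qx. \]

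Next I would multiply \eqref{lineq} through by $p/a$, which is legitimate since $a\neq 0$ on $\mathcal{I}$ and $p>0$, yielding
\[ p\,D^{\alpha}D^{\alpha}x + \frac{bp}{a}D^{\alpha}x + \frac{cp}{a}x = \frac{gp}{a}. \]
Comparing the two displays term by term, the coefficients of $D^{\alpha}D^{\alpha}x$ agree automatically; the coefficient of $x$ agrees precisely because $q=pc/a$ as prescribed in \eqref{peq}; and the right-hand side matches the stipulated $h=gp/a$. The single remaining requirement is that the coefficients of $D^{\alpha}x$ coincide, that is
\[ D^{\alpha}p - \kappa_1 p = \frac{b}{a}p \quad\Longleftrightarrow\quad D^{\alpha}p = \left(\frac{b}{a}+\kappa_1\right)p. \]

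The crux of the argument is therefore to verify that the choice $p(t)=e_{(\frac{b}{a}+\kappa_1)}(t,t_0)$ in \eqref{peq} satisfies this first-order conformable equation. This follows at once from the defining derivative property \eqref{expderiv} of the conformable exponential function, applied with $P=\frac{b}{a}+\kappa_1$, which gives $D^{\alpha}e_P(t,t_0)=P(t)e_P(t,t_0)$; since $p$ is an exponential it is moreover automatically positive, as required for \eqref{saeq}. There is no deep obstacle here: the only point demanding care is the bookkeeping of the product-rule correction term $-\kappa_1 p$, which is exactly what forces the additional $\kappa_1$ summand inside the exponent in \eqref{peq} and which would be easy to drop if one reasoned by analogy with the classical case \eqref{oldform}.
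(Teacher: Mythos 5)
Your proposal is correct and follows essentially the same route as the paper's proof: both rest on the conformable product rule identity $D^{\alpha}[pD^{\alpha}x]=pD^{\alpha}D^{\alpha}x+(D^{\alpha}p-\kappa_1 p)D^{\alpha}x$, the exponential derivative property \eqref{expderiv} giving $D^{\alpha}p=\left(\frac{b}{a}+\kappa_1\right)p$ (equivalently the paper's identity \eqref{papb}), and multiplication of \eqref{lineq} by $p/a$. The only difference is presentational: the paper collapses the scaled equation into $D^{\alpha}[pD^{\alpha}x]+qx$ via the product rule, while you expand $Lx$ first and match coefficients, which is the same computation read in the opposite direction.
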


\begin{proof} 
Assume $x$ is a solution of \eqref{lineq}. By \eqref{peq}, after suppressing the arguments, we see that
\begin{equation}\label{papb}
 pb/a = D^{\alpha}p-\kappa_1 p.
\end{equation}
Multiplying both sides of \eqref{lineq} by $p/a$ and using \eqref{papb}, we get 
\begin{eqnarray*}
 h &=& \frac{gp}{a} = pD^{\alpha}D^{\alpha}x + \frac{pb}{a}D^{\alpha}x + \frac{pc}{a}x \\
 &=& pD^{\alpha}[D^{\alpha}x] + (D^{\alpha}p-\kappa_1p)D^{\alpha}x + qx \\
 &=& D^{\alpha}[pD^{\alpha}x] + qx = Lx.
\end{eqnarray*}
The proof is complete.
\end{proof}


\begin{example}\label{coscosh}
Let $L$ be as in \eqref{saeq}, and let $\kappa_0,\kappa_1$ satisfy \eqref{kappaconditions}. In \eqref{lineq} choose the constant coefficients $a\equiv 1$, $b\equiv 0$, and $c\equiv \pm \omega^2$ for constant $\omega\in\R$ with $\omega>0$, and $g\equiv 0$. By Lemma \ref{expprops} and Theorem \ref{pisaf} we have
\[ p(t)\equiv 1, \quad q(t)= \pm \omega^2, \]
and the homogeneous self-adjoint equation $Lx=0$ takes the form
\[ D^\alpha  D^\alpha x(t) \pm \omega^2 x(t) = 0. \]
Using \cite[Theorem 3.1]{au}, this has characteristic equation $\lambda^2 \pm \omega^2 = 0$ and solutions
\begin{eqnarray*} 
 x_+(t) &=& c_1e_0(t,t_0)\cos\left(\int_{t_0}^t \omega d_{\alpha}s \right) + c_2e_0(t,t_0)\sin\left(\int_{t_0}^t \omega d_{\alpha}s \right), \\
 x_-(t) &=& c_1e_{\omega}(t,t_0) + c_2e_{-\omega}(t,t_0).
\end{eqnarray*}
These solutions suggest the notation
\begin{eqnarray*}
 \cos_{\alpha}(\omega;t,t_0) &=& e_0(t,t_0)\cos\left(\int_{t_0}^t \omega d_{\alpha}s \right), \quad \sin_{\alpha}(\omega;t,t_0) = e_0(t,t_0)\sin\left(\int_{t_0}^t \omega d_{\alpha}s \right), \\
 \cosh_{\alpha}(\omega;t,t_0) &=& \frac{e_{\omega}(t,t_0)+e_{-\omega}(t,t_0)}{2}, \quad \sinh_{\alpha}(\omega;t,t_0) = \frac{e_{\omega}(t,t_0)-e_{-\omega}(t,t_0)}{2},
\end{eqnarray*}
and the equations
\[ D^{\alpha}\cos_{\alpha}(\omega;t,t_0) = -\omega\sin_{\alpha}(\omega;t,t_0), \quad D^{\alpha}\sin_{\alpha}(\omega;t,t_0) = \omega\cos_{\alpha}(\omega;t,t_0), \]
\[ D^{\alpha}\cosh_{\alpha}(\omega;t,t_0) = \omega\sinh_{\alpha}(\omega;t,t_0), \quad D^{\alpha}\sinh_{\alpha}(\omega;t,t_0) = \omega\cosh_{\alpha}(\omega;t,t_0) \]
as well as
\[ \cos^2_{\alpha}(\omega;t,t_0)+\sin^2_{\alpha}(\omega;t,t_0) = e^2_0(t,t_0) = \cosh^2_{\alpha}(\omega;t,t_0)-\sinh^2_{\alpha}(\omega;t,t_0) \]
all hold. In summary,
\[ D^\alpha  D^\alpha x(t) + \omega^2 x(t) = 0 \]
has general solution
\[ x(t) = c_1\cos_{\alpha}(\omega;t,t_0) + c_2\sin_{\alpha}(\omega;t,t_0), \]
and
\[ D^\alpha  D^\alpha x(t) - \omega^2 x(t) = 0 \]
has general solution
\[ x(t) = c_1\cosh_{\alpha}(\omega;t,t_0) + c_2\sinh_{\alpha}(\omega;t,t_0). \]
This ends the example. $\hfill\triangle$
\end{example}

We next state a theorem concerning the existence-uniqueness of solutions of initial value problems for the nonhomogeneous self-adjoint equation $Lx=h$. 


\begin{theorem}\label{unique} 
	\index{existence uniqueness theorem!self-adjoint problem}
Assume $\kappa_0,\kappa_1$ satisfy \eqref{kappaconditions}. Let $\alpha\in(0,1]$, $t_0\in\mathcal{I}$, and let $D^{\alpha}$ be as in \eqref{derivdef}.
Assume $p,q,h$ are continuous on $\mathcal{I}$ with $p(t)\ne 0$, and suppose $x_0,x_1\in\R$ are given constants. Then the initial value problem 
\[ Lx = h(t), \quad x(t_0)=x_0, \quad D^{\alpha}x(t_0)=x_1 \] 
has a unique solution that exists on all of $\mathcal{I}$.
\end{theorem}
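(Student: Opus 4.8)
The plan is to convert the single second-order conformable equation into an equivalent $2\times 2$ first-order linear system of ordinary differential equations and then invoke the classical existence--uniqueness theory for such systems. The device that makes this work is the quasi-derivative $v:=pD^{\alpha}x$, exactly the quantity appearing inside the outer operator in $Lx=D^{\alpha}[pD^{\alpha}x]+qx$. Because $\alpha\in(0,1]$ forces $\kappa_0\ne 0$ on $\mathcal{I}$, and because $p\ne 0$ by hypothesis, from $D^{\alpha}x=\kappa_1 x+\kappa_0 x'=v/p$ I can solve for the ordinary derivative $x'=(v/p-\kappa_1 x)/\kappa_0$; similarly, from $D^{\alpha}v=D^{\alpha}[pD^{\alpha}x]=Lx-qx=h-qx$ I can solve for $v'=(h-qx-\kappa_1 v)/\kappa_0$.

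This yields the first-order linear system
\[ \frac{d}{dt}\vecc{x}{v} = \mat{-\kappa_1/\kappa_0}{\frac{1}{p\kappa_0}}{-q/\kappa_0}{-\kappa_1/\kappa_0}\vecc{x}{v} + \vecc{0}{h/\kappa_0}, \]
whose coefficient matrix and forcing term are continuous on $\mathcal{I}$ precisely because $\kappa_0,\kappa_1,p,q,h$ are continuous with $\kappa_0\ne 0$ and $p\ne 0$ there. The two initial conditions become $x(t_0)=x_0$ and $v(t_0)=p(t_0)D^{\alpha}x(t_0)=p(t_0)x_1$, a standard pair of initial data for $(x,v)$.

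Next I would apply the classical theorem that a first-order linear system with continuous coefficients on an interval $\mathcal{I}$ has a unique solution through any prescribed initial point, and that this solution extends to all of $\mathcal{I}$ (global existence for linear systems follows from a Gr\"onwall bound ruling out finite-time escape). This produces a unique $C^1$ pair $(x,v)$ on $\mathcal{I}$. It then remains to verify that $x$ really solves the original problem and lies in the domain $\D$: running the computation backward, the first equation gives $D^{\alpha}x=v/p$, hence $pD^{\alpha}x=v$ is $C^1$ and $D^{\alpha}[pD^{\alpha}x]=D^{\alpha}v=h-qx$, so $Lx=h$; moreover $x$ is $C^1$, $D^{\alpha}x=v/p$ is continuous, and $D^{\alpha}[pD^{\alpha}x]$ is continuous, so $x\in\D$. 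Uniqueness for $x$ transfers from uniqueness for $(x,v)$ since $v=pD^{\alpha}x$ is determined by $x$.

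The main obstacle is not the existence theorem itself --- that is entirely classical once the system is in hand --- but rather verifying that the correspondence between solutions of $Lx=h$ in $\D$ and $C^1$ solutions of the system is a genuine bijection respecting initial data. The delicate points are checking that no differentiability of $p$ is secretly required (it is not, since $p$ enters only through the single variable $v=pD^{\alpha}x$, never differentiated on its own) and confirming that the regularity extracted from the system is exactly what the definition of $\D$ demands. Once this equivalence is nailed down, the conclusion is immediate.
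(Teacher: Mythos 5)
Your proposal is correct and follows essentially the same route as the paper: the paper also sets $y:=pD^{\alpha}x$, rewrites $Lx=h$ as the first-order system $z'=\begin{bmatrix} -\kappa_1/\kappa_0 & 1/(\kappa_0 p) \\ -q/\kappa_0 & -\kappa_1/\kappa_0 \end{bmatrix}z+\begin{bmatrix} 0 \\ h/\kappa_0 \end{bmatrix}$ (identical to yours), and then cites the classical existence--uniqueness theorem for linear systems with continuous coefficients. Your additional care with the initial-data correspondence $v(t_0)=p(t_0)x_1$ and the backward verification that the system solution lies in $\D$ makes explicit what the paper leaves implicit, but it is the same argument.
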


\begin{proof}
We will write $Lx=h$ as an equivalent vector equation, and then invoke the standard ($\alpha=1$) result to complete the argument. Let $x\in\D$ such that $x$ is a solution of $Lx=h$, and set 
\[ y:=pD^{\alpha}x, \quad D^{\alpha}x = y/p. \]
Using the fact that $x$ is a solution of $Lx=h$ for $L$ defined in \eqref{saeq}, we have
\[ D^{\alpha}y = -qx+h. \]
Therefore, if we set the vector 
\[ z:= \begin{bmatrix} x \\ y \end{bmatrix}, \]
then $z$ is a solution of the vector equation
\[ D^{\alpha}z = Az+b, \quad A = \begin{bmatrix} 0 & 1/p \\ -q & 0 \end{bmatrix}, \quad b = \begin{bmatrix} 0 \\ h \end{bmatrix}. \]
By the definition of $D^{\alpha}$ in \eqref{derivdef}, we thus have
\[ z' = \begin{bmatrix} \frac{-\kappa_1}{\kappa_0} & \frac{1}{\kappa_0p} \\ \frac{-q}{\kappa_0} & \frac{-\kappa_1}{\kappa_0} \end{bmatrix}z + \begin{bmatrix} 0 \\ \frac{h}{\kappa_0} \end{bmatrix}, \]
and the result follows from the standard ($\alpha=1$) proof, since all the functions here are continuous; see, for example, \cite[Theorem 5.4]{kp}.
\end{proof}

Recall the following definition, \cite[Definition 4.5]{au}.


\begin{definition}[Wronskian]\label{wdef}
Let $\kappa_0,\kappa_1:[0,1]\times\mathcal{I}\rightarrow[0,\infty)$ be continuous and satisfy \eqref{kappaconditions}. If $x,y:\mathcal{I}\to\R$ are differentiable on $\mathcal{I}$, then the conformable Wronskian of $x$ and $y$ is given by 
\begin{equation}\label{Wronskian}
 W(x,y)(t)=\det\mat{x(t)}{y(t)}{D^{\alpha}x(t)}{D^{\alpha}y(t)} \quad \text{for} \quad t\in\mathcal{I},
\end{equation}
for $D^{\alpha}$ given in \eqref{derivdef}.
\end{definition}


\begin{theorem}[Conformable Lagrange Identity]\label{wnab} 
Let $L$ be given as in \eqref{saeq}. If $x,y\in\D$, then
\[ x(Ly) - y(Lx) = D^{\alpha}[pW(x,y)] + \kappa_1 pW(x,y), \quad t\in\mathcal{I}, \]
for $L$ given in \eqref{saeq}. Equivalently, we have 
\begin{equation}\label{lagrangeformula}
 e_0(t,b)D^{\alpha}\left[\frac{pW(x,y)(t)}{e_0(t,b)}\right] = x(Ly)-y\left(Lx\right).
\end{equation}
\end{theorem}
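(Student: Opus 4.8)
The plan is to reduce the entire identity to the conformable product and quotient rules recorded in Lemma \ref{basicderiv}, while carefully tracking the non-classical correction terms involving $\kappa_1$. First I would expand the left-hand side using the definition of $L$ in \eqref{saeq}. Since
\[ x(Ly) - y(Lx) = x\,D^\alpha[pD^\alpha y] + qxy - y\,D^\alpha[pD^\alpha x] - qxy, \]
the $q$-terms cancel, leaving $x\,D^\alpha[pD^\alpha y] - y\,D^\alpha[pD^\alpha x]$, which is the quantity I must match against $D^\alpha[pW] + \kappa_1 pW$, where $W=W(x,y)=xD^\alpha y - yD^\alpha x$ is the Wronskian of \eqref{Wronskian}.

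The core of the argument is to apply the product rule $D^\alpha[fg] = fD^\alpha g + gD^\alpha f - fg\kappa_1$ (Lemma \ref{basicderiv}(3)) twice. Expanding $D^\alpha[pD^\alpha y]$ and $D^\alpha[pD^\alpha x]$ and subtracting yields
\[ x\,D^\alpha[pD^\alpha y] - y\,D^\alpha[pD^\alpha x] = p\bigl(x\,D^\alpha D^\alpha y - y\,D^\alpha D^\alpha x\bigr) + (D^\alpha p - \kappa_1 p)W. \]
Applying the product rule to $W$ itself, the cross terms $(D^\alpha x)(D^\alpha y)$ cancel and I obtain $D^\alpha W = x\,D^\alpha D^\alpha y - y\,D^\alpha D^\alpha x - \kappa_1 W$, hence $x\,D^\alpha D^\alpha y - y\,D^\alpha D^\alpha x = D^\alpha W + \kappa_1 W$. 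Substituting this into the previous display, the $\pm p\kappa_1 W$ terms cancel and I am left with $pD^\alpha W + (D^\alpha p)W$. On the other side, the product rule gives $D^\alpha[pW] = pD^\alpha W + WD^\alpha p - \kappa_1 pW$, so that $D^\alpha[pW] + \kappa_1 pW = pD^\alpha W + (D^\alpha p)W$, which matches exactly and establishes the first identity.

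For the equivalent form \eqref{lagrangeformula}, I would apply the quotient rule (Lemma \ref{basicderiv}(4)) to $u/v$ with $u = pW$ and $v = e_0(t,b)$. Since $D^\alpha e_0(t,b) = 0$ by \eqref{expderiv}, the quotient rule collapses to $D^\alpha[u/v] = (D^\alpha u)/v + \kappa_1 u/v$, and multiplying through by $v = e_0(t,b)$ gives
\[ e_0(t,b)\,D^\alpha\!\left[\frac{pW(x,y)(t)}{e_0(t,b)}\right] = D^\alpha[pW] + \kappa_1 pW, \]
which is precisely the right-hand side of the first identity.

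I expect the only real obstacle to be bookkeeping: because the conformable product and quotient rules each carry an extra $\kappa_1$ term relative to the classical Lagrange identity, the whole result hinges on verifying that these terms cancel. In particular the identity holds precisely because the $-p\kappa_1 W$ produced by differentiating through $p$, the $+p\kappa_1 W$ hidden inside $D^\alpha W$, and the $+\kappa_1 pW$ on the right-hand side all balance. There is no analytic difficulty, as continuity of every quantity appearing is guaranteed by the hypothesis $x,y\in\D$.
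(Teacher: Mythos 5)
Your algebra is internally consistent, and your treatment of the equivalent form \eqref{lagrangeformula} (quotient rule with denominator $e_0(t,b)$, using $D^{\alpha}e_0(t,b)=0$) is exactly the paper's step. However, the first half of your argument has a genuine gap: it manipulates objects that need not exist under the stated hypotheses. When you expand $D^{\alpha}[pD^{\alpha}y]=pD^{\alpha}D^{\alpha}y+(D^{\alpha}y)(D^{\alpha}p)-\kappa_1 pD^{\alpha}y$, you implicitly assume that $D^{\alpha}p$ and $D^{\alpha}D^{\alpha}y$ exist. The theorem only assumes $x,y\in\D$, i.e.\ that $D^{\alpha}x$, $D^{\alpha}y$ and the \emph{composite} quantities $D^{\alpha}[pD^{\alpha}x]$, $D^{\alpha}[pD^{\alpha}y]$ are continuous, while $p$ is merely continuous and positive. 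Nothing forces $p$ to be differentiable, and when it is not, $D^{\alpha}y$ need not be differentiable either: for instance, take a nondifferentiable continuous $p>0$ and let $x$ solve $\kappa_1 x+\kappa_0 x'=1/p$; then $pD^{\alpha}x\equiv 1$, so $D^{\alpha}[pD^{\alpha}x]=\kappa_1$ is continuous and $x\in\D$, yet $D^{\alpha}x=1/p$ has no derivative. Consequently your intermediate decomposition, and likewise the identity $xD^{\alpha}D^{\alpha}y-yD^{\alpha}D^{\alpha}x=D^{\alpha}W+\kappa_1 W$, can fail to be meaningful even though both sides of the Lagrange identity are perfectly well defined.

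The repair is a different grouping of the product rule, which is precisely what the paper does: write $pW=x\,(pD^{\alpha}y)-y\,(pD^{\alpha}x)$ and apply Lemma \ref{basicderiv}(3) with factors $x$ and $pD^{\alpha}y$ (respectively $y$ and $pD^{\alpha}x$). These factors are differentiable exactly because $x,y\in\D$: for $\alpha\in(0,1]$ one has $\kappa_0\neq 0$, so existence of $D^{\alpha}$ of a function is equivalent to its ordinary differentiability. One application then gives
\[ D^{\alpha}[pW]=xD^{\alpha}[pD^{\alpha}y]-yD^{\alpha}[pD^{\alpha}x]-\kappa_1 pW = x(Ly)-y(Lx)-\kappa_1 pW, \]
with the cross terms $p(D^{\alpha}x)(D^{\alpha}y)$ cancelling, and never requires $D^{\alpha}p$ or second derivatives. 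Your argument is correct under the additional hypothesis that $p$ is differentiable, but at the stated level of generality the choice of how to group the factors is not mere bookkeeping --- it is the point of the proof.
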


\begin{proof}
For $x,y\in\D$, using the product rule from Lemma \ref{basicderiv} we have (suppressing all arguments)
\begin{eqnarray*}
 D^{\alpha}[pW(x,y)]
 &=& D^{\alpha}\left[xpD^{\alpha}y - ypD^{\alpha}x \right] \\
 &=& xD^{\alpha}\left[pD^{\alpha}y\right] + p(D^{\alpha}x)(D^{\alpha}y)-\kappa_1pxD^{\alpha}y \\ 
 & & - yD^{\alpha}\left[pD^{\alpha}x\right] - p(D^{\alpha}x)(D^{\alpha}y)+\kappa_1pyD^{\alpha}x \\
 &=& x(Ly)-y(Lx) -\kappa_1 pW(x,y)
\end{eqnarray*}
on $\mathcal{I}$. Additionally, by the quotient rule we see that 
\begin{equation}\label{dfpluskf}
 e_0(t,b)D^{\alpha}\left[\frac{f(t)}{e_0(t,b)}\right] = D^{\alpha}f + \kappa_1f. 
\end{equation}
for any differentiable function $f$, and thus \eqref{lagrangeformula} holds as well.
\end{proof}


\begin{definition}[Inner Product]
Let $\alpha\in(0,1]$ and $e_0$ be as given in \eqref{epts}. Define the inner product of (continuous) functions $f,g\in C(\mathcal{I})$ on $[a,b]\subseteq\mathcal{I}$ to be
\begin{equation}\label{ip}
 \langle f,g \rangle = \int_a^b f(t)g(t)e^2_0(b,t) d_{\alpha}t, \quad d_{\alpha}t:=\frac{dt}{\kappa_0(t)}.
\end{equation}
\end{definition}


\begin{corollary}[Green's Formula; Self-Adjoint Operator]\label{selfadj}
Let $L$ be given as in \eqref{saeq}. If $x,y\in\D$, then Green's formula
\begin{equation}\label{greenformula}
 \langle x, Ly \rangle - \langle Ly, x \rangle = e^2_0(b,t)p(t)W(x,y)(t)\Big|_{t=a}^b
\end{equation}
holds. Moreover, the operator $L$ is formally self-adjoint with respect to the inner product \eqref{ip}; that is, the identity 
\[ \langle Lx, y\rangle = \langle x, Ly \rangle \]
holds if and only if $x,y\in\D$ and $x,y$ satisfy the self-adjoint boundary conditions
\begin{equation}\label{sabc}
 e^2_0(b,t)p(t)W(x,y)(t)\Big|_{t=a}^b=0,
\end{equation}
where we have used the conformable Wronskian matrix from Definition $\ref{wdef}$.
\end{corollary}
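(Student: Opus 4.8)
The plan is to integrate the Lagrange identity of Theorem \ref{wnab} against the weight built into the inner product \eqref{ip} and then collapse the resulting integrand into a single conformable derivative, to which the Fundamental Theorem of Integral Calculus (Theorem \ref{ftc}) applies. The weight $e_0^2(b,t)$ in \eqref{ip} is engineered precisely so that the extra $\kappa_1 pW$ term appearing in the first form of the Lagrange identity is absorbed; for this reason I will work with the equivalent ``divergence'' form \eqref{lagrangeformula}, namely $e_0(t,b)D^{\alpha}[pW(x,y)/e_0(t,b)]=x(Ly)-y(Lx)$, rather than with $D^{\alpha}[pW]+\kappa_1 pW$ directly.

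First I would expand the left-hand side of \eqref{greenformula} straight from the definition \eqref{ip}, writing it as
\[ \int_a^b \big(x(Ly)-y(Lx)\big)\,e_0^2(b,t)\,d_\alpha t, \]
and then substitute the right-hand side of \eqref{lagrangeformula} for the integrand. Next I would simplify the exponential factors: by Lemma \ref{expprops}(i)--(iii) one has $e_0(b,t)e_0(t,b)=e_0(b,b)=1$, so $e_0^2(b,t)=1/e_0^2(t,b)$ and the product $e_0(t,b)\,e_0^2(b,t)$ collapses to $e_0(b,t)$. This turns the integral into $\int_a^b D^{\alpha}\!\big[F(t)\big]\,e_0(b,t)\,d_\alpha t$, with $F:=pW(x,y)/e_0(\cdot,b)$. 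Applying Theorem \ref{ftc} to $F$ yields $F(b)-F(a)e_0(b,a)$, and evaluating with $e_0(b,b)=1$ and $e_0(a,b)e_0(b,a)=1$ reproduces exactly $e_0^2(b,t)p(t)W(x,y)(t)\big|_{t=a}^{b}$, which is Green's formula \eqref{greenformula}.

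The ``moreover'' claim is then immediate: \eqref{greenformula} shows that $\langle Lx,y\rangle=\langle x,Ly\rangle$ holds if and only if the boundary term $e_0^2(b,t)p(t)W(x,y)(t)\big|_{t=a}^{b}$ vanishes, which is precisely the self-adjoint boundary condition \eqref{sabc}; membership $x,y\in\D$ is what guarantees that every quantity in the derivation is well defined.

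The step I expect to require the most care is verifying that Theorem \ref{ftc} is legitimately applicable to $F$, i.e. that $F$ is differentiable on $[a,b]$ with integrable classical derivative. Here $p$ is only assumed continuous, so one should not differentiate $p$ in isolation; instead I would write $pW(x,y)=x\,(pD^{\alpha}y)-y\,(pD^{\alpha}x)$ and observe that for $x,y\in\D$ the bracketed factors are differentiable (since $D^{\alpha}[pD^{\alpha}x]$ and $D^{\alpha}[pD^{\alpha}y]$ exist), whence $pW$ — and therefore $F$, after division by the smooth, strictly positive function $e_0(\cdot,b)$ — is differentiable with continuous derivative on $[a,b]$. The only remaining difficulty is the exponential bookkeeping described above, which is routine once Lemma \ref{expprops} is in hand.
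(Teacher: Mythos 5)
Your proposal is correct and takes essentially the same route as the paper's own proof: multiply the divergence form \eqref{lagrangeformula} of the Lagrange identity by $e_0^2(b,t)$, integrate over $[a,b]$ using the inner product \eqref{ip}, collapse the exponential factors via Lemma \ref{expprops}, and apply Theorem \ref{ftc} to $pW(x,y)/e_0(\cdot,b)$ to produce the boundary term, with the self-adjointness equivalence then immediate. Your explicit verification that $pW(x,y)=x\,(pD^{\alpha}y)-y\,(pD^{\alpha}x)$ is differentiable for $x,y\in\D$ (so that Theorem \ref{ftc} legitimately applies) is a careful touch that the paper leaves implicit, but it does not change the argument.
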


\begin{proof}
From Theorem \ref{wnab} we have the Lagrange identity \eqref{lagrangeformula} given by
\[ e_0(t,b)D^{\alpha}\left[\frac{pW(x,y)(t)}{e_0(t,b)}\right] = x(Ly)-y\left(Lx\right). \]
If we multiply both sides of this by $e^2_0(b,t)$ and integrate from $a$ to $b$ we obtain
\[ \int_a^b D^{\alpha}\left[\frac{pW(x,y)(t)}{e_0(t,b)}\right]e_0(b,t)d_{\alpha}t = \langle x, Ly \rangle - \langle Ly, x \rangle. \]
By Theorem \ref{ftc} we have 
\[ \int_a^b D^{\alpha}\left[\frac{pW(x,y)(t)}{e_0(t,b)}\right]e_0(b,t)d_{\alpha}t = e_0(b,t)\frac{pW(x,y)(t)}{e_0(t,b)}\big|_{t=a}^b, \]
so that Green's formula \eqref{greenformula} holds. Thus $\langle x, Ly \rangle = \langle Ly, x \rangle$ if and only if $x,y\in\D$ satisfy the self-adjoint boundary conditions \eqref{sabc}. 
\end{proof}


\begin{corollary}[Abel's Formula]\label{wronskid}
Let $L$ be given as in \eqref{saeq}. If $x,y$ are solutions of $Lx=0$ on $\mathcal{I}$, then for fixed $b\in\mathcal{I}$ we have
\[ W(x,y)(t) = \frac{ce^2_0(t,b)}{p(t)}, \quad t\in\mathcal{I}, \]
where $c=p(b)W(x,y)(b)$ is a constant.
\end{corollary}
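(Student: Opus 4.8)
The plan is to extract Abel's formula directly from the Lagrange identity of Theorem~\ref{wnab} by specializing to two solutions of $Lx=0$. Since $x$ and $y$ both satisfy $Lx=Ly=0$, the right-hand side of \eqref{lagrangeformula} vanishes on all of $\mathcal{I}$, so that
\[ e_0(t,b)\,D^{\alpha}\!\left[\frac{pW(x,y)(t)}{e_0(t,b)}\right] = x(Ly)-y(Lx) = 0. \]
As $e_0(t,b)$ is an exponential and therefore never zero, I would divide it out to conclude that $D^{\alpha}g=0$ on $\mathcal{I}$, where $g(t):=pW(x,y)(t)/e_0(t,b)$.

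The next step is to identify the kernel of $D^{\alpha}$. Rather than invoke an abstract uniqueness theorem, I would compute the ordinary derivative of $g/e_0(\cdot,b)$ directly: using $\frac{d}{dt}e_0(t,b)=-\tfrac{\kappa_1}{\kappa_0}e_0(t,b)$, which follows from \eqref{epts}, a one-line quotient-rule calculation gives
\[ \frac{d}{dt}\!\left[\frac{g(t)}{e_0(t,b)}\right] = \frac{D^{\alpha}g(t)}{\kappa_0(t)\,e_0(t,b)} = 0, \]
since $\kappa_0\neq 0$ on $(0,1]$. Hence $g/e_0(\cdot,b)$ is an ordinary constant $c$, so $g(t)=c\,e_0(t,b)$, which unwinds to $pW(x,y)(t)=c\,e^2_0(t,b)$ and therefore
\[ W(x,y)(t)=\frac{c\,e^2_0(t,b)}{p(t)}, \qquad t\in\mathcal{I}. \]

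Finally, to identify the constant I would evaluate at $t=b$. Since $e_0(b,b)=1$ by Lemma~\ref{expprops}$(i)$, the last display yields $W(x,y)(b)=c/p(b)$, so that $c=p(b)W(x,y)(b)$, exactly as stated.

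The only genuinely delicate point is the middle paragraph: showing that $D^{\alpha}g=0$ forces $g$ to be a scalar multiple of $e_0(\cdot,b)$, and not merely that $e_0(\cdot,b)$ happens to be one solution. This is precisely where the hypothesis $\kappa_0\neq 0$ is indispensable, as it is what reduces $D^{\alpha}g=0$ to the genuine first-order equation $\kappa_0 g'+\kappa_1 g=0$ and lets the quotient $g/e_0(\cdot,b)$ be differentiated and shown constant. Everything else is substitution and a single evaluation.
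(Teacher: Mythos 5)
Your proposal is correct and follows essentially the same route as the paper: specialize the Lagrange identity \eqref{lagrangeformula} to two solutions, divide out the nonvanishing factor $e_0(t,b)$, conclude $D^{\alpha}\bigl[pW(x,y)/e_0(\cdot,b)\bigr]=0$, and identify the constant by evaluating at $t=b$. Your middle paragraph in fact supplies a justification the paper leaves implicit—namely that $D^{\alpha}g=0$ forces $g=c\,e_0(\cdot,b)$ rather than merely admitting $e_0(\cdot,b)$ as one solution—so your write-up is, if anything, slightly more complete than the paper's.
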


\begin{proof}
As in \eqref{lagrangeformula} in the proof of Corollary \ref{selfadj}, for $x,y\in\D$ we have
\[ e_0(t,b)D^{\alpha}\left[\frac{pW(x,y)(t)}{e_0(t,b)}\right] = x(Ly)-y\left(Lx\right). \]
If $x,y$ are solutions of \eqref{saeq} on $\mathcal{I}$, then $Lx=0=Ly$ and 
\[ e_0(t,b)D^{\alpha}\left[\frac{pW(x,y)(t)}{e_0(t,b)}\right] = 0. \]
As a result, 
\[ D^{\alpha}\left[\frac{pW(x,y)(t)}{e_0(t,b)}\right] = 0, \]
so that 
\[ \frac{p(t)W(x,y)(t)}{e_0(t,b)} = ce_0(t,b), \]
where $c$ is the constant $c=p(b)W(x,y)(b)$.
\end{proof}


\begin{corollary}
If $x,y$ are solutions of \eqref{saeq} on $\mathcal{I}$, then either $W(x,y)(t)=0$ for all $t\in\mathcal{I}$, or $W(x,y)(t)\ne 0$ for all $t\in\mathcal{I}$.
\end{corollary}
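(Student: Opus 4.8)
The plan is to obtain this dichotomy as an immediate consequence of Abel's formula, Corollary~\ref{wronskid}. Since $x,y$ are assumed to be solutions of \eqref{saeq} on $\mathcal{I}$, that corollary applies and yields, for any fixed $b\in\mathcal{I}$, the representation
\[ W(x,y)(t) = \frac{c\,e^2_0(t,b)}{p(t)}, \qquad c = p(b)W(x,y)(b), \]
valid for all $t\in\mathcal{I}$. The whole argument then reduces to examining which factors on the right-hand side are permitted to vanish.

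First I would observe that $e_0(t,b)$ never vanishes: by its defining formula \eqref{epts} it is the exponential of a Riemann integral, hence strictly positive, so $e^2_0(t,b)>0$ for every $t\in\mathcal{I}$. Second, the standing hypotheses at the opening of Section~\ref{secselfadj} require $p(t)>0$ on all of $\mathcal{I}$, so $1/p(t)$ is finite and nonzero there. Consequently the function $e^2_0(t,b)/p(t)$ is strictly positive for all $t\in\mathcal{I}$, and the vanishing behavior of $W(x,y)(t)$ is governed solely by the constant $c$.

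To finish, I would split into two cases according to the value of $c$. If $c=0$, then $W(x,y)(t)=0$ for every $t\in\mathcal{I}$. If $c\neq 0$, then, being the product of the nonzero constant $c$ with the everywhere-positive function $e^2_0(t,b)/p(t)$, the Wronskian $W(x,y)(t)$ is nonzero for every $t\in\mathcal{I}$. These two alternatives are precisely the claimed dichotomy, and they are mutually exclusive since on a nonempty interval the conclusions ``$W\equiv 0$'' and ``$W$ nowhere zero'' cannot hold simultaneously; indeed, evaluating the representation at $t=b$ gives $W(x,y)(b)=c/p(b)$, so $c=0$ is equivalent to the vanishing of $W(x,y)$ at the single point $b$.

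I do not anticipate any genuine obstacle here, as Abel's formula does all of the work. The only points requiring care are confirming that $e_0(t,b)\neq 0$, which is immediate from its representation as an exponential in \eqref{epts}, and recalling the standing positivity of $p$ assumed throughout Section~\ref{secselfadj}; together these guarantee that the coefficient $e^2_0(t,b)/p(t)$ is sign-definite, which is exactly what forces the all-or-nothing behavior of the Wronskian.
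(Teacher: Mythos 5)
Your proof is correct and follows exactly the route the paper intends: the corollary is stated immediately after Abel's formula (Corollary~\ref{wronskid}) with no written proof, precisely because the representation $W(x,y)(t)=c\,e^2_0(t,b)/p(t)$, together with the positivity of $e^2_0(t,b)$ and the standing hypothesis $p>0$ on $\mathcal{I}$, makes the dichotomy immediate. Your case split on $c=0$ versus $c\neq 0$ is the argument the paper leaves implicit, so there is nothing to add or correct.
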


\begin{theorem}
Equation \eqref{saeq} on $\mathcal{I}$ has two linearly independent solutions, and every solution of \eqref{saeq} on $\mathcal{I}$ is a linear combination of these two solutions.
\end{theorem}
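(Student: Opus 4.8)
The plan is to produce two canonical solutions with prescribed initial data, prove they are independent, and then use uniqueness to show they span; throughout, the relevant equation is the homogeneous one $Lx=0$ (the statement is only meaningful in this sense, matching the convention of the preceding corollaries). First I would fix a base point $t_0\in\mathcal{I}$ and apply the existence--uniqueness theorem (Theorem~\ref{unique}) with $h\equiv 0$ to obtain solutions $x_1,x_2\in\D$ of $Lx=0$ determined by the normalized initial conditions $x_1(t_0)=1,\ D^{\alpha}x_1(t_0)=0$ and $x_2(t_0)=0,\ D^{\alpha}x_2(t_0)=1$. Since $D^{\alpha}$ is linear by Lemma~\ref{basicderiv}(1), and $L$ is assembled from $D^{\alpha}$, multiplication by the fixed function $p$, and multiplication by $q$, the operator $L$ is itself linear; recording this at the outset is what legitimizes treating $c_1x_1+c_2x_2$ as a candidate solution.

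Second, I would establish linear independence using the conformable Wronskian of Definition~\ref{wdef}. Evaluating at the base point gives $W(x_1,x_2)(t_0)=\det\mat{1}{0}{0}{1}=1\neq 0$. By Abel's formula (Corollary~\ref{wronskid}), equivalently by the dichotomy that the Wronskian of two solutions is either identically zero or nowhere zero, it follows that $W(x_1,x_2)(t)\neq 0$ for every $t\in\mathcal{I}$. A nowhere-vanishing Wronskian forces independence: if $c_1x_1+c_2x_2\equiv 0$ on $\mathcal{I}$, then applying $D^{\alpha}$ and evaluating the resulting two identities at $t_0$ yields $c_1=c_2=0$.

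Third, I would prove spanning. Let $x$ be any solution of $Lx=0$ and set $a_0:=x(t_0)$ and $a_1:=D^{\alpha}x(t_0)$. By linearity of $L$ the function $\tilde{x}:=a_0x_1+a_1x_2$ again solves $L\tilde{x}=0$, and by construction $\tilde{x}(t_0)=a_0=x(t_0)$ together with $D^{\alpha}\tilde{x}(t_0)=a_1=D^{\alpha}x(t_0)$. Thus $x$ and $\tilde{x}$ are two solutions of the homogeneous initial value problem sharing the same data at $t_0$, so the uniqueness half of Theorem~\ref{unique} gives $x=\tilde{x}=a_0x_1+a_1x_2$ on all of $\mathcal{I}$.

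I do not anticipate a genuine obstacle, as every needed tool is already available; the single point requiring care is the twofold use of Theorem~\ref{unique}, invoked once for \emph{existence} of the normalized pair $x_1,x_2\in\D$ and once for \emph{uniqueness} in the spanning step. A minor subtlety is that the correct pair of initial coordinates in this conformable setting is $(x(t_0),D^{\alpha}x(t_0))$ rather than $(x(t_0),x'(t_0))$; since this is precisely the data prescribed by Theorem~\ref{unique}, the matching is automatic and no separate argument is needed.
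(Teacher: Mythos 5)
Your proposal is correct and is exactly the standard argument the paper has in mind: the paper itself gives no details, deferring to the classical $\alpha=1$ case (Kelley--Peterson, Theorem 5.11), and your write-up is that classical proof correctly transplanted to the conformable setting, with Theorem~\ref{unique} supplying existence of the normalized pair and uniqueness for the spanning step, and the conformable Wronskian with data $(x(t_0),D^{\alpha}x(t_0))$ handling independence. No gaps; if anything, the appeal to Abel's formula is dispensable, since evaluating $c_1x_1+c_2x_2\equiv 0$ and its $D^{\alpha}$-derivative at $t_0$ already forces $c_1=c_2=0$.
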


\begin{proof}
The proof is similar to the $\alpha=1$ case; see \cite[Theorem 5.11]{kp}.
\end{proof}


\begin{theorem}[Converse of Abel's Formula]\label{abelcon}
Let $L$ be as in \eqref{saeq}, and let $x$ be a solution of $Lx=0$ on $\mathcal{I}$ such that $x\neq 0$ on $\mathcal{I}$. If $y\in\D$ satisfies $W(x,y)(t) = \frac{ce^2_0(t,b)}{p(t)}$ for some constant $c\in\R$, then $y$ is also a solution of $Lx=0$.
\end{theorem}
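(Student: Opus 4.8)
The plan is to run the Lagrange identity of Theorem \ref{wnab} backward. Both functions lie in $\D$ (the hypothesized $y$ by assumption, and $x$ because being a solution of $Lx=0$ requires membership in $\D$), so the equivalent form \eqref{lagrangeformula} applies and yields
\[ e_0(t,b)D^{\alpha}\left[\frac{pW(x,y)(t)}{e_0(t,b)}\right] = x(Ly)-y(Lx). \]
Since $x$ solves $Lx=0$, the term $y(Lx)$ vanishes identically, leaving $e_0(t,b)D^{\alpha}\!\left[pW(x,y)/e_0(t,b)\right] = x(Ly)$. Thus the whole problem reduces to showing that the left-hand side is zero on all of $\mathcal{I}$.

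Next I would substitute the assumed Wronskian $W(x,y)(t)=ce_0^2(t,b)/p(t)$ directly into the bracket. The factors of $p$ cancel and one copy of $e_0(t,b)$ cancels against the square, so that $pW(x,y)/e_0(t,b)=ce_0(t,b)$. Applying $D^{\alpha}$ and invoking \eqref{expderiv}, namely $D^{\alpha}e_0(t,b)=0$ (the conformable exponential $e_0(\cdot,b)$ is annihilated by $D^\alpha$), gives $D^{\alpha}[ce_0(t,b)]=0$. Hence the entire left-hand side of the reduced Lagrange identity is zero, and we obtain $x(t)\,Ly(t)=0$ for every $t\in\mathcal{I}$.

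Finally, the hypothesis $x\neq 0$ on $\mathcal{I}$ lets me cancel the factor $x(t)$: from $x(t)\,Ly(t)=0$ together with $x(t)\neq 0$ at each $t\in\mathcal{I}$ we conclude $Ly(t)=0$ for all $t$, so $y$ is a solution of $Lx=0$ as claimed. The computation is nearly mechanical once the Lagrange identity is in place; the one point that genuinely matters is the nonvanishing of $x$, which is precisely what legitimizes the final cancellation and explains why that hypothesis is imposed in the statement.
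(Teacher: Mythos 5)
Your proposal is correct and follows essentially the same route as the paper's own proof: apply the Lagrange identity \eqref{lagrangeformula}, substitute the hypothesized Wronskian so that $pW(x,y)/e_0(\cdot,b)=ce_0(\cdot,b)$, annihilate it with $D^{\alpha}$ via \eqref{expderiv}, and then cancel the nonvanishing factor $x$ to conclude $Ly\equiv 0$. No gaps; the emphasis you place on the hypothesis $x\neq 0$ as the justification for the final cancellation matches the paper exactly.
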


\begin{proof}
Suppose that $x$ is a solution of $Lx=0$ such that $x\neq 0$ on $\mathcal{I}$, and assume $y\in\D$ satisfies $W(x,y)(t) = ce^2_0(t,b)/p(t)$ for some constant $c\in\R$. By the Lagrange identity (Theorem \ref{wnab}) and Lagrange's formula \eqref{lagrangeformula} we have for $t\in\mathcal{I}$ that
\[ x(Ly)(t)-y(Lx)(t) = e_0(t,b)D^{\alpha}\left[\frac{pW(x,y)(t)}{e_0(t,b)}\right] = e_0(t,b)D^{\alpha}\left[\frac{ce^2_0(t,b)}{e_0(t,b)}\right]\equiv 0; \]
since $Lx=0$, this yields $x(Ly)(t) \equiv 0$ for $t\in\mathcal{I}$. As $x\neq 0$ on $\mathcal{I}$, $(Ly)(t)\equiv 0$ on $\mathcal{I}$. Thus $y$ is also a solution of $Lx=0$.
\end{proof}

Before we end this section, we include a result on the Wronskian for solutions of the homogeneous version of the general second-order conformable proportional differential equation \eqref{lineq}.


\begin{theorem}[Liouville's Theorem]\label{liouville}
Assume $\kappa_0,\kappa_1$ satisfy \eqref{kappaconditions}. Let $\alpha\in(0,1]$, and let $D^{\alpha}$ be as in \eqref{derivdef}.
In \eqref{lineq} assume $a,b,c$ are continuous on $\mathcal{I}$ with $a(t)\ne 0$, and $g\equiv 0$. If $x,y$ are solutions of the resulting general homogeneous equation
\begin{equation}\label{gen2ndo}
 a(t)D^{\alpha}D^{\alpha}x(t)+b(t)D^{\alpha}x(t)+c(t)x(t)=0, \quad t\in\mathcal{I},
\end{equation}
then their Wronskian \eqref{wdef} satisfies Liouville's formula
\[ W(x,y)(t) = W(x,y)(t_0)e_{(-b/a-\kappa_1)}(t,t_0) \]
for all $t,t_0\in\mathcal{I}$. 
\end{theorem}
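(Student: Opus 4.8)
The plan is to show that the Wronskian $W(x,y)$ satisfies the first-order conformable linear equation $D^{\alpha}W = \left(-b/a - \kappa_1\right)W$ on $\mathcal{I}$, and then to integrate this scalar equation using the characterizing property \eqref{expderiv} of the conformable exponential function. This is the natural conformable analogue of the classical argument, where Abel's/Liouville's formula follows from $W' = -(b/a)W$.

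First I would differentiate $W(x,y) = xD^{\alpha}y - yD^{\alpha}x$ directly, applying the conformable product rule from Lemma \ref{basicderiv}(3), namely $D^{\alpha}[fg] = fD^{\alpha}g + gD^{\alpha}f - \kappa_1 fg$. Expanding both $D^{\alpha}[xD^{\alpha}y]$ and $D^{\alpha}[yD^{\alpha}x]$ and subtracting, the mixed terms $(D^{\alpha}x)(D^{\alpha}y)$ cancel, and the two correction terms combine into a single factor, leaving
\[
 D^{\alpha}W = xD^{\alpha}D^{\alpha}y - yD^{\alpha}D^{\alpha}x - \kappa_1 W.
\]

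Next, since $x$ and $y$ both solve \eqref{gen2ndo} (and $a\neq 0$ on $\mathcal{I}$), I would solve each equation for its iterated second derivative, writing $D^{\alpha}D^{\alpha}x = -(b/a)D^{\alpha}x - (c/a)x$ and likewise for $y$. Substituting these into the expression above, the terms carrying the factor $c/a$ combine as $-(c/a)(xy - yx) = 0$, while the terms carrying $b/a$ combine to $-(b/a)W$. This yields $D^{\alpha}W = \left(-b/a - \kappa_1\right)W$. Finally, by \eqref{expderiv} the function $e_{(-b/a-\kappa_1)}(t,t_0)$ is exactly the solution of $D^{\alpha}u = \left(-b/a-\kappa_1\right)u$ with $u(t_0)=1$; since $W$ solves the same scalar equation, uniqueness (Theorem \ref{unique} applied to the first-order reduction, or direct comparison) gives $W(x,y)(t) = W(x,y)(t_0)\,e_{(-b/a-\kappa_1)}(t,t_0)$, which is Liouville's formula.

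I expect the main obstacle to be the bookkeeping in the first step: one must track the $-\kappa_1 fg$ correction terms from the nonclassical product rule and verify that, after the cancellation of the mixed $(D^{\alpha}x)(D^{\alpha}y)$ terms, exactly one factor $-\kappa_1 W$ survives. It is precisely this surviving term that produces the $-\kappa_1$ appearing inside the exponential, distinguishing the conformable result from the classical formula $W = W(t_0)\exp\!\left(-\int b/a\right)$; an error here would manifest as an incorrect subscript on $e$.
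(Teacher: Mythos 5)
Your proposal is correct and follows essentially the same argument as the paper: apply the conformable product rule to $W = xD^{\alpha}y - yD^{\alpha}x$ (the mixed terms cancel, leaving $-\kappa_1 W$), substitute the equation to get $D^{\alpha}W = \left(-b/a - \kappa_1\right)W$, and conclude via the exponential property \eqref{expderiv}. The only difference is that you explicitly invoke uniqueness for the first-order equation in the final step, which the paper leaves implicit; this is a harmless (indeed slightly more careful) refinement of the same proof.
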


\begin{proof}
If $x,y$ are solutions of \eqref{gen2ndo}, then the Wronskian of $x$ and $y$ satisfies
\begin{eqnarray*}
 D^{\alpha}W &=& D^{\alpha}[xD^{\alpha}y-yD^{\alpha}x] \\
 &=& xD^{\alpha}D^{\alpha}y + (D^{\alpha}x)(D^{\alpha}y) -\kappa_1xD^{\alpha}y - yD^{\alpha}D^{\alpha}x - (D^{\alpha}x)(D^{\alpha}y) +\kappa_1yD^{\alpha}x \\
 &=& x\left(-\frac{b}{a}D^{\alpha}y-\frac{c}{a}y\right)-y\left(-\frac{b}{a}D^{\alpha}x-\frac{c}{a}x\right)-\kappa_1W \\
 &=& -\frac{b}{a}W-\kappa_1W
\end{eqnarray*}
on $\mathcal{I}$. By \eqref{expderiv}, we see that $W(x,y)(t) = W(x,y)(t_0)e_{(-b/a-\kappa_1)}(t,t_0)$, and the result holds.
\end{proof}


\section{Reduction of Order Theorems}\label{secreduct}

In this section we establish two related reduction of order theorems for the conformable self-adjoint equation $Lx=0$ for $L$ given in \eqref{saeq}, and one for the general second-order equation.


\begin{theorem}[Reduction of Order I]\label{order}
Let $t_0\in\mathcal{I}$, and assume $x$ is a solution of $Lx=0$ for $L$ given in \eqref{saeq} with $x\neq 0$ on $\mathcal{I}$. Then 
\[ y(t):= x(t)\int_{t_0}^t \frac{e^2_{0}(s,t_0)}{p(s)x^2(s)} d_{\alpha}s, \quad t\in\mathcal{I} \]
defines a second linearly independent solution $y$ of $Lx=0$ on $\mathcal{I}$.
\end{theorem}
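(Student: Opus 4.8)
The plan is to follow the classical reduction-of-order strategy, but to route the argument through the conformable Wronskian so that the machinery already established in Section~\ref{secselfadj} does the heavy lifting. Write $y=xu$, where
\[ u(t):=\int_{t_0}^t \frac{e^2_0(s,t_0)}{p(s)x^2(s)}\,d_{\alpha}s. \]
Since $x\ne 0$ on $\mathcal{I}$ and $p>0$ is continuous, the integrand is continuous, so $u$ is well defined and (ordinary) differentiable, and $y=xu$ is continuous. There are then two things to establish: that $Ly=0$, and that $x$ and $y$ are linearly independent. I would obtain both at once by computing $W(x,y)$ explicitly.

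The key computation is $D^{\alpha}u$. Using $d_{\alpha}s=ds/\kappa_0(s)$ and the ordinary Fundamental Theorem of Calculus, $u'(t)=\frac{e^2_0(t,t_0)}{p(t)x^2(t)\kappa_0(t)}$, so by the definition \eqref{derivdef},
\[ D^{\alpha}u=\kappa_1 u+\kappa_0 u'=\kappa_1 u+\frac{e^2_0(t,t_0)}{p x^2}. \]
Feeding this into the product rule from Lemma~\ref{basicderiv}, $D^{\alpha}y=xD^{\alpha}u+uD^{\alpha}x-\kappa_1 xu$, and forming $W(x,y)=xD^{\alpha}y-yD^{\alpha}x$, the two $uD^{\alpha}x$ terms cancel and I am left with
\[ W(x,y)=x^2\bigl(D^{\alpha}u-\kappa_1 u\bigr)=x^2\cdot\frac{e^2_0(t,t_0)}{p x^2}=\frac{e^2_0(t,t_0)}{p(t)}. \]

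With the Wronskian in hand the conclusion is immediate. Because $e_0>0$ and $p>0$, we have $W(x,y)\ne 0$ on $\mathcal{I}$, so $x$ and $y$ are linearly independent. Moreover $W(x,y)=\frac{c\,e^2_0(t,b)}{p(t)}$ with $c=1$ and $b=t_0$, which is exactly the hypothesis of the Converse of Abel's Formula (Theorem~\ref{abelcon}); that theorem then yields $Ly=0$. Alternatively, one may feed $pW=e^2_0(t,t_0)$ directly into the Lagrange identity of Theorem~\ref{wnab}: since $D^{\alpha}e_0(t,t_0)=0$ by \eqref{expderiv}, the product rule gives $D^{\alpha}[e^2_0(t,t_0)]=-\kappa_1 e^2_0(t,t_0)$, whence $x(Ly)=D^{\alpha}[pW]+\kappa_1 pW=0$ and $Ly=0$ because $x\ne 0$. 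The one point that needs genuine care---and the only real obstacle---is verifying that $y\in\D$, i.e. that $D^{\alpha}[pD^{\alpha}y]$ is continuous, since both Theorem~\ref{abelcon} and Theorem~\ref{wnab} presuppose membership in $\D$. I would handle this by solving the Wronskian relation for $pD^{\alpha}y=\frac{1}{x}\bigl(e^2_0(t,t_0)+y\,pD^{\alpha}x\bigr)$ and differentiating once more via the product and quotient rules of Lemma~\ref{basicderiv}; every factor that appears is continuous because $x\in\D$, $x\ne 0$, and $p>0$ is continuous, so $D^{\alpha}[pD^{\alpha}y]$ is continuous and $y\in\D$.
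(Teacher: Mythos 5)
Your proposal is correct, and it shares the paper's skeleton: the same substitution $y=xu$, the same formula $D^{\alpha}u=\kappa_1 u+e^2_0(t,t_0)/(px^2)$, and the same Wronskian identity $W(x,y)=e^2_0(t,t_0)/p$, followed by an appeal to the Converse of Abel's Formula (Theorem~\ref{abelcon}). The one genuine difference is how $Ly=0$ is certified. The paper carries out the full computation $D^{\alpha}[pD^{\alpha}y]=-qy$ by repeated product and quotient rules, which proves $Ly=0$ directly and establishes continuity at the same time (so its closing citation of Theorem~\ref{abelcon} is, strictly speaking, redundant). You instead observe that the exact value of $D^{\alpha}[pD^{\alpha}y]$ is never needed: once $y\in\D$ and $pW(x,y)=e^2_0(t,t_0)$ are known, Theorem~\ref{abelcon} --- or equivalently the Lagrange identity of Theorem~\ref{wnab} together with $D^{\alpha}[e^2_0(t,t_0)]=-\kappa_1 e^2_0(t,t_0)$ --- forces $Ly=0$. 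This makes your verification lighter, since only continuity of $D^{\alpha}[pD^{\alpha}y]$ must be checked via $pD^{\alpha}y=\frac{1}{x}\bigl(e^2_0(t,t_0)+y\,pD^{\alpha}x\bigr)$ and the rules of Lemma~\ref{basicderiv}; and you correctly identify membership in $\D$ as the only delicate point, which is precisely what the paper's long computation is implicitly establishing. Both arguments are sound: yours isolates the logical role of the converse of Abel's formula more cleanly, while the paper's explicit computation has the minor virtue of being self-contained, never actually needing Theorem~\ref{abelcon} at all.
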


\begin{proof}
For $y$ defined above and $x\neq 0$ on $\mathcal{I}$, by the product rule in Lemma \ref{basicderiv} and \cite[Lemma 1.9 (iv)]{au} we have
\begin{equation}\label{dalphay}
 D^{\alpha}y = x\left[\frac{e^2_0(t,t_0)}{px^2} + \kappa_1\frac{y}{x}\right] + (D^{\alpha}x)\frac{y}{x} - \kappa_1y = \frac{e^2_0(t,t_0)}{px} + \frac{y}{x}(D^{\alpha}x), 
\end{equation}
which is continuous on $\mathcal{I}$ since $x\in\D$, $x\neq 0$, and $p>0$ is continuous. For the conformable Wronskian $W$ given in Definition \ref{wdef}, using \eqref{dalphay} we have
\begin{equation}\label{rooW}
 W(x,y) =  xD^{\alpha}y - yD^{\alpha}x =  e^2_0(t,t_0)/p + yD^{\alpha}x - yD^{\alpha}x = e^2_0(t,t_0)/p. 
\end{equation}
Now, continuing from \eqref{dalphay} we have
\[ pD^{\alpha}y = \frac{e^2_0(\cdot,t_0)}{x} + \frac{y}{x}p(D^{\alpha}x), \]
from which we see that
\begin{eqnarray*}
 D^{\alpha}[pD^{\alpha}y] 
    &=& D^{\alpha}[e^2_0/x + p(D^{\alpha}x)y/x] \\
    &=& D^{\alpha}[e^2_0/x] + p(D^{\alpha}x)D^{\alpha}[y/x] + D^{\alpha}[pD^{\alpha}x]y/x - \kappa_1 p(D^{\alpha}x)y/x \\
    &=& D^{\alpha}[e^2_0/x] + p(D^{\alpha}x)\left[\frac{e^2_{0}}{px^2} + \kappa_1y/x\right] + D^{\alpha}[pD^{\alpha}x]y/x - \kappa_1 p(D^{\alpha}x)y/x \\
    &=& D^{\alpha}[e^2_0/x] + \frac{e^2_{0}}{x^2}(D^{\alpha}x) + D^{\alpha}[pD^{\alpha}x]y/x \\
    &=& - \frac{e^2_0}{x^2}(D^{\alpha}x) + \frac{e^2_{0}}{x^2}(D^{\alpha}x) + D^{\alpha}[pD^{\alpha}x]y/x \\
    &=& -qy.
\end{eqnarray*}
Therefore $y\in\D$, and by Theorem \ref{abelcon}, $y$ is also a linearly independent solution of $Lx=0$.
\end{proof}


\begin{theorem}[Reduction of Order II]\label{order2}
Let $L$ be as in \eqref{saeq}, let $t_0\in\mathcal{I}$, and assume $x$ is a solution of $Lx=0$ with $x\neq 0$ on $\mathcal{I}$. Then $y$ is a second linearly independent solution of $Lx=0$ iff $y$ satisfies the first-order equation
\begin{equation}\label{401}
 D^{\alpha}[y/x](t) - \kappa_1(t) [y/x](t) = \frac{ce^2_0(t,t_0)}{p(t)x^2(t)}, \quad t\in\mathcal{I}, \quad t\ge t_0, 
\end{equation}
for some constant $c\in\R$ iff $y$ is of the form
\begin{equation}\label{402}
 y(t) = c_1x(t) + c_2x(t)\int_{t_0}^t \frac{e^2_{0}(s,t_0)}{p(s)x^2(s)} d_{\alpha}s
\end{equation}
for $t\in\mathcal{I}$ with $t\ge t_0$, where $c_1,c_2\in\R$ are constants. In the latter case, 
\begin{equation}\label{403}
 c_1=y(t_0)/x(t_0), \qquad c_2=p(t_0)W(x,y)(t_0).
\end{equation}
\end{theorem}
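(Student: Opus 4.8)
The plan is to prove the two stated equivalences by passing through the conformable Wronskian of $x$ and $y$ and then solving an elementary first-order equation for the quotient $u:=y/x$, which is well defined since $x\neq 0$ on $\mathcal{I}$. The engine of the whole argument is the quotient rule of Lemma \ref{basicderiv}(4): applied to $f=y$ and $g=x$ it gives $D^{\alpha}[y/x]=\frac{xD^{\alpha}y-yD^{\alpha}x}{x^2}+\frac{y}{x}\kappa_1$, and since $xD^{\alpha}y-yD^{\alpha}x=W(x,y)$ by Definition \ref{wdef}, this rearranges to the single identity
\[ D^{\alpha}[y/x](t)-\kappa_1(t)[y/x](t)=\frac{W(x,y)(t)}{x^2(t)}, \quad t\in\mathcal{I}. \]
Every implication below amounts to reading this identity from one side or the other.

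For the first equivalence I would argue as follows. If $y$ is a solution of $Lx=0$ linearly independent of $x$, then Abel's formula (Corollary \ref{wronskid}), applied with the fixed base point taken to be $t_0$, gives $W(x,y)(t)=ce_0^2(t,t_0)/p(t)$ with $c=p(t_0)W(x,y)(t_0)$; substituting into the identity above yields exactly \eqref{401}, and linear independence forces $W\not\equiv 0$, hence $c\neq 0$. Conversely, if $y\in\D$ satisfies \eqref{401} for some constant $c$, the same identity shows $W(x,y)(t)=ce_0^2(t,t_0)/p(t)$, so the Converse of Abel's Formula (Theorem \ref{abelcon}, with $b=t_0$) guarantees that $y$ is a solution of $Lx=0$; it is linearly independent precisely when $c\neq 0$.

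The second equivalence is a direct integration. Writing $u=y/x$ and using $D^{\alpha}u=\kappa_1 u+\kappa_0 u'$ straight from the definition \eqref{derivdef}, the left side of \eqref{401} collapses to $D^{\alpha}u-\kappa_1 u=\kappa_0 u'$, so \eqref{401} is equivalent to $u'(t)=\frac{ce_0^2(t,t_0)}{\kappa_0(t)p(t)x^2(t)}$. Integrating from $t_0$ to $t$ and recalling $d_{\alpha}s=ds/\kappa_0(s)$ gives $u(t)=u(t_0)+c\int_{t_0}^t \frac{e_0^2(s,t_0)}{p(s)x^2(s)}d_{\alpha}s$; multiplying through by $x(t)$ reproduces \eqref{402} with $c_1=u(t_0)=y(t_0)/x(t_0)$ and $c_2=c$. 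Combining $c_2=c$ with the value $c=p(t_0)W(x,y)(t_0)$ recorded in the first equivalence establishes \eqref{403}, and differentiating \eqref{402} recovers \eqref{401} for the reverse implication.

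I expect no deep obstacle here; the points requiring care are bookkeeping rather than ideas. Specifically, I must (i) confirm that the base point $t_0$ may legitimately replace the fixed $b$ in Corollary \ref{wronskid} and Theorem \ref{abelcon}, which it may since the point $b\in\mathcal{I}$ there is arbitrary; (ii) track the equivalence between linear independence and $c\neq 0$ so that the phrase ``second linearly independent solution'' is honestly matched; and (iii) respect the restriction $t\ge t_0$ under which the integral representation \eqref{402} is asserted, since the antiderivative for $u$ is pinned at $t_0$.
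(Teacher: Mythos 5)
Your proposal is correct, and its engine---the quotient-rule identity $D^{\alpha}[y/x]-\kappa_1[y/x]=W(x,y)/x^2$ from Lemma \ref{basicderiv}, Abel's formula (Corollary \ref{wronskid}) based at $t_0$, and integration against $d_{\alpha}s$---is exactly the paper's. The one genuine difference is how the loop is closed. The paper proves a one-way cycle: solution $\Rightarrow$ \eqref{401} $\Rightarrow$ \eqref{402}, and then \eqref{402} $\Rightarrow$ solution by observing that \eqref{402} is a linear combination of $x$ and the second solution already constructed in Reduction of Order I (Theorem \ref{order}), so linearity finishes it. You instead prove two honest biconditionals, closing the converse of the first one by the Converse of Abel's Formula (Theorem \ref{abelcon}): \eqref{401} gives $W(x,y)=ce_0^2(t,t_0)/p$, hence $y$ solves $Lx=0$. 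Both routes are legitimate and both key lemmas are available in the paper; your organization is arguably cleaner logically, and your point (ii) about matching ``linearly independent'' with $c\neq 0$ (equivalently $c_2\neq 0$) is a precision the paper's own proof glosses over. What the paper's route buys is that membership $y\in\D$ comes for free from Theorem \ref{order}, whereas Theorem \ref{abelcon} takes $y\in\D$ as a hypothesis; you insert that hypothesis explicitly, which is fine, but strictly one should note it is not restrictive: from \eqref{401} one gets $pD^{\alpha}y=ce_0^2(\cdot,t_0)/x+(pD^{\alpha}x)\,(y/x)$, and the right-hand side is $D^{\alpha}$-differentiable with continuous derivative because $x\in\D$ and $e_0$ is differentiable, so $y\in\D$ is automatic. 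With that one sentence added, your argument is complete and fully consistent with the paper's statement, including the identification $c_2=c=p(t_0)W(x,y)(t_0)$ in \eqref{403}.
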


\begin{proof}
Assume $x$ is a solution of $Lx=0$ with $x\neq 0$ on $\mathcal{I}$. Let $y$ be any solution of $Lx=0$; we must show $y$ is of the form \eqref{402}. Using the Wronskian from Definition \ref{wdef}, set
\[ c_2:=p(t_0)W(x,y)(t_0) = p(t_0)(xD^{\alpha}y - yD^{\alpha}x)(t_0). \]
By Abel's formula, Corollary \ref{wronskid}, we must have 
\[ W(x,y)(t) = \frac{c_2e^2_0(t,t_0)}{p(t)}, \quad t\in\mathcal{I}, \]
so that
\[ \frac{xD^{\alpha}y - y(D^{\alpha}x)}{x^2} +\kappa_1\frac{y}{x} = \frac{c_2e^2_0(t,t_0)}{px^2}+\kappa_1\frac{y}{x}. \]
Thus, we see that
\begin{equation}\label{dxinvu} 
  D^{\alpha}[y/x] = \frac{c_2e^2_0(t,t_0)}{px^2}+\kappa_1\frac{y}{x} 
\end{equation}
and $y$ satisfies \eqref{401}. Integrating both sides of \eqref{401} from $t_0$ to $t$ we get
\[ y(t)/x(t)- y(t_0)/x(t_0) = c_2\int_{t_0}^t \frac{e^2_{0}(s,t_0)}{p(s)x^2(s)} d_{\alpha}s; \]
recovering $y$ yields
\[ y(t) = c_1x(t) + c_2x(t)\int_{t_0}^t \frac{e^2_{0}(s,t_0)}{p(s)x^2(s)} d_{\alpha}s \]
provided $c_1=y(t_0)/x(t_0)$. 

Conversely, assume $y$ is given by \eqref{402}. By Theorem \ref{order} and linearity $y$ is a solution of $Lx=0$ on $\mathcal{I}$ for $t\ge t_0$.  Setting $t=t_0$ in \eqref{402} leads to $c_1$ in \eqref{403}. By Abel's formula, Corollary \ref{wronskid}, the Wronskian satisfies 
\[ W(x,y)(t) = \frac{e^2_0(t,t_0)p(t_0)W(x,y)(t_0)}{p(t)}; \] 
to calculate $W(x,y)(t_0)$, we use \eqref{402} to obtain
\[ W(x,y)(t_0) = (xD^{\alpha}y - yD^{\alpha}x)(t_0) = \frac{c_2}{p(t_0)}. \]
This ends the proof.
\end{proof}

In a similar way we can prove a reduction of order theorem for the general second-order homogeneous equation.


\begin{theorem}[Reduction of Order]\label{rooiii}
Let $t_0\in\mathcal{I}$, and let the coefficient functions $a,b,c$ be continuous on $\mathcal{I}$ with $a(t)\ne 0$ for all $t\in\mathcal{I}$. Further, assume $x$ is a solution of the general equation \eqref{gen2ndo} with $x\ne 0$ on $\mathcal{I}$. Then 
\[ y(t):= x(t)\int_{t_0}^t \frac{e_{(-b/a-\kappa_1)}(s,t_0)}{x^2(s)} d_{\alpha}s, \quad t\in\mathcal{I} \]
defines a second linearly independent solution $y$ of \eqref{gen2ndo} on $\mathcal{I}$.
\end{theorem}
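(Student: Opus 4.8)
The plan is to reduce the claim to the self-adjoint Reduction of Order~I (Theorem \ref{order}) rather than repeat its long direct computation. Since \eqref{gen2ndo} is precisely \eqref{lineq} with $g\equiv 0$, Theorem \ref{pisaf} rewrites it in self-adjoint form $Lx=0$ with
\[ p(t)=e_{(\frac{b}{a}+\kappa_1)}(t,t_0), \qquad q(t)=p(t)c(t)/a(t). \]
Because $p=e_{(\cdot)}>0$ and $a\ne 0$ on $\mathcal{I}$, multiplication by the nonvanishing factor $p/a$ is reversible; hence the solution sets of \eqref{gen2ndo} and of $Lx=0$ coincide. In particular the given $x$ is a solution of $Lx=0$ with $x\ne 0$ on $\mathcal{I}$, so Theorem \ref{order} applies to it.

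The key step is then to verify that the $y$ defined here is exactly the second solution produced by Theorem \ref{order} for this $p$. That amounts to checking the pointwise identity
\[ \frac{e^2_0(s,t_0)}{p(s)} = e_{(-b/a-\kappa_1)}(s,t_0), \]
which follows from the exponential-function algebra of Lemma \ref{expprops}: writing $e^2_0=e_{-\kappa_1}$ by part $(v)$ and then applying the quotient rule $(vii)$ to $e_{-\kappa_1}/e_{(b/a+\kappa_1)}$ collapses the exponent to $-\kappa_1-(b/a+\kappa_1)+\kappa_1=-b/a-\kappa_1$. With this identity the integrand $e_{(-b/a-\kappa_1)}(s,t_0)/x^2(s)$ becomes $e^2_0(s,t_0)/\bigl(p(s)x^2(s)\bigr)$, so the two formulas for $y$ agree verbatim.

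Having matched the formulas, I conclude by invoking Theorem \ref{order}: $y$ is a solution of $Lx=0$ that is linearly independent of $x$. Since linear independence is a property of the functions alone and the solution sets of $Lx=0$ and \eqref{gen2ndo} are identical, $y$ is also a linearly independent solution of \eqref{gen2ndo}, as claimed.

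I expect the only real obstacle to be bookkeeping rather than substance: one must confirm that the exponent arithmetic in Lemma \ref{expprops} is applied with the correct signs (the $+\kappa_1$ appearing in part $(vii)$ is precisely what cancels the two $\kappa_1$ contributions), and that the equivalence of the two solution sets genuinely requires only $p/a\ne 0$, which is guaranteed here. A reader wanting a self-contained argument could instead mirror the direct computation of Theorem \ref{order} — computing $D^{\alpha}y$, then $W(x,y)$, then $aD^{\alpha}D^{\alpha}y+bD^{\alpha}y+cy$ — but the reduction above avoids all of that.
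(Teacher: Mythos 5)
Your proof is correct, but it takes a genuinely different route from the paper's. The paper gives no argument for Theorem \ref{rooiii} at all: it introduces the result with ``in a similar way we can prove,'' i.e.\ the intended proof is a direct computation mirroring Theorem \ref{order} (compute $D^{\alpha}y$, the Wronskian, and then verify \eqref{gen2ndo} term by term). You instead make the theorem a corollary of results already in hand: Theorem \ref{pisaf} converts \eqref{gen2ndo} into $Lx=0$ with $p=e_{(\frac{b}{a}+\kappa_1)}(\cdot,t_0)$, $q=pc/a$, and since the identity $Lx=\frac{p}{a}\bigl(aD^{\alpha}D^{\alpha}x+bD^{\alpha}x+cx\bigr)$ established in that proof holds for \emph{every} sufficiently $\alpha$-differentiable $x$ (the key relation $pb/a=D^{\alpha}p-\kappa_1 p$ is a property of $p$ alone), the nonvanishing of $p/a$ does give equality of the two solution sets, as you assert. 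Your exponent computation is also right: by Lemma \ref{expprops}$(v)$ and $(vii)$,
\[
\frac{e^2_0(s,t_0)}{p(s)}=\frac{e_{-\kappa_1}(s,t_0)}{e_{(\frac{b}{a}+\kappa_1)}(s,t_0)}
=e_{\left(-\kappa_1-\frac{b}{a}-\kappa_1+\kappa_1\right)}(s,t_0)=e_{\left(-\frac{b}{a}-\kappa_1\right)}(s,t_0),
\]
so your $y$ coincides with the one produced by Theorem \ref{order}, and linear independence transfers since it is a property of the pair of functions, not of the equation. What each approach buys: the paper's direct computation is self-contained and does not rely on the transformation machinery, while yours is shorter, exposes the theorem as a consequence of Theorem \ref{pisaf} plus Theorem \ref{order}, and makes explicit the exponential identity linking the two integrand formulas (which also explains why the Wronskian here, $e_{(-b/a-\kappa_1)}(t,t_0)$, agrees with Liouville's formula in Theorem \ref{liouville}). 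The only point you could make more explicit is the regularity bookkeeping in the solution-set equivalence: since $D^{\alpha}p=(\frac{b}{a}+\kappa_1)p$ is continuous, the product rule gives $D^{\alpha}[pD^{\alpha}x]=pD^{\alpha}D^{\alpha}x+(D^{\alpha}p-\kappa_1p)D^{\alpha}x$, so $x\in\D$ exactly when $D^{\alpha}D^{\alpha}x$ is continuous; this is routine but is what licenses applying Theorem \ref{order} to your transformed problem.
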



\section{Cauchy Function and Variation of Constants Formula}

In this section discuss the Cauchy function and derive a variation of constants formula for the conformable nonhomogeneous self-adjoint differential equation $Lx=h$ for $L$ given in \eqref{saeq}, where we assume $h$ is a continuous function on some interval $\mathcal{I}\subseteq\R$. The following theorem follows from a standard argument, using the linearity of $D^{\alpha}$ in \eqref{derivdef} and Theorem \ref{unique}.


\begin{theorem}
If $x_1$ and $x_2$ are linearly independent solutions of the conformable homogeneous equation $Lx=0$ on $\mathcal{I}$, and $y$ is a particular solution of the conformable nonhomogeneous equation $Lx=h$ on $\mathcal{I}$, then 
\[ x = c_1x_1 + c_2x_2 + y \]
is a general solution of $Lx=h$ for constants $c_1,c_2\in\R$.
\end{theorem}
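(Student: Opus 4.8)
The plan is to establish the two properties that the phrase \emph{general solution} encodes: first, that for every choice of constants $c_1,c_2\in\R$ the function $x=c_1x_1+c_2x_2+y$ solves $Lx=h$; and second, that every solution of $Lx=h$ arises this way. Both properties rest on the linearity of $L$ together with the structure theorem for the homogeneous equation already recorded above. I would begin by noting that $L$ is linear: since $D^{\alpha}$ is linear by Lemma \ref{basicderiv}(1) and multiplication by the continuous function $q$ is linear, we have $L[c_1x_1+c_2x_2]=c_1Lx_1+c_2Lx_2$ for all $c_1,c_2\in\R$ and all elements of $\D$. Applying this to the candidate $x=c_1x_1+c_2x_2+y$ and using $Lx_1=Lx_2=0$ and $Ly=h$ gives $Lx=c_1\cdot 0+c_2\cdot 0+h=h$; moreover $x\in\D$ as a linear combination of functions in $\D$. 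This disposes of the forward direction.

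For the converse I would take an arbitrary solution $x$ of $Lx=h$ and set $w:=x-y$. Linearity then yields $Lw=Lx-Ly=h-h=0$, so $w$ solves the homogeneous equation $Lx=0$. Because $x_1,x_2$ are linearly independent solutions of $Lx=0$, the earlier theorem asserting that every solution of \eqref{saeq} with $h\equiv 0$ is a linear combination of two linearly independent solutions produces constants $c_1,c_2\in\R$ with $w=c_1x_1+c_2x_2$. Hence $x=c_1x_1+c_2x_2+y$, which completes the argument.

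If one prefers to avoid quoting the structure theorem and argue directly from uniqueness, the alternative is to fix $t_0\in\mathcal{I}$ and solve the $2\times 2$ linear system $c_1x_1(t_0)+c_2x_2(t_0)=w(t_0)$, $c_1D^{\alpha}x_1(t_0)+c_2D^{\alpha}x_2(t_0)=D^{\alpha}w(t_0)$ for $(c_1,c_2)$; the solutions $w$ and $c_1x_1+c_2x_2$ of $Lx=0$ then share the same values of $x$ and $D^{\alpha}x$ at $t_0$, so Theorem \ref{unique} forces them to coincide on all of $\mathcal{I}$. There is no genuine obstacle here: the only step requiring care is the solvability of that system, which needs $x_1,x_2$ linearly independent to guarantee a nonvanishing coefficient determinant, i.e.\ a nonzero Wronskian $W(x_1,x_2)(t_0)$. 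That is precisely the point at which I would invoke Abel's formula (Corollary \ref{wronskid}) together with the dichotomy that the Wronskian of two solutions is either identically zero or nowhere zero on $\mathcal{I}$, so that linear independence is equivalent to the Wronskian never vanishing.
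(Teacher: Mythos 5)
Your proposal is correct and matches the paper's approach: the paper gives no detailed proof, stating only that the result ``follows from a standard argument, using the linearity of $D^{\alpha}$ in \eqref{derivdef} and Theorem \ref{unique},'' which is exactly the argument you spell out (linearity for the forward direction, and the uniqueness theorem via the Wronskian/Abel dichotomy for the converse). Your write-up simply supplies the details the paper omits.
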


The following definition was given in \cite[Definition 4.11]{au}, and is included here for completeness.


\begin{definition}[Cauchy Function]
Let $L$ be as in \eqref{saeq}. A function $x:\mathcal{I}\times\mathcal{I}\to\R$ is the Cauchy function for $Lx=0$ provided for each fixed $s\in\mathcal{I}$, $x(\cdot,s)$ is the solution of the initial value problem
\[ Lx(\cdot,s) = 0,\quad x(s,s) = 0, \quad D^{\alpha}x(s,s) = \frac{1}{p(s)}. \]
\end{definition}

It is easy to verify the following example.


\begin{example}\label{oldex1}
If $q,h\equiv 0$ in \eqref{saeq}, then the Cauchy function for 
\[ D^{\alpha}\left[pD^{\alpha}x\right](t) = 0 \] 
is given by 
\[ x(t,s) = e_0(t,s)\int_{s}^t \frac{1}{p(\tau)}d_{\alpha}\tau \]
for all $t,s\in\mathcal{I}$. $\hfill\triangle$
\end{example}

For $L$ in \eqref{saeq}, a formula for the Cauchy function for $Lx=0$ is given in the next theorem, whose proof is very similar to that given for a different equation in \cite[Theorem 4.13]{au}, and thus the proof is omitted.


\begin{theorem}\label{t33} \index{Cauchy function!second order equation}
If $u$ and $v$ are linearly independent solutions of $Lx=0$ for $L$ in \eqref{saeq}, then the Cauchy function $x(t,s)$ for $Lx=0$ is given by
\begin{equation}\label{eq31}
  x(t,s) = \frac{u(s)v(t)-v(s)u(t)}{p(s)[u(s)D^{\alpha}v(s)-v(s)D^{\alpha}u(s)]} \quad\text{for}\quad t,s\in\mathcal{I}.
\end{equation}
\end{theorem}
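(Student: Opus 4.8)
The plan is to exploit the fact, established in the preceding theorem on the existence of a fundamental system, that every solution of $Lx=0$ is a linear combination of the two linearly independent solutions $u$ and $v$. Fixing $s\in\mathcal{I}$, I would write the candidate Cauchy function in the form $x(t,s)=c_1(s)u(t)+c_2(s)v(t)$, where the coefficients $c_1,c_2$ are to be chosen so that the initial conditions $x(s,s)=0$ and $D_t^{\alpha}x(s,s)=1/p(s)$ hold. Since $L$ is linear, any such combination already satisfies $Lx(\cdot,s)=0$, so the whole problem reduces to selecting the coefficients correctly.

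Imposing the two initial conditions yields the linear system
\[ c_1 u(s)+c_2 v(s)=0, \qquad c_1 D^{\alpha}u(s)+c_2 D^{\alpha}v(s)=\frac{1}{p(s)}, \]
whose coefficient determinant is precisely the conformable Wronskian $W(u,v)(s)=u(s)D^{\alpha}v(s)-v(s)D^{\alpha}u(s)$ of Definition \ref{wdef}. The key structural point is that this determinant never vanishes on $\mathcal{I}$: were $W(u,v)(s_0)=0$ at some $s_0$, a nontrivial solution of the corresponding homogeneous system would produce a nonzero linear combination $c_1 u+c_2 v$ vanishing together with its $D^{\alpha}$-derivative at $s_0$, which the existence–uniqueness Theorem \ref{unique} forces to be identically zero, contradicting the independence of $u$ and $v$ (the dichotomy corollary following Abel's formula gives the same conclusion). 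Hence the system is uniquely solvable, and Cramer's rule gives
\[ c_1(s)=\frac{-v(s)}{p(s)W(u,v)(s)}, \qquad c_2(s)=\frac{u(s)}{p(s)W(u,v)(s)}. \]

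Substituting these coefficients back into $x(t,s)=c_1(s)u(t)+c_2(s)v(t)$ and simplifying produces exactly formula \eqref{eq31}. To conclude, I would verify that this function genuinely is the Cauchy function: for each fixed $s$ it is a linear combination of solutions of $Lx=0$, hence lies in $\D$ and solves the homogeneous equation, while by construction it meets both initial conditions, so Theorem \ref{unique} certifies it as the unique such solution. I do not anticipate a serious obstacle; the only step requiring genuine care is the non-vanishing of $W(u,v)$, which is where linear independence is actually used and which the uniqueness theorem (equivalently, the Abel's-formula dichotomy) supplies at every point of $\mathcal{I}$ rather than merely at one.
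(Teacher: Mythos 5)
Your proof is correct. Note that the paper itself omits the proof of Theorem \ref{t33}, deferring to the analogous result \cite[Theorem 4.13]{au}, so there is no in-paper argument to compare against; the standard proof of such formulas (and the style of the one in \cite{au}) runs in the verification direction: take \eqref{eq31} as given, observe that for each fixed $s$ it is a linear combination of $u$ and $v$ and hence a solution of $Lx=0$, and check the two initial conditions $x(s,s)=0$ and $D^{\alpha}x(s,s)=1/p(s)$ directly. Your derivation via Cramer's rule is that argument read backwards, with the added virtue of explaining where the formula comes from rather than pulling it out of the air; and you correctly isolate the one point needing genuine care, namely that $W(u,v)$ vanishes nowhere on $\mathcal{I}$, which your appeal to the uniqueness theorem (Theorem \ref{unique}) settles at every point. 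One small caveat: your parenthetical remark that the dichotomy corollary following Abel's formula ``gives the same conclusion'' is not self-contained, since that corollary only asserts $W(u,v)$ is either identically zero or never zero; to exclude the first alternative one still needs the uniqueness argument (or an equivalent) showing that linear independence forces $W(u,v)\not\equiv 0$. Since that is exactly the argument you give as your main one, nothing is actually missing.
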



\begin{theorem}[Variation of Constants Formula]\label{t31}
Assume $h$ is continuous on $\mathcal{I}$ and $a\in\mathcal{I}$. Let $x(t,s)$ be the Cauchy function for $Lx=0$ for $L$ in \eqref{saeq}. Then 
\begin{equation}\label{vofcform}
 x(t) = \int_a^t x(t,s)h(s) d_{\alpha}s, \quad t\in\mathcal{I} 
\end{equation}
is the solution of the initial value problem
\[ Lx = h(t), \quad x(a) = 0, \quad D^{\alpha}x(a) = 0. \]
\end{theorem}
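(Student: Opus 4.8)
The plan is to mimic the classical variation-of-constants argument, differentiating \eqref{vofcform} twice with $D^\alpha$ and letting the defining properties of the Cauchy function collapse the resulting boundary terms. The one structural tool I need is a conformable Leibniz rule for integrals with a variable upper limit. Since \eqref{derivdef} gives $D^\alpha f = \kappa_1 f + \kappa_0 f'$ and $d_\alpha s = ds/\kappa_0(s)$, differentiating the $\kappa_0 f'$ part by the ordinary Leibniz rule and recombining yields, for any jointly continuous kernel $F(t,s)$ with continuous $\partial_t F$,
\[
 D_t^\alpha\!\left[\int_a^t F(t,s)\,h(s)\,d_\alpha s\right] = F(t,t)\,h(t) + \int_a^t D_t^\alpha F(t,s)\,h(s)\,d_\alpha s,
\]
where $D_t^\alpha F(t,s) = \kappa_1(t) F(t,s) + \kappa_0(t)\,\partial_t F(t,s)$ denotes the conformable derivative in the first variable. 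The boundary term $F(t,t)\,h(t)$ is exactly where the initial data $x(s,s)=0$ and $D^\alpha x(s,s)=1/p(s)$ will enter.

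First I would apply this formula with $F(t,s) = x(t,s)$, the Cauchy function. Because $x(s,s)\equiv 0$, the boundary term vanishes and I obtain the clean identity $D^\alpha x(t) = \int_a^t D_t^\alpha x(t,s)\,h(s)\,d_\alpha s$. Setting $t=a$ in \eqref{vofcform} and in this identity immediately gives the two initial conditions $x(a)=0$ and $D^\alpha x(a)=0$, since both integrals are over the degenerate interval $[a,a]$.

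Next I would apply the Leibniz formula a second time, now to $w(t) := p(t) D^\alpha x(t) = \int_a^t F(t,s)\,h(s)\,d_\alpha s$ with kernel $F(t,s) = p(t)\,D_t^\alpha x(t,s)$. The boundary term is now $F(t,t)\,h(t) = p(t)\,D_t^\alpha x(t,s)\big|_{s=t}\,h(t) = h(t)$, using $D^\alpha x(s,s) = 1/p(s)$ evaluated on the diagonal. Inside the integral, $D_t^\alpha F(t,s) = D_t^\alpha\!\left[p\,D_t^\alpha x(t,s)\right] = -q(t)\,x(t,s)$, because $x(\cdot,s)$ solves $Lx(\cdot,s)=0$ and $L$ has the form \eqref{saeq}. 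Assembling the two pieces gives $D^\alpha[pD^\alpha x](t) = h(t) - q(t)\int_a^t x(t,s)\,h(s)\,d_\alpha s = h(t) - q(t) x(t)$, which rearranges to $Lx = h$.

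The main obstacle is purely the rigorous justification of differentiation under the integral sign: the conformable Leibniz rule above requires that $x(t,s)$ and $D_t^\alpha x(t,s)$ (hence $\partial_t x(t,s)$) be jointly continuous in $(t,s)$. This regularity is not automatic from the pointwise definition of the Cauchy function, but it follows from continuous and differentiable dependence on the parameter $s$ of the solution of the initial value problem defining $x(\cdot,s)$, which is guaranteed by the existence-uniqueness Theorem \ref{unique} together with the continuity of $p,q$. Once joint continuity is in hand, every step above is a bookkeeping exercise, and the whole proof reduces to correctly tracking the two boundary terms---one annihilated by $x(s,s)=0$, the other producing precisely the forcing $h(t)$ via $D^\alpha x(s,s) = 1/p(s)$.
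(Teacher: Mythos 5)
Your proposal is correct and takes essentially the same route as the paper's own proof: differentiate \eqref{vofcform} twice under the integral sign via the conformable Leibniz rule, with $x(t,t)=0$ annihilating the first boundary term, $D^{\alpha}x(s,s)=1/p(s)$ turning the second boundary term into $h(t)$, and $Lx(\cdot,s)=0$ converting the remaining integrand into $-q(t)x(t)$. The only difference is cosmetic: you derive the Leibniz rule directly from \eqref{derivdef} and flag the joint-continuity requirements explicitly, whereas the paper simply cites the corresponding lemma from the literature.
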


\begin{proof} 
Let $x(t,s)$ be the Cauchy function for $Lx=0$, and set 
\[ x(t) = \int_a^t x(t,s)h(s) d_{\alpha}s. \]
Note that $x(a)=0$. Taking the conformable derivative $D^{\alpha}$ of $x$ and using \cite[Lemma 1.9 (iv)]{au}, we get that
\begin{equation}\label{vofcderiv}
 D^{\alpha}x(t) = x(t,t)h(t) + \int_a^t D^{\alpha}[x(t,s)]h(s)d_{\alpha}s = \int_a^t D^{\alpha}[x(t,s)]h(s)d_{\alpha}s,
\end{equation}
since the Cauchy function satisfies $x(t,t)=0$. Note that in the integral, $D^{\alpha}$ denotes the derivative with respect to the first variable $t$;
thus $D^{\alpha}x(a)=0$. From \eqref{vofcform} and \eqref{vofcderiv},
\begin{equation}\label{pswitch}
 pD^{\alpha}x(t) = \int_a^t p(t)D^{\alpha}[x(t,s)]h(s)d_{\alpha}s, 
\end{equation}
and we conclude, using \cite[Lemma 1.9 (iv)]{au} again, that
\begin{eqnarray*}
 D^{\alpha}\left[pD^{\alpha}x\right](t)
 &=& p(t)D^{\alpha}[x(t,t)]h(t) + \int_a^t D^{\alpha}\left[p(t)D^{\alpha}x(t,s)\right]h(s)d_{\alpha}s \\
 &=& h(t) - \int_a^t q(t)x(t,s)h(s)d_{\alpha}s \\
 &=& h(t) - q(t)x(t),
\end{eqnarray*}
by all of the properties of the Cauchy function. Consequently, $Lx(t) = h(t)$.
\end{proof}


\section{Riccati equation}\label{riccatieq}

Now we will consider the associated conformable Riccati differential equation
\begin{equation}\label{ricceq}
	Rz=0, \quad\text{where}\quad Rz:=D^{\alpha}z + q + \frac{z^2}{p} - \kappa_1z,
\end{equation}
where $p$ and $q$ are continuous functions on $\mathcal{I}$ such that $p(t)>0$ for all $t\in\mathcal{I}$.


\begin{definition}
Denote by $\D_R$ the set of all differentiable functions $z$ such that $D^{\alpha}z(t)$ is continuous for all $t\in\mathcal{I}$. A function $z\in\D_R$ is a solution of \eqref{ricceq} on $\mathcal{I}$ if and only if $Rz(t)=0$ for all $t\in\mathcal{I}$.
\end{definition}


\begin{example}
Let $\alpha\in(0,1]$, and let $\kappa_0,\kappa_1$ satisfy \eqref{kappaconditions}. Furthermore, let $\theta_-,\theta_+\in\R$ with $\theta_-<0<\theta_+$ such that 
\[ h_1(\theta_-,0):= \int_{0}^{\theta_-}1d_{\alpha}s = -\frac{\pi}{2}, \quad h_1(\theta_+,0):= \int_{0}^{\theta_+}1d_{\alpha}s = \frac{\pi}{2}. \]
Then the solution of the initial value problem
\[ Rz=0, \quad Rz:=D^{\alpha}z + 1 + z^2 - \kappa_1z, \quad z(0)=0 \]
is given by
\[ z(t)=-\tan\left(h_1(t,0)\right), \quad t\in\mathcal{I}:=\left(\theta_-,\theta_+\right). \]
Here we have solved \eqref{ricceq} with $p\equiv q\equiv 1$. $\hfill\triangle$
\end{example}


\begin{theorem}[Factorization Theorem]\label{factorricc}
Let $L$ be as in \eqref{saeq}, $R$ as in \eqref{ricceq}, and let $x\in\D$ with $x\neq 0$ on $\mathcal{I}$. If $z$ is defined by
\begin{equation}\label{zform}
 z = \frac{pD^{\alpha}x}{x}
\end{equation}
on $\mathcal{I}$, then $Lx=xRz$ on $\mathcal{I}$.
\end{theorem}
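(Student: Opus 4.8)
The plan is to prove the identity by direct computation, expanding the right-hand side $x\,Rz$ and showing that the non-$q$ terms collapse exactly to $D^{\alpha}[pD^{\alpha}x]$. Since
\[ x\,Rz = x\,D^{\alpha}z + qx + \frac{x z^2}{p} - \kappa_1 x z, \]
and since $Lx = D^{\alpha}[pD^{\alpha}x] + qx$ carries the same $qx$ term, the task reduces to verifying the single identity
\[ x\,D^{\alpha}z + \frac{x z^2}{p} - \kappa_1 x z = D^{\alpha}[pD^{\alpha}x]. \]

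First I would exploit the defining relation $z = pD^{\alpha}x/x$, which gives the convenient algebraic consequence $xz = pD^{\alpha}x$; this keeps the bookkeeping light. To compute $D^{\alpha}z$, I would apply the conformable quotient rule from Lemma \ref{basicderiv}(4) with numerator $pD^{\alpha}x$ and denominator $x$, namely
\[ D^{\alpha}z = D^{\alpha}\!\left[\frac{pD^{\alpha}x}{x}\right] = \frac{x\,D^{\alpha}[pD^{\alpha}x] - (pD^{\alpha}x)(D^{\alpha}x)}{x^2} + \kappa_1\frac{pD^{\alpha}x}{x}. \]
Multiplying through by $x$ yields $x\,D^{\alpha}z = D^{\alpha}[pD^{\alpha}x] - \frac{p(D^{\alpha}x)^2}{x} + \kappa_1 pD^{\alpha}x$. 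Separately, the purely algebraic terms simplify via $z=pD^{\alpha}x/x$ to $\frac{x z^2}{p} = \frac{p(D^{\alpha}x)^2}{x}$ and $\kappa_1 x z = \kappa_1 pD^{\alpha}x$.

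Adding the three pieces, I expect the quadratic-in-$D^{\alpha}x$ terms $-\frac{p(D^{\alpha}x)^2}{x}$ and $+\frac{p(D^{\alpha}x)^2}{x}$ to cancel, and the two $\kappa_1 pD^{\alpha}x$ contributions (one from the quotient rule, one from $-\kappa_1 x z$) to cancel as well, leaving precisely $D^{\alpha}[pD^{\alpha}x]$. Restoring the common $qx$ term then gives $x\,Rz = D^{\alpha}[pD^{\alpha}x] + qx = Lx$, as required. The one point demanding care — the main (though modest) obstacle — is the extra $+\frac{f}{g}\kappa_1$ correction term in the conformable quotient rule, which has no classical analogue; the whole argument hinges on checking that this $\kappa_1$-contribution is exactly matched and annihilated by the $-\kappa_1 z$ term built into the Riccati operator $R$ in \eqref{ricceq}.
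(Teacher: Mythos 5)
Your proof is correct, but it runs in the opposite direction from the paper's and leans on a different rule from Lemma \ref{basicderiv}. The paper starts from $Lx$, uses \eqref{zform} to replace $pD^{\alpha}x$ by $zx$, and applies the conformable \emph{product} rule (Lemma \ref{basicderiv}(3)):
\[ Lx = D^{\alpha}[zx] + qx = zD^{\alpha}x + xD^{\alpha}z - \kappa_1 zx + qx, \]
then substitutes $D^{\alpha}x = zx/p$ to obtain $x\left(D^{\alpha}z + q + z^2/p - \kappa_1 z\right) = xRz$ in two lines. You instead expand $xRz$ and compute $D^{\alpha}z$ via the \emph{quotient} rule (Lemma \ref{basicderiv}(4)) applied to $z = (pD^{\alpha}x)/x$, then verify that the quadratic terms $\pm p(D^{\alpha}x)^2/x$ and the two $\kappa_1 pD^{\alpha}x$ contributions cancel. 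The paper's route is shorter and avoids the $x^2$ denominators entirely, since writing $pD^{\alpha}x = zx$ lets the product rule do all the work; your route is a bit more computational but has the virtue of isolating exactly where the conformable $\kappa_1$-correction in the quotient rule is absorbed by the $-\kappa_1 z$ term deliberately built into the Riccati operator \eqref{ricceq} --- which is precisely why $R$ is defined with that term, a point the paper's proof leaves implicit. One small technical remark: your use of the quotient rule requires $D^{\alpha}[pD^{\alpha}x]$ to exist, which is guaranteed by the hypothesis $x\in\D$, so there is no gap.
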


\begin{proof} 
Assume $x\in\D$ with $x\neq 0$ on $\mathcal{I}$, and let $z$ have the form \eqref{zform}. Then
\begin{eqnarray*} 
 Lx &=& D^\alpha\left[pD^\alpha x\right] + qx \stackrel{\eqref{zform}}{=} D^\alpha[zx] + qx = zD^{\alpha}x + xD^{\alpha}z -\kappa_1zx + qx \\
 &\stackrel{\eqref{zform}}{=}& z\left(\frac{zx}{p}\right) + xD^{\alpha}z - \kappa_1zx + qx \stackrel{\eqref{ricceq}}{=} xRz
\end{eqnarray*}
on $\mathcal{I}$.
\end{proof}


\begin{definition}\label{disconj}
A solution $x$ of 
\begin{equation}\label{saveq}
 Lx(t) = D^\alpha[pD^\alpha x](t) + q(t)x(t) = 0, \quad t\in\mathcal{I},
\end{equation}
has a zero at $t_0\in\mathcal{I}$ iff $x(t_0)=0$. Equation \eqref{saveq} is disconjugate on $\mathcal{I}$ iff no nontrivial  solution has two (or more) zeros on $\mathcal{I}$.
\end{definition}


\begin{theorem}\label{saricc}
Let $L$ be as in \eqref{saeq}, $R$ as in \eqref{ricceq}. Then the self-adjoint equation $Lx=0$ has a solution without zeros on $\mathcal{I}$ if and only if the Riccati equation $Rz=0$ has a solution $z$ on $\mathcal{I}$. Moreover, these two solutions satisfy \eqref{zform} on $\mathcal{I}$.
\end{theorem}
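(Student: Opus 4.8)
The plan is to route both implications through the Factorization Theorem (Theorem \ref{factorricc}), which already packages the algebra relating $L$ and $R$ via the substitution \eqref{zform}; what remains is the regularity bookkeeping for the function classes $\D$ and $\D_R$.

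For the forward implication, I would assume $x$ solves $Lx=0$ with $x\neq 0$ on $\mathcal{I}$ and simply define $z:=pD^{\alpha}x/x$ as in \eqref{zform}. This quotient is legitimate because $x\in\D$ makes $D^{\alpha}x$ continuous and $p>0$ is continuous, while $x\neq 0$ keeps the quotient continuous. Theorem \ref{factorricc} then yields $0=Lx=xRz$, and dividing by the nonvanishing $x$ gives $Rz=0$. To place $z$ in $\D_R$ I would read $D^{\alpha}z=-q-z^2/p+\kappa_1 z$ off the equation $Rz=0$ and observe that the right-hand side is continuous.

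For the converse, the work is to manufacture a zero-free solution $x$ of $Lx=0$ out of a given Riccati solution $z$. Rearranging \eqref{zform} shows the desired $x$ should satisfy the first-order conformable equation $D^{\alpha}x=(z/p)x$; I would solve this explicitly with the conformable exponential of \eqref{epts}, setting $x(t):=e_{z/p}(t,t_0)$ for a fixed $t_0\in\mathcal{I}$. By \eqref{expderiv} this $x$ satisfies $D^{\alpha}x=(z/p)x$, and since $e_{z/p}(t,t_0)$ is a genuine exponential it is strictly positive, hence zero-free on $\mathcal{I}$. Then $z=pD^{\alpha}x/x$ recovers \eqref{zform}, so Theorem \ref{factorricc} gives $Lx=xRz=0$.

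The main obstacle, and essentially the only place requiring care, is confirming that the constructed $x$ actually lies in $\D$ so that it counts as a solution. Here I would check that $z\in\D_R$ makes $z/p$ continuous and Riemann integrable against $1/\kappa_0$, so the exponential in \eqref{epts} is well defined; that $D^{\alpha}x=zx/p$ is continuous; and that $D^{\alpha}[pD^{\alpha}x]=D^{\alpha}[zx]=zD^{\alpha}x+xD^{\alpha}z-\kappa_1 zx$ is continuous, using that $D^{\alpha}z$ is continuous for $z\in\D_R$. The final ``moreover'' clause is then immediate, since \eqref{zform} is built into both constructions.
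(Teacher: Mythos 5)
Your proposal is correct and follows essentially the same route as the paper: the forward direction applies Theorem \ref{factorricc} to the substitution \eqref{zform}, and the converse constructs the zero-free solution as the conformable exponential $x=e_{(z/p)}(t,t_0)$ and verifies $x\in\D$ exactly as the paper does. The only cosmetic difference is that you close the converse by invoking Theorem \ref{factorricc} again, whereas the paper redoes that same algebra directly (using $Rz=0$ to compute $D^{\alpha}[pD^{\alpha}x]=-qx$); these are the same computation.
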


\begin{proof} 
First assume $Lx=0$ has a solution $x\in\D$ with $x\neq 0$ on $\mathcal{I}$, and let $z$ have the form \eqref{zform}. By Theorem \ref{factorricc} we have
$xRz=Lx=0$ on $\mathcal{I}$, making $z$ a solution of \eqref{ricceq} on $\mathcal{I}$.

Conversely, let $z$ be a solution of \eqref{ricceq} on $\mathcal{I}$. Then $(z/p)$ is continuous, and we let 
\[ x = e_{(z/p)}(t,t_{0}), \quad t_0\in\mathcal{I}. \]  
Then $x$ is well defined and never zero.  Note also that $D^{\alpha}x = zx/p$ is continuous, and if we solve this equation for $z$ we get \eqref{zform}. Furthermore,
\[ D^{\alpha}[pD^{\alpha}x] \stackrel{\eqref{zform}}{=} D^{\alpha}[zx] = zD^{\alpha}x + xD^{\alpha}z - \kappa_1zx \] 
is continuous on $\mathcal{I}$, putting $x\in\D$. Moreover,
\[ D^{\alpha}[pD^{\alpha}x] = zD^{\alpha}x + x(D^{\alpha}z - \kappa_1z) = z^2x/p + x(-q-z^2/p) = -qx. \]
This shows that $x$ is a solution of $Lx=0$ on $\mathcal{I}$, completing the proof.
\end{proof}


\section{Quadratic Functional and Picone Identities}\label{secquad}


We are now interested in exploring the connection between the disconjugacy (Definition \ref{disconj}) of the equation $Lx=0$ on $\mathcal{I}$ for $L$ in \eqref{saveq}, and the positive definiteness of a certain related quadratic functional. To this end, for $a,b\in\mathcal{I}$, we define the set of admissible functions $\A$ to be
\[ \A:=\left\{\eta\in \text{C}^1([a,b],\R)\backslash\{0\}:\eta(a)=\eta(b)=0\right\}. \]
Here C$^1([a,b],\R)$ denotes the set of all continuous functions whose regular derivatives are piece-wise continuous. Then we define the quadratic functional $\cF$ on $\A$ via
\begin{eqnarray}\label{quadF}  
 \cF(\eta) &=& \int_a^b \Big[p(D^{\alpha}\eta)^2 - q\eta^2 \Big](t) e^2_0(b,t)d_{\alpha}t.
\end{eqnarray}


\begin{definition}
The functional $\cF$ is positive definite on $\A$ (write $\cF>0$) provided $\cF(\eta)\ge 0$ for all $\eta\in\A$, and $\cF(\eta)=0$ iff $\eta=0$.
\end{definition}


\begin{lemma}\label{lemma511}
Let $u$ be a solution of \eqref{saveq} on $[a,b]$, and let $c,d\in[a,b]$ with $a \le c < d \le b$. Then for
\[ \eta(t):=\begin{cases} u(t) & : t\in(c,d) \\ 0 & : t\not\in(c,d) \end{cases} \]
we have $\eta\in\A$ and
\[ \cF(\eta) = \frac{[puD^{\alpha}u](d)}{e^2_0(d,b)} - \frac{[puD^{\alpha}u](c)}{e^2_0(c,b)}. \]
\end{lemma}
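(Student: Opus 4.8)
The plan is to reduce the functional to an integral over $(c,d)$, turn the integrand into a single conformable derivative by exploiting the equation $Lu=0$, and then invoke the Fundamental Theorem of Integral Calculus (Theorem \ref{ftc}). Checking $\eta\in\A$ is routine: $\eta$ is piecewise continuously differentiable and not identically zero, and it vanishes at $a$ and $b$ because $a,b\notin(c,d)$ (continuity across the junctions $c,d$ being the standard situation in which $c,d$ are zeros of $u$). Since both $\eta$ and $D^{\alpha}\eta$ are supported in $[c,d]$, the defining integral \eqref{quadF} collapses to $\cF(\eta)=\int_c^d\big[p(D^{\alpha}u)^2-qu^2\big](t)\,e^2_0(b,t)\,d_{\alpha}t$.

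The heart of the argument is a pointwise identity. Applying the product rule $D^{\alpha}[fg]=fD^{\alpha}g+gD^{\alpha}f-fg\kappa_1$ from Lemma \ref{basicderiv} to the product $u\cdot pD^{\alpha}u$, and substituting $D^{\alpha}[pD^{\alpha}u]=-qu$ (valid because $u$ solves \eqref{saveq}), I would obtain $p(D^{\alpha}u)^2-qu^2=D^{\alpha}[upD^{\alpha}u]+\kappa_1\,upD^{\alpha}u$. The right-hand side is exactly of the form handled by the quotient identity \eqref{dfpluskf}; taking $f=upD^{\alpha}u$ there rewrites it as $e_0(t,b)\,D^{\alpha}\!\left[upD^{\alpha}u/e_0(t,b)\right]$. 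Multiplying by the weight $e^2_0(b,t)$ and using the reciprocal relation $e_0(t,b)e_0(b,t)=1$ from Lemma \ref{expprops}, the integrand of $\cF(\eta)$ becomes $e_0(b,t)\,D^{\alpha}[g]$ with $g:=upD^{\alpha}u/e_0(\cdot,b)$, i.e. a lone conformable derivative weighted by $e_0(b,t)$.

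Finally I would evaluate the integral. Theorem \ref{ftc} is phrased with the weight keyed to the upper limit of integration, so I would first factor $e_0(b,t)=e_0(b,d)e_0(d,t)$ via the semigroup property in Lemma \ref{expprops} and present the integral as $e_0(b,d)\int_c^d D^{\alpha}[g]\,e_0(d,t)\,d_{\alpha}t$. Theorem \ref{ftc} then yields $e_0(b,d)\big[g(d)-g(c)e_0(d,c)\big]$. Substituting $g(t)=[upD^{\alpha}u](t)/e_0(t,b)$ and simplifying the exponential factors—using $e_0(b,d)/e_0(d,b)=1/e^2_0(d,b)$ for the $d$-term, and $e_0(b,d)e_0(d,c)=e_0(b,c)$ together with $e_0(b,c)/e_0(c,b)=1/e^2_0(c,b)$ for the $c$-term—produces precisely $\frac{[puD^{\alpha}u](d)}{e^2_0(d,b)}-\frac{[puD^{\alpha}u](c)}{e^2_0(c,b)}$, as claimed.

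I expect the only genuine obstacle to be the exponential bookkeeping: the functional carries the weight $e^2_0(b,t)$ anchored at the fixed point $b$, whereas Theorem \ref{ftc} naturally produces boundary values against the weight keyed to the endpoint of integration. Keeping these straight—and repeatedly invoking the semigroup and reciprocal identities of Lemma \ref{expprops}—is where an error is most likely to creep in; by comparison the differential part (the product-rule identity and the substitution $D^{\alpha}[pD^{\alpha}u]=-qu$) is entirely routine.
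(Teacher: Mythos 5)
Your proof is correct, and it arrives at the same boundary terms, but it is organized along a genuinely different line than the paper's. The paper keeps $p(D^{\alpha}u)^2$ and $-qu^2$ separate, applies the integration-by-parts formula of \cite[Lemma 1.9 (iii)]{au} to the first term (taking $f=u$ and the weighted $pD^{\alpha}u$ as $g$), rewrites the resulting integral via \eqref{dfminuskf}, and only at the final step invokes the equation, so that what survives is $\int_c^d u(Lu)(t)e^2_0(b,t)\,d_{\alpha}t=0$ plus the boundary term. You reverse the order: you substitute $D^{\alpha}[pD^{\alpha}u]=-qu$ pointwise at the outset, so that the product rule of Lemma \ref{basicderiv} together with \eqref{dfpluskf} exhibits the whole integrand $[p(D^{\alpha}u)^2-qu^2]e^2_0(b,t)$ as the single exact derivative $e_0(b,t)D^{\alpha}\left[upD^{\alpha}u/e_0(\cdot,b)\right]$, and then one application of Theorem \ref{ftc}, with the weight re-anchored from $b$ to $d$ by the semigroup identity of Lemma \ref{expprops}, finishes the computation; your exponential bookkeeping checks out exactly. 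Since integration by parts is precisely product rule plus fundamental theorem, the two arguments are equivalent in content, but each has a distinct advantage: yours is self-contained, using only tools stated and proved in this paper rather than the external integration-by-parts lemma of \cite{au}, and your pointwise identity is in fact Picone Identity I (Theorem \ref{picone}) specialized to $z=pD^{\alpha}u/u$, obtained without assuming $u\neq 0$; the paper's template, on the other hand, is the one that generalizes when the test function is not the solution itself (as in first-variation arguments), since your exact-derivative trick depends on $\eta=u$ on $(c,d)$. One shared soft spot: $\eta$ is continuous at the junctions only when $u(c)=u(d)=0$, the situation in which the lemma is actually applied in the Reid Roundabout Theorem; your parenthetical flags this, so you are no less careful than the paper, which simply calls membership in $\A$ ``apparent.''
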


\begin{proof}
Let $u,\eta$ be as described in the statement of the lemma. It is apparent that $\eta\in \text{C}^1([a,b],\R)$ with $\eta(a)=0$ and $\eta(b)=0$, putting $\eta\in\A$. Now consider $\cF(\eta)$ for $\cF$ in \eqref{quadF}. We have 
\[ \eta = 0 = D^{\alpha}\eta \quad\text{on}\quad [a,c)\cup(d,b], \]
so
\[ \cF(\eta) = \int_c^d \Big[p(D^{\alpha}u)^2 - qu^2 \Big](t) e^2_0(b,t)d_{\alpha}t, \]
giving us
\[ \cF(\eta) = \int_c^d (e_0(b,d)e_0(b,t)pD^{\alpha}u)(D^{\alpha}u)(t) e_0(d,t)d_{\alpha}t - \int_c^d (qu^2)(t) e^2_0(b,t)d_{\alpha}t. \]
Using the integration by parts formula \cite[Lemma 1.9 (iii)]{au}
\[ \int_c^d D^{\alpha}[f(t)]g(t) e_0(d,t)d_{\alpha}t = f(t)g(t)e_0(d,t)\big|_{t=c}^{d}  
   - \int_c^d f(t)\left(D^{\alpha}[g]-\kappa_1g\right)(t) e_0(d,t)d_{\alpha}t \]
with $u=f$ and $(e_0(b,d)e_0(b,t)pD^{\alpha}u)=g$ yields
\begin{eqnarray*}
 \cF(\eta) &=& \frac{puD^{\alpha}u}{e^2_0(t,b)}\Big|_c^d - \int_c^d u\left(D^{\alpha}\left[\frac{pD^{\alpha}u}{e_0(\cdot,b)}\right] 
               -\kappa_1\frac{pD^{\alpha}u}{e_0(\cdot,b)}\right)(t)e_0(b,t)d_{\alpha}t - \int_c^d (qu^2)(t) e^2_0(b,t)d_{\alpha}t \\
           &=& \frac{puD^{\alpha}u}{e^2_0(t,b)}\Big|_c^d - \int_c^d \frac{u(t)}{e_0(t,b)}D^{\alpha}\left[e_0(\cdot,b)\frac{pD^{\alpha}u}{e_0(\cdot,b)}\right]e_0(b,t)d_{\alpha}t
               - \int_c^d (qu^2)(t) e^2_0(b,t)d_{\alpha}t,
\end{eqnarray*}
where we have used the equality
\begin{equation}\label{dfminuskf} 
  D^{\alpha} f(t) - \kappa_1(t) f(t) = \frac{1}{e_0(t,b)}D^{\alpha}\left[e_0(\cdot,b)f\right](t).
\end{equation}
Therefore,
\begin{eqnarray*}
 \cF(\eta) &=& \frac{puD^{\alpha}u}{e^2_0(t,b)}\Big|_c^d - \int_c^d u(t)\left(D^{\alpha}[pD^{\alpha}u]+qu\right)(t)e^2_0(b,t)d_{\alpha}t;
\end{eqnarray*}
as $u$ is a solution of \eqref{saveq}, we have
\[ Lu = D^{\alpha}[pD^{\alpha}u] + qu = 0. \]
Consequently,
\[ \cF(\eta) = \frac{puD^{\alpha}u}{e^2_0(t,b)}\Big|_c^d - \int_c^d u(t)\left(Lu\right)(t)e^2_0(b,t)d_{\alpha}t  = \frac{puD^{\alpha}u}{e^2_0(t,b)}\Big|_c^d, \]
and the proof is complete.
\end{proof}


\begin{theorem}[Picone Identity I]\label{picone}
Assume the Riccati equation \eqref{ricceq} has a solution $z$ on $[a,b]$. Then the conformable self-adjoint equation $Lx=0$ for $L$ in \eqref{saeq} has a solution $x$ that is nonzero on $[a,b]$, and for any admissible function $\eta\in\A$, on $[a,b]$ we have the equality
\[ D^{\alpha}[z\eta^2] + \kappa_1z\eta^2 = p(D^{\alpha}\eta)^2 - q\eta^2 - p(D^{\alpha}\eta - z\eta/p)^2. \]
\end{theorem}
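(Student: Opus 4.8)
The plan is to split the statement into two independent pieces. The existence claim — that a solution $x$ of $Lx=0$ nonzero on $[a,b]$ exists once the Riccati equation has a solution $z$ there — is immediate from Theorem \ref{saricc}, whose converse direction constructs exactly such an $x$ (namely $x=e_{(z/p)}(\cdot,t_0)$) from any Riccati solution. So no separate argument is needed; I would simply cite that theorem and move to the identity.

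For the identity itself, the strategy is a direct computation of the left-hand side using the conformable product rule (Lemma \ref{basicderiv}(3)) followed by a single substitution from the Riccati equation. First I would record the auxiliary formula $D^{\alpha}[\eta^2] = 2\eta D^{\alpha}\eta - \kappa_1\eta^2$, obtained by applying the product rule with $f=g=\eta$. Applying the product rule once more to the product $z\cdot\eta^2$ gives $D^{\alpha}[z\eta^2] = zD^{\alpha}[\eta^2] + \eta^2 D^{\alpha}z - \kappa_1 z\eta^2$; combining these and adding $\kappa_1 z\eta^2$ to both sides yields
\[ D^{\alpha}[z\eta^2] + \kappa_1 z\eta^2 = 2z\eta D^{\alpha}\eta - \kappa_1 z\eta^2 + \eta^2 D^{\alpha}z. \]

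Next I would eliminate $D^{\alpha}z$ using $Rz=0$ from \eqref{ricceq}, i.e. $D^{\alpha}z = -q - z^2/p + \kappa_1 z$. Substituting this in, the two $\kappa_1 z\eta^2$ contributions cancel and the left side reduces to $2z\eta D^{\alpha}\eta - q\eta^2 - z^2\eta^2/p$. To close the argument I would expand the completed-square term on the right, namely $p(D^{\alpha}\eta - z\eta/p)^2 = p(D^{\alpha}\eta)^2 - 2z\eta D^{\alpha}\eta + z^2\eta^2/p$, and subtract it from $p(D^{\alpha}\eta)^2 - q\eta^2$; the $p(D^{\alpha}\eta)^2$ terms cancel and the remaining expression matches the reduced left side exactly.

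The computation carries no genuine difficulty; the only point requiring care is the bookkeeping of the $\kappa_1$ terms, which enter both from the nonstandard product rule (with its extra $-\kappa_1 fg$ correction) and from the $\kappa_1 z$ term in the Riccati equation. The whole point of the argument is that these contributions cancel in pairs, so I would keep explicit track of each $\kappa_1 z\eta^2$ term to confirm that the final identity is free of $\kappa_1$, as it must be since the right-hand side contains no such term.
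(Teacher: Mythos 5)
Your proposal is correct and follows essentially the same route as the paper: cite Theorem \ref{saricc} for the existence of a nonvanishing solution, expand $D^{\alpha}[z\eta^2]$ via two applications of the conformable product rule, substitute $D^{\alpha}z = -q - z^2/p + \kappa_1 z$ from \eqref{ricceq}, and complete the square. The only (immaterial) difference is that you group the product as $z\cdot(\eta^2)$ while the paper groups it as $(z\eta)\cdot\eta$; the $\kappa_1$ bookkeeping and the resulting cancellations are identical.
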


\begin{proof}
The first claim follows from the assumptions and Theorem \ref{saricc}. For any $\eta\in\A$, we have on $[a,b]$ that
\begin{eqnarray*}
 D^{\alpha}[z\eta^2] + \kappa_1z\eta^2 
 &=& \eta D^{\alpha}[z\eta] + z\eta  D^{\alpha}[\eta] \\
 &=& \eta[zD^{\alpha}\eta + \eta  D^{\alpha}z - \kappa_1 z\eta] + z\eta  D^{\alpha}\eta \\
 &\stackrel{\eqref{ricceq}}{=}& 2z\eta  D^{\alpha}\eta + \eta^2(-q-z^2/p) \\
 &=& p(D^{\alpha}\eta)^2 - p(D^{\alpha}\eta)^2 + 2z\eta  D^{\alpha}\eta - q\eta^2 - pz^2\eta^2/p^2 \\
 &=& p(D^{\alpha}\eta)^2 - q\eta^2 - p\left(D^{\alpha}\eta-\frac{z\eta}{p}\right)^2, 
\end{eqnarray*}
and the result follows.
\end{proof}


\begin{remark}
Another Picone identity is possible, this one for the system of self-adjoint conformable equations given by
\begin{eqnarray}
\label{piconeq1} D^\alpha[p_1D^\alpha x](t) + q_1(t)x(t) &=& 0, \\
\label{piconeq2} D^\alpha[p_2D^\alpha x](t) + q_2(t)x(t) &=& 0,
\end{eqnarray}
where $p_j$ and $q_j$ are continuous functions on the interval $\mathcal{I}\subseteq\R$ with $p_j(t)>0$ for all $t\in\mathcal{I}$, for $j=1,2$. 
\end{remark}


\begin{theorem}[Picone Identity II]\label{picone2}
Let $p_j$ and $q_j$ be continuous functions on the interval $\mathcal{I}\subseteq\R$ with $p_j(t)>0$ for all $t\in\mathcal{I}$, for $j=1,2$. Assume $u$ and $v$ are solutions of \eqref{piconeq1} and \eqref{piconeq2}, respectively, with $v(t)\ne 0$ on $[a,b]\subset\mathcal{I}$. Then we have the equality
\begin{equation}\label{picone2eq}
 D^{\alpha}\left[\frac{u}{v}\left(p_1vD^{\alpha}u-p_2uD^{\alpha}v\right)\right] 
 = (q_2 - q_1)u^2 + (p_1 - p_2)(D^{\alpha}u)^2 + p_2\left(D^{\alpha}u - \frac{uD^{\alpha}v}{v}\right)^2.
\end{equation}
\end{theorem}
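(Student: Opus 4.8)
The plan is to establish \eqref{picone2eq} by differentiating the bracketed quantity directly and using the two governing equations \eqref{piconeq1} and \eqref{piconeq2} to eliminate the second-order terms. Since $v\neq 0$ on $[a,b]$, I would first simplify the function inside the outer $D^{\alpha}$,
\[
 \Phi := \frac{u}{v}\left(p_1 v D^{\alpha}u - p_2 u D^{\alpha}v\right) = p_1 u D^{\alpha}u - p_2\,\frac{u^2}{v}\,D^{\alpha}v,
\]
so that the goal becomes showing $D^{\alpha}[\Phi] = (q_2-q_1)u^2 + (p_1-p_2)(D^{\alpha}u)^2 + p_2\left(D^{\alpha}u - uD^{\alpha}v/v\right)^2$ on $[a,b]$.

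Next I would differentiate $\Phi$ one summand at a time with the product and quotient rules of Lemma \ref{basicderiv}. For the first summand, writing $p_1 u D^{\alpha}u = (p_1 D^{\alpha}u)\,u$ and applying the product rule gives $p_1(D^{\alpha}u)^2 + u\,D^{\alpha}[p_1 D^{\alpha}u] - \kappa_1 p_1 u D^{\alpha}u$; substituting $D^{\alpha}[p_1 D^{\alpha}u] = -q_1 u$ from \eqref{piconeq1} collapses the curvature term and produces $p_1(D^{\alpha}u)^2 - q_1 u^2$ together with a proportional piece. For the second summand I would first compute $D^{\alpha}[u^2/v]$ by the quotient rule, using $D^{\alpha}[u^2] = 2u D^{\alpha}u - \kappa_1 u^2$, which simplifies to $2u D^{\alpha}u/v - u^2 D^{\alpha}v/v^2$; then, writing $p_2(u^2/v)D^{\alpha}v = (p_2 D^{\alpha}v)(u^2/v)$ and applying the product rule, I would substitute $D^{\alpha}[p_2 D^{\alpha}v] = -q_2 v$ from \eqref{piconeq2}.

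Subtracting the two differentiated summands, I would sort the curvature terms into $(q_2-q_1)u^2$ and regroup the remaining first-order terms. The final algebraic move is to recognize the combination $p_1(D^{\alpha}u)^2 - 2 p_2 u D^{\alpha}u\,D^{\alpha}v/v + p_2 u^2 (D^{\alpha}v)^2/v^2$ as $(p_1-p_2)(D^{\alpha}u)^2$ plus the perfect square $p_2\left(D^{\alpha}u - u D^{\alpha}v/v\right)^2$, which yields the stated right-hand side of \eqref{picone2eq}.

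The step I expect to be the main obstacle is the careful accounting of the proportional ($\kappa_1$) contributions that each application of the product and quotient rules introduces — contributions entirely absent from the classical ($\alpha=1$, $\kappa_1\equiv 0$) Picone identity. Tracking how these $\kappa_1$ terms distribute across the two summands and verifying that they combine correctly to reproduce exactly the right-hand side is the delicate bookkeeping on which the precise form of \eqref{picone2eq} hinges; by comparison, the completion of the square at the end is routine.
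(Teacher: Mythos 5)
Your computational route is essentially the paper's own (the paper keeps the factor $u/v$ undistributed and applies the product rule once to $\frac{u}{v}\cdot(p_1vD^{\alpha}u-p_2uD^{\alpha}v)$, while you distribute first and differentiate the two summands --- an immaterial difference), but the one step you deferred, namely ``verifying that the $\kappa_1$ terms combine correctly to reproduce exactly the right-hand side,'' is precisely where the argument does not close: they do not combine to zero. Carrying out your plan with Lemma \ref{basicderiv} and the equations \eqref{piconeq1}, \eqref{piconeq2} gives
\begin{align*}
 D^{\alpha}\bigl[(p_1D^{\alpha}u)\,u\bigr] &= p_1(D^{\alpha}u)^2 - q_1u^2 - \kappa_1 p_1 u D^{\alpha}u,\\
 D^{\alpha}\Bigl[(p_2D^{\alpha}v)\,\frac{u^2}{v}\Bigr] &= p_2D^{\alpha}v\Bigl(\frac{2uD^{\alpha}u}{v}-\frac{u^2D^{\alpha}v}{v^2}\Bigr) - q_2u^2 - \kappa_1 p_2\frac{u^2}{v}D^{\alpha}v
\end{align*}
(your intermediate observation that $D^{\alpha}[u^2/v]$ is $\kappa_1$-free is correct), and subtracting and completing the square as you describe yields, with $\Phi:=\frac{u}{v}\left(p_1vD^{\alpha}u-p_2uD^{\alpha}v\right)$,
\[
 D^{\alpha}[\Phi] = (q_2-q_1)u^2 + (p_1-p_2)(D^{\alpha}u)^2 + p_2\Bigl(D^{\alpha}u-\frac{uD^{\alpha}v}{v}\Bigr)^2 - \kappa_1\Phi.
\]
The two leftover proportional pieces, $-\kappa_1p_1uD^{\alpha}u$ and $+\kappa_1p_2u^2D^{\alpha}v/v$, sum to $-\kappa_1\Phi$, not to zero; so your method proves $D^{\alpha}[\Phi]+\kappa_1\Phi=\mathrm{RHS}$, which is not \eqref{picone2eq} as stated.

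This is not a defect you can patch by cleverer bookkeeping, because \eqref{picone2eq} as printed is false whenever $\kappa_1\not\equiv 0$: take $p_1=p_2\equiv 1$, $q_1=q_2\equiv 0$, $u=e_0(\cdot,t_0)$, and $v=h_1(\cdot,t_0)e_0(\cdot,t_0)$; both solve $D^{\alpha}D^{\alpha}x=0$ (since $D^{\alpha}u=0$ and $D^{\alpha}v=e_0(\cdot,t_0)$) and $v\neq 0$ on any $[a,b]$ with $a>t_0$, yet the right-hand side equals $e_0^2(t,t_0)/h_1^2(t,t_0)$ while $D^{\alpha}[\Phi]=e_0^2(t,t_0)/h_1^2(t,t_0)+\kappa_1(t)e_0^2(t,t_0)/h_1(t,t_0)$. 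In fact the paper's own proof commits exactly the slip your hoped-for cancellation would require: in its first display, the inner product-rule differentiations of $v\,(p_1D^{\alpha}u)$ and $u\,(p_2D^{\alpha}v)$ each produce a $-\kappa_1(\text{product})$ term, and their sum $-\kappa_1\Phi$ is silently dropped from the braces, after which the remaining $\kappa_1$ terms appear to cancel against the left-hand side. The correct identity carries the extra term $+\kappa_1\frac{u}{v}\left(p_1vD^{\alpha}u-p_2uD^{\alpha}v\right)$ on the left --- exactly parallel to Picone Identity I (Theorem \ref{picone}), whose left side is $D^{\alpha}[z\eta^2]+\kappa_1z\eta^2$ --- or equivalently, by \eqref{dfpluskf}, its left side can be written as $e_0(t,b)D^{\alpha}\bigl[\Phi/e_0(\cdot,b)\bigr]$. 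Your computation, executed honestly, proves precisely this corrected identity; the downstream Sturm comparison argument (Theorem \ref{sct}) survives with the weight $e_0(d,t)$ replaced by $e_0^2(d,t)$ before integrating via Theorem \ref{ftc}, since only positivity of $e_0$ is used there.
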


\begin{proof}
Assume $u$ and $v$ are solutions of \eqref{piconeq1} and \eqref{piconeq2}, respectively, with $v(t)\ne 0$ on $[a,b]\subset\mathcal{I}$.
For $t\in[a,b]\subset\mathcal{I}$, we use the alpha product rule on $\frac{u}{v}\cdot\left(vp_1D^{\alpha}u-up_2D^{\alpha}v\right)$ to see that 
\begin{eqnarray*}
 && D^{\alpha}\left[\frac{u}{v}\left(vp_1D^{\alpha}u-up_2D^{\alpha}v\right)\right]\!+\!\kappa_1\frac{u}{v}\left(vp_1D^{\alpha}u-up_2D^{\alpha}v\right) \\
 && =\frac{u}{v}\left\{vD^{\alpha}[p_1D^{\alpha}u]+p_1D^{\alpha}uD^{\alpha}v-uD^{\alpha}[p_2D^{\alpha}v]-p_2D^{\alpha}uD^{\alpha}v\right\} 
    + D^{\alpha}\left[\frac{u}{v}\right]\left(vp_1D^{\alpha}u-up_2D^{\alpha}v\right) \\
 && =uD^{\alpha}[p_1D^{\alpha}u]-\frac{u^2}{v}D^{\alpha}[p_2D^{\alpha}v]+\frac{u}{v}(p_1-p_2)D^{\alpha}uD^{\alpha}v \\
 &&  \hspace{1in} + \left(\frac{D^{\alpha}u}{v}-\frac{uD^{\alpha}v}{v^2} + \frac{\kappa_1u}{v}\right)(vp_1D^{\alpha}u - up_2D^{\alpha}v).
\end{eqnarray*} 
Since $u$ and $v$ are solutions of \eqref{piconeq1} and \eqref{piconeq2}, respectively, we can simplify both sides of the expression above to get
\begin{eqnarray*}
 && D^{\alpha}\left[\frac{u}{v}\left(vp_1D^{\alpha}u-up_2D^{\alpha}v\right)\right] \\
 && = (q_2-q_1)u^2 + \frac{u}{v}(p_1-p_2)D^{\alpha}uD^{\alpha}v + \left(D^{\alpha}u-\frac{uD^{\alpha}v}{v}\right)\left(p_1D^{\alpha}u 
       - \frac{u}{v}p_2D^{\alpha}v\right) \\
 && = (q_2-q_1)u^2 + (p_1-p_2)(D^{\alpha}u)^2 + p_2\left((D^{\alpha}u)^2-\frac{2u}{v}D^{\alpha}uD^{\alpha}v+\frac{u^2(D^{\alpha}v)^2}{v^2}\right).
\end{eqnarray*} 
Therefore, equality \eqref{picone2eq} holds and the proof is complete.
\end{proof}

Using the Picone Identity II from Theorem \ref{picone2} above, we prove the following conformable Sturm comparison result.


\begin{theorem}[Sturm Comparison Theorem]\label{sct}
Let $p_j$ and $q_j$ be continuous functions on the interval $\mathcal{I}\subseteq\R$ with $p_j(t)>0$ for all $t\in\mathcal{I}$, for $j=1,2$. Assume $u$ is a solution of \eqref{piconeq1} with consecutive zeros at $a<b$ in $\mathcal{I}$, and assume that
\begin{equation}\label{kp522}
 q_2(t) \ge q_1(t), \quad p_1(t) \ge p_2(t) > 0, \quad t\in[a,b].
\end{equation}
Let $v$ be a nontrivial solution of \eqref{piconeq2}. If for some $t\in[a,b]$ one of the inequalities in \eqref{kp522} is strict, or if $u$ and $v$ are linearly independent on $[a,b]$, then $v$ has a zero in $(a,b)$. 
\end{theorem}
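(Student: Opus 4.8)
The plan is to argue by contradiction using the Picone Identity II (Theorem \ref{picone2}) together with the conformable Fundamental Theorem of Calculus (Theorem \ref{ftc}). Suppose, contrary to the conclusion, that $v(t)\neq 0$ for all $t\in(a,b)$; replacing $v$ by $-v$ if necessary, I may take $v>0$ on $(a,b)$. First I would record that the zeros of $u$ at $a$ and $b$ are simple: since $u$ is a nontrivial solution of \eqref{piconeq1}, the uniqueness result (Theorem \ref{unique}) forbids $u(t_0)=D^\alpha u(t_0)=0$, so $D^\alpha u(a)\neq 0\neq D^\alpha u(b)$, and the same applies to any zero of $v$.

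On $(a,b)$ we have $v\neq 0$, so Theorem \ref{picone2} applies and yields, with $f:=\frac{u}{v}\bigl(p_1vD^\alpha u-p_2uD^\alpha v\bigr)$,
\[ D^\alpha f = (q_2-q_1)u^2 + (p_1-p_2)(D^\alpha u)^2 + p_2\Bigl(D^\alpha u-\tfrac{uD^\alpha v}{v}\Bigr)^2. \]
The key observation is that $f$ vanishes at both endpoints: the factor $u/v$ forces $f(a)=f(b)=0$ when $v(a),v(b)\neq 0$, while if $v$ vanishes at an endpoint the simplicity of the zeros keeps $u/v$ finite and drives the bracket $p_1vD^\alpha u-p_2uD^\alpha v\to 0$, so $f\to 0$ there too. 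Multiplying the identity by the strictly positive weight $e_0(b,t)$ and integrating $d_\alpha t$ via Theorem \ref{ftc} collapses the left side to $f(b)-f(a)e_0(b,a)=0$, whence
\[ \int_a^b \Bigl[(q_2-q_1)u^2+(p_1-p_2)(D^\alpha u)^2+p_2\bigl(D^\alpha u-\tfrac{uD^\alpha v}{v}\bigr)^2\Bigr](t)\,e_0(b,t)\,d_\alpha t = 0. \]

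Under the hypotheses \eqref{kp522} the integrand is nonnegative, and since $e_0(b,t)>0$ and $d_\alpha t>0$ it must vanish identically on $(a,b)$. I would then extract the contradiction in the two stated cases. If one inequality in \eqref{kp522} is strict at some point, then by continuity it is strict on a subinterval, where $(q_2-q_1)u^2>0$ (as $u\neq 0$ there) or $(p_1-p_2)(D^\alpha u)^2>0$ (as $D^\alpha u$ can vanish only at isolated points, again by uniqueness), contradicting the vanishing of the integrand. If instead $u$ and $v$ are linearly independent, the vanishing of the third term forces $D^\alpha u=\frac{u}{v}D^\alpha v$, i.e. $W(u,v)=uD^\alpha v-vD^\alpha u\equiv 0$ on $(a,b)$; since $W(u,v)=\kappa_0(uv'-vu')$ with $\kappa_0\neq 0$, this gives $(u/v)'\equiv 0$, so $u$ is a constant multiple of $v$, contradicting linear independence.

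The step I expect to be most delicate is the endpoint analysis of the boundary term $f$: verifying $f(a)=f(b)=0$ and the convergence of the integral when $v$ is permitted to vanish at $a$ or $b$ rests on the simplicity of the zeros of $u$ and $v$ and a careful limiting argument, rather than on the cruder assumption that $v\neq 0$ on the closed interval $[a,b]$.
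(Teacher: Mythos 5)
Your overall strategy coincides with the paper's: argue by contradiction assuming $v\neq 0$ on $(a,b)$, invoke Picone Identity II (Theorem \ref{picone2}), integrate against the exponential weight using Theorem \ref{ftc}, and show the boundary term $f=\frac{u}{v}\left(p_1vD^{\alpha}u-p_2uD^{\alpha}v\right)$ vanishes at $a$ and $b$ using simplicity of zeros (Theorem \ref{unique}). The main structural difference is that the paper integrates over compact subintervals $[c,d]\subset(a,b)$, obtains \eqref{kp522int}, and passes to the limits $c\to a^+$, $d\to b^-$, disposing of the boundary terms with a conformable L'H\^{o}pital argument; you instead extend $f$ continuously to $[a,b]$ and apply Theorem \ref{ftc} on the closed interval. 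Since Theorem \ref{ftc} requires differentiability of $f$ on all of $[a,b]$, and $f$ is only known to be differentiable where $v\neq 0$, the rigorous form of your step is precisely the paper's limiting argument; you flagged this yourself, and your endpoint computation ($u/v$ bounded, the bracket tending to $0$) is correct and is in substance the same as the paper's L'H\^{o}pital computation. Your treatment of the linear-independence case, where vanishing of the third term gives $W(u,v)=\kappa_0(uv'-vu')\equiv 0$ and hence $u$ proportional to $v$, is correct and more explicit than what the paper writes.

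The genuine gap is the parenthetical claim in your strict-inequality case that ``$D^{\alpha}u$ can vanish only at isolated points, again by uniqueness.'' This is false. Theorem \ref{unique} forbids $u$ and $D^{\alpha}u$ from vanishing \emph{simultaneously}; it says nothing about zeros of $D^{\alpha}u$ alone. In fact $D^{\alpha}u$ can vanish identically on a subinterval $J\subset(a,b)$: there $u$ is a nonzero multiple of $e_0(\cdot,t_0)$, and equation \eqref{piconeq1} then merely forces $q_1\equiv 0$ on $J$, which is perfectly consistent. Consequently, when only the inequality $p_1\ge p_2$ in \eqref{kp522} is strict, your contradiction cannot be reached. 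Worse, in this degenerate configuration no argument can close the gap from the stated hypotheses: take $\alpha=1$ (which is admissible), let $u$ be smooth with simple zeros at $a,b$, positive on $(a,b)$, and constant on a middle subinterval $J$; set $p_1\equiv 1$, $q_1:=-u''/u$ (so $q_1\equiv 0$ on $J$ and $q_1$ is continuous), $q_2:=q_1$, and $p_2:=1-\psi$ with $\psi$ a small smooth bump supported in $J$. Then $v:=u$ is a nontrivial solution of \eqref{piconeq2}, the hypotheses hold with $p_1>p_2$ strictly on part of $J$, yet all three terms of the Picone integrand vanish identically and $v$ has no zero in $(a,b)$. The strict-$p$ subcase genuinely needs the extra assumption that $D^{\alpha}u\not\equiv 0$ on the set where $p_1>p_2$, or equivalently that $\int_a^b\left[(q_2-q_1)u^2+(p_1-p_2)(D^{\alpha}u)^2\right](t)\,e_0(b,t)\,d_{\alpha}t>0$.

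To be fair, the paper's own proof is glossy at exactly this spot: it asserts without justification that the hypotheses force the limit of the right-hand side of \eqref{kp522int} to be positive, and the assertion is unjustifiable in the strict-$p$ subcase for the reason above. Your $q$-strict subcase (using $u\neq 0$ on $(a,b)$, which follows from the zeros at $a,b$ being consecutive) and your linear-independence subcase are sound; it is only the $p$-strict subcase where both your argument and the paper's require either a repaired hypothesis or an additional nondegeneracy assumption.
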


\begin{proof}
Let $u$ be a solution of \eqref{piconeq1} with consecutive zeros at $a<b$ in $\mathcal{I}$, but assume the conclusion of the theorem does not hold. To wit, assume $v$ is a solution of \eqref{piconeq2} with 
\[ v(t) \ne 0, \quad t\in(a,b). \]
Assume $a<c<d<b$, and multiply Picone Identity II in \eqref{picone2eq} by $e_0(d,t)$ to get
\begin{eqnarray*}
 && D^{\alpha}\left[\frac{u}{v}\left(p_1vD^{\alpha}u-p_2uD^{\alpha}v\right)\right](t)e_0(d,t) \\ 
 && = \left\{(q_2 - q_1)u^2 + (p_1 - p_2)(D^{\alpha}u)^2 + p_2\left(D^{\alpha}u - \frac{uD^{\alpha}v}{v}\right)^2\right\}(t)e_0(d,t). 
\end{eqnarray*}
Integrating this from $c$ to $d$ and employing Theorem \ref{ftc} we obtain
\begin{eqnarray}
\nonumber & & \left[\frac{u}{v}\left(p_1vD^{\alpha}u-p_2uD^{\alpha}v\right)\right](t)e_0(d,t)\Big|_{t=c}^d \\
\label{kp522int} & &  = \int_c^d \left\{(q_2 - q_1)u^2 + (p_1 - p_2)(D^{\alpha}u)^2 + p_2\left(D^{\alpha}u - \frac{uD^{\alpha}v}{v}\right)^2\right\}(t)e_0(d,t)d_{\alpha}t.
\end{eqnarray}
From the inequalities in \eqref{kp522} and the assumption that for some $t\in[a,b]$ one of the inequalities in \eqref{kp522} is strict, or that $u$ and $v$ are linearly independent on $[a,b]$, we take limits in \eqref{kp522int} as $c\rightarrow a^+$ and $d\rightarrow b^-$ to see that
\[ \lim_{c\rightarrow a^+, \;d\rightarrow b^-} \left[\frac{u}{v}\left(p_1vD^{\alpha}u-p_2uD^{\alpha}v\right)\right](t)e_0(d,t)\Big|_{t=c}^d >0. \]
We will thus arrive at a contradiction if these two limits are zero. We will prove only the case where $c\rightarrow a^+$, as the other case is similar and thus omitted. Now if $v(a)\ne 0$, then 
\[ \lim_{c\rightarrow a^+} \left[\frac{u}{v}\left(p_1vD^{\alpha}u-p_2uD^{\alpha}v\right)\right](c)e_0(d,c) = 0 \]
as $u(a)=0$. If $v(a)=0$, however, then $D^{\alpha}v(a)\ne 0$, since by Theorem \ref{unique}, only the trivial solution satisfies \eqref{piconeq2} with initial conditions $x(a)=0$, $D^{\alpha}x(a)=0$, and $v$ is nontrivial. Consequently,
\begin{eqnarray*} 
 & & \lim_{c\rightarrow a^+} \left[\frac{u}{v}\left(p_1vD^{\alpha}u-p_2uD^{\alpha}v\right)\right](c)e_0(d,c)
     = -e_0(d,a)\lim_{c\rightarrow a^+} \left[\frac{u^2p_2D^{\alpha}v}{v}\right](c) \\
 & & \stackrel{\text{L'H}}= -e_0(d,a)\lim_{c\rightarrow a^+} 
     \left[\frac{u^2D^{\alpha}[p_2D^{\alpha}v]+(p_2D^{\alpha}v)D^{\alpha}[u^2]-\kappa_1u^2p_2D^{\alpha}v}{D^{\alpha}v}\right](c) \\
 & & = -e_0(d,a)\lim_{c\rightarrow a^+}
     \left[\frac{-q_2uv + 2p_2D^{\alpha}uD^{\alpha}v - 2\kappa_1up_2D^{\alpha}v}{D^{\alpha}v}\right](c)u(c) \\
 & & = 0,
\end{eqnarray*}
using L'H\^{o}pital's Rule. This rule is valid in this case; suppose $f$ and $g$ are continuously differentiable at $a$ such that $f(a)=0=g(a)$ with $D^{\alpha}g(a)\ne 0$. Then
\[ \lim_{c\rightarrow a} \frac{f(c)}{g(c)} = \lim_{c\rightarrow a} \frac{\kappa_1(c)f(a)+\kappa_0(c)\frac{f(c)-f(a)}{c-a}}{\kappa_1(c)g(a)+\kappa_0(c)\frac{g(c)-g(a)}{c-a}} = \frac{\kappa_1(a)f(a)+\kappa_0(a)f'(a)}{\kappa_1(a)g(a)+\kappa_0(a)g'(a)} = \frac{D^{\alpha}f(a)}{D^{\alpha}g(a)}. \]
This ends the proof.
\end{proof}


\begin{definition}\label{oscdef}
Let $L$ be as in \eqref{saveq} with $p,q$ continuous and $p>0$, and consider the interval $[a,b)\subset(0,\infty)$ with $b\le\infty$. Then $Lx=0$ is oscillatory on $[a,b)$ provided every nontrivial real-valued solution of $Lx=0$ has infinitely many zeros in $[a,b)$, and $Lx=0$ is nonoscillatory if it is not oscillatory.
	\index{oscillatory} 
	\index{zero}
\end{definition}

The following corollary to the Sturm Comparison Theorem, Theorem \ref{sct}, relates certain behaviors of solutions to \eqref{piconeq1} and \eqref{piconeq2} in terms of disconjugacy (Definition \ref{disconj}) and oscillation (Definition \ref{oscdef}). The proof is omitted.


\begin{corollary}\label{sctcoro}
Let $p_j$ and $q_j$ be continuous functions on the interval $\mathcal{I}\subseteq\R$ with $p_j(t)>0$ for all $t\in\mathcal{I}$, for $j=1,2$. Assume that
\begin{equation}\label{kp524}
 q_2(t) \ge q_1(t), \quad p_1(t) \ge p_2(t) > 0, \quad t\in J\subset\mathcal{I}.
\end{equation}
If \eqref{piconeq2} is disconjugate on $J$, then \eqref{piconeq1} is disconjugate on $J$. If \eqref{piconeq1} is oscillatory on $J$, then \eqref{piconeq2} is oscillatory on $J$.
\end{corollary}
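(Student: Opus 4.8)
The plan is to derive both implications from the Sturm Comparison Theorem (Theorem \ref{sct}), arguing each contrapositively. First I would record a preliminary fact used throughout: every nontrivial solution of \eqref{piconeq1} or \eqref{piconeq2} has only simple, hence isolated, zeros. Indeed, if such a solution vanished together with its conformable derivative at a point, it would be trivial by the uniqueness theorem (Theorem \ref{unique}); so each zero is simple and therefore isolated. This is what lets me pass freely between ``having two zeros'' and ``having two consecutive zeros,'' and it controls how infinitely many zeros must distribute.

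For the disconjugacy statement I would prove the contrapositive: if \eqref{piconeq1} is not disconjugate on $J$, then neither is \eqref{piconeq2}. Assume some nontrivial solution $u$ of \eqref{piconeq1} has two zeros in $J$; by isolation I may choose consecutive zeros $a<b$ with $[a,b]\subset J$. Let $v$ be the nontrivial solution of \eqref{piconeq2} fixed by $v(a)=0$, $D^{\alpha}v(a)=1$, so $v$ already has a zero at $a$, and I invoke Theorem \ref{sct} on $[a,b]$ to produce a second zero of $v$ in $(a,b)$. If one of the inequalities in \eqref{kp524} is strict somewhere on $[a,b]$, Theorem \ref{sct} applies directly. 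Otherwise $p_1\equiv p_2$ and $q_1\equiv q_2$ on $[a,b]$, and either $u,v$ are linearly independent there, so Theorem \ref{sct} again yields a zero of $v$ in $(a,b)$, or $v$ is a scalar multiple of $u$, whence $v(b)=0$. In every case $v$ has two zeros in $[a,b]\subseteq J$, so \eqref{piconeq2} is not disconjugate.

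For the oscillation statement I would argue by contradiction on the half-open interval $J$ with right endpoint $\omega\le\infty$. Suppose \eqref{piconeq1} is oscillatory but \eqref{piconeq2} is not; then some nontrivial solution $v$ of \eqref{piconeq2} has only finitely many zeros in $J$, so $v\ne 0$ on $(T,\omega)$ for some $T\in J$. Fix any nontrivial solution $u$ of \eqref{piconeq1}; by oscillation it has infinitely many zeros in $J$, and since these are isolated they accumulate only at $\omega$, so $u$ has consecutive zeros $t_k<t_{k+1}$ with $T<t_k$. On $[t_k,t_{k+1}]$ we have $v\ne 0$ while $u(t_k)=0$, so $u$ and $v$ are linearly independent on that subinterval, and Theorem \ref{sct} forces $v$ to vanish somewhere in $(t_k,t_{k+1})\subset(T,\omega)$, a contradiction. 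Hence every nontrivial solution of \eqref{piconeq2} has infinitely many zeros in $J$, i.e. \eqref{piconeq2} is oscillatory.

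The routine parts are the selection of consecutive zeros and the bookkeeping of which hypothesis of Theorem \ref{sct} is being used. The main obstacle is verifying the non-degeneracy hypothesis of the Sturm Comparison Theorem in each application. The oscillation argument is clean because linear independence is automatic on any subinterval where $v$ is nonvanishing and $u$ vanishes; the disconjugacy argument is more delicate, since there I must separately dispatch the degenerate case in which the two equations coincide on $[a,b]$, observing that $v$ is then either independent of $u$ (so Sturm supplies an interior zero) or proportional to $u$ (so the second zero appears at $b$).
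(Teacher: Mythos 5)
Your proof is correct and complete. The paper itself omits the proof of this corollary, presenting it as an immediate consequence of Theorem \ref{sct}; your argument --- contrapositive applications of the Sturm Comparison Theorem, with zeros shown to be simple and isolated via Theorem \ref{unique}, and with the degenerate case ($p_1\equiv p_2$, $q_1\equiv q_2$, $u$ and $v$ proportional on $[a,b]$, whence $v(b)=0$) dispatched separately --- is exactly the intended derivation, and it correctly verifies the non-degeneracy hypothesis of Theorem \ref{sct} in each application.
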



\begin{example}
We show that if
\[ 0<p(t)\le 1, \quad\text{and}\quad q(t)\ge \omega^2 > 0, \quad t\in\R, \]
then the homogeneous self-adjoint equation \eqref{saveq} is oscillatory on $\R$. To prove this we will refer to Corollary \ref{sctcoro}. Assume $\kappa_0,\kappa_1$ satisfy \eqref{kappaconditions} such that 
\[ \lim_{b\rightarrow\infty}\int_{t_0}^b 1d_{\alpha}t=\infty. \]

In \eqref{lineq} choose the constant coefficients $a\equiv 1$, $b\equiv 0$, and $c\equiv \omega^2$ for constant $\omega>0$, and $g\equiv 0$. By Lemma \ref{expprops} and Theorem \ref{pisaf} we have
\[ p_1(t)\equiv 1, \quad q_1(t) = \omega^2, \]
and the homogeneous self-adjoint equation \eqref{piconeq1} simplifies, as shown in Example \ref{coscosh}, to
\[ D^\alpha  D^\alpha x(t) + \omega^2 x(t) = 0, \quad t\in\R, \]
with oscillatory solutions of the form
\[ x(t) = c_1\cos_{\alpha}(\omega;t,t_0) + c_2\sin_{\alpha}(\omega;t,t_0), \quad t,t_0\in\R, \]
since $\omega>0$. Therefore, by Corollary \ref{sctcoro}, the homogeneous self-adjoint equation \eqref{saveq} is oscillatory on $\R$. $\hfill\triangle$
\end{example}


\section{Reid Roundabout Theorem}\label{secreid}

The earlier results of the paper culminate here with the following roundabout theorem; this roundabout theorem generalizes and extends results found in the classical case by Reid~\cite[Theorem 2.1]{r56}, which can be recovered by setting $\alpha=1$.


\begin{theorem}[Reid Roundabout Theorem]\label{reidthm}
Let $L$ be as in \eqref{saeq}, and let $p>0$ and $q$ be continuous functions on $[a,b]\subset\mathcal{I}$. Then the following are equivalent:
\begin{enumerate}
 \item[$(i)$]   There is a solution $x$ of the self-adjoint equation $Lx=0$ with $x\neq 0$ on $[a,b]$.
 \item[$(ii)$]  There is a solution $z$ of the Riccati equation \eqref{ricceq} on $[a,b]$.
 \item[$(iii)$] The quadratic functional $\cF$ given in \eqref{quadF} is positive definite on $\A$.
 \item[$(iv)$]  The self-adjoint equation $Lx=0$ is disconjugate on $[a,b]$.
 \item[$(v)$]   The solution $u$ of $Lx=0$ satisfying initial conditions $u(a)=0$ and $D^{\alpha} u(a)=1/p(a)$ is nonzero on $(a,b]$. 
 \item[$(vi)$]  The solution $v$ of $Lx=0$ satisfying initial conditions $v(b)=0$ and $D^{\alpha} v(b)=1/p(b)$ is nonzero on $[a,b)$.
\end{enumerate}
\end{theorem}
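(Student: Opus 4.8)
The plan is to prove all six statements equivalent by the cycle $(i)\Rightarrow(ii)\Rightarrow(iii)\Rightarrow(iv)\Rightarrow(v)\Rightarrow(i)$, supplemented by $(iv)\Rightarrow(vi)\Rightarrow(i)$ to fold in the sixth condition; every statement then reaches $(i)$ and $(i)$ reaches every statement. The equivalence $(i)\Leftrightarrow(ii)$ is immediate from Theorem \ref{saricc}, which already links a nonvanishing solution of $Lx=0$ to a solution of the Riccati equation \eqref{ricceq} through the substitution \eqref{zform}, so no new work is needed there.

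For $(ii)\Rightarrow(iii)$ I would start from the Picone Identity I (Theorem \ref{picone}), rearranged as $p(D^{\alpha}\eta)^2-q\eta^2 = D^{\alpha}[z\eta^2]+\kappa_1 z\eta^2 + p(D^{\alpha}\eta-z\eta/p)^2$, multiply by the weight $e_0^2(b,t)$, and integrate over $[a,b]$ against $d_{\alpha}t$. The left side is exactly $\cF(\eta)$ from \eqref{quadF}. For the first term on the right, \eqref{dfpluskf} gives $D^{\alpha}[z\eta^2]+\kappa_1 z\eta^2 = e_0(t,b)D^{\alpha}\!\left[z\eta^2/e_0(t,b)\right]$, and since $e_0(t,b)e_0^2(b,t)=e_0(b,t)$ by Lemma \ref{expprops}(iii), the Fundamental Theorem of Integral Calculus (Theorem \ref{ftc}) collapses this term to boundary values of $z\eta^2/e_0(t,b)$, which vanish because $\eta(a)=\eta(b)=0$. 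What survives is
\[ \cF(\eta) = \int_a^b p\left(D^{\alpha}\eta-\frac{z\eta}{p}\right)^2 e_0^2(b,t)\, d_{\alpha}t \ge 0, \]
as $p>0$. Positive definiteness follows since $\cF(\eta)=0$ forces $D^{\alpha}\eta=z\eta/p$ with $\eta(a)=0$, whence $\eta\equiv 0$ by the uniqueness theory behind Theorem \ref{unique}.

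Next, $(iii)\Rightarrow(iv)$ I would argue by contraposition: if $Lx=0$ is not disconjugate, a nontrivial solution $u$ has consecutive zeros $c<d$ in $[a,b]$, and Lemma \ref{lemma511} supplies a nonzero admissible $\eta$ (equal to $u$ on $(c,d)$ and $0$ outside) with $\cF(\eta)$ equal to the boundary expression, which vanishes because $u(c)=u(d)=0$; this contradicts $\cF>0$. Then $(iv)\Rightarrow(v)$ (and symmetrically $(iv)\Rightarrow(vi)$) is direct: the solution $u$ with $u(a)=0$, $D^{\alpha}u(a)=1/p(a)$ is nontrivial, so disconjugacy forbids a second zero, forcing $u\ne 0$ on $(a,b]$.

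The main obstacle is closing the loop with $(v)\Rightarrow(i)$ (and likewise $(vi)\Rightarrow(i)$), because the witness $u$ in $(v)$ itself vanishes at the endpoint $a$ and so is not a witness for $(i)$. My plan is a continuous-dependence/perturbation argument: normalize $u>0$ on $(a,b]$, and for small $\varepsilon>0$ take the solution $x_\varepsilon$ of $Lx=0$ with $x_\varepsilon(a)=\varepsilon$, $D^{\alpha}x_\varepsilon(a)=1/p(a)$. By continuous dependence on initial data (via the vector reformulation in the proof of Theorem \ref{unique}), $x_\varepsilon\to u$ uniformly on $[a,b]$ as $\varepsilon\to 0^+$, so $x_\varepsilon$ inherits the strict positivity of $u$ on any $[a+\delta,b]$ for small $\varepsilon$, while near $a$ the conditions $x_\varepsilon(a)=\varepsilon>0$ and $D^{\alpha}x_\varepsilon(a)>0$ keep $x_\varepsilon$ positive; hence $x_\varepsilon\ne 0$ on all of $[a,b]$, giving $(i)$. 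The delicate point is exactly the uniform control near the degenerate endpoint $a$, which I would handle by combining compactness on $[a+\delta,b]$ with a local sign estimate from the positive conformable derivative at $a$. A more algebraic alternative is to invoke a Sturm separation argument (the Wronskian $W(u,w)=ce_0^2(t,b)/p$ of Abel's formula, Corollary \ref{wronskid}, is sign-definite) to show $(v)\Rightarrow(iv)$ directly, but the perturbation route is more self-contained given the existence–uniqueness theory already established.
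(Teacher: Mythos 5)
Your proposal is correct, and for most of the cycle it coincides with the paper's own proof: $(i)\Leftrightarrow(ii)$ via Theorem \ref{saricc}; $(ii)\Rightarrow(iii)$ via Picone Identity I, the identity \eqref{dfpluskf}, Theorem \ref{ftc}, and uniqueness for the first-order equation $D^{\alpha}\eta=z\eta/p$, $\eta(a)=0$; $(iii)\Rightarrow(iv)$ by contraposition through Lemma \ref{lemma511}; and $(iv)\Rightarrow(v)$, $(iv)\Rightarrow(vi)$ directly from disconjugacy. Where you genuinely diverge is in closing the loop. The paper first proves $(v)\Leftrightarrow(vi)$ using Abel's formula (Corollary \ref{wronskid}), which gives the endpoint identity $u(b)=-e_0^2(b,a)v(a)$ tying the two special solutions together, and then proves $(vi)\Rightarrow(i)$ \emph{constructively}: it sets $x:=u+v/v(a)$, notes $x(b)=u(b)\neq 0$, and shows $x\neq 0$ on each $[a,\tau]$, $\tau<b$, by writing $x$ in the reduction-of-order form of Theorem \ref{order2}, $x(t)=\frac{v(t)}{v(a)}\bigl[1+v^2(a)\int_a^t \frac{e_0^2(s,a)}{p(s)v^2(s)}\,d_{\alpha}s\bigr]$, whose bracket is $\ge 1$. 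Your perturbation route $(v)\Rightarrow(i)$ is a legitimate, more elementary alternative, and your skeleton (the cycle through $(i)$--$(v)$ plus $(iv)\Rightarrow(vi)\Rightarrow(i)$) does make all six statements equivalent; but as sketched the step near $a$ has a quantifier problem: positivity of $x_\varepsilon(a)$ and of $D^{\alpha}x_\varepsilon(a)$ only yields positivity on an $\varepsilon$-dependent neighborhood of $a$, which may shrink as $\varepsilon\to 0^+$, while your compactness estimate requires a fixed $\delta$. The clean repair is linearity rather than abstract continuous dependence: write $x_\varepsilon=u+\varepsilon w$ where $Lw=0$, $w(a)=1$, $D^{\alpha}w(a)=0$; pick $\delta>0$ with $w\ge\tfrac12$ on $[a,a+\delta]$ (independent of $\varepsilon$), so that $x_\varepsilon\ge\varepsilon/2>0$ there because $u\ge 0$, and on $[a+\delta,b]$ take $\varepsilon<\min u/\max|w|$. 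With that fix (and its mirror image at $b$ for $(vi)\Rightarrow(i)$) your argument is complete. One caution on your side remark: Sturm separation (Theorem \ref{sct} with $p_1=p_2$, $q_1=q_2$) does give $(v)\Rightarrow(iv)$, but that implication alone would leave nothing in your diagram returning to $(i)$, so it cannot substitute for the perturbation (or the paper's Wronskian) step. What the paper's route buys is an explicit nonvanishing solution and the useful relation between $u$, $v$; what yours buys is independence from the reduction-of-order machinery, at the cost of a limiting argument.
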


\begin{proof}
Note that $(i)$ $\Longleftrightarrow$ $(ii)$ by Theorem~\ref{saricc}.

\noindent $(ii)$ $\Longrightarrow$ $(iii)$: 
From Theorem \ref{saricc} there exists a solution $x$ of $Lx=0$ that is nonzero on $[a,b]$. Let $z$ be given by \eqref{zform}; by Theorem~\ref{saricc}, $z$ is a solution of \eqref{ricceq} on $[a,b]$. By Picone Identity I (Theorem~\ref{picone}), for any $\eta\in\A$ on $[a,b]$ we have
\begin{equation}\label{deze}
 D^{\alpha}[z\eta^2] + \kappa_1z\eta^2 = p(D^{\alpha}\eta)^2 - q\eta^2 - p(D^{\alpha}\eta - z\eta/p)^2; 
\end{equation}
using \eqref{dfpluskf}, we have that \eqref{deze} becomes
\[ e_0(t,b) D^{\alpha}\left[\frac{z\eta^2}{e_0(t,b)}\right] = p(D^{\alpha}\eta)^2 - q\eta^2 - p(D^{\alpha}\eta - z\eta/p)^2. \]
As the equation holds on $[a,b]$, we may multiply both sides by $e^2_0(b,t)$ and integrate from $a$ to $b$ to obtain
\[ \int_a^b D^{\alpha}\left[\frac{z\eta^2}{e_0(t,b)}\right]e_0(b,t)d_{\alpha}t = \cF(\eta) -\int_a^b p(D^{\alpha}\eta - z\eta/p)^2(t)e^2_0(b,t)d_{\alpha}t. \]
Since $\eta$ is admissible, we have by Theorem \ref{ftc} that
\[ \int_a^b D^{\alpha}\left[\frac{z\eta^2}{e_0(t,b)}\right]e_0(b,t)d_{\alpha}t = \frac{z\eta^2(t)}{e_0(t,b)}e_0(b,t)\Big|_{t=a}^b = 0, \]
putting
\[ \cF(\eta) = \int_a^b p(D^{\alpha}\eta - z\eta/p)^2(t)e^2_0(b,t)d_{\alpha}t. \]
As $p>0$, we have $\cF(\eta)\ge 0$ for all $\eta\in\A$. Furthermore, if $\eta\equiv 0$, then $\cF(\eta)=0$. If $\cF(\eta)=0$ for some $\eta\in\A$, then on $[a,b]$ we have
\[ D^{\alpha}\eta = z\eta/p. \]
By variation of constants \cite[Lemma 1.10]{au} we have that the initial value problem
\[ D^{\alpha}\eta - (z/p)\eta = 0, \qquad \eta(a)=0 \]
has a unique solution on $[a,b]$, namely $\eta\equiv 0$ on $[a,b]$. It follows that $\cF$ is positive definite on $\A$.

\noindent $(iii)$ $\Longrightarrow$ $(iv)$: 
Assume $Lx=0$ is not disconjugate on $[a,b]$. Then there exists a nontrivial solution $u$ of \eqref{saveq} with (at least) two zeros in $[a,b]$; in other words, there exist points $c,d\in[a,b]$ with $a\le c < d \le b$ such that $u(c)=0=u(d)$. Define
\[ \eta(t):=\begin{cases} u(t) & : t\in(c,d) \\ 0 & : \text{ otherwise.} \end{cases} \]
By Lemma~\ref{lemma511}, $\eta\in\A$ and $\cF(\eta)=0$, but $\eta\neq 0$, making $\cF$ not positive definite. By the contrapositive argument, the assertion holds.

\noindent $(iv)$ $\Longrightarrow$ $(v)$: 
Let $u$ be the solution of \eqref{saeq} satisfying 
\[ Lu=0, \quad u(a)=0, \quad D^{\alpha}u(a)=1/p(a), \]
and assume the equation \eqref{saveq} is disconjugate on $[a,b]$. Since $u(a)=0$, disconjugacy implies $u\neq 0$ on $(a,b]$.

\noindent $(iv)$ $\Longrightarrow$ $(vi)$: 
Assume \eqref{saveq} is disconjugate on $[a,b]$. The initial value problem 
\[ Lv=0, \quad v(b) = 0 \quad D^{\alpha}v(b)=1/p(b) \] 
has a unique solution $v$ by Theorem~\ref{unique}. Again $v(b)=0$, so disconjugacy implies $v\neq 0$ on $[a,b)$.

\noindent $(v)$ $\Longleftrightarrow$ $(vi)$: Let $u$ and $v$ be the unique solutions of $Lx=0$ as described in statements $(v)$ and $(vi)$, respectively. By Abel's formula, Corollary \ref{wronskid}, their Wronskian satisfies 
\[ W(u,v)(t) = e^2_0(t,a)p(a)W(u,v)(a)/p(t) = -e^2_0(t,a)v(a)/p(t); \]
 by employing the initial conditions at $a$ and $b$ we also get
\begin{equation}\label{wuv}
  -e^2_0(b,a)v(a) = u(b). 
\end{equation}
If $(v)$ holds, $u(b)\neq 0$, making $v(a)\neq 0$ also by \eqref{wuv}. For any $\tau\in(a,b]$, $u\neq 0$ on $[\tau,b]$ implies $(i)$ of this theorem holds on $[\tau,b]$, and thus $(ii)$, $(iii)$, and $(iv)$ likewise hold, as shown above. Consequently, $v$ has no second zero on $[\tau,b)$. By the arbitrary choice of $\tau$ and the condition of $v$ at $a$, $(vi)$ holds overall. The fact that $(vi)$ implies $(v)$ follows from a similar argument and is omitted.

\noindent $(vi)$ $\Longrightarrow$ $(i)$: 
As we are assuming $(vi)$, condition $(v)$ holds as well. Using \eqref{wuv}, let 
\[ y(t):= v(t)/v(a) = -e^2_0(b,a)v(t)/u(b). \]
Then $y$ is a solution of $Lx=0$ with $W(u,y)(t)=-e^2_0(t,a)/p(t)$, and $y\neq 0$ on $[a,b)$ follows from the assumptions on $v$.
Set $x:=u+y$; then $x$ is a solution of $Lx=0$ by linearity. We must check that $x$ is nonzero on all of $[a,b]$. At $b$, $y(b)=0$, and
\begin{equation}\label{xpxatb}
 x(b) = u(b) \neq 0.
\end{equation} 

Finally, let $\tau\in[a,b)$. Since $y$ is a solution of $Lx=0$ that is nonzero on $[a,\tau]$, by the second reduction of order theorem, Theorem \ref{order2}, and the definition of $y$ in terms of $v$ we have on $[a,\tau]$ that
\[ x(t) = v(t)/v(a) + v(a)v(t)\int_{a}^t \frac{e^2_{0}(s,a)}{p(s)v^2(s)} d_{\alpha}s \]
for $t\in[a,\tau]$, which we write as
\[ x(t) = \frac{v(t)}{v(a)}\left[1 + v^2(a)\int_{a}^t \frac{e^2_{0}(s,a)}{p(s)v^2(s)} d_{\alpha}s\right]. \]
Then $x\neq 0$ on $[a,\tau]$ as $v\neq 0$ on $[a,\tau]$. Given that $\tau\in[a,b)$ was arbitrary and the earlier remark \eqref{xpxatb} about $x(b)\neq 0$, we have that $x(t)\neq 0$ for all $t\in[a,b]$, and the proof is complete.
\end{proof}

\section{Lyapunov Inequality}\label{s103} 

Lyapunov inequalities have proven to be useful tools in oscillation theory, disconjugacy, eigenvalue problems, and numerous other applications in the theory of differential equations. In this section we present a conformable Lyapunov inequality.

Throughout this section we assume $a,b\in\mathcal{I}$ with $0\le a<b$ and $\mathcal{I}\subseteq[0,\infty)$. Let $q:\mathcal{I}\to\R$ be continuous with $q(t)>0$ for all $t\in\mathcal{I}$,
and consider the conformable self-adjoint equation ($p\equiv 1$)
	\index{self-adjoint equation}
\begin{equation}\label{lySL}
  D^{\alpha}D^{\alpha}x(t)+q(t)x(t) = 0.
\end{equation}
We will consider this together with the quadratic functional 
\begin{equation}\label{shortquadF}
 \cF(x) = \int_a^b \left[\left(D^{\alpha}x\right)^2-qx^2\right](t) e^2_0(b,t)d_{\alpha}t, \quad d_{\alpha}t\equiv \frac{dt}{\kappa_0(t)}. 
\end{equation}
To prove a Lyapunov inequality for \eqref{lySL} we will first need several auxiliary results.


\begin{lemma}\label{lyl31}
If $x$ solves \eqref{lySL} and if $\cF(y)$ is defined for a function $y$, then
\[ \cF(y)-\cF(x) = \cF(y-x) + 2[(y-x)D^{\alpha}x](t)e^2_0(b,t)\big|_{t=a}^{b}. \]
\end{lemma}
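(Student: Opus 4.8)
The plan is to reduce everything to the bilinearity of the quadratic form $\cF$ and then to identify the resulting cross term with a single conformable derivative whose integral collapses to boundary data. First I would set $\eta := y-x$, so $y = x+\eta$, and expand $\cF(y)=\cF(x+\eta)$ directly from \eqref{shortquadF}. Since the integrand is quadratic in its argument, expanding $(D^{\alpha}(x+\eta))^2 = (D^{\alpha}x)^2 + 2(D^{\alpha}x)(D^{\alpha}\eta)+(D^{\alpha}\eta)^2$ together with $(x+\eta)^2 = x^2+2x\eta+\eta^2$ yields
\[ \cF(y) = \cF(x) + \cF(\eta) + 2\int_a^b \left[(D^{\alpha}x)(D^{\alpha}\eta) - qx\eta\right](t)\, e^2_0(b,t)\, d_{\alpha}t. \]
Thus $\cF(y)-\cF(x)-\cF(y-x)$ equals twice that cross integral, and the entire lemma reduces to showing this integral equals the boundary term $\left[\eta D^{\alpha}x\right](t)e^2_0(b,t)\big|_{t=a}^b$.

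To evaluate the cross integral I would apply the product rule from Lemma \ref{basicderiv}(3) to $\eta\, D^{\alpha}x$, obtaining
\[ D^{\alpha}[\eta D^{\alpha}x] = \eta\,D^{\alpha}D^{\alpha}x + (D^{\alpha}x)(D^{\alpha}\eta) - \kappa_1\eta D^{\alpha}x. \]
This is the one place the hypothesis on $x$ enters: since $x$ solves \eqref{lySL} with $p\equiv 1$, we have $D^{\alpha}D^{\alpha}x = -qx$, so the identity rearranges to
\[ (D^{\alpha}x)(D^{\alpha}\eta) - qx\eta = D^{\alpha}[\eta D^{\alpha}x] + \kappa_1\eta D^{\alpha}x. \]
Invoking identity \eqref{dfpluskf} with $f=\eta D^{\alpha}x$ then lets me recognize the right-hand side as $e_0(t,b)\,D^{\alpha}\!\left[\eta D^{\alpha}x/e_0(t,b)\right]$, converting the full integrand into a single conformable derivative.

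Finally I would substitute this back and clean up the exponential weights. Using Lemma \ref{expprops}(iii) to write $e_0(t,b)=1/e_0(b,t)$ collapses the weight $e_0(t,b)e^2_0(b,t)$ to $e_0(b,t)$, so the cross integral becomes $\int_a^b D^{\alpha}\!\left[\eta D^{\alpha}x/e_0(t,b)\right]e_0(b,t)\,d_{\alpha}t$, which is exactly the form to which the Fundamental Theorem of Integral Calculus, Theorem \ref{ftc}, applies. Evaluating that theorem with $F=\eta D^{\alpha}x/e_0(t,b)$ and simplifying the endpoint factors via properties (i) and (iii) of Lemma \ref{expprops} (noting $e_0(b,b)=1$ and $e_0(b,a)/e_0(a,b)=e^2_0(b,a)$) returns precisely $\left[\eta D^{\alpha}x\right]e^2_0(b,t)\big|_{t=a}^b$, which completes the identity. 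I expect the only delicate step to be the exponential bookkeeping — keeping the factors $e_0(t,b)$, $e_0(b,t)$, and $e^2_0(b,t)$ straight through the reduction to Theorem \ref{ftc} — since the quadratic expansion and the product-rule manipulation are entirely routine.
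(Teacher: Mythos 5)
Your proposal is correct and follows essentially the same route as the paper's proof: expand the quadratic functional to isolate the cross term, use the product rule together with $D^{\alpha}D^{\alpha}x=-qx$ to write the integrand as $D^{\alpha}[\eta D^{\alpha}x]+\kappa_1\eta D^{\alpha}x$, convert this via \eqref{dfpluskf} into a single conformable derivative, and evaluate with Theorem \ref{ftc}. The only difference is cosmetic — you substitute $\eta=y-x$ at the outset rather than expanding $\cF(y)-\cF(x)-\cF(y-x)$ directly — and your exponential bookkeeping at the endpoints is exactly right.
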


\begin{proof}
Under the above assumptions we find
\begin{eqnarray*}
\lefteqn{\cF(y)-\cF(x)-\cF(y-x) = \int_a^b \left[(D^{\alpha}y)^2-qy^2-(D^{\alpha}x)^2+qx^2\right.} \\
&&  \qquad\qquad\qquad
    \left. -(D^{\alpha}(y-x))^2+q(y-x)^2\right](t)e^2_0(b,t)d_{\alpha}t \\
&=& 2\int_a^b \left[(D^{\alpha}y)(D^{\alpha}x) - qyx + qx^2 - (D^{\alpha}x)^2\right](t)e^2_0(b,t)d_{\alpha}t \\
&\stackrel{\eqref{lySL}}{=}& 2\int_a^b \left[(D^{\alpha}x)(D^{\alpha}(y-x)) + (y-x) D^{\alpha}[D^{\alpha}x]\right](t)e^2_0(b,t)d_{\alpha}t \\
&=& 2\int_a^b \left\{D^{\alpha}\left[(y-x)D^{\alpha}x\right] + \kappa_1(y-x)D^{\alpha}x\right\}(t)e^2_0(b,t)d_{\alpha}t \\
&=& 2\int_a^b D^{\alpha}\left[\frac{(y-x)D^{\alpha}x}{e_0(t,b)}\right](t) e_0(t,b)e^2_0(b,t)d_{\alpha}t \\
&=& 2[(y-x)D^{\alpha}x](t)e^2_0(b,t)\big|_{t=a}^{b},
\end{eqnarray*}
using Theorem \ref{ftc} and Lemma \ref{basicderiv} (iii).
\end{proof}


\begin{lemma}\label{lyl32}
Let $\cF$ be given by \eqref{shortquadF}. If $\cF(y)$ is defined, then for any $r,s\in\mathcal{I}$ with $a \le r < s \le b$,
\[ \int_r^s [D^{\alpha}y]^2(t) e^2_0(s,t)d_{\alpha}t \ge \frac{\left\{y(s) - y(r)e_0(s,r)\right\}^2}{h_1(s,r)}. \]
\end{lemma}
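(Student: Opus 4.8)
The plan is to view this as a weighted Cauchy--Schwarz inequality, where the numerator on the right is first rewritten as an integral by means of the Fundamental Theorem of Integral Calculus (Theorem~\ref{ftc}). Applying that theorem to $y$ on the subinterval $[r,s]$ converts the boundary expression into a $d_\alpha$-integral:
\[ y(s) - y(r)e_0(s,r) = \int_r^s D^{\alpha}[y(t)]\,e_0(s,t)\,d_{\alpha}t. \]
This identity is the crucial bridge, since it exhibits $y(s)-y(r)e_0(s,r)$ as the integral of the weighted derivative $D^{\alpha}y(t)\,e_0(s,t)$ against the measure $d_\alpha t$.

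Next I would invoke the ordinary Cauchy--Schwarz inequality for the positive measure $d_\alpha t = dt/\kappa_0(t)$ --- positive because $\kappa_0(\alpha,t)>0$ for $\alpha\in(0,1]$ --- writing the integrand as the product $\bigl(D^{\alpha}y(t)\,e_0(s,t)\bigr)\cdot 1$. This produces
\[ \left(\int_r^s D^{\alpha}[y(t)]\,e_0(s,t)\,d_{\alpha}t\right)^2 \le \left(\int_r^s [D^{\alpha}y(t)]^2\,e^2_0(s,t)\,d_{\alpha}t\right)\left(\int_r^s 1\,d_{\alpha}t\right). \]
Recalling that $\int_r^s 1\,d_\alpha t = h_1(s,r)$ and substituting the identity from the first step, the left side becomes $\bigl(y(s)-y(r)e_0(s,r)\bigr)^2$, so dividing through by $h_1(s,r)$ --- which is strictly positive since $r<s$ --- yields precisely the asserted inequality.

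The argument is short and I expect no genuine obstacle, only a few hypotheses to confirm: that $d_\alpha t$ is a bona fide positive measure on $[r,s]$ (guaranteed by $\kappa_0>0$), that the constant $1$ and the weighted derivative $D^{\alpha}y\,e_0(s,t)$ are both square-integrable there (the latter following from the standing assumption that $\cF(y)$ is defined, which forces $\int_a^b (D^{\alpha}y)^2 e^2_0\,d_\alpha t<\infty$ on $[a,b]\supseteq[r,s]$), and that the weight $e_0(s,t)$ is tracked correctly when Theorem~\ref{ftc} is applied so that the squared weight $e^2_0(s,t)$ matches the one appearing in the statement. The main subtlety, if any, lies in this last bookkeeping of the $e_0$ factors, namely ensuring that the single power of $e_0(s,t)$ produced by the Fundamental Theorem squares to exactly the $e^2_0(s,t)$ weight carried by $\cF$.
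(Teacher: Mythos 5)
Your proposal is correct, but it takes a genuinely different route from the paper. You prove the inequality in two strokes: Theorem~\ref{ftc} applied to $y$ on $[r,s]$ gives $y(s)-y(r)e_0(s,r)=\int_r^s D^{\alpha}[y(t)]\,e_0(s,t)\,d_{\alpha}t$, and then Cauchy--Schwarz with respect to the positive measure $d_{\alpha}t$, splitting the integrand as $\bigl(D^{\alpha}y\,e_0(s,\cdot)\bigr)\cdot 1$, bounds the square of that integral by $\bigl(\int_r^s [D^{\alpha}y]^2 e^2_0(s,t)\,d_{\alpha}t\bigr)\,h_1(s,r)$; your bookkeeping of the exponential weights is exactly right, since the single factor $e_0(s,t)$ produced by the fundamental theorem squares to the weight in the statement (and square-integrability transfers from the weight $e^2_0(b,t)$ in $\cF$ because $e_0(s,t)/e_0(b,t)=1/e_0(b,s)$ is constant in $t$). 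The paper instead argues variationally: it constructs the conformable geodesic interpolant $x(t)=\bigl[y(s)e_0(t,s)h_1(t,r)-y(r)e_0(t,r)h_1(t,s)\bigr]/h_1(s,r)$, which satisfies $D^{\alpha}D^{\alpha}x=0$ with $x(r)=y(r)$, $x(s)=y(s)$, then applies the quadratic expansion of Lemma~\ref{lyl31} (with $q\equiv 0$, on $[r,s]$) to get $\cF_0(y)=\cF_0(x)+\cF_0(y-x)\ge\cF_0(x)$, the boundary terms vanishing because $y-x$ vanishes at both endpoints, and finally evaluates $\cF_0(x)$ explicitly to obtain the right-hand side. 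Your argument is shorter and more elementary, bypassing Lemma~\ref{lyl31} and the explicit interpolant entirely; the paper's argument exhibits the right-hand side as the actual minimum of the restricted functional, attained at the geodesic, which makes the equality case structurally visible and reuses machinery the section needs anyway for the Lyapunov theorem. Note that the two extremals coincide: equality in your Cauchy--Schwarz step forces $D^{\alpha}y\,e_0(s,\cdot)$ to be constant, i.e.\ $D^{\alpha}y=c\,e_0(\cdot,s)$, hence $D^{\alpha}D^{\alpha}y=0$, which is precisely the paper's geodesic equation.
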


\begin{proof}
Under the above assumptions we define
\[ x(t):= \frac{y(s)e_0(t,s)h_1(t,r) - y(r)e_0(t,r)h_1(t,s)}{h_1(s,r)}, \quad h_1(b,a)=\int_a^b 1d_{\alpha}t. \]
We then have
\[ x(r)=y(r),\quad x(s)=y(s),\quad D^{\alpha}x(t) = \frac{y(s)e_0(t,s) - y(r)e_0(t,r)}{h_1(s,r)}, \]
and 
\[ D^{\alpha}D^{\alpha}x(t) = 0, \]
where we used Lemma \ref{basicderiv} (ii), (iv), and (v).
Hence $x$ solves the special conformable equation \eqref{lySL} where $q=0$, and therefore we may apply Lemma \ref{lyl31} to  $\cF_0$ defined by
\[ \cF_0(x):= \int_r^s \left[D^{\alpha}x\right]^2(t) e^2_0(s,t)d_{\alpha}t \]
to find
\begin{eqnarray*}
\cF_0(y)
&=& \cF_0(x) + \cF_0(y-x) + 2[(y-x)D^{\alpha}x](t)e^2_0(s,t)\big|_{t=r}^{s} \\
&=& \cF_0(x)+\cF_0(y-x) \\
&\ge& \cF_0(x) \\ 
&=& \int_r^s \left[D^{\alpha}x\right]^2(t) e^2_0(s,t)d_{\alpha}t  \\
&=& \int_r^s \left[\frac{y(s)e_0(t,s) - y(r)e_0(t,r)}{h_1(s,r)}\right]^2 e^2_0(s,t)d_{\alpha}t \\
&=& \int_r^s \left[\frac{y(s) - y(r)e_0(s,r)}{h_1(s,r)}\right]^2 d_{\alpha}t \\
&=& \left[\frac{y(s) - y(r)e_0(s,r)}{h_1(s,r)}\right]^2 h_1(s,r),
\end{eqnarray*}
and this proves our claim.
\end{proof}

Using Lemma \ref{lyl32}, we now can prove a Lyapunov inequality for conformable equations of the form \eqref{lySL}.


\begin{theorem}[Lyapunov's Inequality]\label{lyt31}
	\index{Lyapunov's inequality!scalar case}
Let $q:\mathcal{I}\to(0,\infty)$ be continuous and positive-valued. If the conformable equation \eqref{lySL} has a nontrivial solution $x$ with $x(a)=x(b)=0$, then the Lyapunov inequality
\begin{equation}\label{lye333}
  \int_a^b q(t) e_0(b,t)d_{\alpha}t \ge \frac{4e_0(b,a)}{h_1(b,a)}, 
\end{equation}
holds.
\end{theorem}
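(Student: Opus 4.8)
The plan is to trap the ``energy identity'' of the solution between a lower bound from Lemma~\ref{lyl32} and an upper bound from a weighted maximum of $x^2$, and then cancel that maximum. First I would record that $\cF(x)=0$. Applying Lemma~\ref{lyl31} with $y\equiv 0$, so that $\cF(y)=0$ and $\cF(y-x)=\cF(x)$, and using $x(a)=x(b)=0$ to annihilate the boundary term, collapses the identity to $-\cF(x)=\cF(x)$, whence $\cF(x)=0$. By \eqref{shortquadF} this is precisely
\[ \int_a^b (D^{\alpha}x)^2(t)\,e^2_0(b,t)\,d_{\alpha}t = \int_a^b q(t)x^2(t)\,e^2_0(b,t)\,d_{\alpha}t. \]

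Next I would choose the splitting point. Since $x$ is nontrivial, continuous, and vanishes at both endpoints, the weighted square $x^2(t)e_0(b,t)$ attains a strictly positive maximum $M=x^2(c)e_0(b,c)$ at some interior $c\in(a,b)$. To bound the left-hand integral below, I split it at $c$ and invoke Lemma~\ref{lyl32} on $[a,c]$ (with $r=a$, $s=c$) and on $[c,b]$ (with $r=c$, $s=b$); the endpoint zeros reduce the two numerators to $x(c)^2$ and $x(c)^2e^2_0(b,c)$. Rewriting the $[a,c]$ contribution under the common weight $e^2_0(b,t)$ via the identities $e_0(c,t)=e_0(c,b)e_0(b,t)$ and $1/e^2_0(c,b)=e^2_0(b,c)$ from Lemma~\ref{expprops}, both pieces acquire the factor $x(c)^2e^2_0(b,c)$. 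Then the additivity $h_1(c,a)+h_1(b,c)=h_1(b,a)$ and the elementary bound $\tfrac{1}{u}+\tfrac{1}{v}\ge \tfrac{4}{u+v}$ give
\[ \int_a^b (D^{\alpha}x)^2(t)\,e^2_0(b,t)\,d_{\alpha}t \ge \frac{4\,x(c)^2 e^2_0(b,c)}{h_1(b,a)} = \frac{4\,M\,e_0(b,c)}{h_1(b,a)} \ge \frac{4\,M\,e_0(b,a)}{h_1(b,a)}, \]
the final inequality because $c\ge a$ and $e_0(b,s)=e^{-\int_s^b \kappa_1/\kappa_0}$ is nondecreasing in $s$.

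For the upper bound I would factor the right-hand integrand as $x^2(t)e^2_0(b,t)=\bigl(x^2(t)e_0(b,t)\bigr)e_0(b,t)\le M\,e_0(b,t)$, which is permissible since $q>0$ and $e_0>0$, yielding
\[ \int_a^b q(t)x^2(t)\,e^2_0(b,t)\,d_{\alpha}t \le M\int_a^b q(t)\,e_0(b,t)\,d_{\alpha}t. \]
Chaining the lower bound, the energy identity $\cF(x)=0$, and this upper bound, and then dividing by the positive constant $M$, produces exactly \eqref{lye333}.

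The main obstacle is the bookkeeping of the two different exponential weights $e^2_0(c,t)$ and $e^2_0(b,t)$ that Lemma~\ref{lyl32} returns on the two subintervals; reconciling them under a single weight must be done with the semigroup and reciprocal identities for $e_0$ applied in the correct direction, and it is this manipulation that makes the common factor $e^2_0(b,c)$ appear. The remaining care is in confirming that $c$ is interior with $M>0$ and that $e_0(b,c)\ge e_0(b,a)$, both of which rest on the nonnegativity of $\kappa_1/\kappa_0$ and the nontriviality of $x$.
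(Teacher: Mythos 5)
Your proof is correct, and it shares the paper's skeleton---derive $\cF(x)=0$ from Lemma \ref{lyl31} with $y=0$, split the integral of $(D^{\alpha}x)^2e^2_0(b,t)$ at a maximum point $c$, and apply Lemma \ref{lyl32} on $[a,c]$ and on $[c,b]$---but it departs from the paper at two substantive points, both to your advantage. First, the paper takes $M=\max x^2(t)$ (the \emph{ordinary} maximum) and then cites Definition \ref{maxmin} to claim $x^2(c)\ge e_0(c,t)x^2(t)$ for all $t\in[a,b]$; for $t>c$ this $\alpha$-maximum property is not automatic from $c$ being an ordinary maximizer, since $e_0(c,t)\ge 1$ there, so that step really requires an argument the paper does not give. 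Your weighted maximum $M=\max x^2(t)e_0(b,t)$ makes the needed bound $q(t)x^2(t)e^2_0(b,t)\le M\,q(t)e_0(b,t)$ true by construction and removes the issue entirely. Second, after both proofs arrive at the quantity $\tfrac{1}{h_1(c,a)}+\tfrac{1}{h_1(b,c)}$, the paper minimizes it over the splitting point by conformable calculus (the critical point $c^*$ of \eqref{lyapcstar}, producing the expression $2+\sqrt{4+\kappa_1^2(c^*)h_1(b,a)}$ and additional $e_0$ factors), whereas you use the elementary inequality $\tfrac1u+\tfrac1v\ge\tfrac{4}{u+v}$ with $h_1(c,a)+h_1(b,c)=h_1(b,a)$, followed by the monotonicity $e_0(b,c)\ge e_0(b,a)$, which rests only on $\kappa_1/\kappa_0\ge 0$. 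Your route is shorter, every step is checkable, and it delivers the same constant $4$ and the same final inequality; the paper's optimization over $c^*$ records where the optimal splitting point lies, but its intermediate inequalities are considerably harder to verify.
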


\begin{proof}
Suppose $x$ is a nontrivial solution of \eqref{lySL} with $x(a)=x(b)=0$. But then we have from Lemma \ref{lyl31} (with $y=0$) that 
\[ \cF(x) = \int_a^b\left[(D^{\alpha}x)^2-qx^2\right](t) e^2_0(b,t)d_{\alpha}t = 0. \]
Since $x$ is nontrivial, we have that $M$ defined by 
\begin{equation}\label{lyeM}
	M:=\max\left\{x^2(t):\;t\in[a,b]\right\} 
\end{equation}
is positive. We now let $c\in(a,b)$ be such that $x^2(c)=M$. According to Definition \ref{maxmin}, we then have
\[ x^2(c)\ge e_0(c,t)x^2(t), \quad t\in[a,b]. \]
Applying the above as well as Lemma \ref{lyl32} twice (once with $r=a$ and $s=c$ and a second time with $r=c$ and $s=b$) we find
\begin{eqnarray*}
 M \int_a^b q(t) e_0(b,t)d_{\alpha}t 
  &\ge& \int_a^b \left\{qx^2\right\}(t) e_0(c,t)e_0(b,t)d_{\alpha}t \\
  & = & e_0(c,b) \int_a^b q(t)x^2(t)e^2_0(b,t) d_{\alpha}t \\
  & = & e_0(c,b) \int_a^b (D^{\alpha}x)^2(t) e^2_0(b,t)d_{\alpha}t \\
  & = & e_0(c,b)\left[\int_a^c (D^{\alpha}x)^2(t) e^2_0(b,t)d_{\alpha}t + \int_c^b (D^{\alpha}x)^2(t) e^2_0(b,t)d_{\alpha}t\right] \\
  & = & e_0(b,c)\int_a^c (D^{\alpha}x)^2(t) e^2_0(c,t)d_{\alpha}t + e_0(c,b)\int_c^b (D^{\alpha}x)^2(t) e^2_0(b,t)d_{\alpha}t \\
  &\ge& \frac{e_0(b,c)}{h_1(c,a)}\left\{x(c) - x(a)e_0(c,a)\right\}^2 + \frac{e_0(c,b)}{h_1(b,c)}\left\{x(b) - x(c)e_0(b,c)\right\}^2 \\
  & = & x^2(c) \left\{\frac{e_0(b,c)}{h_1(c,a)} + \frac{e_0(c,b)e^2_0(b,c)}{h_1(b,c)}\right\} \\
  & = & Me_0(b,c) \left\{\frac{1}{h_1(c,a)} + \frac{1}{h_1(b,c)}\right\} \\
  &\ge& Me_0(b,c)e_0(c,c^*) \left\{\frac{2+\sqrt{4+\kappa_1^2(c^*)h_1(b,a)}}{h_1(b,a)}\right\} \\
  &\ge& M \left\{\frac{4e_0(b,a)}{h_1(b,a)}\right\}.
\end{eqnarray*}
If we divide by $M>0$ we arrive at the desired inequality. Here we have used, in the penultimate inequality, the fact that $h^{-1}_1(c,a)+h^{-1}_1(b,c)$ reaches an $\alpha$-minimum at $c^*\in(a,b)$ iff
\begin{equation}\label{lyapcstar}
 D^{\alpha}\left[\frac{1}{h_1(c,a)} + \frac{1}{h_1(b,c)}\right] = 0 \quad\text{at $c^*\in(a,b)$ iff}\quad \frac{1}{h_1(c^*,a)} = \frac{1}{h_1(b,c^*)} +\kappa_1(c^*), 
\end{equation}
and the fact that $h_1(c^*,a)=h_1(b,a)-h_1(b,c^*)$.
\end{proof}

As an application of Theorem \ref{lyt31} we now prove a sufficient criterion for disconjugacy of \eqref{lySL}, for disconjugacy defined in Definition \ref{disconj}. We use the notation of Theorem \ref{lyt31}.


\begin{theorem}[Sufficient Condition for Disconjugacy of \eqref{lySL}]\label{lyt32}
	\index{disconjugate}
If $q$ satisfies 
\begin{equation}\label{lye33}
	\int_a^b q(t)e_0(b,t) d_{\alpha}t < \frac{4e_0(b,a)}{h_1(b,a)}, 
\end{equation}
then \eqref{lySL} is disconjugate on $[a,b]$.
\end{theorem}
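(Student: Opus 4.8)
The plan is to argue by contraposition, deriving the negation of the hypothesis from the failure of disconjugacy by means of the Lyapunov inequality just established in Theorem~\ref{lyt31}. So suppose \eqref{lySL} is \emph{not} disconjugate on $[a,b]$; by Definition~\ref{disconj} there is a nontrivial solution $x$ with (at least) two zeros, say $x(c)=x(d)=0$ for some $a\le c<d\le b$. Since $c\ge a\ge 0$ and $q>0$ throughout $\mathcal{I}$, the subinterval $[c,d]$ meets all the hypotheses of Theorem~\ref{lyt31}; moreover $x$ is nontrivial on $[c,d]$, for otherwise $x(c)=0$ together with $D^{\alpha}x(c)=0$ would force $x\equiv 0$ on $\mathcal{I}$ by the uniqueness Theorem~\ref{unique}. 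Applying the Lyapunov inequality \eqref{lye333} on $[c,d]$ therefore gives
\[ \int_c^d q(t) e_0(d,t)\, d_{\alpha}t \ge \frac{4 e_0(d,c)}{h_1(d,c)}. \]

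Next I would transport this estimate from $[c,d]$ back to $[a,b]$ using only the positivity of the integrand and the semigroup identity $e_0(b,t)=e_0(b,d)e_0(d,t)$ from Lemma~\ref{expprops}(ii). Since $q\,e_0(b,\cdot)\ge 0$, enlarging the domain of integration and then factoring out the constant $e_0(b,d)$ yields
\[ \int_a^b q(t) e_0(b,t)\, d_{\alpha}t \;\ge\; \int_c^d q(t) e_0(b,t)\, d_{\alpha}t \;=\; e_0(b,d)\int_c^d q(t) e_0(d,t)\, d_{\alpha}t \;\ge\; e_0(b,d)\,\frac{4 e_0(d,c)}{h_1(d,c)} \;=\; \frac{4 e_0(b,c)}{h_1(d,c)}, \]
where the last equality again uses Lemma~\ref{expprops}(ii), namely $e_0(b,d)e_0(d,c)=e_0(b,c)$.

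It then remains to compare the threshold over $[c,d]$ with the target threshold over $[a,b]$; this is the one genuinely delicate step, since in the classical case it reduces to the trivial $4/(b-a)\le 4/(d-c)$, but here the exponential weight must be tracked. I would verify the two monotonicities separately from the integral representation in \eqref{epts}: because $a\le c$ and $\kappa_1/\kappa_0\ge 0$, one has $e_0(b,a)\le e_0(b,c)$, while because $[c,d]\subseteq[a,b]$ and $1/\kappa_0>0$, one has $0<h_1(d,c)\le h_1(b,a)$. Combining a larger numerator with a smaller denominator gives $\tfrac{e_0(b,c)}{h_1(d,c)}\ge \tfrac{e_0(b,a)}{h_1(b,a)}$, and hence
\[ \int_a^b q(t) e_0(b,t)\, d_{\alpha}t \ge \frac{4 e_0(b,a)}{h_1(b,a)}. \]
This is precisely the negation of \eqref{lye33}, completing the contrapositive.

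The main obstacle, as just indicated, is exactly this final threshold comparison: one must confirm that passing to the smaller interval $[c,d]$ moves the Lyapunov bound in the favorable direction, which is where the interplay between the decay of $e_0$ over a larger interval and the growth of $h_1$ has to be checked rather than assumed. Everything else is bookkeeping with the exponential identities of Lemma~\ref{expprops} and the positivity of $q$ and $e_0$.
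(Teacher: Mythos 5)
Your proof is correct, but it takes a genuinely different route from the paper's. The paper argues by contradiction through the Reid Roundabout Theorem~\ref{reidthm}: non-disconjugacy is converted, via the equivalence of disconjugacy with positive definiteness of $\cF$, into a nontrivial function $x$ satisfying $x(a)=x(b)=0$ and $\cF(x)\le 0$ (an admissible function, not necessarily a solution), and then the whole chain of estimates from the proof of Theorem~\ref{lyt31} --- the maximum point $c$, Lemma~\ref{lyl32} applied on $[a,c]$ and on $[c,b]$, the optimization at $c^*$ --- is re-run on the full interval $[a,b]$ to reach $\int_a^b q(t)e_0(b,t)\,d_{\alpha}t\ge 4e_0(b,a)/h_1(b,a)$, contradicting \eqref{lye33}. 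You instead localize to the interval $[c,d]$ between two zeros of a nontrivial solution, invoke Theorem~\ref{lyt31} there as a black box (your nontriviality check via Theorem~\ref{unique} is the right way to license this), and transport the inequality back to $[a,b]$ using positivity of $q\,e_0(b,\cdot)$, the semigroup identity $e_0(b,t)=e_0(b,d)e_0(d,t)$ from Lemma~\ref{expprops}, and the monotonicity facts $e_0(b,c)\ge e_0(b,a)$ and $0<h_1(d,c)\le h_1(b,a)$, all of which you verify correctly from \eqref{epts} since $\kappa_1/\kappa_0\ge 0$ and $1/\kappa_0>0$. The threshold comparison you flag as the delicate step is indeed the only piece with no content in the classical case (where it reads $4/(d-c)\ge 4/(b-a)$), and it is precisely the step the paper never faces: extending the solution by zero (which is what Lemma~\ref{lemma511} does inside the Roundabout Theorem) keeps everything on one interval with one weight. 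What your route buys is economy and self-containedness: Theorem~\ref{lyt31} is reused as stated, and neither the quadratic-functional machinery nor the Roundabout Theorem is needed. What the paper's route buys is that it avoids the nested-interval bookkeeping entirely and keeps the disconjugacy criterion inside the variational framework developed in Sections~\ref{secquad} and~\ref{secreid}.
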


\begin{proof}
Suppose that \eqref{lye33} holds. For the sake of contradiction we assume \eqref{lySL} is not disconjugate. But then by the Reid Roundabout Theorem \ref{reidthm}, there exists a nontrivial $x$ with $x(a)=x(b)=0$ such that $\cF(x)\le 0$. Using this $x$, we define $M$ by \eqref{lyeM} to find
\begin{eqnarray*}
 M \int_a^b q(t) e_0(b,t)d_{\alpha}t 
  &\ge& \int_a^b \left\{qx^2\right\}(t) e_0(c,t)e_0(b,t)d_{\alpha}t \\
  & = & e_0(c,b) \int_a^b (D^{\alpha}x)^2(t) e^2_0(b,t)d_{\alpha}t \\
  & = & e_0(b,c)\int_a^c (D^{\alpha}x)^2(t) e^2_0(c,t)d_{\alpha}t + e_0(c,b)\int_c^b (D^{\alpha}x)^2(t) e^2_0(b,t)d_{\alpha}t \\
  &\ge& \frac{e_0(b,c)}{h_1(c,a)}\left\{x(c) - x(a)e_0(c,a)\right\}^2 + \frac{e_0(c,b)}{h_1(b,c)}\left\{x(b) - x(c)e_0(b,c)\right\}^2 \\
  & = & Me_0(b,c) \left\{\frac{1}{h_1(c,a)} + \frac{1}{h_1(b,c)}\right\} \\
  &\ge& Me_0(b,c)e_0(c,c^*) \left\{\frac{2+\sqrt{4+\kappa_1^2(c^*)h_1(b,a)}}{h_1(b,a)}\right\} \\
  &\ge& M \left\{\frac{4e_0(b,a)}{h_1(b,a)}\right\}.
\end{eqnarray*}
where the last inequality follows precisely as in the proof of Theorem \ref{lyt31}. Consequently, after dividing by $M>0$, we arrive at
\[ \int_a^b q(t)e_0(b,t) d_{\alpha}t \ge \frac{4e_0(b,a)}{h_1(b,a)}, \]
which contradicts \eqref{lye33}, and thus completes the proof.
\end{proof}


\begin{example}
Theorem \ref{lyt32} gives a sufficient condition for the disconjugacy of \eqref{lySL}, namely if the inequality
\eqref{lye333}, namely
\[ \int_a^b q(t) e_0(b,t)d_{\alpha}t < \frac{4e_0(b,a)}{h_1(b,a)}, \]
holds. This example shows that this result is sharp in the sense that 4 is the largest constant such that this result is true in general. To this end, on $[a,b]=[0,1]$, pick $c,\delta\in(0,1)$ such that $h_1(c,0)=h_1(1,c)$ and $c\pm\delta\in(0,1)$. Choose $x:[0,1]\rightarrow\R$ so that $x$ has a continuous second derivative on $[0,1]$ with
\[ x(t) = e_0(t,c-\delta)\begin{cases} h_1(t,0) &: t\in[0,c-\delta], \\ h_1(1,t) &: t\in[c+\delta,1], \end{cases} \]
and such that $x$ satisfies 
\[ D^{\alpha}D^{\alpha}x(t) \le 0, \quad t\in[0,1]. \]
Moreover, let
\[ q(t):= \begin{cases} \frac{-D^{\alpha}D^{\alpha}x(t)}{x(t)}e_0(t,0) &: t\in(0,1), \\ 0 &: t=0,1. \end{cases} \]
Note that $q$ is nonnegative and continuous on $[0,1]$, and $x$ is a nontrivial solution of \eqref{lySL}, namely
\[ D^{\alpha}D^{\alpha}x(t)+q(t)x(t)=0 \]
with $x(0)=0=x(1)$; consequently, \eqref{lySL} is not disconjugate on $[0,1]$. Additionally, we see that
\begin{eqnarray*}
 \int_0^1 q(t) e_0(1,t)d_{\alpha}t 
 & = & \int_{c-\delta}^{c+\delta} \frac{-D^{\alpha}D^{\alpha}x(t)}{x(t)}e_0(t,0)e_0(1,t)d_{\alpha}t \\
 &\le& \frac{-1}{x(c-\delta)} \int_{c-\delta}^{c+\delta} D^{\alpha}[D^{\alpha}x](t)e_0(c-\delta,t)e_0(1,0)d_{\alpha}t \\
 &\le& \frac{-e_0(c-\delta,0)e_0(1,c+\delta)}{h_1(c-\delta,0)} \int_{c-\delta}^{c+\delta} D^{\alpha}[D^{\alpha}x](t)e_0(c+\delta,t)d_{\alpha}t \\
 &\stackrel{\text{Thm } \ref{ftc}}{=}& \frac{-e_0(c-\delta,0)e_0(1,c+\delta)}{h_1(c-\delta,0)} \left[D^{\alpha}x(c+\delta) - D^{\alpha}x(c-\delta)e_0(c+\delta,c-\delta)\right] \\
 & = & \frac{-e_0(c-\delta,0)e_0(1,c+\delta)}{h_1(c-\delta,0)} \left[-e_0(c+\delta,c-\delta) - e_0(c+\delta,c-\delta)\right] \\
 & = & \frac{2e_0(1,0)}{h_1(c-\delta,0)} \\
 & = & \frac{4e_0(1,0)}{h_1(1,0)-2h_1(c,c-\delta)}
\end{eqnarray*}
using the fact that $h_1(c,0)=h_1(1,c)$ and $h_1(c,0)+h_1(1,c)=h_1(1,0)$. Comparing with the result in \eqref{lye333}, clearly
\[ \frac{4e_0(1,0)}{h_1(1,0)-2h_1(c,c-\delta)} > \frac{4e_0(1,0)}{h_1(1,0)} \]
but they can be made arbitrarily close by taking $\delta$ arbitrarily close to 0. 
\end{example}


\section{Factorization}\label{fact}
In this section we lay the groundwork for further exploration of the nonhomogeneous equation \eqref{saeq} by introducing the P\'{o}lya factorization for the conformable self-adjoint equation $Lx=0$, which in turn leads to a variation of parameters result for $Lx=h$. Again we assume throughout that the coefficient function $p$ satisfies $p>0$.


\begin{theorem}[P\'{o}lya Factorization]\label{polya}
Let $L$ be as in \eqref{saeq}. If $Lx=0$ has a positive solution $x>0$ on an interval $[a,b)\subset(0,\infty)$ with $b\le\infty$, then for any function $y\in\D$ we have a P\'{o}lya factorization of $Ly$ on $[a,b)$ given by 
\[ Ly = \rho_1D^{\alpha}\left\{\rho_2D^{\alpha}[\rho_1 y]\right\}, \quad \rho_1(t):=\frac{e_0(t,a)}{x(t)} > 0, \quad \rho_2(t):=\frac{p(t)x^2(t)}{e^2_0(t,a)} > 0. \]
\end{theorem}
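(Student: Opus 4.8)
The plan is to verify the factorization by direct computation, expanding the nested expression $\rho_1 D^{\alpha}\{\rho_2 D^{\alpha}[\rho_1 y]\}$ from the innermost derivative outward and showing it collapses to $Ly$. The guiding observation is that each layer has been engineered so that the conformable Wronskian $W(x,y)=xD^{\alpha}y-yD^{\alpha}x$ from Definition \ref{wdef} emerges naturally, after which the Lagrange identity (Theorem \ref{wnab}) does the heavy lifting.

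First I would compute the inner derivative $D^{\alpha}[\rho_1 y]=D^{\alpha}[e_0(t,a)y/x]$. Writing $\rho_1 y = e_0(t,a)\cdot(y/x)$ and applying the product rule of Lemma \ref{basicderiv}, together with $D^{\alpha}e_0(t,a)=0$ from \eqref{expderiv}, reduces this to $e_0(t,a)\big(D^{\alpha}[y/x]-\kappa_1 (y/x)\big)$. Applying the quotient rule to $D^{\alpha}[y/x]$, the $\kappa_1 y/x$ correction term it produces cancels precisely against the $-\kappa_1(y/x)$ term, leaving the clean identity $D^{\alpha}[\rho_1 y]=e_0(t,a)W(x,y)/x^2$. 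Multiplying by $\rho_2=p x^2/e_0^2(t,a)$ then cancels the $x^2$ and one power of $e_0$, giving $\rho_2 D^{\alpha}[\rho_1 y]=pW(x,y)/e_0(t,a)$.

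Next I would take $D^{\alpha}$ of $pW(x,y)/e_0(t,a)$. Since $D^{\alpha}e_0(t,a)=0$, the same quotient-rule computation underlying \eqref{dfpluskf} yields $D^{\alpha}[pW/e_0(t,a)]=e_0^{-1}(t,a)\big(D^{\alpha}[pW(x,y)]+\kappa_1 pW(x,y)\big)$. The bracketed quantity is exactly the right-hand side of the Lagrange identity in Theorem \ref{wnab}, hence equals $x(Ly)-y(Lx)$. Because $x$ solves $Lx=0$ by hypothesis, this simplifies to $x(Ly)/e_0(t,a)$. Finally, multiplying by $\rho_1=e_0(t,a)/x$ cancels both $e_0(t,a)$ and $x$, producing $Ly$ as required; the positivity of $\rho_1,\rho_2$ on $[a,b)$ is immediate from $x>0$, $p>0$, and $e_0>0$.

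I expect the only delicate point to be the bookkeeping of the extra $\kappa_1$ terms that the conformable product and quotient rules carry relative to the classical $\alpha=1$ case: the argument hinges on these correction terms cancelling at each stage, so that the Wronskian survives cleanly and the combination $pW/e_0(t,a)$ — precisely the quantity governed by the Lagrange identity — is what reaches the outer derivative. Once the inner layer reduces to this form, the remainder is essentially a restatement of Theorem \ref{wnab} combined with $Lx=0$, so no genuine obstacle remains beyond this careful accounting.
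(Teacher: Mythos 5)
Your proposal is correct and is essentially the paper's own proof run in reverse: the paper starts from $Ly$, invokes the Lagrange identity (Theorem \ref{wnab}) with $Lx=0$, and unwinds through the identities \eqref{dfpluskf} and \eqref{dfminuskf} to reach the nested form, whereas you expand the nested form inward-out and arrive at the same pivotal quantity $pW(x,y)/e_0(\cdot,a)$ before applying the same Lagrange identity. Since every step is an equality, the two directions are logically identical, and your accounting of the $\kappa_1$ correction terms matches the paper's use of \eqref{dxinvu}, \eqref{dfpluskf}, and \eqref{dfminuskf} exactly.
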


\begin{proof}
In the calculations below we will use the following conformable derivative facts, namely that
\[ D^{\alpha} f(t) + \kappa_1(t) f(t) \stackrel{\eqref{dfpluskf}}{=} e_0(t,a)D^{\alpha}\left[\frac{f}{e_0(\cdot,a)}\right](t) \]
and
\[ D^{\alpha} f(t) - \kappa_1(t) f(t) \stackrel{\eqref{dfminuskf}}{=} \frac{1}{e_0(t,a)}D^{\alpha}\left[e_0(\cdot,a)f\right](t). \]
Assume $x>0$ is a positive solution of $Lx=0$ on $[a,b)$, and let $y\in\D$. Then
\begin{eqnarray*}
 Ly &\stackrel{\text{Thm }\ref{wnab}}=& \frac{1}{x}\left\{D^{\alpha}[pW(x,y)]+\kappa_1pW(x,y)\right\} \\
    &\stackrel{\eqref{dfpluskf}}=& \frac{e_0(\cdot,a)}{x}D^{\alpha}\left[\frac{pW(x,y)}{e_0(\cdot,a)}\right] \\
    &\stackrel{\text{Def }\ref{wdef}}=& \frac{e_0(\cdot,a)}{x}D^{\alpha}\left\{\frac{p}{e_0(\cdot,a)}\left[xD^{\alpha}y - yD^{\alpha}x\right]\right\} \\
    &=& \rho_1D^{\alpha}\left\{\frac{px^2}{e_0(\cdot,a)}\left[\frac{1}{x}D^{\alpha}y - \frac{y}{x^2}D^{\alpha}x\right]\right\} \\
    &\stackrel{\eqref{dxinvu}}{=}& \rho_1D^{\alpha}\left\{e_0(\cdot,a)\rho_2\left[D^{\alpha}[y/x]-\kappa_1y/x\right]\right\} \\
    &\stackrel{\eqref{dfminuskf}}{=}& \rho_1D^{\alpha}\left\{e_0(\cdot,a)\rho_2\frac{1}{e_0(\cdot,a)}D^{\alpha}\left[e_0(\cdot,a)y/x\right]\right\} \\
    &=& \rho_1D^{\alpha}\left\{\rho_2D^{\alpha}\left[\rho_1y\right]\right\},
\end{eqnarray*}
for $\rho_1$ and $\rho_2$ as defined in the statement of the theorem.
\end{proof}


\begin{theorem}[Variation of Parameters]\label{variation}
Let $h$ be a continuous function defined on $[a,\infty)$, and let $L$ be given as in \eqref{saeq}. If the homogeneous equation $Lx=0$ has a positive solution $x>0$ on $[a,\infty)$, then the nonhomogeneous equation $Ly=h$ has a solution $y$ given by
\begin{eqnarray*}
  y(t) &=& \frac{x(t)y(a)}{x(a)} + p(a)W(x,y)(a)x(t)\int_a^t \frac{e^2_0(s,a)}{p(s)x^2(s)}d_{\alpha}s \\    
       & & + x(t)\int_a^t \left(\frac{1}{p(s)x^2(s)}\int_a^s x(\sigma)h(\sigma)e^2_0(s,\sigma)d_{\alpha}\sigma\right)d_{\alpha}s. 
\end{eqnarray*}
\end{theorem}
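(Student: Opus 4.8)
The plan is to invert the P\'olya factorization of Theorem~\ref{polya} by solving a nested pair of first-order conformable equations. Writing $\rho_1=e_0(\cdot,a)/x$ and $\rho_2=px^2/e^2_0(\cdot,a)$ as in Theorem~\ref{polya}, the equation $Ly=h$ becomes
\[
\rho_1 D^{\alpha}\left\{\rho_2 D^{\alpha}[\rho_1 y]\right\} = h, \qquad t\in[a,\infty).
\]
Since $\rho_1>0$ and $\rho_2>0$, this is equivalent to the chain $D^{\alpha}v = h/\rho_1$ with $v:=\rho_2 D^{\alpha}[\rho_1 y]$, followed by $D^{\alpha}w = v/\rho_2$ with $w:=\rho_1 y$, and finally $y = w/\rho_1$. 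Each first-order equation $D^{\alpha}(\cdot)=g$ I would solve using the antiderivative formula \eqref{DofI} of Lemma~\ref{basicderiv}, whose particular solution is $\int_a^t g(s)e_0(t,s)\,d_{\alpha}s$, together with the homogeneous solution $c\,e_0(\cdot,a)$ guaranteed by \eqref{expderiv}.

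Carrying this out, I would first write, using $1/\rho_1 = x/e_0(\cdot,a)$,
\[
v(t) = \int_a^t \frac{h(\sigma)x(\sigma)}{e_0(\sigma,a)}\,e_0(t,\sigma)\,d_{\alpha}\sigma + C_1 e_0(t,a),
\]
and then, using $1/\rho_2 = e^2_0(\cdot,a)/(px^2)$,
\[
w(t) = \int_a^t \frac{v(s)\,e^2_0(s,a)}{p(s)x^2(s)}\,e_0(t,s)\,d_{\alpha}s + C_2 e_0(t,a).
\]
Substituting the first display into the second and recovering $y=wx/e_0(\cdot,a)$ produces three groups of terms: a $C_2$-term, a $C_1$-term, and the double integral. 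The key simplification is the semigroup identity $e_0(t,s)e_0(s,a)=e_0(t,a)$ of Lemma~\ref{expprops}(ii), i.e.\ $e_0(t,s)=e_0(t,a)/e_0(s,a)$ and $e_0(s,\sigma)=e_0(s,a)/e_0(\sigma,a)$; these collapse the nested exponentials into the factors $e^2_0(s,a)/(px^2)$ and $e^2_0(s,\sigma)$ appearing in the statement, and in particular the $C_1$-term becomes $C_1\,x(t)\int_a^t e^2_0(s,a)/(p(s)x^2(s))\,d_{\alpha}s$, matching the reduction-of-order solution of Theorem~\ref{order}.

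It then remains to identify the constants. Evaluating $w$ at $t=a$ and using $e_0(a,a)=1$ gives $C_2 = w(a)=\rho_1(a)y(a)=y(a)/x(a)$, the first constant in the statement. For $C_1$ I would invoke the intermediate identity established inside the proof of Theorem~\ref{polya}, namely $v=\rho_2 D^{\alpha}[\rho_1 y]=pW(x,y)/e_0(\cdot,a)$; evaluating at $t=a$ yields $C_1=v(a)=p(a)W(x,y)(a)$, the second constant. Because $y$ is built by successively inverting each $D^{\alpha}$ via \eqref{DofI}, it automatically satisfies $Ly=h$, so no separate substitution check is needed.

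I expect the main obstacle to be purely the exponential bookkeeping in the penultimate step: correctly tracking the three $e_0$ factors through the double integral and collapsing them via Lemma~\ref{expprops}(ii) so that the inner kernel becomes exactly $x(\sigma)h(\sigma)e^2_0(s,\sigma)$ rather than a product of exponentials all based at $a$. The constant identification, by contrast, is immediate once the P\'olya intermediate $v=pW(x,y)/e_0(\cdot,a)$ is used and both $v$ and $w$ are evaluated at $t=a$.
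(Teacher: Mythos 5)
Your proposal is correct and follows essentially the same route as the paper: both proofs invert the P\'olya factorization of Theorem~\ref{polya} by two successive integrations, and both identify the constant $p(a)W(x,y)(a)$ through the intermediate Wronskian identity $\rho_2 D^{\alpha}[\rho_1 y]=pW(x,y)/e_0(\cdot,a)$. The only difference is bookkeeping: the paper integrates definitely via Theorem~\ref{ftc}, so the constants appear as boundary terms, while you antidifferentiate via \eqref{DofI} and fix the undetermined constants by evaluation at $t=a$.
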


\begin{proof}
Let $y\in\D$ be defined on $[a,\infty)$, and assume $x>0$ is a positive solution of $Lx=0$ on $[a,\infty)$. As in Theorem \ref{polya}, we factor $Ly$ to get
\[ h(\sigma) = Ly(\sigma) = \frac{e_0(\sigma,a)}{x(\sigma)}D^{\alpha}\left\{\frac{px^2}{e^2_0(\cdot,a)}D^{\alpha}[e_0(\cdot,a)y/x]\right\}(\sigma). \]
Multiplying by $e_0(t,\sigma)e_0(a,\sigma)x$ and integrating from $a$ to $t$ we arrive via Theorem \ref{ftc} at
\begin{eqnarray*} 
 && \int_a^t e_0(a,\sigma)x(\sigma)h(\sigma)e_0(t,\sigma)d_{\alpha}\sigma \\
 && =\int_a^t D^{\alpha}\left\{\frac{px^2}{e^2_0(\cdot,a)}D^{\alpha}[e_0(\cdot,a)y/x]\right\}(\sigma)e_0(t,\sigma)d_{\alpha}\sigma \\
 && =\frac{(px^2)(\sigma)}{e^2_0(\sigma,a)}D^{\alpha}[e_0(\cdot,a)y/x](\sigma)e_0(t,\sigma)\big|_{\sigma=a}^t \\
 && =\frac{(px^2)(t)}{e^2_0(t,a)}D^{\alpha}[e_0(\cdot,a)y/x](t)e_0(t,t) - \frac{(px^2)(a)}{e^2_0(a,a)}D^{\alpha}[e_0(\cdot,a)y/x](a)e_0(t,a) \\
 && =\frac{(px^2)(t)}{e^2_0(t,a)}D^{\alpha}[e_0(\cdot,a)y/x](t) - (px^2)(a)D^{\alpha}[e_0(\cdot,a)y/x](a)e_0(t,a) \\
 && =\frac{(px^2)(t)}{e^2_0(t,a)}D^{\alpha}[e_0(\cdot,a)y/x](t) - p(a)W(x,y)(a)e_0(t,a)
\end{eqnarray*}
using Theorem \ref{order2}, since $x>0$ is a solution. This leads to
\begin{eqnarray*} 
 D^{\alpha}[e_0(\cdot,a)y/x](s)e_0(t,s) d_{\alpha}s &=& \frac{e_0(t,a)p(a)}{(px^2)(s)}W(x,y)(a)e^2_0(s,a)d_{\alpha}s \\
 & & + \frac{e_0(t,a)}{(px^2)(s)}\left(\int_a^s x(\sigma)h(\sigma)e^2_0(s,\sigma)d_{\alpha}\sigma\right)d_{\alpha}s.
\end{eqnarray*}
Integrating this from $a$ to $t$ and using Theorem \ref{ftc} yields the form for $y$ given in the statement of the theorem. Clearly the right-hand side of the form of $y$ above reduces to $y(a)$ at $a$, and since $x>0$ is a solution the conformable derivative reduces to $D^{\alpha}y(a)$ at $a$.
\end{proof}


\begin{corollary}
Let $h$ be a continuous function defined on $[a,\infty)$, and let $L$ be given as in \eqref{saeq}. If the homogeneous matrix equation \eqref{saeq} has a positive solution $x>0$ on $[a,\infty)$, then the nonhomogeneous initial value problem
\begin{equation}\label{nonivp}
 Ly = h, \quad y(a) = y_a, \quad D^{\alpha}y(a) = y_a'
\end{equation}
has a unique solution.
\end{corollary}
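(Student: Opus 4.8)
The plan is to produce the desired solution explicitly from the Variation of Parameters formula (Theorem \ref{variation}) and then pin down its two free constants using the prescribed initial data. Since $Lx=0$ has a positive solution $x>0$ on $[a,\infty)$, Theorem \ref{variation} guarantees that $Ly=h$ admits a solution $y\in\D$ of the displayed form, in which the quantities $y(a)$ and $p(a)W(x,y)(a)$ enter as two independent real parameters. My strategy is to show that the map carrying this pair of parameters to $(y(a),D^{\alpha}y(a))$ is an affine bijection, so that they may be chosen in exactly one way to meet $y(a)=y_a$ and $D^{\alpha}y(a)=y_a'$; this yields existence, while uniqueness will follow from Theorem \ref{unique}.

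First I would match the value at $a$. Evaluating the formula of Theorem \ref{variation} at $t=a$, the two integral terms vanish, leaving $y(a)$ itself; hence the first initial condition is satisfied precisely by taking the parameter $y(a)=y_a$. Next I would compute $D^{\alpha}y(a)$. The third (nonhomogeneous) term is a particular solution constructed exactly as in the Variation of Constants Formula (Theorem \ref{t31}), so both it and its conformable derivative vanish at $t=a$; consequently $D^{\alpha}y(a)$ is determined entirely by the homogeneous part $c_1x+c_2\,x\int_a^t e_0^2(s,a)/(px^2)\,d_\alpha s$, where $c_1=y_a/x(a)$ and $c_2=p(a)W(x,y)(a)$ as in \eqref{402}--\eqref{403}. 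Differentiating via \eqref{dalphay} of Theorem \ref{order} and using $e_0(a,a)=1$ together with $x(a)\ne 0$, I expect
\[ D^{\alpha}y(a) = \frac{y_a}{x(a)}\,D^{\alpha}x(a) + \frac{p(a)W(x,y)(a)}{p(a)x(a)}. \]
This is affine in the remaining parameter $p(a)W(x,y)(a)$ with coefficient $1/(p(a)x(a))\ne 0$ (here $p(a)>0$ and $x(a)>0$), so there is exactly one admissible choice of $p(a)W(x,y)(a)$, namely $p(a)\bigl(x(a)y_a'-y_aD^{\alpha}x(a)\bigr)$, for which $D^{\alpha}y(a)=y_a'$. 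Feeding these two constants back into the formula of Theorem \ref{variation} produces a solution of \eqref{nonivp}.

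For uniqueness, if $y_1,y_2$ both solve \eqref{nonivp}, then $w:=y_1-y_2$ satisfies $Lw=0$ with $w(a)=0$ and $D^{\alpha}w(a)=0$; by the existence--uniqueness result for the self-adjoint initial value problem (Theorem \ref{unique}) we obtain $w\equiv 0$, so the solution is unique. The main obstacle I anticipate is the careful evaluation of $D^{\alpha}y(a)$: one must verify that the particular (third) term contributes nothing to the conformable derivative at $a$ -- which follows because that term is the Variation of Constants solution of Theorem \ref{t31}, whose value and conformable derivative both vanish at $a$ -- and then correctly evaluate the conformable derivative of the reduction-of-order solution at the left endpoint, where the clean values $e_0(a,a)=1$ and $x(a)>0$ make the coefficient of the free parameter nonzero and hence the parameter selection well-defined.
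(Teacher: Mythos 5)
Your proof is correct and follows essentially the same route as the paper: existence comes from the Variation of Parameters formula (Theorem \ref{variation}), and uniqueness by applying Theorem \ref{unique} to the difference of two solutions of \eqref{nonivp}. The only difference is that you spell out explicitly how the two free constants $y(a)$ and $p(a)W(x,y)(a)$ are pinned down by the data $(y_a,y_a')$ -- a detail the paper leaves implicit when it simply cites Theorem \ref{variation} for existence -- and your computation of $D^{\alpha}y(a)$ and the resulting value $p(a)\bigl(x(a)y_a'-y_aD^{\alpha}x(a)\bigr)$ is accurate.
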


\begin{proof}
By Theorem \ref{variation}, the nonhomogeneous initial value problem \eqref{nonivp} has a solution. Suppose $y_1$ and $y_2$ both solve \eqref{nonivp}. Then $x=y_1-y_2$ solves the homogeneous initial value problem
\[ Lx=0, \quad x(a)=0, \quad D^{\alpha}x(a)=0; \]
by Theorem \ref{unique}, this has only the trivial solution $x\equiv 0$, and thus $y_1=y_2$ is unique.
\end{proof}


\begin{theorem}[Trench Factorization]\label{trenchscalar}
	\index{Trench factorization}
Let $L$ be as in \eqref{saeq}. If $Lx=0$ has a positive solution on an interval $[a,b)\subset(0,\infty)$ with $b\le\infty$, then for any $x\in\D$ we have the Trench factorization
\[ Lx(t) = \gamma_1(t)D^{\alpha}\left\{\gamma_2 D^{\alpha}[\gamma_1x]\right\}(t) \quad\text{for all}\quad t\in[a,b), \]
where $\gamma_1$ and $\gamma_2$ are positive functions on $[a,b)$ and
\begin{equation}\label{eq32}
   \int_a^{b}\frac{1}{\gamma_2(t)}d_{\alpha}t = \infty.
\end{equation}
\end{theorem}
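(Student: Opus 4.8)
The plan is to reduce everything to the P\'olya factorization (Theorem \ref{polya}) and then, if necessary, to replace the given positive solution by a distinguished one for which the divergence condition \eqref{eq32} becomes automatic. First I would apply Theorem \ref{polya} to the hypothesized positive solution $x>0$ on $[a,b)$, obtaining, for every $y\in\D$,
\[ Ly=\rho_1 D^{\alpha}\left\{\rho_2 D^{\alpha}[\rho_1 y]\right\},\qquad \rho_1=\frac{e_0(\cdot,a)}{x}>0,\qquad \rho_2=\frac{p x^2}{e^2_0(\cdot,a)}>0. \]
This is already a factorization of the required shape; the only thing separating it from a Trench factorization is the behavior of $\int_a^b \rho_2^{-1}\,d_{\alpha}t=\int_a^b \frac{e^2_0(t,a)}{p(t)x^2(t)}\,d_{\alpha}t$, which is precisely the integral appearing in the reduction-of-order formula of Theorem \ref{order} with base point $t_0=a$. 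In the case where this integral diverges, the proof ends immediately by taking $\gamma_i=\rho_i$.

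In the remaining case, set $C:=\int_a^b \rho_2^{-1}\,d_{\alpha}s<\infty$ and build a new positive solution by integrating the reduction-of-order density from the right endpoint,
\[ \tilde x(t):=x(t)\,v(t),\qquad v(t):=\int_t^b \frac{e^2_0(s,a)}{p(s)x^2(s)}\,d_{\alpha}s=\int_t^b \frac{1}{\rho_2(s)}\,d_{\alpha}s. \]
Writing $v(t)=C-\int_a^t \rho_2^{-1}\,d_{\alpha}s$ exhibits $\tilde x$ as a linear combination of $x$ and the second solution produced by Theorem \ref{order}, so $\tilde x$ solves $Lx=0$; and since $v(t)>0$ for every $t\in[a,b)$ while $x>0$, the solution $\tilde x$ is positive on $[a,b)$. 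Applying Theorem \ref{polya} to $\tilde x$ then yields the asserted factorization with
\[ \gamma_1=\frac{e_0(\cdot,a)}{\tilde x}>0,\qquad \gamma_2=\frac{p\tilde x^2}{e^2_0(\cdot,a)}>0 \]
on $[a,b)$, so that $Ly=\gamma_1 D^{\alpha}\{\gamma_2 D^{\alpha}[\gamma_1 y]\}$ for all $y\in\D$, which is the claimed identity (relabeling the dummy argument as $x$).

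It then remains only to verify \eqref{eq32} for this $\gamma_2$. Observe that $\gamma_2^{-1}=\rho_2^{-1}v^{-2}$, and recall that $\int\cdot\,d_{\alpha}t$ is literally the weighted Riemann integral $\int\cdot\,\kappa_0^{-1}\,dt$, so $v$ is an ordinary antiderivative with $v'(t)=-\rho_2^{-1}(t)\kappa_0^{-1}(t)$. Hence $\gamma_2^{-1}(t)\,d_{\alpha}t=\rho_2^{-1}(t)v^{-2}(t)\,\kappa_0^{-1}(t)\,dt=-v'(t)v^{-2}(t)\,dt$, giving
\[ \int_a^b \frac{1}{\gamma_2(t)}\,d_{\alpha}t=\int_a^b \frac{-v'(t)}{v^2(t)}\,dt=\frac{1}{v(b^-)}-\frac{1}{v(a)}=+\infty, \]
the divergence arising because $v(t)\to 0^+$ as $t\to b^-$ (the tail of a convergent integral when $b<\infty$, and likewise when $b=\infty$). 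The main obstacle is the bookkeeping of this second case: confirming that the right-endpoint function $\tilde x$ genuinely is a positive solution on all of $[a,b)$, and that $v$ is well defined with $v(b^-)=0$; once the identification $\rho_2^{-1}\,d_{\alpha}t=-v'\,dt$ is secured, the divergence of $\int_a^b \gamma_2^{-1}\,d_{\alpha}t$ follows at once.
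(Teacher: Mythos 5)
Your proposal is correct, and it reaches exactly the same factors as the paper --- unwinding your construction, $\gamma_1=e_0(\cdot,a)/\tilde x$ and $\gamma_2=p\tilde x^2/e_0^2(\cdot,a)$ coincide with the paper's $1/\beta_1$ and $1/\beta_2$, where $\beta_1=\delta_1\int_t^b\delta_2\,d_\alpha s$ and $\beta_2=\delta_2/\bigl(\int_t^b\delta_2\,d_\alpha s\bigr)^2$ --- but the mechanism of your proof is genuinely different. The paper never observes that these modified factors are themselves the P\'olya factors of another solution; instead it verifies the identity $Lx=\frac{1}{\beta_1}D^{\alpha}\bigl\{\frac{1}{\beta_2}D^{\alpha}[x/\beta_1]\bigr\}$ for arbitrary $x\in\D$ by direct conformable-calculus manipulations (quotient rule, then applying $D^{\alpha}$ and cancelling), and it proves the divergence $\int_a^b\beta_2\,d_\alpha t=\infty$ by recognizing $\beta_2$ as $D^{\alpha}\bigl\{e_0(t,a)/\int_t^b\delta_2\,d_\alpha s\bigr\}e_0(a,t)$ and invoking the conformable fundamental theorem (Theorem \ref{ftc}). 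Your route instead uses the structure of the problem: $\tilde x=xv$ with $v(t)=\int_t^b\rho_2^{-1}\,d_\alpha s$ is a positive solution of $Lx=0$ (a linear combination $Cx-y$ of $x$ and the reduction-of-order solution $y$ of Theorem \ref{order} with $t_0=a$), so Theorem \ref{polya} can simply be applied a second time, and the factorization identity comes for free; the divergence then reduces to the elementary computation $\int_a^b(-v'/v^2)\,dt=1/v(b^-)-1/v(a)=\infty$ since $v(b^-)=0$. What your approach buys is conceptual economy --- no rederivation of the factorization identity, and a transparent reason why the tail-weighted factors still factor $L$ --- at the cost of leaning on Theorem \ref{order} and on linearity of $L$; the paper's computational route is self-contained modulo Theorem \ref{polya} and the basic derivative rules, and its divergence step is just the conformable dress of your $-v'/v^2$ argument. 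Both proofs handle $b=\infty$ and $b<\infty$ uniformly, and your observation that $v(b^-)=0$ is the tail of a convergent integral is exactly the point needed there.
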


\begin{proof} 
Since $Lx=0$ has a positive solution on $[a,b)$, $Lx$ has a Polya factorization on $[a,b)$ by Theorem \ref{polya}.
Hence there are positive functions $\rho_1$ and $\rho_2$ on $[a,b)$ such that
\[ Lx(t) = \rho_1(t)D^{\alpha}\left\{\rho_2D^{\alpha}[\rho_1x]\right\}(t)
   = \frac{1}{\delta_1(t)}D^{\alpha}\left\{\frac{1}{\delta_2} D^{\alpha}\left[\frac{x}{\delta_1}\right]\right\}(t) \]
for $t\in[a,b)$, where
\[ \delta_i(t):=\frac{1}{\rho_i(t)} \quad\text{for $i=1,2$,}\quad t\in[a,b). \]
If
\[ \int_a^{b}\delta_2(t)d_{\alpha}t = \infty, \]
then we have what we want. Assume
\begin{equation}\label{assumediesda}
 \int_a^{b}\delta_2(t)d_{\alpha}t < \infty. 
\end{equation}
In this case we set
\[ \beta_1(t) = \delta_1(t)\int_t^{b}\delta_2(s) d_{\alpha}s  \quad\text{and}\quad \beta_2(t) = \frac{\delta_2(t)}{\left(\int_t^{b}\delta_2(s) d_{\alpha}s\right)^2} \]
for $t\in[a,b)$. Then by \eqref{assumediesda}
\begin{eqnarray*}
  \int_a^{b}\beta_2(t)d_{\alpha}t
   &=& \lim_{c\to b^-} \int_a^c \frac{\delta_2(t)}{\left(\int_t^{b}\delta_2(s) d_{\alpha}s\right)^2}d_{\alpha}t \\
   &=& \lim_{c\to b^-} \int_a^c D^{\alpha}\left\{\frac{e_0(t,a)}{\int_t^{b}\delta_2(s) d_{\alpha}s}\right\} e_0(a,t) d_{\alpha}t \\
   &=& \lim_{c\to b^-} \frac{e_0(t,a)e_0(a,t)}{\int_t^{b}\delta_2(s) d_{\alpha}s}\Big|_{t=a}^c \\
   &=& \infty.
\end{eqnarray*}
For $x\in\D$, note that
\[ D^{\alpha}\left[\frac{x}{\beta_1}\right](t)
   = D^{\alpha}\left[\frac{\frac{x(t)}{\delta_1(t)}} {\int_t^{b}\delta_2(s) d_{\alpha}s}\right] \\
   = \frac{\left(\int_t^{b}\delta_2(s) d_{\alpha}s\right) D^{\alpha}\left[\frac{x}{\delta_1}\right](t) + \frac{x(t)\delta_2(t)}{\delta_1(t)}}
      {\left(\int_t^{b}\delta_2(s) d_{\alpha}s\right)^2} \]
for $t\in[a,b)$. Hence
\[ \frac{1}{\beta_2(t)}D^{\alpha}\left[\frac{x}{\beta_1}\right](t) = \frac{1}{\delta_2(t)}D^{\alpha}\left[\frac{x}{\delta_1}\right](t)
    \int_t^{b}\delta_2(s)  d_{\alpha}s + \frac{x(t)}{\delta_1(t)} \]
for $t\in[a,b)$. Applying the operator $D^{\alpha}$ to both sides we get 
\[ D^{\alpha}\left\{\frac{1}{\beta_2}D^{\alpha}\left[\frac{x}{\beta_1}\right]\right\}(t)
   = D^{\alpha}\left\{\frac{1}{\delta_2}D^{\alpha}\left[\frac{x}{\delta_1}\right]\right\}(t)\int_{t}^{b}\delta_2(s) d_{\alpha}s \]
for $t\in[a,b)$. It follows that
\[ \frac{1}{\beta_1(t)}D^{\alpha}\left\{\frac{1}{\beta_2} D^{\alpha}\left[\frac{x}{\beta_1}\right]\right\}(t)
   = \frac{1}{\delta_1(t)}D^{\alpha}\left\{\frac{1}{\delta_2} D^{\alpha}\left[\frac{x}{\delta_1}\right]\right\}(t)
   = Lx(t) \]
for $t\in[a,b)$. If
\[ \gamma_i(t):=\frac{1}{\beta_i(t)} \quad\text{for $i=1,2$,}\quad t\in[a,b), \]
then
\[ Lx(t) = \gamma_1(t)D^{\alpha}\{\gamma_2D^{\alpha}[\gamma_1x]\}(t) \]
for $t\in[a,b)$, where \eqref{eq32} is satisfied.
\end{proof}


\begin{theorem}\label{rdsols}
	\index{recessive solution}
	\index{dominant solution}
Let $L$ be as in \eqref{saeq}. If $Lx=0$ is nonoscillatory on an interval $[a,b)\subset(0,\infty)$ with $b\le\infty$, then there is a positive solution $u$, called a recessive solution at $b$, such that for any second linearly independent solution $v$, called a dominant solution at $b$,
\[ \lim_{t\to b^-}\frac{u(t)}{v(t)} = 0, \quad
   \int_{a}^{b}\frac{e^2_0(t,a)}{p(t)u^2(t)}d_{\alpha}t = \infty, \quad\text{and}\quad
   \int_{t_0}^{b}\frac{e^2_0(t,a)}{p(t)v^2(t)}d_{\alpha}t < \infty, \]
where $t_0<b$ is sufficiently close. Furthermore
\begin{equation}\label{rin}
   \frac{p(t)D^{\alpha}v(t)}{v(t)} > \frac{p(t)D^{\alpha}u(t)}{u(t)}  
\end{equation}
for $t<b$ sufficiently close. Moreover, the recessive solution is unique up to multiplication by a nonzero constant.
\end{theorem}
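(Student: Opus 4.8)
The plan is to build the recessive solution from a single positive solution near $b$ and then read off all of the claimed properties by reducing the conformable integrals to ordinary one-variable integration. First I would use nonoscillation to produce a reference solution positive near $b$. Since $Lx=0$ is nonoscillatory on $[a,b)$ (Definition~\ref{oscdef}), every nontrivial solution has only finitely many zeros, so there is a point $t_0<b$ and a solution $w$ with $w>0$ on $[t_0,b)$; equivalently, by the Reid Roundabout Theorem~\ref{reidthm} the equation is disconjugate on every compact subinterval of $[t_0,b)$ and hence admits such a zero-free $w$. With $w$ fixed, Theorem~\ref{order} supplies the companion solution $w\Phi_w$, where
\[ \Phi_w(t):=\int_{t_0}^t \frac{e_0^2(s,t_0)}{p(s)w^2(s)}\,d_{\alpha}s, \]
and $W(w,w\Phi_w)=e_0^2(\cdot,t_0)/p$ by \eqref{rooW}. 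The key observation I would record at the outset is that, because $d_{\alpha}s=ds/\kappa_0(s)$, the recurring weight is an exact differential: $d\Phi_w=\frac{e_0^2(s,t_0)}{p w^2}\,d_{\alpha}s$. Consequently any integral of the form $\int \frac{e_0^2(s,t_0)}{pw^2}\,\psi(\Phi_w)\,d_{\alpha}s$ collapses to the ordinary integral $\int \psi\,d\Phi_w$, which makes the endpoint analysis tractable despite the conformable exponential factors.

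Next I would run the recessive/dominant dichotomy on $\Lambda:=\Phi_w(b^-)\in(0,\infty]$. If $\Lambda=\infty$, set $u:=w$, so that $\int_{t_0}^b \frac{e_0^2(s,t_0)}{pu^2}\,d_{\alpha}s=\Lambda=\infty$ outright. If $\Lambda<\infty$, set
\[ u(t):=w(t)\int_t^{b}\frac{e_0^2(s,t_0)}{p(s)w^2(s)}\,d_{\alpha}s=\Lambda\,w(t)-\big(w\Phi_w\big)(t), \]
which is a positive solution on $[t_0,b)$ by linearity, and the substitution above yields $\int_{t_0}^t \frac{e_0^2(s,t_0)}{pu^2}\,d_{\alpha}s=\int_{t_0}^t \frac{d\Phi_w}{(\Lambda-\Phi_w)^2}=\frac{1}{\Lambda-\Phi_w(t)}-\frac{1}{\Lambda-\Phi_w(t_0)}\to\infty$ as $t\to b^-$. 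Either way $u$ is positive with $\int_{a}^{b}\frac{e_0^2(t,a)}{pu^2}\,d_{\alpha}t=\infty$ (changing the base point only alters the constant factor $e_0^2(a,t_0)$, and away from $b$ the integrand is finite). To treat the dominant solutions uniformly I would apply Theorem~\ref{order} once more, now to $u$: writing $\Phi_u(t)=\int_{t_0}^t \frac{e_0^2(s,t_0)}{pu^2}\,d_{\alpha}s$ (so $\Phi_u(b^-)=\infty$ by recessiveness) and $W(u,u\Phi_u)=e_0^2(\cdot,t_0)/p$, every linearly independent solution has the form $v=c_1u+c_2\,u\Phi_u$ with $c_2\neq0$, whence $\int_{t_0}^b \frac{e_0^2(t,t_0)}{pv^2}\,d_{\alpha}t=\int^{\,\infty}\frac{d\Phi_u}{(c_1+c_2\Phi_u)^2}<\infty$, which is the dominant integral condition.

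The remaining conclusions follow from this representation. From $v=c_1u+c_2u\Phi_u$ I obtain $u/v=1/(c_1+c_2\Phi_u)\to0$ as $t\to b^-$. For \eqref{rin} I would compute, using the quotient rule (Lemma~\ref{basicderiv}) and $W(u,u\Phi_u)=e_0^2(\cdot,t_0)/p$,
\[ \frac{p\,D^{\alpha}v}{v}-\frac{p\,D^{\alpha}u}{u}=\frac{p\,W(u,v)}{uv}=\frac{c_2\,e_0^2(\cdot,t_0)}{uv}, \]
and since a dominant solution oriented to be positive near $b$ forces $c_2>0$ (the $\Phi_u\to\infty$ term determines the sign), the right-hand side is strictly positive for $t<b$ sufficiently close. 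Uniqueness is then immediate from the integral characterization: any recessive solution is $\tilde u=c_1u+c_2u\Phi_u$, and $c_2\neq0$ would force $\int^b e_0^2/(p\tilde u^2)\,d_{\alpha}t<\infty$ by the dominant computation, a contradiction; hence $c_2=0$ and $\tilde u=c_1u$.

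I expect the main obstacle to be the first step rather than the calculus: passing from ``nonoscillatory on $[a,b)$'' (finitely many zeros per solution, or disconjugacy only on compact subintervals via Theorem~\ref{reidthm}) to a single solution positive on the whole half-open interval $[t_0,b)$, and justifying that the stated integral $\int_a^b$ is governed solely by the endpoint behavior at $b$. Once a zero-free $w$ is secured, the identity $d\Phi_w=\frac{e_0^2}{pw^2}\,d_{\alpha}s$ reduces all of the recessive/dominant bookkeeping to elementary one-variable integrals, and the only additional care required is the sign orientation used in \eqref{rin}.
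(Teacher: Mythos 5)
Your proof is correct, but it takes a genuinely different route from the paper's. The paper never touches the reduction-of-order integral directly: it invokes the Trench factorization (Theorem \ref{trenchscalar}, built on the P\'olya factorization, Theorem \ref{polya}) to write $Lx=\gamma_1 D^{\alpha}\{\gamma_2 D^{\alpha}[\gamma_1 x]\}$ with $\int_a^b \gamma_2^{-1}\,d_{\alpha}t=\infty$, reads off the recessive solution as $u=e_0(\cdot,a)/\gamma_1$ and a dominant one as $v_0=u\int_a^t\gamma_2^{-1}(s)\,d_{\alpha}s$, and then derives the limit and integral statements from Abel's formula, identity \eqref{dfminuskf}, and Theorem \ref{ftc}. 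Your dichotomy on $\Lambda=\Phi_w(b^-)$, with the flip $u=w\int_t^b \frac{e_0^2(s,t_0)}{p(s)w^2(s)}\,d_{\alpha}s$ when $\Lambda<\infty$, is precisely the trick hidden inside the paper's proof of the Trench factorization (there it appears as $\beta_1=\delta_1\int_t^b\delta_2\,d_{\alpha}s$ and $\beta_2=\delta_2/\bigl(\int_t^b\delta_2\,d_{\alpha}s\bigr)^2$); you have inlined that machinery at the level of solutions, which makes your argument self-contained and reduces the bookkeeping to the elementary substitution $d\Phi=\frac{e_0^2}{pw^2}\,d_{\alpha}s$, at the cost of not reusing the factorization theorems the paper has already established. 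In two places your write-up is actually tighter than the paper's: your sign argument for \eqref{rin} (positivity of $v$ near $b$ together with $\Phi_u\to\infty$ forces $c_2>0$) avoids the paper's more delicate appeal to the sign of the ordinary derivative $(v/u)'$ near $b$, and you prove the uniqueness assertion explicitly, which the paper's proof never returns to. The ``main obstacle'' you flag is shared by the paper: its proof opens with ``without loss of generality it has a positive solution on $[a,b)$,'' which is exactly the replacement of $a$ by a point past the last zero that you make explicit; neither argument yields a solution positive on all of the original $[a,b)$, and the real content in both is the behavior at $b$. Your only loose ends are cosmetic: the representation $v=c_1u+c_2u\Phi_u$ of an arbitrary linearly independent solution is Theorem \ref{order2} rather than Theorem \ref{order}, and for the dominant integral one should, consistently with the statement's ``$t_0<b$ sufficiently close,'' move the base point past the at most one zero of $c_1+c_2\Phi_u$.
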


\begin{proof} 
Since we are assuming that $Lx=0$ is nonoscillatory on $[a,b)$, without loss of generality it has a positive solution on $[a,b)$. Thus, $Lx$ has a Trench factorization on $[a,b)$ by Theorem \ref{trenchscalar}, namely
\[ Lx(t) = \gamma_1(t)D^{\alpha}\left\{\gamma_2 D^{\alpha}[\gamma_1x]\right\}(t) \quad\text{for all}\quad t\in[a,b), \]
where $\gamma_1$ and $\gamma_2$ are positive functions on $[a,b)$, and \eqref{eq32} holds. If 
\[ u(t) = \frac{e_0(t,a)}{\gamma_1(t)} > 0, \]
then we get from the Trench factorization that $u$ is a positive solution of $Lx=0$ on $[a,b)$. Let 
\[ v_0(t) = \frac{e_0(t,a)}{\gamma_1(t)}\int_a^t\frac{1}{\gamma_2(s)} d_{\alpha}s = u(t)\int_a^t \gamma^{-1}_2(s) d_{\alpha}s > 0. \]
Since $\gamma_2(t)D^{\alpha}[\gamma_1v_0](t)=e_0(t,a)$ and $v_0(a)=0$, we get by the Trench factorization that $v_0$ is a solution of $Lx=0$ on $[a,b)$. Note that
\[ \lim_{t\to b^-}\frac{u(t)}{v_0(t)} = \lim_{t\to b^-}\frac{1}{\int_a^t \gamma^{-1}_2(s) d_{\alpha}s} = 0 \]
by \eqref{eq32}. Consider, using the quotient rule from Lemma \ref{basicderiv}, the expression
\begin{equation}\label{dvou}
 D^{\alpha}\left[\frac{v_0}{u}\right](t) -\kappa_1(t)\frac{v_0(t)}{u(t)} = \frac{W(u,v_0)(t)}{u^2(t)} = \frac{ce_0^2(t,a)}{p(t)u^2(t)},
\end{equation}
where $c=p(a)W(u,v_0)(a)$, by Abel's formula (Corollary \ref{wronskid}). Note that $c\neq 0$ since $u$ and $v_0$ are linearly independent. 
Using \eqref{dfminuskf} we see that
\[ D^{\alpha}\left[\frac{v_0}{u}\right](t) -\kappa_1(t)\frac{v_0(t)}{u(t)} = \frac{1}{e_0(t,a)} D^{\alpha}\left[e_0(\cdot,a)\frac{v_0}{u}\right](t), \]
so that
\[ \frac{1}{e_0(t,a)} D^{\alpha}\left[e_0(\cdot,a)\frac{v_0}{u}\right](t) = \frac{W(u,v_0)(t)}{u^2(t)} = \frac{ce_0^2(t,a)}{p(t)u^2(t)}. \] 
Integrating both sides of this last equality from $a$ to $t$ via Theorem \ref{ftc}, we obtain
\[ 0 < \frac{v_0(t)}{u(t)} = \int_a^t \frac{ce^2_0(s,a)}{p(s)u^2(s)} d_{\alpha}s. \]
Letting $t\to b^-$ we get one of the desired results, namely
\[ \int_a^{b} \frac{e^2_0(t,a)}{p(t)u^2(t)}d_{\alpha}t = \infty. \]
Suppose $v$ is any solution of $Lx=0$ such that $u$ and $v$ are linearly independent. Then
\[ v(t)=c_1u(t)+c_2v_0(t), \]
where $c_2\neq 0$. It follows that
\[ \lim_{t\to b^-}\frac{u(t)}{v(t)} = \lim_{t\to b^-}\frac{u(t)}{c_1u(t)+c_2v_0(t)} = \lim_{t\to b^-}\frac{\frac{u(t)}{v_0(t)}}{c_1\frac{u(t)}{v_0(t)}+c_2} = 0. \]
Again let $v$ be a fixed solution of \eqref{saeq} such that $u$ and $v$ are linearly independent, but this time pick $t_0\in[a,b)$ so that $v(t)\neq 0$ on $[t_0,b)$, which is possible by the nonoscillatory nature of $Lx=0$. Then for $t\in[t_0,b)$, similar to the calculations above in \eqref{dvou}, we have
\begin{eqnarray*}
  D^{\alpha}\left[\frac{u}{v}\right](t) -\kappa_1(t)\frac{u(t)}{v(t)} = 
  \frac{1}{e_0(t,a)} D^{\alpha}\left[e_0(\cdot,a)\frac{u}{v}\right](t) = \frac{W(v,u)(t)}{v^2(t)} = \frac{c_2e_0^2(t,a)}{p(t)v^2(t)},
\end{eqnarray*}
where $c_2\neq 0$. Integrating both sides of this last equality from $t_0$ to $t$ we obtain
\[ \frac{u(t)}{v(t)}-\frac{u(t_0)}{v(t_0)} = \int_{t_0}^t\frac{c_2e_0^2(s,a)}{p(s)v^2(s)} d_{\alpha}s. \]
Letting $t\to b^-$ we get another one of the desired results, that is
\[ \int_{t_0}^{b}\frac{e_0^2(t,a)}{p(t)v^2(t)}d_{\alpha}t < \infty. \]
We now show that \eqref{rin} holds for $t<b$ sufficiently close. Above we saw that $v(t)\neq 0$ on $[t_0,b)$. Note that the expression
\[ \frac{p(t)D^{\alpha}v(t)}{v(t)} \]
is the same if $v(t)$ is replaced by $-v(t)$. Hence without loss of generality we can assume $v>0$ on $[t_0,b)$. For $t\in[t_0,b)$, consider
\[ \frac{p(t)D^{\alpha}v(t)}{v(t)}-\frac{p(t)D^{\alpha}u(t)}{u(t)} = \frac{p(t)W(u,v)(t)}{v(t)u(t)}
   = \frac{c_3e_0^2(t,t_0)}{v(t)u(t)}, \]
where $c_3=p(t)W(u,v)(t)/e_0^2(t,t_0)$ is a (nonzero) constant by Abel's formula. 
It remains to show that $c_3>0$. Since
\[ \lim_{t\to b^-}\frac{v(t)}{u(t)} = \infty, \]
the ordinary derivative $(v/u)'>0$ near $b$. Consequently, we see that
\[ 0 < \kappa_0(t)\left(\frac{v}{u}\right)'(t) \stackrel{\eqref{derivdef}}{=} D^{\alpha}\left[\frac{v}{u}\right](t) - \kappa_1(t)\frac{v(t)}{u(t)} = \frac{W(u,v)(t)}{u^2(t)} = \frac{c_3e_0^2(t,t_0)}{p(t)u^2(t)}, \]
and we get the desired result that $c_3>0$, and this completes the proof.
\end{proof}


\begin{theorem}[Fite--Leighton--Wintner Theorem]\label{leiwinsca}
	\index{Fite--Leighton--Wintner theorem}
Let $L$ be as in \eqref{saeq}. Assume $\mathcal{I}=[a,\infty)$, $p>0$ on $\mathcal{I}$, and 
\begin{equation}\label{LW}
  \int_{a}^{\infty} \frac{e_0(t,a)}{p(t)} d_{\alpha}t = \int_{a}^{\infty}q(t) d_{\alpha}t = \infty. 
\end{equation}
Then $Lx=0$ is oscillatory on $[a,\infty)$.
\end{theorem}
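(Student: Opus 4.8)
The plan is to argue by contradiction, passing from oscillation to the associated Riccati equation and then extracting a contradiction from the two divergent integrals in \eqref{LW}. Suppose $Lx=0$ is nonoscillatory on $[a,\infty)$. By Definition~\ref{oscdef} some nontrivial solution has only finitely many zeros, so after a sign change there is a solution $x$ with $x>0$ on some $[t_0,\infty)\subseteq[a,\infty)$. By Theorem~\ref{saricc} (through the factorization in Theorem~\ref{factorricc}) the function $z:=pD^{\alpha}x/x$ is then a solution of the Riccati equation $Rz=D^{\alpha}z+q+z^{2}/p-\kappa_1 z=0$ on $[t_0,\infty)$.

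First I would integrate the Riccati equation cleanly. Writing it as $D^{\alpha}z-\kappa_1 z=-(q+z^{2}/p)$ and applying identity \eqref{dfminuskf} with the reference point $a$ in place of $b$, one gets $D^{\alpha}[e_0(\cdot,a)z]=-e_0(\cdot,a)(q+z^{2}/p)$. Applying the conformable Fundamental Theorem of Calculus (Theorem~\ref{ftc}) to $e_0(\cdot,a)z$ on $[t_0,b]$, the weight $e_0(b,t)$ supplied by the FTC combines with $e_0(t,a)$ via the semigroup property (Lemma~\ref{expprops}$(ii)$) into the constant $e_0(b,a)$, which cancels after division. The outcome is the clean formula
\[ z(b)=z(t_0)-\int_{t_0}^{b}q\,d_{\alpha}t-\int_{t_0}^{b}\frac{z^{2}}{p}\,d_{\alpha}t. \]
Since $\int^{\infty}q\,d_{\alpha}t=\infty$ and the last integral is nonnegative (as $p>0$), this forces $z(b)\to-\infty$ as $b\to\infty$; in particular $z<0$ on some $[t_1,\infty)$.

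Next I would reciprocate. On $[t_1,\infty)$ the quotient rule (Lemma~\ref{basicderiv}$(iv)$) together with the Riccati equation gives $D^{\alpha}[1/z]-\kappa_1(1/z)=1/p+q/z^{2}$, and \eqref{dfminuskf} rewrites the left side as $\tfrac{1}{e_0(\cdot,a)}D^{\alpha}[e_0(\cdot,a)/z]$, i.e. $D^{\alpha}[e_0(\cdot,a)/z]=e_0(\cdot,a)/p+e_0(\cdot,a)q/z^{2}$. Integrating this by Theorem~\ref{ftc} on $[t_1,b]$ and using the same collapse $e_0(t,a)e_0(b,t)=e_0(b,a)$ yields
\[ \int_{t_1}^{b}\frac{d_{\alpha}t}{p}+\int_{t_1}^{b}\frac{q}{z^{2}}\,d_{\alpha}t=\frac{1}{z(b)}-\frac{1}{z(t_1)}. \]
As $b\to\infty$ the factor $1/z(b)\to0$, so the right-hand side stays bounded. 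But $0<e_0(t,a)\le 1$ forces $\int^{\infty}e_0(t,a)/p\,d_{\alpha}t\le\int^{\infty}1/p\,d_{\alpha}t$, so the hypothesis \eqref{LW} gives $\int^{\infty}1/p\,d_{\alpha}t=\infty$; since the residual term $\int q/z^{2}\,d_{\alpha}t$ is bounded below, the left-hand side diverges to $+\infty$, a contradiction. Hence $Lx=0$ is oscillatory.

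I expect the main obstacle to be the exponential-weight bookkeeping, namely arranging the $e_0$ factors so that the FTC produces exactly the hypothesis integrals $\int e_0/p\,d_\alpha t$ and $\int q\,d_\alpha t$; the key simplification is the semigroup collapse $e_0(t,a)e_0(b,t)=e_0(b,a)$, which is what turns both integrations into classical-looking Riccati identities. A secondary delicate point is controlling the sign and size of the leftover term $\int q/z^{2}\,d_{\alpha}t$: it is manifestly nonnegative (hence harmless) when $q\ge 0$, as in the oscillation examples of the paper, but for sign-changing $q$ one needs a short auxiliary estimate, using $z\to-\infty$ so that $1/z^{2}\to 0$, to guarantee it remains bounded below.
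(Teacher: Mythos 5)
Your first half coincides with the paper's: both pass to a Riccati solution $z=pD^{\alpha}x/x$ via Theorem~\ref{saricc}, integrate $Rz=0$ using \eqref{dfminuskf}, Theorem~\ref{ftc} and the collapse $e_0(t,a)e_0(b,t)=e_0(b,a)$, and use $\int^{\infty}q\,d_{\alpha}t=\infty$ to force $z\to-\infty$ (the paper drops the $z^{2}/p$ term and gets an inequality; you keep the equality — both fine). The endgame, however, has a genuine gap. Your reciprocal identity
\[
\int_{t_1}^{b}\frac{1}{p}\,d_{\alpha}t+\int_{t_1}^{b}\frac{q}{z^{2}}\,d_{\alpha}t=\frac{1}{z(b)}-\frac{1}{z(t_1)}
\]
is algebraically correct, but it cannot yield a contradiction by itself when $q$ changes sign (the theorem assumes only $\int q\,d_{\alpha}t=\infty$, not $q\ge0$). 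Indeed, since the right-hand side stays bounded and $\int 1/p\,d_{\alpha}t\to\infty$, the identity itself \emph{forces} $\int_{t_1}^{b}q/z^{2}\,d_{\alpha}t\to-\infty$; it is perfectly self-consistent, so nothing contradictory has been derived. Consequently your proposed auxiliary estimate — that $z\to-\infty$, hence $1/z^{2}\to0$, makes $\int q/z^{2}$ bounded below — cannot succeed: the quantity you want to bound below is exactly the one your own identity sends to $-\infty$, and any argument establishing such a bound would have to contain a complete proof of the theorem by other means. For $q\ge0$ your proof is complete; for the theorem as stated it is not.

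Two repairs are available. The paper's own route is different after $z\to-\infty$: by nonoscillation it takes $x$ to be the \emph{dominant} solution of Theorem~\ref{rdsols}, so that $\int_{t_0}^{\infty}e_0^{2}(t,a)/(p\,x^{2})\,d_{\alpha}t<\infty$; then $z<0$ gives $D^{\alpha}x<0$, Theorem~\ref{incdectest} makes $x$ strictly $\alpha$-decreasing, so $x^{2}(t)<e_0(t,t_1)x^{2}(t_1)$, and the first hypothesis in \eqref{LW} forces that same integral to be infinite — a contradiction. Alternatively, your elementary Riccati line can be completed by reciprocating the right function: set $w(t):=1+\int_{t_1}^{t}(z^{2}/p)\,d_{\alpha}s$, choose $t_1$ so that $z(b)\le-w(b)$ for $b\ge t_1$ (possible since $\int q\to\infty$), note $D^{\alpha}w-\kappa_1 w=z^{2}/p\ge w^{2}/p$, and run your same \eqref{dfminuskf}/Theorem~\ref{ftc} integration on $e_0(\cdot,a)/w$ to get $\int_{t_1}^{b}p^{-1}\,d_{\alpha}t\le 1/w(t_1)-1/w(b)\le1$, contradicting $\int 1/p\,d_{\alpha}t\ge\int e_0(t,a)/p\,d_{\alpha}t=\infty$. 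This version avoids the uncontrollable $\int q/z^{2}$ term entirely and handles sign-changing $q$.
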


\begin{proof} 
If $Lx=0$ is nonoscillatory on $[a,\infty)$, then $Lx=0$ is disconjugate in a neighborhood of $\infty$. By Theorem \ref{rdsols}, there is a dominant solution $x$ at $\infty$ satisfying
\begin{equation}\label{finite}
   \int_{t_0}^{\infty} \frac{e_0^2(t,a)}{p(t)x^2(t)}d_{\alpha}t < \infty
\end{equation}
for $t_0\in\mathcal{I}$ sufficiently large. Define $z$ by the Riccati substitution \eqref{zform} on $[t_0,\infty)$. Then, by Theorem \ref{saricc}, $z$ is a solution of the Riccati equation \eqref{ricceq} on $[t_0,\infty)$. It follows for all $s\ge t_0$ that
\[ \frac{D^{\alpha}[e_0(\cdot,t_0)z](s)}{e_0(s,t_0)} \stackrel{\eqref{dfminuskf}}{=} D^{\alpha}z(s) - \kappa_1(s)z(s) \stackrel{\eqref{ricceq}}{=} -q(s) - \frac{z^2(s)}{p(s)} \le -q(s), \]
whence
\[ D^{\alpha}[e_0(\cdot,t_0)z](s)e_0(t,s) \le -e_0(t,t_0)q(s).  \]
Integrating both sides of this last inequality from $t_0$ to $t$ and using Theorem \ref{ftc}, we get 
\[ z(t) - z(t_0) \le -\int_{t_0}^t q(s) d_{\alpha}s. \]
It follows that $\lim_{t\to\infty} z(t)=-\infty$. But then there exists $t_1\in \mathcal{I}$ sufficiently large so that $t_1\ge t_0$ and
\[ z(t) = \frac{p(t)D^{\alpha}x(t)}{x(t)} < 0 \quad\text{for all}\quad t\ge t_1. \]
Without loss of generality we can assume that $x(t)>0$ for $t\in[t_1,\infty)$; it then follows from Theorem \ref{incdectest} that $x$ is a positive $\alpha$-decreasing function on $[t_1,\infty)$, so that
\[ x^2(t) < e_0(t,t_1)x^2(t_1), \quad t\ge t_1. \]
But then we have that
\[ \int_{a}^{\infty}\frac{e_0^2(t,a)}{p(t)x^2(t)}d_{\alpha}t \ge \int_{t_1}^{\infty}\frac{e_0^2(t,a)}{p(t)x^2(t)}d_{\alpha}t > \frac{e_0(t_1,a)}{x^2(t_1)} \int_{t_1}^{\infty} \frac{e_0(t,a)}{p(t)}d_{\alpha}t = \infty, \]
which contradicts \eqref{finite}. This completes the proof.
\end{proof}

\section{Boundary Value Problems and Green's Function}\label{s33}

In this section we are concerned with Green's functions for a general two-point boundary value problem (abbreviated by BVP) for $Lx=0$, $L$ given in \eqref{saeq}.

\begin{theorem}[Existence and Uniqueness of Solutions for General Two Point BVPs]\label{uniq}
	\index{existence uniqueness theorem!BVP}
	\index{boundary value problem!general two point}
Assume that the homogeneous boundary value problem
\begin{equation}\label{he}
   Lx=0,\quad \xi x(a)-\beta D^{\alpha}x(a)
   	 = \gamma x(b)+\delta D^{\alpha}x(b)=0
\end{equation}
has only the trivial solution. Then the nonhomogeneous BVP
\begin{equation}\label{nhe}
  Lx=h(t),\quad \xi x(a)-\beta D^{\alpha}x(a)=A,\quad
   	\gamma x(b)+\delta D^{\alpha}x(b)=B,
\end{equation}
where $A$ and $B$ are given constants and $h$ is continuous, has a unique solution.
\end{theorem}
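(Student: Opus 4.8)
The plan is to reduce the boundary value problem \eqref{nhe} to a $2\times 2$ linear algebraic system for two free constants and then show that the hypothesis on \eqref{he} is exactly what makes the coefficient matrix invertible. First I would fix two linearly independent solutions $x_1,x_2$ of $Lx=0$ on $[a,b]$, which exist by the earlier theorem guaranteeing a two-dimensional solution space, together with a particular solution $y_p$ of $Lx=h$ satisfying $y_p(a)=D^{\alpha}y_p(a)=0$, furnished by the variation of constants formula (Theorem \ref{t31}). By linearity of $L$ and the existence-uniqueness result for initial value problems (Theorem \ref{unique}), every solution of $Lx=h$ on $[a,b]$ has the form $x=c_1x_1+c_2x_2+y_p$ for constants $c_1,c_2\in\R$, since any two solutions of $Lx=h$ differ by a solution of the homogeneous equation.

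Introducing the boundary functionals $B_a[x]:=\xi x(a)-\beta D^{\alpha}x(a)$ and $B_b[x]:=\gamma x(b)+\delta D^{\alpha}x(b)$, I would substitute the general solution into the two boundary conditions of \eqref{nhe}. Because $y_p$ vanishes together with its $\alpha$-derivative at $a$, the boundary conditions collapse to the linear system
\[ \begin{pmatrix} B_a[x_1] & B_a[x_2] \\ B_b[x_1] & B_b[x_2] \end{pmatrix}\begin{pmatrix} c_1 \\ c_2 \end{pmatrix} = \begin{pmatrix} A \\ B - B_b[y_p] \end{pmatrix}. \]
This system is uniquely solvable for $(c_1,c_2)$ for \emph{every} right-hand side precisely when its coefficient determinant $\Delta:=B_a[x_1]B_b[x_2]-B_a[x_2]B_b[x_1]$ is nonzero, so the entire theorem reduces to verifying $\Delta\neq 0$.

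The heart of the argument is this determinant condition, which I would establish by contraposition. If $\Delta=0$, then the homogeneous algebraic system $B_a[x_1]c_1+B_a[x_2]c_2=0$, $B_b[x_1]c_1+B_b[x_2]c_2=0$ admits a nontrivial solution $(c_1,c_2)$; the corresponding function $x:=c_1x_1+c_2x_2$ is then a nontrivial solution of $Lx=0$ (nontrivial by linear independence of $x_1,x_2$) satisfying both homogeneous boundary conditions in \eqref{he}, contradicting the hypothesis that \eqref{he} has only the trivial solution. Hence $\Delta\neq 0$, the displayed system has a unique solution $(c_1,c_2)$, and $x=c_1x_1+c_2x_2+y_p$ is the unique solution of \eqref{nhe}. (Uniqueness is also visible directly: the difference of any two solutions of \eqref{nhe} solves \eqref{he} and is therefore identically zero.)

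The only genuine obstacle is the equivalence between $\Delta=0$ and the existence of a nontrivial solution of the homogeneous BVP; everything else is routine linear algebra once the general-solution representation and the vanishing initial data of $y_p$ are secured. The single point requiring care is confirming that the variation of constants particular solution indeed satisfies $y_p(a)=D^{\alpha}y_p(a)=0$, but this is precisely the content of Theorem \ref{t31}, so it requires no extra work.
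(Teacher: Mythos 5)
Your proposal is correct and is essentially the argument the paper has in mind: the paper omits this proof as ``similar to the classical ($\alpha=1$) case,'' and your reduction to a $2\times 2$ linear system---general solution $c_1x_1+c_2x_2+y_p$ with $y_p$ from the variation of constants formula (Theorem \ref{t31}), followed by the observation that the coefficient determinant vanishes exactly when the homogeneous BVP \eqref{he} has a nontrivial solution---is precisely that classical argument carried over to the conformable setting. It also matches, step for step, the strategy the paper does write out for the analogous periodic problem (Theorem \ref{perasefyu}), where the nonvanishing of $\det M$ plays the role of your $\Delta\neq 0$.
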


\begin{proof}
The proof is similar to the classical ($\alpha=1$) case and thus is omitted.
\end{proof}

In the next example we give a BVP of the type \eqref{he} which does not have just the trivial solution. In Example \ref{zrsjsery} we give a necessary and sufficient condition for some boundary value problems of the form \eqref{he} to have only the trivial solution.

\begin{example}\label{arhuyer} 
Find all solutions of the BVP 
\begin{equation}\label{bvpexexex}
 D^{\alpha}[pD^{\alpha}x](t) = 0, \quad D^{\alpha}x(a)=D^{\alpha}x(b)=0,
\end{equation}
where $a<b$. The BVP \eqref{bvpexexex} is equivalent to a BVP of the form \eqref{he} if we take $q(t)\equiv 0$, $\xi=\gamma=0$, and $\beta=\delta=1$.
A general solution of the boundary value problem in \eqref{bvpexexex} is
\begin{equation}\label{homoqsol}
 x(t) = c_1e_0(t,a) + c_2 e_0(t,a)\int_a^t \frac{1}{p(\tau)} d_{\alpha}\tau, \quad t\in[a,b],
\end{equation}
and the boundary conditions lead to the equations
\[ D^{\alpha}x(a) = \frac{c_2}{p(a)} = 0 \quad\text{and}\quad
   D^{\alpha}x(b) = \frac{c_2e_0(b,a)}{p(b)} = 0. \]
Thus $c_2=0$, and $x(t)=c_1e_0(t,a)$ solves \eqref{bvpexexex} for any constant $c_1\in\R$. $\hfill\triangle$
\end{example}


\begin{example}\label{zrsjsery} 
Let
\[ D = \xi\gamma\int_a^{b} \frac{1}{p(\tau)} d_{\alpha}\tau + \frac{\beta\gamma}{p(a)} + \frac{\xi\delta}{p(b)}. \] 
Using \eqref{homoqsol}, it is straightforward to show that the BVP \eqref{he} with $q\equiv  0$ has only the trivial solution if and only if $D\neq 0$.
In particular, $D=0$ for the BVP in Example \ref{arhuyer} and thus \eqref{bvpexexex} has nontrivial solutions, as seen above.  $\hfill\triangle$
\end{example}


\begin{theorem}[Green's Function for General Two Point BVPs]\label{oldtheorem5}
	\index{Green's function!general two point BVP}
	\index{boundary value problem!general two point}
Assume that the BVP \eqref{he} has only the trivial solution. For each fixed $s\in[a,b]$, let $u(\cdot,s)$ be the unique solution of the BVP 
\begin{gather}
  Lu(\cdot,s) = 0,\quad \xi u(a,s) -\beta D^{\alpha}u(a,s) = 0,\nonumber \\
	  \gamma u(b,s) + \delta D^{\alpha}u(b,s) = -\gamma x(b,s)-\delta D^{\alpha}x(b,s),\label{unhbc}
\end{gather}
where $x(t,s)$ is the Cauchy function \eqref{eq31} for $Lx=0$, $L$ in \eqref{saeq}. Define Green's function $G:[a,b]\times[a,b]\to\R$ 
for the BVP \eqref{he} by
\[ G(t,s)=\begin{cases} u(t,s) &: a\le t \le s \le b, \\
                        v(t,s) &: a\le s \le t \le b, \end{cases} \]
where $v(t,s):=u(t,s)+x(t,s)$ for $t,s\in[a,b]$. Then for each fixed $s\in[a,b]$, $v(\cdot,s)$ is a solution of $Lx=0$ and satisfies the second boundary condition in \eqref{he}. If $h$ is continuous, then 
\[ x(t):=\int_a^{b}G(t,s)h(s) d_{\alpha}s \]
is the solution of the nonhomogeneous BVP \eqref{nhe} with $A=B=0$.
\end{theorem}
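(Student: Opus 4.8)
The plan is to prove the two assertions in turn, resting everything on the linearity of $L$, the defining properties of the Cauchy function $x(t,s)$, and the Variation of Constants Formula (Theorem~\ref{t31}).

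For the first assertion I would observe that $v(\cdot,s)=u(\cdot,s)+x(\cdot,s)$ is a sum of two solutions of $Lx=0$: the function $u(\cdot,s)$ solves $Lu(\cdot,s)=0$ by its defining problem \eqref{unhbc}, and $x(\cdot,s)$ solves $Lx(\cdot,s)=0$ by the Cauchy-function property, so linearity gives $Lv(\cdot,s)=0$. For the second boundary condition I would write $\gamma v(b,s)+\delta D^{\alpha}v(b,s)=\left[\gamma u(b,s)+\delta D^{\alpha}u(b,s)\right]+\left[\gamma x(b,s)+\delta D^{\alpha}x(b,s)\right]$ and note that by the inhomogeneous boundary condition in \eqref{unhbc} the first bracket equals $-\gamma x(b,s)-\delta D^{\alpha}x(b,s)$, so the two brackets cancel and the condition holds.

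The heart of the argument is the second assertion. I would first split the integral at $s=t$ according to the definition of $G$ and substitute $v=u+x$ to obtain $x(t)=w(t)+\phi(t)$, where $w(t):=\int_a^b u(t,s)h(s)\,d_{\alpha}s$ collects the full-interval $u$-contribution and $\phi(t):=\int_a^t x(t,s)h(s)\,d_{\alpha}s$ is precisely the variation-of-constants integral. Theorem~\ref{t31} then immediately gives $L\phi=h$ together with $\phi(a)=0$ and $D^{\alpha}\phi(a)=0$. For $w$ the $s$-limits of integration are constant, so $D^{\alpha}$ passes under the integral sign directly; applying it twice yields $Lw(t)=\int_a^b L_t u(t,s)\,h(s)\,d_{\alpha}s=0$ since each $u(\cdot,s)$ solves $Lu(\cdot,s)=0$. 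Linearity then gives $Lx=Lw+L\phi=h$.

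It remains to verify the two homogeneous boundary conditions, which is where the bookkeeping must be done carefully. At $t=a$ the data $\phi(a)=0$, $D^{\alpha}\phi(a)=0$ reduce $x(a)$ and $D^{\alpha}x(a)$ to $w(a)$ and $D^{\alpha}w(a)$; pulling $\xi$ and $\beta$ inside the constant-limit integral and invoking the first boundary condition in \eqref{unhbc}, $\xi u(a,s)-\beta D^{\alpha}u(a,s)=0$, makes $\xi x(a)-\beta D^{\alpha}x(a)=0$. At $t=b$ I would compute $\gamma x(b)+\delta D^{\alpha}x(b)$ by grouping the $w$-terms and the $\phi$-terms separately: the $w$-group produces $\int_a^b\left[\gamma u(b,s)+\delta D^{\alpha}u(b,s)\right]h(s)\,d_{\alpha}s$, which by \eqref{unhbc} equals $-\int_a^b\left[\gamma x(b,s)+\delta D^{\alpha}x(b,s)\right]h(s)\,d_{\alpha}s$, exactly cancelling the $\phi$-group. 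The one subtlety to watch is the derivative of $\phi$: because its upper limit depends on $t$, I must use \eqref{vofcderiv}, which gives $D^{\alpha}\phi(t)=\int_a^t D^{\alpha}_t x(t,s)h(s)\,d_{\alpha}s$ (the boundary term dropping out since $x(t,t)=0$), so that $D^{\alpha}\phi(b)$ matches the $D^{\alpha}_t x(b,s)$ appearing on the right of \eqref{unhbc}. With both conditions verified, $x$ solves \eqref{nhe} with $A=B=0$. The main obstacle here is purely organizational: keeping the two integral families straight — $w$ with fixed limits, $\phi$ with a moving upper limit — together with their $\alpha$-derivatives, so that the \eqref{unhbc} conditions produce the clean cancellation at $b$.
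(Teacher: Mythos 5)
Your proof is correct, and its skeleton---splitting $x(t)=\int_a^b u(t,s)h(s)\,d_{\alpha}s+\int_a^t x(t,s)h(s)\,d_{\alpha}s$, applying Theorem~\ref{t31} to the moving-limit piece, differentiating under the integral in the fixed-limit piece, and reading off the boundary condition at $a$---is exactly the paper's. The one genuine divergence is the verification at $b$. The paper never cancels anything explicitly: it regroups $x(t)=\int_a^b v(t,s)h(s)\,d_{\alpha}s+\int_b^t x(t,s)h(s)\,d_{\alpha}s$, notes that the second piece is the variation-of-constants solution anchored at $b$ (so it and its $D^{\alpha}$-derivative vanish there), and then invokes the first assertion of the theorem---that each $v(\cdot,s)$ satisfies the homogeneous second condition of \eqref{he}---to annihilate the first piece. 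You instead keep the original decomposition and produce the cancellation by hand, pairing the inhomogeneous condition in \eqref{unhbc} against $\gamma\phi(b)+\delta D^{\alpha}\phi(b)$ computed via \eqref{vofcderiv} and $x(t,t)=0$. Both are sound. The paper's regrouping buys symmetry (the conditions at $a$ and at $b$ are dispatched by the same one-line argument, with $u$ and anchor $a$ replaced by $v$ and anchor $b$) and it actually uses the first assertion, which explains why $v$ and the inhomogeneous condition in \eqref{unhbc} were designed as they were; your version is more self-contained---it needs only Theorem~\ref{t31} as literally stated, not a variation-of-constants formula anchored at the right endpoint---but at the price of explicit bookkeeping, and it leaves the first assertion you proved (that $v(\cdot,s)$ satisfies the second condition of \eqref{he}) unused in the remainder of your argument.
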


\begin{proof} 
The existence and uniqueness of $u(t,s)$ is guaranteed by Theorem \ref{uniq}. Since for each fixed $s\in [a,b]$, $u(\cdot,s)$ and $x(\cdot,s)$ are 
solutions of $Lx=0$, $L$ in \eqref{saeq}, we have for each fixed $s\in[a,b]$ that $v(\cdot,s)=u(\cdot,s)+x(\cdot,s)$ is also a solution of $Lx=0$.
It follows from \eqref{unhbc} that $v(\cdot,s)$ satisfies the second boundary condition in \eqref{he} for each fixed $s\in[a,b]$.
First note that
\begin{eqnarray*}
  x(t) &=& \int_a^{b}G(t,s)h(s) d_{\alpha}s\\
  &=& \int_a^tG(t,s)h(s) d_{\alpha}s+\int_t^{b}G(t,s)h(s) d_{\alpha}s\\
  &=& \int_a^tv(t,s)h(s) d_{\alpha}s
      +\int_t^{b}u(t,s)h(s) d_{\alpha}s\\ 
  &=& \int_a^t[u(t,s)+x(t,s)]h(s) d_{\alpha}s
      +\int_t^{b}u(t,s)h(s) d_{\alpha}s\\
  &=& \int_a^{b}u(t,s)h(s) d_{\alpha}s
      +\int_a^tx(t,s)h(s) d_{\alpha}s \\
  &=& \int_a^{b}u(t,s)h(s) d_{\alpha}s+w(t),
\end{eqnarray*}
where, by the variation of constants formula in Theorem \ref{t31}, $w$ is the solution of the IVP
\begin{equation}\label{wcondh}
 Lw = h(t),\quad w(a)=D^{\alpha}w(a)=0.
\end{equation}
It follows that
\[ Lx(t) = \int_a^{b}Lu(t,s)h(s) d_{\alpha}s + Lw(t) = Lw(t) = h(t). \]
Hence $x$ is a solution of the nonhomogeneous equation $Lx=h(t)$. It remains to show that $x$ satisfies the two boundary conditions 
in \eqref{he}. Now
\begin{eqnarray*}
   \xi x(a)-\beta D^{\alpha}x(a) 
   &=& \int_a^{b}[\xi u(a,s)-\beta D^{\alpha}u(a,s)]h(s) d_{\alpha}s = 0,
\end{eqnarray*}
since for each fixed $s$, $u(\cdot,s)$ satisfies the first boundary condition in \eqref{he} and $w$ satisfies \eqref{wcondh}. Hence $x$ satisfies the first boundary condition in \eqref{he}. From earlier in this proof,
\begin{eqnarray*} 
x(t)&=&\int_a^{b}u(t,s)h(s) d_{\alpha}s+\int_a^tx(t,s)h(s) d_{\alpha}s\\
  &=&\int_a^{b}v(t,s)h(s) d_{\alpha}s-\int_t^{b}x(t,s)h(s) d_{\alpha}s\\
  &=&\int_a^{b}v(t,s)h(s) d_{\alpha}s+\int_{b}^tx(t,s)h(s) d_{\alpha}s\\
  &=&\int_a^{b}v(t,s)h(s) d_{\alpha}s+y(t),
\end{eqnarray*}
where, by the variation of constants formula in Theorem \ref{t31}, $y$ solves
\begin{equation}\label{ycondh} 
 Ly = h(t),\quad y(b)=D^{\alpha}y(b)=0. 
\end{equation}
Then
\begin{eqnarray*}
 \gamma x(b)+\delta D^{\alpha}x(b)
   &=& \int_a^{b}\left[\gamma v(b,s) + \delta D^{\alpha}v(b,s)\right]h(s) d_{\alpha}s = 0,
\end{eqnarray*}
since for each fixed $s$, $v(\cdot,s)$ satisfies the second boundary condition in \eqref{he} and $y$ satisfies \eqref{ycondh}. Hence $x$ satisfies the second boundary condition in \eqref{he}. The uniqueness of solutions of the nonhomogeneous BVP \eqref{nhe} (with $A=B=0$) follows from Theorem \ref{uniq}.
\end{proof}

Instead of the Cauchy function approach as above, in the next theorem we find another form of Green's function for the BVP \eqref{he}; of particular interest is a symmetry condition on the square $[a,b]\times[a,b]$ satisfied by Green's function.


\begin{theorem}\label{2ptbvpthm}
	\index{Green's function!general two point BVP}
Assume that the BVP \eqref{he} has only the trivial solution. Let $\phi$ be the solution of the IVP
\[ L\phi = 0, \quad \phi(a) = \beta, \quad D^{\alpha}\phi(a)=\xi, \]
and let $\psi$ be the solution of 
\[ L\psi = 0, \quad \psi(b) = \delta, \quad D^{\alpha}\psi(b)=-\gamma. \]
Then Green's function for the BVP \eqref{he} is given by
\begin{eqnarray}\label{formgf2} G(t,s) = \frac{1}{p(s)W(\phi,\psi)(s)}\begin{cases}
   \phi(t)\psi(s)&: a\le t\le s \le b, \\
	 \psi(t)\phi(s)&: a\le s\le t \le b, \end{cases}
\end{eqnarray}
where $W$ is the Wronskian \eqref{wdef}. Furthermore, $G$ satisfies the property
\[ e_0(s,t)G(t,s) = e_0(t,s)G(s,t). \]
\end{theorem}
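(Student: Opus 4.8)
The plan is to verify directly that the kernel $G$ in \eqref{formgf2} reproduces the solution operator for the nonhomogeneous problem \eqref{nhe} with $A=B=0$, and then to invoke uniqueness. First I would record the basic facts about $\phi$ and $\psi$. Both are well defined by the existence--uniqueness Theorem~\ref{unique}, and a one-line substitution of the initial data shows $\xi\phi(a)-\beta D^{\alpha}\phi(a)=\xi\beta-\beta\xi=0$ and $\gamma\psi(b)+\delta D^{\alpha}\psi(b)=\gamma\delta-\delta\gamma=0$, so $\phi$ meets the left-hand and $\psi$ the right-hand boundary condition in \eqref{he}. The hypothesis that \eqref{he} has only the trivial solution forces $\phi$ and $\psi$ to be linearly independent: were they proportional, the common nontrivial solution would satisfy both boundary conditions. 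Hence $W(\phi,\psi)$ never vanishes (by the Wronskian dichotomy following Abel's formula, Corollary~\ref{wronskid}), and $G$ is well defined. Writing $G_1,G_2$ for the lower ($s\le t$) and upper ($t\le s$) branches of \eqref{formgf2}, I would also note $G_1(t,s)-G_2(t,s)=\frac{\psi(t)\phi(s)-\phi(t)\psi(s)}{p(s)W(\phi,\psi)(s)}$, which by comparison with \eqref{eq31} is exactly the Cauchy function $x(t,s)$ of Theorem~\ref{t33}; this is the diagonal jump that drives the computation.

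The heart of the proof is to show $x(t):=\int_a^{b}G(t,s)h(s)\,d_{\alpha}s$ solves \eqref{nhe}. Splitting $x(t)=\int_a^t G_1(t,s)h(s)\,d_{\alpha}s+\int_t^{b}G_2(t,s)h(s)\,d_{\alpha}s$, I would differentiate using the Leibniz rule $D^{\alpha}_t\int_a^t f(t,s)\,d_{\alpha}s=f(t,t)+\int_a^t D^{\alpha}_t f(t,s)\,d_{\alpha}s$ from \eqref{vofcderiv} (the $\kappa_1$ contributions cancel), together with its companion $D^{\alpha}_t\int_t^{b} f\,d_{\alpha}s=-f(t,t)+\int_t^{b}D^{\alpha}_t f\,d_{\alpha}s$. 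On the first pass the boundary terms cancel because $G_1(t,t)=G_2(t,t)$ (continuity across the diagonal), leaving $D^{\alpha}x=\int_a^t D^{\alpha}_t G_1\,h\,d_{\alpha}s+\int_t^{b}D^{\alpha}_t G_2\,h\,d_{\alpha}s$. Applying the same two rules to $p\,D^{\alpha}x$, the surviving boundary terms combine into $p(t)\big[D^{\alpha}_tG_1-D^{\alpha}_tG_2\big](t,t)\,h(t)$; since $\big[D^{\alpha}_tG_1-D^{\alpha}_tG_2\big](t,t)=W(\phi,\psi)(t)/\big(p(t)W(\phi,\psi)(t)\big)=1/p(t)$ this is precisely $h(t)$, while each interior integrand becomes $-q\,G_i$ because $L\phi=L\psi=0$ gives $D^{\alpha}[pD^{\alpha}G_i]=-qG_i$. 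Collecting terms yields $D^{\alpha}[pD^{\alpha}x]=h-qx$, i.e. $Lx=h$. The boundary conditions follow since at $t=a$ only the $\int_t^b G_2$ piece survives and $\xi G_2(a,s)-\beta D^{\alpha}_tG_2(a,s)\propto \xi\phi(a)-\beta D^{\alpha}\phi(a)=0$, and symmetrically at $t=b$ using $\psi$. Uniqueness of solutions of \eqref{nhe}, guaranteed by Theorem~\ref{uniq}, then identifies $G$ as the Green's function.

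For the symmetry relation, the claimed identity is invariant under interchanging $t$ and $s$, so it suffices to treat $t\le s$, where $G(t,s)=\phi(t)\psi(s)/\big(p(s)W(\phi,\psi)(s)\big)$ and $G(s,t)=\psi(s)\phi(t)/\big(p(t)W(\phi,\psi)(t)\big)$. Abel's formula, Corollary~\ref{wronskid}, gives $p(\tau)W(\phi,\psi)(\tau)=c\,e^2_0(\tau,b)$ for a constant $c$, so $G(t,s)/G(s,t)=e^2_0(t,b)/e^2_0(s,b)$. Using the semigroup law $e_0(t,b)=e_0(t,s)e_0(s,b)$ from Lemma~\ref{expprops}$(ii)$, this ratio equals $e^2_0(t,s)$, and since $e_0(s,t)=1/e_0(t,s)$ by Lemma~\ref{expprops}$(iii)$ we have $e^2_0(t,s)=e_0(t,s)/e_0(s,t)$. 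Cross-multiplying gives $e_0(s,t)G(t,s)=e_0(t,s)G(s,t)$, as claimed.

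The step I expect to be most error-prone is the double differentiation of the split integral: one must track that the continuity of $G$ on the diagonal annihilates the first-order boundary contributions, while the unit jump in $D^{\alpha}_tG$ is exactly what regenerates $h(t)$ on the second pass, and the sign in $D^{\alpha}_t\int_t^{b}$ (which contributes $-f(t,t)$ rather than $+f(t,t)$) is easy to mishandle.
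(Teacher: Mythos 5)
Your proposal is correct, but it takes a genuinely different route from the paper. The paper does not re-derive the reproducing property of the kernel at all: it verifies that $\phi$ and $\psi$ satisfy the first and second boundary conditions of \eqref{he}, defines $u(t,s)=\phi(t)\psi(s)/\bigl(p(s)W(\phi,\psi)(s)\bigr)$ and $v(t,s)=\psi(t)\phi(s)/\bigl(p(s)W(\phi,\psi)(s)\bigr)$, shows that $\chi:=v-u$ satisfies $\chi(s,s)=0$ and $D^{\alpha}\chi(s,s)=1/p(s)$ so that $\chi$ is the Cauchy function, and checks that $u(\cdot,s)$ satisfies the boundary condition \eqref{unhbc}; the conclusion then follows by citing the already-established Theorem~\ref{oldtheorem5} (whose proof, via Theorem~\ref{t31}, contains the differentiation-under-the-integral work you do by hand). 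Your approach instead verifies directly that $x(t)=\int_a^b G(t,s)h(s)\,d_{\alpha}t$ solves $Lx=h$, using the Leibniz rule of \eqref{vofcderiv} twice and exploiting continuity of $G$ and the jump $\bigl[D^{\alpha}_tG_1-D^{\alpha}_tG_2\bigr](t,t)=1/p(t)$ across the diagonal -- all computations I checked are sound, including the sign on $D^{\alpha}_t\int_t^b$ and the identification of the interior integrands as $-qG_i$. What the paper's route buys is economy: the diagonal-jump bookkeeping is done once, in Theorems~\ref{t31} and \ref{oldtheorem5}, and this theorem becomes a short identification argument. What your route buys is self-containedness, plus two points you make explicit that the paper leaves implicit: that the trivial-solution hypothesis forces $\phi,\psi$ to be linearly independent, hence $W(\phi,\psi)\neq 0$ and $G$ is well defined, and the full Abel's-formula computation $p(\tau)W(\phi,\psi)(\tau)=c\,e^2_0(\tau,b)$ behind the symmetry relation $e_0(s,t)G(t,s)=e_0(t,s)G(s,t)$, which the paper merely asserts. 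One small refinement you should make: uniqueness of solutions of \eqref{nhe} by itself only gives $\int_a^b\bigl(G-\widetilde G\bigr)(t,s)h(s)\,d_{\alpha}s=0$ for every continuous $h$, where $\widetilde G$ is the Green's function of Theorem~\ref{oldtheorem5}; to conclude $G=\widetilde G$ pointwise you should add that $G-\widetilde G$ is continuous on each triangle, so the fundamental lemma of the calculus of variations applies -- or simply take the reproducing property itself as the definition of ``Green's function,'' which your argument establishes completely.
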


\begin{proof}
Let $\phi$ and $\psi$ be as stated in the theorem. We use 
Theorem \ref{oldtheorem5} to prove that $G$ defined by \eqref{formgf2} is Green's function for the BVP \eqref{he}. Note that
\[ \xi\phi(a)-\beta D^{\alpha}\phi(a)=\xi\beta-\beta\xi=0 \]
and
\[ \gamma\psi(b)+\delta D^{\alpha}\psi(b) = \gamma\delta-\delta\gamma=0. \]
Hence $\phi$ and $\psi$ satisfy the first and second boundary condition in \eqref{he}, respectively. Let
\[ u(t,s):=\frac{\phi(t)\psi(s)}{p(s)W(\phi,\psi)(s)} \quad\text{and}\quad
   v(t,s):=\frac{\psi(t)\phi(s)}{p(s)W(\phi,\psi)(s)} \]
for $t\in[a,b]$, $s\in[a,b]$. Note that for each fixed $s\in[a,b]$, $u(\cdot,s)$ and $v(\cdot,s)$ are solutions of $Lx=0$, $L$ in \eqref{saeq}, on $[a,b]$. Also for each fixed $s\in[a,b]$,
\[ \xi u(a,s)-\beta D^{\alpha}u(a,s) = \frac{\psi(s)}{p(s)W(\phi,\psi)(s)}[\xi\phi(a)-\beta D^{\alpha}\phi(a)]=0 \]
and
\[ \gamma v(b,s)+\delta D^{\alpha}v(b,s) = \frac{\phi(s)}{p(s)W(\phi,\psi)(s)}[\gamma\psi(b)+\delta D^{\alpha}\psi(b)]=0. \]
Hence for each fixed $s\in [a,b]$, $u(\cdot,s)$ and $v(\cdot,s)$ satisfy the first and second boundary condition in \eqref{he}, respectively. Let
\[ \chi(t,s):= v(t,s)-u(t,s) = \frac{\psi(t)\phi(s)-\phi(t)\psi(s)}{p(s)W(\phi,\psi)(s)}. \] 
It follows that for each fixed $s$, $\chi(\cdot,s)$ is a solution of $Lx=0$, $\chi(s,s)=0$, and
\begin{eqnarray*}
 D^{\alpha}\chi(s,s)
  &=& \frac{\phi(s)D^{\alpha}\psi(s) - \psi(s)D^{\alpha}\phi(s)}{p(s)W(\phi,\psi)(s)} = \frac{1}{p(s)}.
\end{eqnarray*}
Consequently $\chi(t,s)=x(t,s)$ is the Cauchy function for $Lx=0$, and we have
	\index{Cauchy function!second order equation}
\[ v(t,s) = u(t,s)+x(t,s). \]
It remains to prove that for each fixed $s$, $u(\cdot,s)$ satisfies \eqref{unhbc}. To see this consider
\begin{eqnarray*}
\gamma u(b,s)+\delta D^{\alpha}u(b,s) &=& \gamma v(b,s)+\delta D^{\alpha}v(b,s) - [\gamma x(b,s)+\delta D^{\alpha}x(b,s)] \\
  &=& -\gamma x(b,s)-\delta D^{\alpha}x(b,s).
\end{eqnarray*}
Hence by Theorem \ref{oldtheorem5}, $G(t,s)$ defined by \eqref{formgf2} is Green's function for \eqref{he}. It follows from Abel's formula (Corollary \ref{wronskid}) and \eqref{formgf2} that $G$ satisfies the condition $e_0(s,t)G(t,s)=e_0(t,s)G(s,t)$ for $s,t\in[a,b]$.
\end{proof}

\subsection{Conjugate Problem and Disconjugacy}

In this subsection we examine Theorem \ref{oldtheorem5} and Theorem \ref{2ptbvpthm} in more detail, in particular for the special case where the boundary conditions \eqref{unhbc} are conjugate (also known as Dirichlet) boundary conditions.


\begin{corollary}[Green's Function for the Conjugate Problem]\label{oldcor6}
 \index{Green's function!conjugate problem}
Assume the BVP
\begin{equation}\label{cojubvp}
  Lx=0,\quad x(a)=x(b)=0.
\end{equation}
has only the trivial solution. Let $x(t,s)$ be the Cauchy function for $Lx=0$, $L$ in \eqref{saeq}. For each fixed $s\in\mathcal{I}$, let $u(\cdot,s)$ be the unique solution of the BVP
\[ Lu(\cdot,s)=0,\quad u(a,s)=0,\quad u(b,s)=-x(b,s). \]
Then
\[ G(t,s)=\begin{cases}
    u(t,s) &: t \le s, \\
    v(t,s) &: t \ge s, \end{cases} \]
where $v(t,s)=u(t,s)+x(t,s)$, is Green's function for the BVP \eqref{cojubvp}.
Moreover, for each fixed $s\in [a,b]$, $v(\cdot,s)$ is a solution of $Lx=0$ and $v(b,s)=0$.
\end{corollary}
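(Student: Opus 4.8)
The plan is to recognize Corollary \ref{oldcor6} as nothing more than the special case of Theorem \ref{oldtheorem5} obtained by choosing the boundary-condition constants $\xi=\gamma=1$ and $\beta=\delta=0$. First I would observe that under this choice the general boundary conditions in \eqref{he}, namely $\xi x(a)-\beta D^{\alpha}x(a)=0$ and $\gamma x(b)+\delta D^{\alpha}x(b)=0$, collapse precisely to the conjugate conditions $x(a)=x(b)=0$ of \eqref{cojubvp}. Consequently the standing hypothesis of the corollary, that \eqref{cojubvp} admits only the trivial solution, is exactly the hypothesis of Theorem \ref{oldtheorem5} for these constants, so that theorem applies verbatim.

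Next I would verify that the auxiliary BVP defining $u(\cdot,s)$ in the corollary is the faithful specialization of \eqref{unhbc}. Substituting $\xi=\gamma=1$ and $\beta=\delta=0$ into \eqref{unhbc} yields $u(a,s)=0$ together with $u(b,s)=-x(b,s)$, which is precisely the BVP stated in the corollary; its unique solvability is again guaranteed by Theorem \ref{uniq}, exactly as invoked in the proof of Theorem \ref{oldtheorem5}. With $u(\cdot,s)$ so identified, the Cauchy function $x(t,s)$ being the same object \eqref{eq31} in both statements, and $v(t,s):=u(t,s)+x(t,s)$ as before, the piecewise function $G(t,s)$ of the corollary coincides with the Green's function constructed in Theorem \ref{oldtheorem5}. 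I would note in passing that $G$ is well defined along the diagonal $t=s$ because $x(s,s)=0$ forces $u(s,s)=v(s,s)$, a consistency already built into Theorem \ref{oldtheorem5}. I then simply quote that theorem to conclude that $x(t):=\int_a^{b}G(t,s)h(s)\,d_{\alpha}s$ solves the nonhomogeneous conjugate BVP $Lx=h$ with $x(a)=x(b)=0$.

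Finally, the ``moreover'' assertion is read off from the corresponding clause of Theorem \ref{oldtheorem5}: there $v(\cdot,s)$ is shown to solve $Lx=0$ and to satisfy the second boundary condition of \eqref{he}, which under $\gamma=1$ and $\delta=0$ is exactly $v(b,s)=0$. I expect no genuine obstacle in this argument, since the corollary is a direct instantiation rather than a new result; the only care required is the bookkeeping of which constant governs which endpoint condition and the confirmation that every object ($x(t,s)$, $u(\cdot,s)$, $v(\cdot,s)$, and the nontriviality hypothesis) specializes consistently. The proof therefore reduces to checking these correspondences and invoking Theorem \ref{oldtheorem5}.
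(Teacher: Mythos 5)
Your proposal is correct and matches the paper's proof exactly: the paper also derives Corollary \ref{oldcor6} as the specialization of Theorem \ref{oldtheorem5} with $\xi=\gamma=1$ and $\beta=\delta=0$, stating this in a single line. Your additional bookkeeping (checking that \eqref{unhbc} collapses to the stated BVP for $u(\cdot,s)$ and that the ``moreover'' clause follows from the corresponding clause of the theorem) is sound and simply makes explicit what the paper leaves to the reader.
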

\begin{proof}
This corollary follows from Theorem \ref{oldtheorem5} with
$\xi=\gamma=1$ and $\beta=\delta=0$.
\end{proof}


\begin{corollary}\label{oldcor7}
Green's function for the BVP \eqref{cojubvp} with $q\equiv 0$ is given by
\[ G(t,s) = \frac{-e_0(t,s)}{\int_a^{b} \frac{1}{p(\tau)}d_{\alpha}\tau}
\begin{cases}
  \displaystyle\int_a^t \frac{1}{p(\tau)} d_{\alpha}\tau \int_s^b \frac{1}{p(\tau)} d_{\alpha}\tau &: a\le t\le s\le b, \\
  \displaystyle\int_a^s \frac{1}{p(\tau)} d_{\alpha}\tau \int_t^b \frac{1}{p(\tau)} d_{\alpha}\tau &: a\le s\le t\le b. \end{cases} \]
\end{corollary}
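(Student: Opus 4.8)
The plan is to apply Theorem \ref{2ptbvpthm} with the conjugate (Dirichlet) boundary data $\xi=\gamma=1$, $\beta=\delta=0$, and to make the two solutions $\phi,\psi$ appearing in \eqref{formgf2} explicit in the case $q\equiv 0$. Before anything else I would confirm the standing hypothesis that \eqref{cojubvp} has only the trivial solution: by Example \ref{zrsjsery}, for these boundary data the determinant reduces to $D=\int_a^{b} \frac{1}{p(\tau)} d_{\alpha}\tau$, which is strictly positive since $p>0$, so Green's function exists and Theorem \ref{2ptbvpthm} applies.

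Next I would identify $\phi$ and $\psi$. Since $q\equiv 0$, the Cauchy function of Example \ref{oldex1} is $x(t,s)=e_0(t,s)\int_s^t \frac{1}{p(\tau)} d_{\alpha}\tau$, and the solutions required by Theorem \ref{2ptbvpthm} are simply rescaled Cauchy functions,
\[ \phi(t)=p(a)\,x(t,a)=p(a)e_0(t,a)\int_a^t \frac{1}{p(\tau)} d_{\alpha}\tau, \qquad \psi(t)=-p(b)\,x(t,b)=p(b)e_0(t,b)\int_t^{b} \frac{1}{p(\tau)} d_{\alpha}\tau. \]
Reading off the Cauchy-function initial data $x(s,s)=0$ and $D^{\alpha}x(s,s)=1/p(s)$ at $s=a$ and $s=b$ gives $\phi(a)=0$, $D^{\alpha}\phi(a)=1$, $\psi(b)=0$, $D^{\alpha}\psi(b)=-1$, so $\phi,\psi$ are exactly the solutions named in Theorem \ref{2ptbvpthm}.

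The only remaining ingredient is the denominator $p(s)W(\phi,\psi)(s)$ in \eqref{formgf2}, which I would evaluate using Abel's formula (Corollary \ref{wronskid}) rather than by differentiating: it yields $p(s)W(\phi,\psi)(s)=p(b)W(\phi,\psi)(b)\,e^2_0(s,b)$, and the single value $W(\phi,\psi)(b)=\phi(b)D^{\alpha}\psi(b)-\psi(b)D^{\alpha}\phi(b)=-\phi(b)$ follows at once from the boundary data, so that
\[ p(s)W(\phi,\psi)(s) = -p(a)p(b)e_0(b,a)\left(\int_a^{b} \frac{1}{p(\tau)} d_{\alpha}\tau\right)e^2_0(s,b). \]

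Finally I would substitute $\phi$, $\psi$, and this denominator into both branches of \eqref{formgf2}. The prefactors $p(a)p(b)$ cancel, and the surviving exponential factors collapse through Lemma \ref{expprops}(ii),(iii): in the branch $t\le s$ one finds $e_0(t,a)e_0(s,b)/\bigl(e_0(b,a)e^2_0(s,b)\bigr)=e_0(t,s)$, and the symmetric reduction holds for $t\ge s$, producing precisely the claimed formula. I expect the main (and essentially only) obstacle to be bookkeeping in this last step: tracking the several $e_0$ factors so that they reduce to the common prefactor $-e_0(t,s)/\int_a^{b} \frac{1}{p(\tau)}d_{\alpha}\tau$ in both cases. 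There is no conceptual difficulty once $\phi$ and $\psi$ are normalized correctly.
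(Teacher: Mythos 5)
Your proposal is correct, but it takes a genuinely different route from the paper. The paper proves this corollary through Corollary \ref{oldcor6} (the conjugate specialization of Theorem \ref{oldtheorem5}): it checks triviality of the homogeneous BVP by modifying Example \ref{arhuyer}, writes $u(\cdot,s)$ as a combination of the basis solutions $x_1(t)=e_0(t,a)$ and $x_2(t)=e_0(t,a)\int_a^t p^{-1}(\tau)\,d_{\alpha}\tau$, solves the two boundary equations for the coefficients to get $u(t,s)$, and then, on the branch $t\ge s$, adds the Cauchy function and performs the integral-splitting algebra $\int_s^t = \int_a^t-\int_a^s$ to reach the stated form. You instead invoke the symmetric $\phi\psi$-formula \eqref{formgf2} of Theorem \ref{2ptbvpthm}, recognize that for $q\equiv 0$ the required solutions are rescaled Cauchy functions $\phi(t)=p(a)x(t,a)$ and $\psi(t)=-p(b)x(t,b)$ (your verification of the initial data is right), evaluate the denominator $p(s)W(\phi,\psi)(s)$ in one stroke via Abel's formula (Corollary \ref{wronskid}) with $W(\phi,\psi)(b)=-\phi(b)$, and collapse the exponentials using Lemma \ref{expprops}(ii),(iii); your identity $e_0(t,a)e_0(s,b)/\bigl(e_0(b,a)e_0^2(s,b)\bigr)=e_0(t,s)$ and its mirror for $t\ge s$ both check out. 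Your use of Example \ref{zrsjsery} (where $D=\int_a^b p^{-1}(\tau)\,d_{\alpha}\tau>0$) for the existence hypothesis is also cleaner than the paper's appeal to Example \ref{arhuyer}. What your route buys: no linear system to solve, no asymmetric case analysis (both branches emerge simultaneously from \eqref{formgf2}), and the symmetry $e_0(s,t)G(t,s)=e_0(t,s)G(s,t)$ is visible at a glance. What the paper's route buys: it exercises the Cauchy-function machinery of Theorem \ref{oldtheorem5} directly, exhibiting the functions $u$ and $v=u+x$ that later corollaries (e.g.\ Corollary \ref{oldcor11} and its focal analogues) reuse, so its structure transfers to non-conjugate boundary conditions where the clean $\phi\psi$ normalization is less immediate.
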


\begin{proof}
It is easy to check that the BVP
\[ D^{\alpha}[pD^{\alpha}x](t) = 0, \qquad x(a)=x(b)=0, \]
has only the trivial solution from a modification of Example \ref{arhuyer}. By Example \ref{oldex1}, the Cauchy function for 
\begin{equation}\label{qiszero}
 D^{\alpha}[pD^{\alpha}x](t) = 0 
\end{equation}
is given by
\[ x(t,s) = e_0(t,s)\int_{s}^t \frac{1}{p(\tau)} d_{\alpha}\tau. \]
By Corollary \ref{oldcor6}, $u(\cdot,s)$ from the statement of Corollary \ref{oldcor6} solves \eqref{qiszero} for each fixed $s\in[a,b]$ and satisfies
\begin{equation}\label{baundaricondischons}
   u(a,s) = 0 \quad\text{and}\quad
   u(b,s) = -x(b,s) = - e_0(b,s)\int_{s}^b \frac{1}{p(\tau)} d_{\alpha}\tau.
\end{equation}
Since
\[ x_1(t) = e_0(t,a) \quad\text{and}\quad x_2(t) = e_0(t,a)\int_a^t \frac{1}{p(\tau)} d_{\alpha}\tau \]
are solutions of \eqref{qiszero},
\[ u(t,s) = \xi(s)e_0(t,a)+\beta(s)e_0(t,a)\int_a^t \frac{1}{p(\tau)} d_{\alpha}\tau. \]
Using the boundary conditions \eqref{baundaricondischons}, it can be shown that
\[ u(t,s) = -e_0(t,s)\frac{\int_a^t\frac{1}{p(\tau)} d_{\alpha}\tau \int_{s}^{b}\frac{1}{p(\tau)} d_{\alpha}\tau}{\int_a^{b}\frac{1}{p(\tau)} d_{\alpha}\tau}. \]
Hence $G(t,s)$ has the desired form for $t\leq s$.
By Corollary \ref{oldcor6} for $t\ge s$,
\[ G(t,s)=x(t,s)+u(t,s). \]
Therefore for $t\ge s$,
\begin{eqnarray*}
 G(t,s) &=& e_0(t,s)\int_s^t \frac{1}{p(\tau)} d_{\alpha}\tau -e_0(t,s)\frac{\int_a^t\frac{1}{p(\tau)} d_{\alpha}\tau \int_{s}^{b}\frac{1}{p(\tau)} d_{\alpha}\tau}{\int_a^{b}\frac{1}{p(\tau)} d_{\alpha}\tau} \\
   &=& e_0(t,s)\frac{\int_{s}^t \frac{1}{p(\tau)} d_{\alpha}\tau \int_a^{b}\frac{1}{p(\tau)} d_{\alpha}\tau
     -\int_a^t\frac{1}{p(\tau)} d_{\alpha}\tau \int_{s}^{b}\frac{1}{p(\tau)} d_{\alpha}\tau}{\int_a^{b}\frac{1}{p(\tau)} d_{\alpha}\tau} \\
   &=& e_0(t,s)\frac{\int_a^t\frac{1}{p(\tau)} d_{\alpha}\tau
       \left[-\int_t^{b}\frac{1}{p(\tau)} d_{\alpha}\tau\right]+\int_{s}^t\frac{1}{p(\tau)} d_{\alpha}\tau
       \int_t^{b}\frac{1}{p(\tau)} d_{\alpha}\tau}{\int_a^{b}\frac{1}{p(\tau)} d_{\alpha}\tau} \\
   &=& -e_0(t,s)\frac{\int_a^{s}\frac{1}{p(\tau)} d_{\alpha}\tau \int_t^{b}\frac{1}{p(\tau)} d_{\alpha}\tau}{\int_a^{b}\frac{1}{p(\tau)} d_{\alpha}\tau},
\end{eqnarray*}
which is the desired result.
\end{proof}

The following corollary follows immediately from Corollary \ref{oldcor7} by taking $p(\tau)\equiv 1$.


\begin{corollary}\label{grcp} 
	\index{Green's function!conjugate problem}
Green's function for the BVP 
\[ D^{\alpha}D^{\alpha}x=0, \quad x(a)=x(b)=0 \]
is given by
\[ G(t,s) = \frac{-e_0(t,s)}{\int_a^{b} 1 d_{\alpha}\tau}
\begin{cases}
  \displaystyle\int_a^t 1 d_{\alpha}\tau \int_{s}^{b} 1 d_{\alpha}\tau &: a\le t\le s\le b, \\
  \displaystyle\int_a^s 1 d_{\alpha}\tau\int_t^{b} 1 d_{\alpha}\tau &: a\le s\le t\le b. \end{cases} \]
For example, if $\kappa_1(t)=(1-\alpha)(\omega t)^{\alpha}$ and $\kappa_0(t)=\alpha (\omega t)^{1-\alpha}$ for $\alpha\in(0,1]$ and $\omega,t\in(0,\infty)$, then
\[ \int_a^t 1 d_{\alpha}\tau = \frac{t^{\alpha}-a^{\alpha}}{\alpha^2\omega^{1-\alpha}} \quad\text{and}\quad e_0(t,s)=e^{-\left(\frac{1-\alpha}{2\alpha^2}\right)\omega^{2\alpha-1}\left(t^{2\alpha}-s^{2\alpha}\right)} \]
for $a,s\in(0,\infty)$, and
\[ G(t,s) = \frac{-e^{-\left(\frac{1-\alpha}{2\alpha^2}\right)\omega^{2\alpha-1}\left(t^{2\alpha}-s^{2\alpha}\right)}}{\alpha^2\omega^{1-\alpha}\left(b^{\alpha}-a^{\alpha}\right)} \begin{cases} \left(t^{\alpha}-a^{\alpha}\right)\left(b^{\alpha}-s^{\alpha}\right) &: a\le t\le s\le b, \\ \left(s^{\alpha}-a^{\alpha}\right)\left(b^{\alpha}-t^{\alpha}\right) &: a\le s\le t\le b. \end{cases} \]
\end{corollary}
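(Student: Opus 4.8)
The plan is to obtain both assertions directly from Corollary~\ref{oldcor7}, since the hypothesis $p\equiv 1$ is exactly the specialization considered there. First I would set $p(\tau)\equiv 1$ in the Green's function formula of Corollary~\ref{oldcor7}: every occurrence of $1/p(\tau)$ becomes $1$, and the displayed expression collapses at once to the stated formula for $G(t,s)$, with $\int 1\,d_{\alpha}\tau$ replacing $\int \frac{1}{p(\tau)}\,d_{\alpha}\tau$ on each branch. No further argument is required for the first claim; it is a verbatim substitution.

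For the explicit example with $\kappa_1(t)=(1-\alpha)(\omega t)^{\alpha}$ and $\kappa_0(t)=\alpha(\omega t)^{1-\alpha}$, the remaining task is to evaluate the two ingredients appearing in $G$. For the iterated integral I would use $d_{\alpha}\tau=d\tau/\kappa_0(\tau)=\tau^{\alpha-1}\,d\tau/(\alpha\omega^{1-\alpha})$ and apply the power rule:
\[ \int_a^t 1\,d_{\alpha}\tau = \frac{1}{\alpha\omega^{1-\alpha}}\int_a^t \tau^{\alpha-1}\,d\tau = \frac{t^{\alpha}-a^{\alpha}}{\alpha^2\omega^{1-\alpha}}. \]
For the exponential factor I would compute the ratio $\kappa_1(\tau)/\kappa_0(\tau) = \frac{1-\alpha}{\alpha}(\omega\tau)^{2\alpha-1}$ and integrate it in the exponent of $e_0(t,s)$ from \eqref{epts}, giving
\[ e_0(t,s)=\exp\left(-\int_s^t \frac{\kappa_1(\tau)}{\kappa_0(\tau)}\,d\tau\right) = e^{-\left(\frac{1-\alpha}{2\alpha^2}\right)\omega^{2\alpha-1}\left(t^{2\alpha}-s^{2\alpha}\right)}, \]
which matches the stated value.

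Finally I would substitute these two evaluations into the general formula. On the branch $a\le t\le s\le b$ the ratio $\left(\int_a^t 1\,d_{\alpha}\tau\right)\left(\int_s^b 1\,d_{\alpha}\tau\right)/\int_a^b 1\,d_{\alpha}\tau$ contributes two factors of $1/(\alpha^2\omega^{1-\alpha})$ from the numerator integrals against one from the denominator integral, so a single factor $\alpha^2\omega^{1-\alpha}$ survives in the denominator alongside $(t^{\alpha}-a^{\alpha})(b^{\alpha}-s^{\alpha})/(b^{\alpha}-a^{\alpha})$; the branch $a\le s\le t\le b$ is identical with $t$ and $s$ interchanged in the numerator. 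Multiplying by $-e_0(t,s)$ then yields the displayed closed form. There is no genuine obstacle here---the argument is substitution followed by elementary power-rule integration, and the only point requiring care is the bookkeeping of the powers of $\alpha$ and $\omega$ as the three normalizing constants cancel down to one.
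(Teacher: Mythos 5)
Your proposal is correct and matches the paper's approach exactly: the paper obtains this corollary immediately from Corollary~\ref{oldcor7} by setting $p(\tau)\equiv 1$, which is precisely your first step. Your explicit power-rule evaluations of $\int_a^t 1\,d_{\alpha}\tau$ and of the exponent in $e_0(t,s)$ are accurate and simply fill in the computations the paper leaves to the reader.
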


The following corollary follows immediately from Corollary \ref{oldcor7} by taking 
\begin{equation}\label{pinvconj}
 p^{-1}(\tau) = e_0(b,\tau)D^{\alpha}\left[\frac{\tau}{e_0(b,\tau)}\right].
\end{equation}


\begin{corollary}
Let $a,b\in\R$ with $a<b$, and let $p$ be given via \eqref{pinvconj}. Then Green's function for the BVP 
\[ D^{\alpha}\left[pD^{\alpha}x\right](t)=0, \quad x(a)=x(b)=0 \]
is given by
\[ G(t,s) = \frac{-e_0(t,s)}{b-a}
\begin{cases}
  (t-a)(b-s) &: a\le t\le s\le b, \\
  (s-a)(b-t) &: a\le s\le t\le b. \end{cases} \]
\end{corollary}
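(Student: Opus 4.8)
The plan is to reduce the entire statement to Corollary~\ref{oldcor7} by evaluating the single conformable integral $\int_a^t p^{-1}(\tau)\,d_\alpha\tau$ for the particular coefficient $p$ prescribed by \eqref{pinvconj}. The crucial observation is that this choice of $p$ makes $p^{-1}$ collapse to $\kappa_0$. First I would rewrite $\frac{\tau}{e_0(b,\tau)} = \tau\,e_0(\tau,b)$ via Lemma~\ref{expprops}(iii), so that $p^{-1}(\tau) = e_0(b,\tau)\,D^{\alpha}[\tau\,e_0(\tau,b)]$. Applying the conformable product rule Lemma~\ref{basicderiv}(iii) to the product $\tau\cdot e_0(\tau,b)$, together with the defining property $D^{\alpha}e_0(\tau,b)=0$ from \eqref{expderiv} (the operator differentiating the first slot) and $D^{\alpha}\tau = \kappa_1\tau+\kappa_0$ from \eqref{derivdef}, the two $\kappa_1$-terms cancel and leave $D^{\alpha}[\tau\,e_0(\tau,b)] = \kappa_0\,e_0(\tau,b)$. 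Hence $p^{-1}(\tau) = e_0(b,\tau)\,e_0(\tau,b)\,\kappa_0(\tau) = \kappa_0(\tau)$, where $e_0(b,\tau)\,e_0(\tau,b) = e_0(b,b) = 1$ by Lemma~\ref{expprops}(i)--(ii). (The same conclusion follows in one line from identity \eqref{dfminuskf} applied with $f(\tau)=\tau$.)

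Once $p^{-1}=\kappa_0$ is established, the integral is immediate: since $d_\alpha\tau = d\tau/\kappa_0(\tau)$, we obtain $\int_a^t p^{-1}(\tau)\,d_\alpha\tau = \int_a^t \kappa_0(\tau)\,\frac{d\tau}{\kappa_0(\tau)} = \int_a^t d\tau = t-a$ for every $t\in[a,b]$. In particular $\int_a^b p^{-1}\,d_\alpha\tau = b-a > 0$, so the general-solution argument of Example~\ref{arhuyer} (as invoked in the proof of Corollary~\ref{oldcor7}) shows that the BVP $D^\alpha[pD^\alpha x]=0$, $x(a)=x(b)=0$, admits only the trivial solution; this verifies the hypothesis needed to apply Corollary~\ref{oldcor7}.

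It then remains only to substitute the evaluated integrals $\int_a^t = t-a$, $\int_s^b = b-s$, $\int_a^s = s-a$, $\int_t^b = b-t$, and $\int_a^b = b-a$ into the formula furnished by Corollary~\ref{oldcor7}, which directly produces the stated $G(t,s)$. The only step demanding any care is the identity $p^{-1}=\kappa_0$, where one must respect the conventions that $D^{\alpha}$ differentiates the first argument of $e_0(\cdot,\cdot)$ and that $e_0(b,\tau)$ and $e_0(\tau,b)$ are mutual reciprocals; after that point the argument is pure substitution and presents no genuine obstacle.
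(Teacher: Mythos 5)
Your proof is correct, and it reaches the result by the same overall reduction as the paper (substitute the evaluated integrals into Corollary \ref{oldcor7}), but the integral evaluation itself goes by a genuinely different mechanism. The paper never simplifies $p^{-1}$ pointwise: it recognizes $\int_a^b p^{-1}(\tau)\,d_{\alpha}\tau = \int_a^b D^{\alpha}\bigl[\tau/e_0(b,\tau)\bigr]e_0(b,\tau)\,d_{\alpha}\tau$ as exactly the shape of the Fundamental Theorem (Theorem \ref{ftc}) on $[a,b]$ with $f(\tau)=\tau/e_0(b,\tau)$, giving $f(b)-f(a)e_0(b,a)=b-a$, and then asserts that ``similar evaluations'' handle the remaining integrals. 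You instead prove the pointwise identity $p^{-1}=\kappa_0$, and your computation is right: the product rule with $D^{\alpha}e_0(\tau,b)=0$ and $D^{\alpha}\tau=\kappa_1\tau+\kappa_0$ gives $D^{\alpha}[\tau e_0(\tau,b)]=\kappa_0 e_0(\tau,b)$, and $e_0(b,\tau)e_0(\tau,b)=1$ kills the exponentials (equivalently, one line from \eqref{dfminuskf} with $f=\mathrm{id}$). What your route buys is uniformity: all five integrals collapse to ordinary integrals, $\int_a^t p^{-1}(\tau)\,d_{\alpha}\tau=\int_a^t d\tau=t-a$ and so on, with no dependence on the interval of integration. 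The paper's FTC route, by contrast, is immediate only for the full integral $\int_a^b$, where the fixed weight $e_0(b,\tau)$ built into \eqref{pinvconj} happens to coincide with the FTC weight; for a partial integral such as $\int_a^t$ one must first factor $e_0(b,\tau)=e_0(b,t)e_0(t,\tau)$ via Lemma \ref{expprops}(ii) before Theorem \ref{ftc} applies on $[a,t]$ --- a step the paper leaves implicit in ``similar evaluations'' and that your argument avoids entirely. Your extra check that the homogeneous problem has only the trivial solution (in effect $D=b-a\neq 0$ in the notation of Example \ref{zrsjsery}) is sound, though strictly redundant, since Corollary \ref{oldcor7} carries that verification inside its own proof.
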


\begin{proof}
By Corollary \ref{oldcor7}, Green's function for the BVP \eqref{cojubvp} with $q\equiv 0$ is given by
\[ G(t,s) = \frac{-e_0(t,s)}{\int_a^{b} \frac{1}{p(\tau)}d_{\alpha}\tau}
\begin{cases}
  \displaystyle\int_a^t \frac{1}{p(\tau)} d_{\alpha}\tau \int_s^b \frac{1}{p(\tau)} d_{\alpha}\tau &: a\le t\le s\le b, \\
  \displaystyle\int_a^s \frac{1}{p(\tau)} d_{\alpha}\tau \int_t^b \frac{1}{p(\tau)} d_{\alpha}\tau &: a\le s\le t\le b. \end{cases} \]
Using \eqref{pinvconj} and Theorem \ref{ftc}, we see that
\[ \int_a^{b} \frac{1}{p(\tau)}d_{\alpha}\tau = \int_a^{b} D^{\alpha}\left[\frac{\tau}{e_0(b,\tau)}\right]e_0(b,\tau)d_{\alpha}\tau = b-a. \]
Similar evaluations of the other integrals yield the result.
\end{proof}

The proofs of the following two theorems are similar to their classical ($\alpha=1$) counterparts and thus are omitted.


\begin{theorem}\label{negG} 
Let $L$ be as in \eqref{saeq}. If $p>0$ and $Lx=0$ is disconjugate on $[a,b]$, then Green's function for the conjugate BVP \eqref{cojubvp} exists and satisfies
\[ G(t,s)<0 \quad\text{for}\quad t,s\in(a,b). \]
\end{theorem}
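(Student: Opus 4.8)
The plan is to reduce to the explicit factored form of Green's function supplied by Theorem~\ref{2ptbvpthm}, specialized to the conjugate boundary data, and then track the signs of its ingredients. First I would observe that disconjugacy on $[a,b]$ (Definition~\ref{disconj}) forces the homogeneous conjugate BVP \eqref{cojubvp} to admit only the trivial solution, since any nontrivial solution with $x(a)=x(b)=0$ would have two zeros in $[a,b]$. Thus the hypothesis of Theorem~\ref{2ptbvpthm} (equivalently Corollary~\ref{oldcor6}) is met, Green's function exists, and with $\xi=\gamma=1$ and $\beta=\delta=0$ it takes the form
\[
 G(t,s) = \frac{1}{p(s)W(\phi,\psi)(s)}\begin{cases} \phi(t)\psi(s)&: a\le t\le s \le b, \\ \psi(t)\phi(s)&: a\le s\le t \le b, \end{cases}
\]
where $\phi$ solves $L\phi=0$ with $\phi(a)=0$, $D^{\alpha}\phi(a)=1$, and $\psi$ solves $L\psi=0$ with $\psi(b)=0$, $D^{\alpha}\psi(b)=-1$.

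The next step is to show that $\phi>0$ on $(a,b]$ and $\psi>0$ on $[a,b)$. Near $a$, continuity of $D^{\alpha}\phi$ together with $D^{\alpha}\phi(a)=1>0$ gives $D^{\alpha}\phi>0$ on a right-neighborhood of $a$; by the Increasing/Decreasing Test (Theorem~\ref{incdectest}), $\phi$ is strictly $\alpha$-increasing there, so $e_0(a,t)\phi(t)>\phi(a)=0$, and hence $\phi(t)>0$ just to the right of $a$. Disconjugacy then precludes any further zero of $\phi$, so $\phi$ retains this sign throughout $(a,b]$. A symmetric argument at $b$, using $D^{\alpha}\psi(b)=-1<0$ and the strictly $\alpha$-decreasing conclusion of Theorem~\ref{incdectest}, gives $e_0(b,t)\psi(t)>\psi(b)=0$ just to the left of $b$, so that $\psi>0$ on $[a,b)$.

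It then remains to fix the sign of the denominator. Since $\phi$ and $\psi$ vanish at different endpoints while staying positive at the other, they are linearly independent, so $W(\phi,\psi)$ never vanishes. Evaluating at $b$ gives $W(\phi,\psi)(b)=\phi(b)D^{\alpha}\psi(b)-\psi(b)D^{\alpha}\phi(b)=-\phi(b)<0$, whence the constant $c:=p(b)W(\phi,\psi)(b)$ is negative. Abel's formula (Corollary~\ref{wronskid}) then yields $p(s)W(\phi,\psi)(s)=c\,e_0^2(s,b)$, a negative constant times the positive factor $e_0^2(s,b)$, so the denominator is negative throughout $[a,b]$. Combining these observations, for $t,s\in(a,b)$ each branch of $G$ has a strictly positive numerator ($\phi(t)\psi(s)>0$ or $\psi(t)\phi(s)>0$) divided by a strictly negative denominator, giving $G(t,s)<0$.

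I expect the main obstacle to be the endpoint sign analysis: converting the sign of the conformable derivative at a zero into the sign of the function just past that zero cannot be done by an ordinary monotonicity argument and instead hinges on the conformable Increasing/Decreasing Test, with the orientation of the conditions (in particular $D^{\alpha}\psi(b)=-1$ yielding $\psi>0$ to the \emph{left} of $b$) needing careful bookkeeping so that the Wronskian sign emerges consistently. Everything else is a routine propagation of signs through Abel's formula.
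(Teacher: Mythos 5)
Your proof is correct, and it supplies exactly what the paper leaves out: the paper omits the proof of Theorem~\ref{negG}, deferring to the classical $\alpha=1$ counterpart, and your argument is precisely the conformable adaptation of that classical argument --- existence via Theorem~\ref{2ptbvpthm} with $\xi=\gamma=1$, $\beta=\delta=0$, endpoint sign analysis via Theorem~\ref{incdectest}, and the denominator sign via Abel's formula. All the conformable-specific steps check out: positivity of the $e_0$ factors converts the strictly $\alpha$-increasing/decreasing conclusions into $\phi>0$ on $(a,b]$ and $\psi>0$ on $[a,b)$, the evaluation $W(\phi,\psi)(b)=-\phi(b)<0$ is right, and Corollary~\ref{wronskid} then gives $p(s)W(\phi,\psi)(s)=c\,e_0^2(s,b)<0$ on all of $[a,b]$, so each branch of $G$ is a positive numerator over a negative denominator.
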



\begin{theorem}[Comparison Theorem for Conjugate BVPs]\label{oldtheorem8}
	\index{comparison theorem!for BVPs}
Let $L$ be as in \eqref{saeq}. Let $p>0$ and assume that $Lx=0$ is disconjugate on $[a,b]$. 
If $u,v\in\D$ satisfy
\[ Lu(t)\leq Lv(t)\;\mbox{ for all }\;t\in[a,b],\quad
   u(a)\geq v(a),\quad u(b)\geq v(b), \]
then
\[ u(t)\geq v(t) \quad\text{for all}\quad t\in[a,b]. \]
\end{theorem}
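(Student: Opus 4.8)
The plan is to work with the single auxiliary function $w := u - v$. By linearity of $L$ the hypotheses translate into $Lw = Lu - Lv \le 0$ on $[a,b]$, together with $w(a) = u(a)-v(a) \ge 0$ and $w(b) = u(b)-v(b) \ge 0$, so the whole theorem reduces to showing $w \ge 0$ on $[a,b]$. Since $Lx=0$ is disconjugate on $[a,b]$, no nontrivial solution can vanish at both endpoints, hence the homogeneous conjugate BVP \eqref{cojubvp} has only the trivial solution. This is exactly the hypothesis needed to invoke Theorem \ref{uniq} (existence–uniqueness), the Green's-function representation of Theorem \ref{oldtheorem5} specialized to the conjugate data as in Corollary \ref{oldcor6}, and, crucially, Theorem \ref{negG}, which guarantees that the conjugate Green's function $G$ exists and satisfies $G(t,s) < 0$ for all $t,s \in (a,b)$.

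Next I would split $w$ into a forcing contribution and a boundary contribution. Let $w_p$ solve $Lw_p = Lw$ with $w_p(a)=w_p(b)=0$, and let $w_h$ solve $Lw_h = 0$ with $w_h(a)=w(a)$, $w_h(b)=w(b)$. Then $w_p + w_h$ satisfies the same BVP as $w$, so uniqueness (Theorem \ref{uniq}) forces $w = w_p + w_h$. The forcing part is handled purely by the sign of $G$: by the representation formula,
\[ w_p(t) = \int_a^b G(t,s)\,(Lw)(s)\, d_\alpha s, \]
and since $G(t,s) < 0$ on the interior, $(Lw)(s) \le 0$, and the measure $d_\alpha s = ds/\kappa_0(s)$ is positive (as $\kappa_0 \neq 0$), the integrand is nonnegative, giving $w_p(t) \ge 0$ for every $t \in [a,b]$.

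The boundary part $w_h$ is where the real work lies, and I expect the sign analysis here to be the main obstacle. I would use the two distinguished solutions $\phi,\psi$ from Theorem \ref{2ptbvpthm}, which for the conjugate data ($\xi=\gamma=1$, $\beta=\delta=0$) satisfy $\phi(a)=0$, $D^{\alpha}\phi(a)=1$ and $\psi(b)=0$, $D^{\alpha}\psi(b)=-1$. Expanding $D^{\alpha}$ via \eqref{derivdef} at the endpoints yields $\phi'(a)=1/\kappa_0(a) > 0$ and $\psi'(b)=-1/\kappa_0(b) < 0$, so $\phi$ is positive just to the right of $a$ and $\psi$ is positive just to the left of $b$; disconjugacy then rules out any interior zero and propagates these signs to give $\phi > 0$ on $(a,b]$ and $\psi > 0$ on $[a,b)$. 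Because $\phi,\psi$ are linearly independent (any dependence would force a second zero, contradicting disconjugacy), I can write $w_h = c_1\phi + c_2\psi$; matching the endpoint values gives $c_1 = w(b)/\phi(b) \ge 0$ and $c_2 = w(a)/\psi(a) \ge 0$, so $w_h \ge 0$ on $[a,b]$. Combining the two parts yields $w = w_p + w_h \ge 0$, i.e.\ $u \ge v$. The delicate point throughout is the endpoint sign-chasing for $\phi$ and $\psi$: one must carefully convert the conformable initial conditions into ordinary-derivative signs and then lean on disconjugacy to exclude interior sign changes.
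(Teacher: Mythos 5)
Your proof is correct and follows exactly the classical Green's-function argument that the paper itself appeals to (the paper omits the proof, stating it is "similar to the classical $\alpha=1$ counterpart"): decompose $w=u-v$ into the Green's-function integral of $Lw\le 0$ against $G<0$ plus a homogeneous solution with nonnegative boundary data, and handle the latter via the disconjugacy-forced positivity of $\phi$ and $\psi$. All the conformable adaptations you flag — $Lw_p$ continuous so Theorems \ref{uniq}, \ref{oldtheorem5}, and \ref{negG} apply, and the conversion of $D^{\alpha}\phi(a)=1$, $D^{\alpha}\psi(b)=-1$ into the ordinary-derivative signs $\phi'(a)=1/\kappa_0(a)>0$, $\psi'(b)=-1/\kappa_0(b)<0$ — are handled correctly.
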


\subsection{Right Focal Problem}

Similar to the subsection above on the conjugate boundary conditions and disconjugacy, here we examine Theorem \ref{oldtheorem5} and Theorem \ref{2ptbvpthm} for the special case where the boundary conditions \eqref{unhbc} are right focal boundary conditions, namely the boundary value problem
\begin{equation}\label{fokalbvp}
   Lx=0, \quad x(a) = D^{\alpha}x(b)=0. 
\end{equation}


\begin{corollary}[Green's Function for Focal BVPs]\label{oldcor11}
	\index{Green's function!focal BVP}
Assume that the BVP \eqref{fokalbvp} has only the trivial solution. For each fixed $s\in[a,b]$, let $u(\cdot,s)$ be the solution of the BVP 
\[ Lu(\cdot,s)=0,\quad u(a,s)=0,\quad 
   D^{\alpha}u(b,s) = -D^{\alpha}x(b,s), \]
where $x(t,s)$ is the Cauchy function of $Lx=0$. Then
\[ G(t,s)=\begin{cases}
	   u(t,s)        &: a\le t\le s\le b \\
	   u(t,s)+x(t,s) &: a\le s\le t\le b \end{cases} \]
is Green's function for the right focal BVP \eqref{fokalbvp}.
\end{corollary}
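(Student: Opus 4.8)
The plan is to obtain this corollary as a direct specialization of the general two-point result, Theorem \ref{oldtheorem5}, exactly as Corollary \ref{oldcor6} was derived for the conjugate problem. The right focal conditions $x(a)=D^{\alpha}x(b)=0$ in \eqref{fokalbvp} are the instance of the general boundary conditions \eqref{he} obtained by setting $\xi=1$, $\beta=0$ in the boundary functional at $a$ and $\gamma=0$, $\delta=1$ in the boundary functional at $b$. First I would record this identification $(\xi,\beta,\gamma,\delta)=(1,0,0,1)$ and observe that, under this choice, the homogeneous BVP \eqref{he} becomes precisely \eqref{fokalbvp}; hence the standing hypothesis that \eqref{fokalbvp} has only the trivial solution is exactly the hypothesis required to invoke Theorem \ref{oldtheorem5}.

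Next I would substitute these parameter values into the auxiliary problem \eqref{unhbc} that defines $u(\cdot,s)$ in Theorem \ref{oldtheorem5}. The first condition $\xi u(a,s)-\beta D^{\alpha}u(a,s)=0$ collapses to $u(a,s)=0$, while the second, $\gamma u(b,s)+\delta D^{\alpha}u(b,s)=-\gamma x(b,s)-\delta D^{\alpha}x(b,s)$, collapses to $D^{\alpha}u(b,s)=-D^{\alpha}x(b,s)$. These are exactly the conditions imposed on $u(\cdot,s)$ in the statement of the corollary, so the function $u(\cdot,s)$ produced here coincides with the one furnished by Theorem \ref{oldtheorem5}; in particular its existence and uniqueness are guaranteed by that theorem.

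Finally, the piecewise formula for $G(t,s)$ in Theorem \ref{oldtheorem5}, namely $G=u(t,s)$ for $a\le t\le s\le b$ and $G=v(t,s)=u(t,s)+x(t,s)$ for $a\le s\le t\le b$, is identical to the one asserted in the corollary. Thus Green's function for the focal BVP \eqref{fokalbvp} is given by the stated formula, and no further computation is required. I do not anticipate any genuine obstacle here: the construction, the properties of the Cauchy function $x(t,s)$, and the verification that the resulting $G$ yields solutions of \eqref{nhe} are all already contained in Theorem \ref{oldtheorem5}. The only point requiring care is the bookkeeping check that the single parameter choice $(\xi,\beta,\gamma,\delta)=(1,0,0,1)$ simultaneously reduces \eqref{he} to \eqref{fokalbvp} and \eqref{unhbc} to the focal auxiliary problem, which is routine.
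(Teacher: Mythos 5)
Your proposal is correct and follows exactly the paper's own route: the paper proves Corollary \ref{oldcor11} in one line by invoking Theorem \ref{oldtheorem5} with $\xi=\delta=1$ and $\beta=\gamma=0$, which is precisely your parameter choice $(\xi,\beta,\gamma,\delta)=(1,0,0,1)$. Your write-up simply spells out the bookkeeping (reduction of \eqref{he} to \eqref{fokalbvp} and of \eqref{unhbc} to the focal auxiliary conditions) that the paper leaves implicit.
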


\begin{proof}
This follows from Theorem \ref{oldtheorem5} with 
$\xi=\delta=1$ and $\beta=\gamma=0$.
\end{proof}


\begin{corollary}\label{ggff}
Green's function for the focal BVP \eqref{fokalbvp} with $q\equiv 0$ is given by
\[ G(t,s) = -e_0(t,s)\begin{cases}
  \int_a^t\frac{1}{p(\tau)} d_{\alpha}\tau   &: a\le t\le s\le b, \\
	\int_a^{s}\frac{1}{p(\tau)} d_{\alpha}\tau &: a\le s\le t\le b. \end{cases} \]
\end{corollary}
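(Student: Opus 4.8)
The plan is to apply Corollary \ref{oldcor11} directly, exactly as Corollary \ref{oldcor7} was deduced from Corollary \ref{oldcor6} in the conjugate case. First I would verify the standing hypothesis that the focal BVP \eqref{fokalbvp} with $q\equiv 0$ has only the trivial solution, so that Green's function exists: the general solution of $D^{\alpha}[pD^{\alpha}x]=0$ is $x(t)=c_1e_0(t,a)+c_2e_0(t,a)\int_a^t \tfrac{1}{p(\tau)}d_{\alpha}\tau$ (the fundamental pair used in the proof of Corollary \ref{oldcor7}), and imposing $x(a)=0$ forces $c_1=0$, whereupon $D^{\alpha}x(b)=c_2 e_0(b,a)/p(b)=0$ forces $c_2=0$ since $e_0(b,a)\neq 0$ and $p(b)\neq 0$. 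With existence secured, Corollary \ref{oldcor11} reduces the problem to computing the Cauchy function $x(t,s)$ and the auxiliary solution $u(\cdot,s)$.

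By Example \ref{oldex1} the Cauchy function for $D^{\alpha}[pD^{\alpha}x]=0$ is $x(t,s)=e_0(t,s)\int_s^t \tfrac{1}{p(\tau)}d_{\alpha}\tau$. The one computation I want to do cleanly is $D^{\alpha}x(b,s)$: rather than differentiate the product directly, I would observe that $w(t):=p(t)D^{\alpha}x(t,s)$ satisfies $D^{\alpha}w=0$, so by \eqref{expderiv} $w(t)=w(s)e_0(t,s)$, and since $D^{\alpha}x(s,s)=1/p(s)$ gives $w(s)=1$, we get $p(t)D^{\alpha}x(t,s)=e_0(t,s)$, hence $D^{\alpha}x(b,s)=e_0(b,s)/p(b)$. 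Next I would write $u(t,s)=A(s)e_0(t,a)+B(s)e_0(t,a)\int_a^t\tfrac{1}{p(\tau)}d_{\alpha}\tau$; the condition $u(a,s)=0$ yields $A(s)=0$, so $u(\cdot,s)=B(s)x_2$ with $x_2(t)=e_0(t,a)\int_a^t\tfrac{1}{p(\tau)}d_{\alpha}\tau$. Recognizing $x_2(t)=x(t,a)$, the same annihilation trick gives $D^{\alpha}x_2(t)=e_0(t,a)/p(t)$, so $D^{\alpha}u(b,s)=B(s)e_0(b,a)/p(b)$. Setting $D^{\alpha}u(b,s)=-D^{\alpha}x(b,s)$ gives $B(s)=-e_0(b,s)/e_0(b,a)=-e_0(a,s)$ by properties (ii)--(iii) of Lemma \ref{expprops}, and then $e_0(t,a)e_0(a,s)=e_0(t,s)$ (again property (ii)) collapses the expression to $u(t,s)=-e_0(t,s)\int_a^t\tfrac{1}{p(\tau)}d_{\alpha}\tau$, which is exactly the $t\le s$ branch of the claimed $G$.

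For the branch $t\ge s$, Corollary \ref{oldcor11} gives $G(t,s)=u(t,s)+x(t,s)=e_0(t,s)\bigl[-\int_a^t+\int_s^t\bigr]\tfrac{1}{p(\tau)}d_{\alpha}\tau$, and the additivity $\int_a^t=\int_a^s+\int_s^t$ leaves $-\int_a^s$, producing $G(t,s)=-e_0(t,s)\int_a^s\tfrac{1}{p(\tau)}d_{\alpha}\tau$, the remaining branch. There is no genuine obstacle here; the computation is routine once the derivative identities are in hand. The only place demanding care is the bookkeeping with the conformable exponential—correctly invoking the annihilation argument $D^{\alpha}[pD^{\alpha}(\cdot)]=0\Rightarrow pD^{\alpha}(\cdot)=c\,e_0$ to evaluate $D^{\alpha}x(b,s)$ and $D^{\alpha}u(b,s)$, and then chaining properties (ii) and (iii) of Lemma \ref{expprops} to reduce $B(s)$ and to merge $e_0(t,a)e_0(a,s)$ into $e_0(t,s)$; a sign slip in $B(s)$ or a missed $e_0$ factor is the most likely pitfall.
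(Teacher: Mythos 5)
Your proposal is correct and takes essentially the same route as the paper's own proof: verify the homogeneous focal problem has only the trivial solution, invoke Corollary \ref{oldcor11}, compute the Cauchy function from Example \ref{oldex1}, solve the boundary value problem for $u(\cdot,s)$, and add $x(t,s)$ on the branch $t\ge s$. The paper merely asserts the intermediate steps (``it is easy to see\ldots'', ``solving this BVP, we get\ldots''), whereas you supply the explicit computations---including the annihilation argument $p\,D^{\alpha}x(\cdot,s)=e_0(\cdot,s)$ and the exponential identities used to evaluate $B(s)$---all of which check out.
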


\begin{proof}
It is easy to see that \eqref{fokalbvp} with $q\equiv 0$ has only the trivial solution. Hence we can apply Corollary \ref{oldcor11} to find the focal 
Green's function $G(t,s)$. For $t\le s$, $G(t,s)=u(t,s)$, where for each fixed $s$, $u(\cdot,s)$ solves the BVP
\[ Lu(\cdot,s)=0,\quad
   u(a,s)=0, \quad D^{\alpha}u(b,s) = -D^{\alpha}x(b,s), \]
and where $x(t,s)$ is the Cauchy function for \eqref{qiszero}. Solving this BVP, we get that
\[ u(t,s) = -e_0(t,s)\int_a^t\frac{1}{p(\tau)} d_{\alpha}\tau, \]
which is the desired expression for $G(t,s)$ if $t\le s$. 
If $t\ge s$, then
\begin{eqnarray*}
 G(t,s) &=& u(t,s)+x(t,s) = -e_0(t,s)\int_a^{s}\frac{1}{p(\tau)} d_{\alpha}\tau.
\end{eqnarray*}
This completes the proof.
\end{proof}


\begin{corollary}\label{gf}
Green's function for the focal BVP \eqref{fokalbvp} with $p(t)\equiv 1$ and $q\equiv 0$ is given by
\[ G(t,s) = -e_0(t,s)\begin{cases}
  \int_a^t 1 d_{\alpha}\tau &: a\le t\le s\le b, \\
	\int_a^s 1 d_{\alpha}\tau &: a\le s\le t\le b. \end{cases} \]
For example, if $\kappa_1(t)=(1-\alpha)(\omega t)^{\alpha}$ and $\kappa_0(t)=\alpha (\omega t)^{1-\alpha}$ for $\alpha\in(0,1]$ and $\omega,t\in(0,\infty)$, then
\[ \int_a^t 1 d_{\alpha}\tau = \frac{t^{\alpha}-a^{\alpha}}{\alpha^2\omega^{1-\alpha}} \quad\text{and}\quad e_0(t,s)=e^{-\left(\frac{1-\alpha}{2\alpha^2}\right)\omega^{2\alpha-1}\left(t^{2\alpha}-s^{2\alpha}\right)} \]
for $a,s\in(0,\infty)$, and
\[ G(t,s) = \frac{-e^{-\left(\frac{1-\alpha}{2\alpha^2}\right)\omega^{2\alpha-1}\left(t^{2\alpha}-s^{2\alpha}\right)}}{\alpha^2\omega^{1-\alpha}} 
\begin{cases} t^{\alpha}-a^{\alpha} &: t \le s, \\ s^{\alpha}-a^{\alpha} &: t\ge s. \end{cases} \]
\end{corollary}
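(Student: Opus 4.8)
The plan is to obtain Corollary \ref{gf} as an immediate specialization of the general focal Green's function already established in Corollary \ref{ggff}, and then to verify the explicit example by direct computation. First I would observe that the hypotheses of Corollary \ref{ggff} are met here with $q\equiv 0$, and that setting $p(\tau)\equiv 1$ makes $1/p(\tau)=1$ in both integrals appearing in that corollary. Thus the piecewise expression for $G(t,s)$ in Corollary \ref{ggff} collapses directly to
\[ G(t,s) = -e_0(t,s)\begin{cases} \int_a^t 1\, d_{\alpha}\tau &: a\le t\le s\le b, \\ \int_a^s 1\, d_{\alpha}\tau &: a\le s\le t\le b, \end{cases} \]
which is exactly the claimed first formula. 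No further argument is needed for the general statement, since the focal BVP \eqref{fokalbvp} with $p\equiv q\equiv 0$ plainly has only the trivial solution, so Corollary \ref{ggff} applies verbatim.

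For the worked example, I would substitute the prescribed $\kappa_1(\tau)=(1-\alpha)(\omega\tau)^{\alpha}$ and $\kappa_0(\tau)=\alpha(\omega\tau)^{1-\alpha}$ and evaluate the two elementary quantities that appear. Since $d_{\alpha}\tau = d\tau/\kappa_0(\tau)$, the integral $\int_a^t 1\, d_{\alpha}\tau$ becomes $\frac{1}{\alpha\omega^{1-\alpha}}\int_a^t \tau^{\alpha-1}\, d\tau$, and the power rule gives $\frac{t^{\alpha}-a^{\alpha}}{\alpha^2\omega^{1-\alpha}}$. For the exponential factor I would use the defining formula \eqref{epts}, computing $\kappa_1(\tau)/\kappa_0(\tau) = \frac{1-\alpha}{\alpha}\omega^{2\alpha-1}\tau^{2\alpha-1}$ and integrating from $s$ to $t$ to obtain $\frac{1-\alpha}{2\alpha^2}\omega^{2\alpha-1}(t^{2\alpha}-s^{2\alpha})$; hence $e_0(t,s)=\exp\!\left(-\frac{1-\alpha}{2\alpha^2}\omega^{2\alpha-1}(t^{2\alpha}-s^{2\alpha})\right)$.

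Finally I would insert these two evaluations into the piecewise formula from the first part, factoring the common $1/(\alpha^2\omega^{1-\alpha})$ outside the cases to recover the stated closed form for $G(t,s)$ in this example. The main obstacle is essentially nonexistent at the conceptual level, as everything reduces to the already-proved Corollary \ref{ggff}. The only place demanding care is the bookkeeping of exponents in the two power-rule integrals---in particular tracking the factors of $\omega$ and the powers $\alpha-1$ and $2\alpha-1$---and making sure the sign inside the exponential is carried correctly from the definition of $e_0$ in \eqref{epts}.
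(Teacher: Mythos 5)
Your proposal is correct and follows the paper's proof exactly: the paper's entire argument is ``Put $p(\tau)\equiv 1$ in Corollary \ref{ggff},'' and your exponent bookkeeping for the example ($d_{\alpha}\tau$ integral and $e_0(t,s)$) checks out against the stated formulas. The only blemish is the typo ``$p\equiv q\equiv 0$'' (you mean $p\equiv 1$, $q\equiv 0$); in any case the triviality of the homogeneous focal BVP is already part of Corollary \ref{ggff}, so no separate verification is needed.
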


\begin{proof}
Put $p(\tau)\equiv 1$ in Corollary \ref{ggff}.
\end{proof}


\begin{corollary}
Let $a,b\in\R$ with $a<b$, and let $p$ be given via \eqref{pinvconj}. Then Green's function for the BVP 
\[ D^{\alpha}\left[pD^{\alpha}x\right](t)=0, \quad x(a)=D^{\alpha}x(b)=0 \]
is given by
\[ G(t,s) = -e_0(t,s)
\begin{cases}
  t-a &: a\le t\le s\le b, \\
  s-a &: a\le s\le t\le b. \end{cases} \]
\end{corollary}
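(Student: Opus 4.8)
The plan is to reduce everything to the already-established focal Green's function formula in Corollary \ref{ggff}, exactly as the preceding conjugate corollary reduced to Corollary \ref{oldcor7}. First I would invoke Corollary \ref{ggff}, which states that for the focal BVP \eqref{fokalbvp} with $q\equiv 0$ the Green's function is
\[ G(t,s) = -e_0(t,s)\begin{cases} \int_a^t p^{-1}(\tau) d_{\alpha}\tau &: a\le t\le s\le b, \\ \int_a^s p^{-1}(\tau) d_{\alpha}\tau &: a\le s\le t\le b, \end{cases} \]
so that the whole problem collapses to evaluating the single integral $\int_a^\xi p^{-1}(\tau)d_{\alpha}\tau$ for the particular $p$ prescribed by \eqref{pinvconj}.

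Then I would carry out that evaluation. The cleanest route is to compute $p^{-1}$ outright: writing $\tau/e_0(b,\tau) = \tau e_0(\tau,b)$ via Lemma \ref{expprops}(iii) and applying the product rule Lemma \ref{basicderiv}(3) together with $D^{\alpha}e_0(\tau,b)=0$ from \eqref{expderiv} and $D^{\alpha}\tau = \kappa_1\tau+\kappa_0$ from \eqref{derivdef}, one finds $D^{\alpha}[\tau/e_0(b,\tau)] = \kappa_0 e_0(\tau,b)$, whence \eqref{pinvconj} gives $p^{-1}(\tau) = e_0(b,\tau)\kappa_0(\tau)e_0(\tau,b) = \kappa_0(\tau)$ by the semigroup property Lemma \ref{expprops}(ii). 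Since $d_{\alpha}\tau = d\tau/\kappa_0(\tau)$, the conformable integral degenerates to an ordinary one, $\int_a^\xi p^{-1}(\tau)d_{\alpha}\tau = \int_a^\xi \kappa_0(\tau)\frac{d\tau}{\kappa_0(\tau)} = \int_a^\xi d\tau = \xi - a$, which incidentally also confirms $p=1/\kappa_0>0$ as required by \eqref{saeq}. Substituting $\xi = t$ and $\xi = s$ into the displayed formula yields precisely the claimed $G(t,s)$.

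Alternatively, to mirror the proof of the analogous conjugate corollary, I would keep $p^{-1}$ in the form \eqref{pinvconj} and apply Theorem \ref{ftc} directly. The one point needing care is that the kernel in \eqref{pinvconj} is $e_0(b,\tau)$ with $b$ fixed, while the upper limit of integration is the variable $\xi$, whereas Theorem \ref{ftc} expects the kernel $e_0(\xi,\tau)$ matching the upper endpoint. I would bridge this gap with the semigroup identity $e_0(b,\tau)=e_0(b,\xi)e_0(\xi,\tau)$ from Lemma \ref{expprops}(ii), factor out $e_0(b,\xi)$, apply Theorem \ref{ftc} on $[a,\xi]$ to $f(\tau):=\tau/e_0(b,\tau)$, and simplify using $e_0(\xi,a)/e_0(b,a)=e_0(\xi,b)$ to again obtain $\xi-a$. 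This kernel-matching step is the only genuine obstacle; everything else is bookkeeping. Given how transparent the direct computation $p^{-1}=\kappa_0$ turns out to be, I expect the first route to be both shorter and less error-prone, and I would present that one.
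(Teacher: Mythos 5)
Your proposal is correct, and both routes you describe lead to the stated formula. For context: the paper states this corollary with no proof at all; the intended argument is plainly the focal analogue of the proof given for the conjugate counterpart, namely invoke Corollary \ref{ggff} (as you do) and then evaluate $\int_a^{\xi} p^{-1}(\tau)\,d_{\alpha}\tau$ using \eqref{pinvconj} together with Theorem \ref{ftc}. Your Route 2 is exactly that argument, and you correctly identify the one subtlety the paper glosses over: Theorem \ref{ftc} applies directly only when the kernel $e_0(b,\tau)$ matches the upper limit of integration, which happens for $\int_a^b$ but not for $\int_a^t$ or $\int_a^s$ (in the paper's conjugate proof this issue is hidden in the sentence ``Similar evaluations of the other integrals yield the result''); your fix via $e_0(b,\tau)=e_0(b,\xi)e_0(\xi,\tau)$ is the right one. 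Your Route 1 --- computing from \eqref{pinvconj} that $D^{\alpha}\left[\tau e_0(\tau,b)\right]=\kappa_0(\tau)e_0(\tau,b)$, hence $p^{-1}=\kappa_0$ --- is a genuinely cleaner alternative: it collapses the conformable integral to an ordinary one so that every integral of the form $\int_a^{\xi}p^{-1}(\tau)\,d_{\alpha}\tau=\xi-a$ is handled uniformly, and it confirms $p=1/\kappa_0>0$, which the self-adjoint theory of \eqref{saeq} requires. Either route is a complete proof; your preference for Route 1 is well justified.
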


\subsection{Periodic Problem}

Finally we consider periodic boundary conditions. In traditional ($\alpha=1$) calculus, the geometry of periodicity means returning to the same values in the sense that they lie along the same horizontal straight line (geodesic with slope zero). Considering the context of the derivative \eqref{derivdef}, we investigate the periodic BVP 
	\index{boundary value problem!periodic}
	\index{periodic boundary conditions}
\begin{equation}\label{hperiodicbvp}
    Lx=0,\quad x(a) = e_0(a,b)x(b),\quad D^{\alpha}x(a) = e_0(a,b)D^{\alpha}x(b).
\end{equation}


\begin{theorem}\label{perasefyu} 
	\index{existence uniqueness theorem!BVP}
Let $L$ be as in \eqref{saeq}. Assume that the homogeneous periodic BVP \eqref{hperiodicbvp} has only the trivial solution. Then for $t\in[a,b]$, the nonhomogeneous BVP
\begin{equation}\label{nhperiodicbvp}
  Lx=h(t),\quad x(a)-e_0(a,b)x(b)=A, \quad D^{\alpha}x(a)-e_0(a,b)D^{\alpha}x(b)=B,
\end{equation}
where $A$ and $B$ are given constants and $h$ is continuous, has a unique solution.
\end{theorem}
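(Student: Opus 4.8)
The plan is to reduce the periodic boundary value problem \eqref{nhperiodicbvp} to a $2\times 2$ linear algebraic system for the free parameters in the general solution of $Lx=h$, exactly as in the classical argument underlying Theorem \ref{uniq}, and then to identify nonsingularity of the coefficient matrix with the triviality hypothesis on \eqref{hperiodicbvp}. First I would invoke the structure theory already established for \eqref{saeq}: by Theorem \ref{unique} together with the theorem guaranteeing that \eqref{saeq} has two linearly independent solutions, fix a fundamental pair $x_1,x_2$ of $Lx=0$ on $[a,b]$; and by the Variation of Constants Formula (Theorem \ref{t31}) let $y_p$ be the solution of $Ly_p=h$ with $y_p(a)=D^{\alpha}y_p(a)=0$. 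By the general-solution representation for $Lx=h$, every solution then has the form $x=c_1x_1+c_2x_2+y_p$ for constants $c_1,c_2\in\R$.

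Next, since $D^{\alpha}$ is linear by Lemma \ref{basicderiv}(1) and $e_0(a,b)$ is a fixed scalar for fixed endpoints $a,b$, the two boundary maps
\[ B_1[x]:=x(a)-e_0(a,b)x(b), \qquad B_2[x]:=D^{\alpha}x(a)-e_0(a,b)D^{\alpha}x(b) \]
are linear functionals on the solution space. Substituting $x=c_1x_1+c_2x_2+y_p$ into the two conditions $B_1[x]=A$ and $B_2[x]=B$ of \eqref{nhperiodicbvp} and using linearity yields the system
\[ \mat{B_1[x_1]}{B_1[x_2]}{B_2[x_1]}{B_2[x_2]}\vecc{c_1}{c_2} = \vecc{A-B_1[y_p]}{B-B_2[y_p]}, \]
whose right-hand side is completely determined by $A$, $B$, and the known boundary values of $y_p$. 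Call the coefficient matrix $M$.

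It remains to show $M$ is nonsingular. Taking $h\equiv 0$ (so that $y_p\equiv 0$) and $A=B=0$ recovers precisely the homogeneous periodic BVP \eqref{hperiodicbvp}: a function $x=c_1x_1+c_2x_2$ solves \eqref{hperiodicbvp} if and only if $M(c_1,c_2)^{\top}=0$. By hypothesis the only solution of \eqref{hperiodicbvp} is $x\equiv 0$, and since $x_1,x_2$ are linearly independent this forces $c_1=c_2=0$; hence $M(c_1,c_2)^{\top}=0$ admits only the trivial solution and $\det M\ne 0$. Consequently the displayed system has a unique solution $(c_1,c_2)$ for every choice of $A$, $B$, and $h$, producing a unique $x$ solving \eqref{nhperiodicbvp}, as required.

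I do not anticipate a genuine obstacle here, as the entire argument is the standard linear-BVP reduction; the only point needing care in the conformable setting is that the periodic conditions define \emph{linear} functionals, which is immediate from linearity of $D^{\alpha}$ and the constancy of $e_0(a,b)$. The modest novelty relative to the $\alpha=1$ case is the weight $e_0(a,b)$ appearing in the boundary conditions, but this merely scales the relevant boundary data and does not affect the equivalence between nonsingularity of $M$ and triviality of \eqref{hperiodicbvp}.
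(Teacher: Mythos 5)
Your proposal is correct and follows essentially the same route as the paper's proof: fix a fundamental pair $x_1,x_2$, observe that the homogeneous periodic conditions correspond to the linear system $M(c_1,c_2)^{\top}=0$ so that the triviality hypothesis forces $\det M\neq 0$, and then solve the nonhomogeneous problem by adding a particular solution and inverting $M$. The only cosmetic difference is that you obtain the particular solution explicitly from the Variation of Constants Formula (Theorem \ref{t31}), whereas the paper simply takes a fixed solution $u_0$ of $Lu=h$; this changes nothing in the argument.
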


\begin{proof} 
Let $x_1$ and $x_2$ be linearly independent solutions of $Lx=0$. Then
\[ x(t) = c_1 x_1(t) + c_2 x_2(t) \]
is a general solution of $Lx=0$. Note that $x$ satisfies the boundary conditions in \eqref{hperiodicbvp} if and only if $c_1$ and $c_2$ are constants satisfying
\[ M\vecc{c_1}{c_2}=0 \]
with
\[ M = \mat{x_1(a)-e_0(a,b)x_1(b)}{x_2(a)-e_0(a,b)x_2(b)}{D^{\alpha}x_1(a)-e_0(a,b)D^{\alpha}x_1(b)}{D^{\alpha}x_2(a)-e_0(a,b)D^{\alpha}x_2(b)}. \]
Since we are assuming that \eqref{hperiodicbvp} has only the trivial solution, it follows that 
\[ c_1=c_2=0 \]
is the unique solution of the above linear system. Hence
\begin{equation}\label{detjseooper}
  \det M\ne 0.
\end{equation}
Now we show that \eqref{nhperiodicbvp} has a unique solution. Let $u_0$ be a fixed solution of $Lu=h(t)$. Then a general solution of $Lu=h(t)$ is given by
\[ u(t) = a_1x_1(t)+a_2x_2(t)+u_0(t). \]
It follows that $u$ satisfies the boundary conditions in \eqref{nhperiodicbvp} if and only if $a_1$ and $a_2$ are constants satisfying the system of equations
\[ M\vecc{a_1}{a_2} = \vecc{A - u_0(a) + e_0(a,b)u_0(b)}{B - D^{\alpha}u_0(a) + e_0(a,b)D^{\alpha}u_0(b)}. \]
This system has a unique solution because of \eqref{detjseooper}, and hence \eqref{nhperiodicbvp} has a unique solution.
\end{proof}


\begin{theorem}[Green's Function for Periodic BVPs]\label{tgfper}
	\index{Green's function!periodic BVP} 
Assume that the homogeneous BVP \eqref{hperiodicbvp} has only the trivial solution. For each fixed $s\in\mathcal{I}$, let $u(\cdot,s)$ be the solution of the BVP
\begin{equation}\label{bcu1per}
 \left\{\begin{array}{l}Lu(\cdot,s)=0\\
  u(a,s) = e_0(a,b)\left[u(b,s) + x(b,s)\right] \\
  D^{\alpha}u(a,s) = e_0(a,b)\left[D^{\alpha}u(b,s) + D^{\alpha}x(b,s)\right],\end{array}\right.
\end{equation}
where $x(t,s)$ is the Cauchy function for $Lx=0$, $L$ as in \eqref{saeq}. Define
\begin{equation}\label{formgfper} 
  G(t,s):=\begin{cases}
      u(t,s) &: t \le s, \\
	    v(t,s) &: t \ge s, \end{cases}
\end{equation}
where $v(t,s):=u(t,s)+x(t,s)$. If $h$ is continuous, then 
\[ x(t):=\int_a^{b}G(t,s)h(s) d_{\alpha}s \]
is the unique solution of the nonhomogeneous periodic BVP \eqref{nhperiodicbvp} with $A=B=0$. Furthermore, for each fixed $s\in [a,b]$, $v(\cdot,s)$ is a solution of $Lx=0$, and 
\[ u(a,s) = e_0(a,b)v(b,s),\quad D^{\alpha}u(a,s) = e_0(a,b)D^{\alpha}v(b,s). \]
\end{theorem}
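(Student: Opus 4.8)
The plan is to mirror the proof of Theorem \ref{oldtheorem5}, adapting the splitting of the Green's function integral to the periodic setting, with the auxiliary boundary-value characterization \eqref{bcu1per} of $u(\cdot,s)$ supplying exactly the coupling identities needed at the two endpoints. I would carry out the argument in three stages: the preliminary structural claims about $u$ and $v$, the verification that $Lx=h$, and the verification of the periodic boundary conditions.

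First I would settle the preliminary claims. The existence and uniqueness of $u(\cdot,s)$ for each fixed $s$ follows from Theorem \ref{perasefyu}: indeed \eqref{bcu1per} is precisely the nonhomogeneous periodic problem \eqref{nhperiodicbvp} with $h\equiv 0$ and boundary data $A=e_0(a,b)x(b,s)$, $B=e_0(a,b)D^{\alpha}x(b,s)$, so the standing hypothesis that \eqref{hperiodicbvp} has only the trivial solution guarantees a unique $u(\cdot,s)$. Since both $u(\cdot,s)$ and the Cauchy function $x(\cdot,s)$ solve $Lx=0$, linearity of $L$ shows that $v(\cdot,s)=u(\cdot,s)+x(\cdot,s)$ also solves $Lx=0$. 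The endpoint relations $u(a,s)=e_0(a,b)v(b,s)$ and $D^{\alpha}u(a,s)=e_0(a,b)D^{\alpha}v(b,s)$ are then immediate upon rewriting the boundary conditions in \eqref{bcu1per} via $v(b,s)=u(b,s)+x(b,s)$.

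Next I would show $Lx=h$. Splitting $x(t)=\int_a^t v(t,s)h(s)d_{\alpha}s+\int_t^b u(t,s)h(s)d_{\alpha}s$ and substituting $v=u+x$ gives, exactly as in Theorem \ref{oldtheorem5}, the representation $x(t)=\int_a^b u(t,s)h(s)d_{\alpha}s+w(t)$, where $w(t)=\int_a^t x(t,s)h(s)d_{\alpha}s$ is, by the variation of constants formula (Theorem \ref{t31}), the solution of $Lw=h$ with $w(a)=D^{\alpha}w(a)=0$. Applying $L$ and passing it under the fixed-limit integral (legitimate since all data are continuous), the $u$-integral drops since $Lu(\cdot,s)\equiv 0$, leaving $Lx=Lw=h$.

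Finally I would verify the two periodic boundary conditions from the same representation. At $t=a$ the conditions $w(a)=D^{\alpha}w(a)=0$ give $x(a)=\int_a^b u(a,s)h(s)d_{\alpha}s$ and $D^{\alpha}x(a)=\int_a^b D^{\alpha}u(a,s)h(s)d_{\alpha}s$; at $t=b$, using $x(t,t)=0$ to differentiate the variable-limit integral defining $w$ (as in Theorem \ref{t31}), one finds $x(b)=\int_a^b v(b,s)h(s)d_{\alpha}s$ and $D^{\alpha}x(b)=\int_a^b D^{\alpha}v(b,s)h(s)d_{\alpha}s$. Subtracting then yields $x(a)-e_0(a,b)x(b)=\int_a^b[u(a,s)-e_0(a,b)v(b,s)]h(s)d_{\alpha}s$ and an analogous expression for the derivative condition, both of which vanish by the endpoint relations established above; uniqueness is inherited from Theorem \ref{perasefyu}. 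The main delicate point is the endpoint differentiation at $b$: one must account for the boundary term arising from the moving upper limit in $w$ and confirm that $x(t,t)=0$ annihilates it, so that the $u$- and $x$-contributions recombine cleanly into $v(b,s)$ and $D^{\alpha}v(b,s)$. Everything else is linear bookkeeping driven by \eqref{bcu1per}.
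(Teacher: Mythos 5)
Your proposal is correct and follows essentially the same route as the paper's proof: existence and uniqueness of $u(\cdot,s)$ via Theorem \ref{perasefyu}, the decomposition $x(t)=\int_a^b u(t,s)h(s)\,d_{\alpha}s+\int_a^t x(t,s)h(s)\,d_{\alpha}s$ with the second term handled by the variation of constants formula (Theorem \ref{t31}), and the periodic boundary conditions verified through the endpoint relations $u(a,s)=e_0(a,b)v(b,s)$, $D^{\alpha}u(a,s)=e_0(a,b)D^{\alpha}v(b,s)$. Your "compute both endpoints and subtract" phrasing is just a cosmetic reorganization of the paper's direct chain of equalities $x(a)=\int_a^b e_0(a,b)G(b,s)h(s)\,d_{\alpha}s=e_0(a,b)x(b)$, and your explicit attention to the vanishing boundary term $x(t,t)h(t)$ when differentiating at $t=b$ matches what the paper implicitly relies on from Theorem \ref{t31}.
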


\begin{proof} 
The existence and uniqueness of $u(t,s)$ is guaranteed by Theorem \ref{perasefyu}. Since $v(t,s)=u(t,s)+x(t,s)$, we have for each fixed $s$ that $v(\cdot,s)$ is a solution of $Lx=0$. Using the boundary conditions in \eqref{bcu1per}, it is easy to see that for each fixed $s$, $u(a,s)=e_0(a,b)v(b,s)$ and $D^{\alpha}u(a,s)=e_0(a,b)D^{\alpha}v(b,s)$. Let $G$ be as in \eqref{formgfper} and notice that
\begin{eqnarray*}
  x(t)&=&\int_a^{b}G(t,s)h(s) d_{\alpha}s = \int_a^{b}u(t,s)h(s) d_{\alpha}s + \int_a^tx(t,s)h(s) d_{\alpha}s\\
  &=&\int_a^{b}u(t,s)h(s) d_{\alpha}s + z(t),
\end{eqnarray*}
where, by the variation of constants formula in Theorem \ref{t31}, $z$ solves
\[ Lz=h(t),\quad z(a)=D^{\alpha}z(a)=0. \]
Hence
\[ Lx(t)=\int_a^{b}Lu(t,s)h(s) d_{\alpha}s+Lz(t)=Lz(t)=h(t). \]
Thus $x$ is a solution of $Lx=h(t)$. Note that
\begin{eqnarray*}
x(a) &=& \int_a^{b} u(a,s)h(s) d_{\alpha}s+z(a) = \int_a^{b} e_0(a,b)v(b,s)h(s) d_{\alpha}s\\
     &=& \int_a^{b} e_0(a,b)G(b,s)h(s) d_{\alpha}s = e_0(a,b)x(b),
\end{eqnarray*}
and
\begin{eqnarray*}
D^{\alpha}x(a) &=&\int_a^{b}D^{\alpha}u(a,s)h(s) d_{\alpha}s+D^{\alpha}z(a) = \int_a^{b}e_0(a,b)D^{\alpha}v(b,s)h(s) d_{\alpha}s\\
&=&\int_a^{b}e_0(a,b)D^{\alpha}G(b,s)h(s) d_{\alpha}s = e_0(a,b)D^{\alpha}x(b).
\end{eqnarray*}
Hence $x$ satisfies the periodic boundary conditions in \eqref{hperiodicbvp}.
\end{proof}


\begin{example} 
Let $\alpha\in(0,1]$. Using Theorem \ref{tgfper} we will solve the periodic BVP
\begin{equation}\label{erehjr}
  D^{\alpha}D^{\alpha}x + x = 3\sin_{\alpha}(2;t,0), \quad
     x(0) = e_0(0,\pi^*)x(\pi^*), \quad D^{\alpha}x(0) = e_0(0,\pi^*)D^{\alpha}x(\pi^*)
\end{equation}
on $\mathcal{I}=[0,\pi^*]$, where $\pi^*\in\R$ is such that
\[ h_1(\pi^*,0)=\pi, \]
and where we have used the notation
\[ \sin_{\alpha}(2;t,0):=e_0(t,0)\sin\left(2h_1(t,0)\right) \]
from Example \ref{coscosh}. It is easy to show that the homogeneous BVP 
\[ D^{\alpha}D^{\alpha}x + x = 0, \quad x(0) = e_0(0,\pi^*)x(\pi^*), \quad D^{\alpha}x(0) = e_0(0,\pi^*)D^{\alpha}x(\pi^*) \]
has only the trivial solution by Example \ref{coscosh}; hence one approach to solving this would be to use Theorem \ref{tgfper} to solve \eqref{erehjr}. 
By Theorem \ref{tgfper}, Green's function $G$ is given by \eqref{formgfper}, where for each fixed $s\in[0,\pi^*]$, $u(\cdot,s)$ is the solution of $D^{\alpha}D^{\alpha}u+u=0$ with boundary conditions
\begin{eqnarray}
 u(0,s) &=& e_0(0,\pi^*)\left[u(\pi^*,s)+x(\pi^*,s)\right], \nonumber \\
 D^{\alpha}u(0,s) &=& e_0(0,\pi^*)\left[D^{\alpha}u(\pi^*,s)+D^{\alpha}x(\pi^*,s)\right], \label{sadeuhpere}
\end{eqnarray}
where $x(t,s)$ is the Cauchy function for $D^{\alpha}D^{\alpha}x+x=0$, and $v(t,s):=u(t,s)+x(t,s)$.
The Cauchy function for $D^{\alpha}D^{\alpha}x+x=0$ is given by Theorem \ref{t33} as
\[ x(t,s) = e_0(t,s)\sin(h_1(t,s)) = \sin_{\alpha}(1;t,s). \]
Since for each fixed $s$, $u(\cdot,s)$ is a solution of $D^{\alpha}D^{\alpha}u+u=0$,
\[ u(t,s) = A(s)\cos_{\alpha}(1;t,0) + B(s)\sin_{\alpha}(1;t,0). \]
From the boundary conditions in \eqref{sadeuhpere} we get
\[ A(s) = \frac{\sin_{\alpha}(1;s,0)}{2e^2_0(s,0)} \quad \text{and} \quad
   B(s) = \frac{-\cos_{\alpha}(1;s,0)}{2e^2_0(s,0)}. \]
It follows that
\[ u(t,s) = \frac{\sin_{\alpha}(1;s,0)\cos_{\alpha}(1;t,0)}{2e^2_0(s,0)} - \frac{\cos_{\alpha}(1;s,0)\sin_{\alpha}(1;t,0)}{2e^2_0(s,0)} = \frac{-1}{2}\sin_{\alpha}(1;t,s). \]
Therefore
\[ v(t,s) = u(t,s) + x(t,s) = \frac{1}{2}\sin_{\alpha}(1;t,s). \]
Hence by Theorem \ref{tgfper} the solution of the BVP \eqref{erehjr} is given by
\begin{eqnarray*}
 x(t) &=& 3\int_0^{\pi^*}G(t,s)\sin_{\alpha}(2;s,0) d_{\alpha}s \\
 &=& \frac{3}{2}\int_0^{t}\sin_{\alpha}(1;t,s)\sin_{\alpha}(2;s,0)d_{\alpha}s - \frac{3}{2}\int_t^{\pi^*} \sin_{\alpha}(1;t,s)\sin_{\alpha}(2;s,0)d_{\alpha}s \\
 &=& \frac{3}{2}\sin(h_1(t,0)) \int_0^{t} \cos_{\alpha}(1;s,0)\sin(2h_1(s,0))e_0(t,s)d_{\alpha}s \\
 & & -\frac{3}{2}\cos(h_1(t,0)) \int_0^{t} \sin_{\alpha}(1;s,0)\sin(2h_1(s,0))e_0(t,s)d_{\alpha}s \\
 & & -\frac{3}{2}\sin(h_1(t,0)) \int_t^{\pi^*} \cos_{\alpha}(1;s,0)\sin(2h_1(s,0))e_0(t,s)d_{\alpha}s \\
 & & + \frac{3}{2}\cos(h_1(t,0)) \int_t^{\pi^*} \sin_{\alpha}(1;s,0)\sin(2h_1(s,0))e_0(t,s)d_{\alpha}s \\
 &=& \sin(h_1(t,0))\left[1-\cos^3(h_1(t,0))\right] - \sin(h_1(t,0))\left[1+\cos^3(h_1(t,0))\right] \\
 & & -\cos(h_1(t,0))\sin^3(h_1(t,0)) - \cos(h_1(t,0))\sin^3(h_1(t,0)) \\
 &=& -2\sin(h_1(t,0))\cos(h_1(t,0))\left[\cos^2(h_1(t,0))+\sin^2(h_1(t,0))\right] \\
 &=& -2\sin(h_1(t,0))\cos(h_1(t,0))e^2_0(t,0) \\
 &=& -\sin_{\alpha}(2;t,0).
\end{eqnarray*}
Of course one could ultimately arrive at the same solution via Example \ref{coscosh} directly, together with the assumption that the solution takes the form $x=x_c+x_p$, where $x_c$ is the general solution to the corresponding homogeneous equation, $x_p$ is any particular solution, and then applying the boundary conditions. $\hfill\triangle$
\end{example}

\section*{Acknowledgements}
This research was supported by an NSF STEP grant (DUE 0969568) and by a gift to the Division of Sciences and Mathematics at Concordia College made in support of undergraduate research. I would like to thank my two first-year undergraduate students, Grace Bryan and Laura Legare, for their research efforts in the summer REU of 2016.


\end{document}